\def\flnm{gPs-} 
\def\version{BM29} 
\def\datum{September 29, 2021}
\let\iflabels=\iffalse 
\let\iftxfonts=\iftrue 
\let\iffilename=\iffalse 
\let\ifhyperref=\iftrue 
\definecolor{blue}{rgb}{0,0,1}
\definecolor{red}{rgb}{1,0,0}
\definecolor{purple}{rgb}{1,0,1}
\definecolor{green}{rgb}{0,1,.5}
\definecolor{orange}{rgb}{1,.4,0}
\long\def\red#1\endred{{\color{red}#1}}
\long\def\blue#1\endblue{{\color{blue}#1}}
\long\def\purple#1\endpurple{{\color{purple}#1}}
\long\def\green#1\endgreen{{\color{green}#1}}
\long\def\orange#1\endorange{{\color{orange}#1}}
  \newcommand\texorpdfstring[2]{#1}
\def\@oddfoot{\rm{\footnotesize{File name
\tt\flnm\version.tex}\quad\today\hfil\thepage}}
\let\@evenfoot\@oddfoot \addtolength{\textheight}{-.4cm}
\numberwithin{equation}{section}
\def\be#1\ee{\begin{equation}#1\end{equation}}
\def\bad#1\ead{\be\begin{aligned}#1\end{aligned}\ee}
\def\badl#1#2\eadl{\be\label{#1}\begin{aligned}#2\end{aligned}\ee}
\newtheorem{thm}{Theorem}[section]
\newtheorem{prop}[thm]{Proposition}
\newtheorem{lem}[thm]{Lemma}
\theoremstyle{definition}
\newtheorem{defn}[thm]{Definition}
\newcommand\rmrk[1]{\medskip\par\noindent\emph{#1. }}
\renewcommand\={\,=\,}
\renewcommand\setminus{\smallsetminus}
\newcommand\re{{\mathrm{Re}\,}}
\newcommand\im{{\mathrm{Im}\,}}
\newcommand\sign{{\mathrm{Sign}\,}}%
\newcommand\SU{{\mathrm{SU}}}
\newcommand\SL{{\mathrm{SL}}}
\newcommand\U{{\mathrm{U}}}
\newcommand\oh{\mathrm{O}}
\newcommand\dis{\stackrel.=}
\newcommand\ddis{\;\dis\;}
\newcommand\Gf{\Gamma} 
\newcommand\pmat[1]{\begin{pmatrix}#1\end{pmatrix}}
\newcommand\al{\alpha}
\newcommand\bt{\beta}
\newcommand\Gm{\Gamma}
\newcommand\gm{\gamma}
\newcommand\Dt{\Delta}
\newcommand\dt{\delta}
\newcommand\e{\varepsilon}
\newcommand\z{\zeta}
\newcommand\Th{\Theta}
\renewcommand\th{\vartheta}
\newcommand\kp{\kappa}
\let\k=\kp
\newcommand\Ld{\Lambda}
\newcommand\ld{\lambda}
\newcommand\Mu{{\mathsf M}}
\newcommand\s{\sigma}
\newcommand\Ups{\Upsilon}
\newcommand\ups{\upsilon}
\newcommand\Ph{\Phi}
\newcommand\ph{\varphi}
\newcommand\ch{\chi}
\newcommand\ps{\psi}
\newcommand\Om{\Omega}
\newcommand\om{\omega}
\newcommand\CC{{\mathbb{C}}}
\newcommand\NN{{\mathbb{N}}}
\newcommand\RR{{\mathbb{R}}}
\newcommand\ZZ{{\mathbb{Z}}}
\newcommand\CK{{\mathbf C}}
\newcommand\HH{{\mathbf H}}
\newcommand\WW{{\mathbf W}}
\newcommand\XX{{\mathbf X}}
\newcommand\Z{{\mathbf Z}}
\newcommand\alie{{\mathfrak a}}
\newcommand\glie{{\mathfrak g}}
\newcommand\klie{{\mathfrak k}}
\newcommand\mlie{{\mathfrak m}}
\newcommand\nlie{{\mathfrak n}}
\newcommand\Poin{{\mathbf M}}
\let\M=\Poin
\newcommand\poin{{\mathbf P}}
\newcommand\Four{{\mathbf F}}
\newcommand\Ffu{{\mathcal F}}
\newcommand\Nfu{{\mathcal N}}
\newcommand\Mfu{{\mathcal M}}
\newcommand\Wfu{{\mathcal W}}
\newcommand\n{{\mathbf n}}
\newcommand\am{{\mathrm a}}
\newcommand\km{{\mathrm k}}
\newcommand\mm{{\mathrm m}}
\newcommand\nm{{\mathrm n}}
\newcommand\wm{{\mathrm w}}
\newcommand\Wo{{\mathrm{O}_{\mathrm{W}}}}
\newcommand\MS{{\mathrm{MS}}}
\let\W=\WW
\newcommand\wo{{\mathbf{wo}}}
\let\ord=\wo
\newcommand\po{{\mathbf{po}}}
\newcommand\Ws[1]{S_{\!#1}}
\newcommand\Kph[4]{\,{}^{#1}\Ph^{#2}_{#3,#4}}
\newcommand\kph[4]{\,{}^{#1}\ph^{#2}_{#3,#4}}
\newcommand\II{\mathit{II}}
\newcommand\IF{\mathit{IF}}
\newcommand\FI{\mathit{FI}}
\newcommand\A{{\mathbf A}}
\newcommand\Au[1]{\mathbf{A}^{\!#1}}
\newcommand\discr{{\mathrm{discr}}}
\newcommand\cont{{\mathrm{cont}}}
\newcommand\sym{{\mathcal X}}
\newcommand\Schw{{\mathcal S}}
\newcommand\B{{\mathcal B}}
\renewcommand\Wr{\mathrm{Wr}}
\newcommand\Sie{{\mathfrak S}}
\begin{document}
\title{Generalized Poincar\'e series for $\SU(2,1)$}

\author{Roelof W. Bruggeman}

\address{Mathematisch Instituut, Universiteit Utrecht, Postbus 80010,
	3508 TA Utrecht, Nederlands}

\email{r.w.bruggeman@uu.nl}

\author{Roberto J. Miatello}

\address{FAMAF-CIEM, Universidad Nacional de C\'or\-do\-ba,
	C\'or\-do\-ba~5000, Argentina}

\email{miatello@famaf.unc.edu.ar}

\date{\datum}

\begin{abstract}
We define and study 'non-abelian' Poincar\'e series for the group
$G=\mathrm{SU} (2,1)$, that is, Poincar\'e series attached to a
Stone-Von Neumann representation of the unipotent subgroup $N$ of $G$.
Such Poincar\'e series have in general exponential growth. In this
study we use results on abelian and non-abelian Fourier term modules
obtained in \cite{BMSU21}. We compute the inner product of truncations
of these series and of those associated to unitary characters of $N$,
with square integrable automorphic forms in connection with their
Fourier expansions. As a consequence, we obtain general completeness
results that, in particular, generalize those valid for the classical
holomorphic (and antiholomorphic) Poincar\'e series for $\SL(2,\RR)$.
\end{abstract}
\keywords{unitary group SU(2,1), Poincar\'e series,
completeness}
\subjclass[2010]{Primary: 11F70; Secondary: 11F55, 22E30}

\maketitle

\tableofcontents

\section{Introduction}

In this paper we study Poincar\'e series on the Lie group $\SU(2,1)$.
Classically, in the study of automorphic forms one considers functions
on a space $X$ that are invariant under the action of a group $\Gm$ of
transformations of~$X$. The idea of a Poincar\'e series is to take as a
germ a simple function $h$ on $X$ and to form the sum of transformed
functions $x\mapsto \sum_{\gm\in \Gm} h(\gm x)$. If this sum converges
absolutely one has a $\Gm$-invariant function on~$X$. Poincar\'e
\cite{Po1882} used this idea to construct what he called
\emph{fonctions th\^etafuchsiennes}.

For the well-known cuspidal Poincar\'e series in the theory of
holomorphic modular forms, the germ $h(z)=e^{2\pi i n z}$,
$n\in \ZZ_{\geq 1}$ on the upper half-plane is invariant under the
transformation $z\mapsto z+1$. With a suitable automorphy factor, it
leads to series that converge absolutely, where the sum is over
$\Gm_\infty\backslash \SL_2(\ZZ)$, and $\Gm_\infty$ is generated by
$\pm\begin{pmatrix}1&1\\0&1\end{pmatrix}$. The resulting series may be
identically zero. Hecke showed that the non-zero series of this type
span the spaces of holomorphic cusp forms.

The same idea can be applied to the function
\[ h_s(x+iy) = e^{2\pi i n x} \, y^{1/2} \,I_{s-1/2}(2\pi|n|y)\]
on the 
upper half-plane, with $I_{s-1/2}$ an exponentially increasing modified
Bessel function. The function $h_s$ is an eigenfunction of the
hyperbolic Laplace operator. For $\re s>1$, the sum
$\sum_{\gm\in \Gm_\infty \backslash \SL_2(\ZZ)} h_s(\gm z)$ converges
absolutely and defines a real-analytic Maass form with exponential
growth. The resulting family of functions, called real-analytic
Poincar\'e series, have a meromorphic continuation to the complex plane
and the spaces of Maass cusp forms are spanned by residues of these
families. (See Neunh\"offer \cite{Neu73}, Niebur \cite{Nie73}.)
  This construction can be translated in a standard way yielding
  functions on the group $G=\SL(2,\RR)$ that satisfy a transformation
  rule under the discrete subgroup $\Gm$.

Miatello and Wallach \cite{MW89} extended the methods to automorphic
forms on arbitrary Lie groups $G$ of real rank one. For a discrete
cofinite subgroup $\Gm$ of $G$, they used a holomorphic family of germs
$h$ of Casimir eigenfunctions on the Lie group $G$ such that
$h(n g) \= \ch(n) \, h(g)$, where $\ch$ is a character of a maximal
unipotent subgroup $N$ of $G$ that is trivial on $N\cap \Gm$. Under
suitable conditions on the parameters, this leads to Poincar\'e series
on $G$ with exponential growth. It is shown that there is a meromorphic
continuation and a functional equation and the residues at
singularities are square integrable automorphic forms. In many cases
the corresponding spaces of cusp forms are spanned by residues of these
Poincar\'e series (see [Theorem 3.2] and Corollary 3.4 in \cite{MW89}).
On the other hand, as it has been observed by several authors (for
instance Gelbart-Piatetski-Shapiro (\cite{GPS84}))
there exist many non-generic cusp forms, i.e. those having all their
$\ch$-Fourier coefficients equal to zero, that cannot be detected by
the series in \cite{MW89}. These cusp forms correspond to
representations of $G$ that do not have Whittaker models (see
\cite{KO95},\cite{VZ84} for algebraic criteria satisfied by such
representations).

The goal of the present paper is to refine the results in \cite{MW89} in
the case of the group $\SU(2,1)$, by incorporating Poincar\'e series
attached to irreducible, infinite dimensional representations of $N$.
For this purpose, we were led to study the so called Fourier term
modules attached to the Stone-Von Neumann representation of $N$, a task
we carry out mainly in \cite{BMSU21}.

Here we define, initially for $\re \nu >2$ in a way similar to
\cite{MW89}, holomorphic families $\M_\Nfu (\xi,\nu)$ of exponentially
increasing automorphic forms attached to an arbitrary irreducible,
unitary representation of $N$. We carry out its meromorphic
continuation to $\CC$, proving that the poles are simple (except
possibly at $\nu=0$) and they occur at spectral parameters. We show
that resolving these poles we obtain square integrable automorphic
forms. In Section~\ref{sect-stcr} we state our main results. We prove
that what we call the \emph{main parts} of these Poincar\'e series span
the spaces of cusp forms (see Theorem~\ref{thm-complgen}). The main
part is an automorphic form that in most cases is either a residue at a
simple pole, or just a value at a point $s$ of holomorphy of the
series.

  We consider the different cases of $(\glie ,K)$-modules in
  Propositions 6.2 through 6.4. In particular we study the cases of
holomorphic and antiholomorphic discrete series representations and the
so called thin representations that come up in the computation of the
cohomology of the associated locally symmetric spaces. The resulting
forms occur at integral parameters, mostly in the region of absolute
convergence of the Poincar\'e series, and we show that their values or
residues span the spaces of holomorphic cusp forms (see
Proposition~\ref{prop-hads}). A comparison with results in the
classical case is made in \S~\ref{comparison} and in
Appendix~\ref{PSSl2R}.

Some of our results are connected with results by Ishikawa \cite{Ish99},
\cite{Ish00} (see \cite[\S16.2]{BMSU21}). Similar methods as in this
paper could be applied in the case of the more general group
$\SU(n,1)$, but we have preferred to stick to the case $n=2$ to be able
to give explicit results. We use computer aid to carry out some of
calculations, specially in Proposition 7.1. We first work with discrete
subgroups satisfying the conditions in \cite{BMSU21}, \S5.1, and we
further restrict the discussion to a class of discrete subgroups of
$\SU(2,1)$ having only one cusp to avoid notational complications that
might obscure the methods. See~\S\ref{discretesubgroups}.

\section{Preliminaries. The group \texorpdfstring{$\SU(2,1)$}{SU21}}

  We use the following realization of $\SU(2,1)$
\badl{SU} G&\;:=\; \SU(2,1) \= \bigl\{ g\in \SL_3(\CC)\;:\; \bar g^t
I_{2,1} g \= I_{2,1}\bigr\}\,,\eadl
with $I_{2,1} = \textrm{diag}\{1,1,-1\}\}$.

We fix subgroups $K$, $A$, and $N$ in an Iwasawa decomposition $G=NAK$.
Here, the subgroup $K$ consists of the matrices
\be\label{Kdef} K\= \biggl\{ \pmat {u&0\\0&\dt }\;:\; u\in \U(2) \,, \;
\dt\,\det u\=1\biggr\}\,, \ee
and is a maximal compact subgroup of $G$. There is an isomorphism
$\bigl(U(1) \times \SU(2)\bigr)/\{1,-1\}   \;\cong\; K $ via
$(\eta,v) \mapsto \pmat{\eta v&0\\0&\eta^{-2}}$. The group
\badl{Adef} A&\= \bigl\{ \am(y)\;:\; y>0\bigr\}\,, \textrm{ with }
\am(y)&=\pmat{(y+y^{-1})/2&0 &(y-y^{-1})/2\\
0&1&0\\
(y-y^{-1})/2&0&(y+y^{-1})/2} \eadl
is the identity component of a split torus.

The unipotent subgroup
$N = \bigl\{ \nm(b,r)\;:\; b\in \CC,\, r\in \RR\bigr\}\,,$ with
$$\nm(b;r)\=\pmat{ 1+i r-|b|^2/2& b & -ir+|b|^2/2\\
-\bar b & 1 & \bar b\\
ir-|b|^2/2&b& 1-ir +|b|^2/2} \,,$$
is isomorphic to the Heisenberg group on $\RR^2$. The multiplication is
given by
\be \nm(b_1,r_1) \,
\nm(b_2,r_2)\=\nm\bigl(b_1+b_2,r_1+r_2+\im(\overline{b_1}b_2)\bigr)\,,
\ee
and its center is $ Z(N) \= \bigl\{ \nm(0,r)\;:\; r\in \RR\bigr\}\,.$

The group $K$ contains the subgroup
$M\= \{ \mm(\eta) \;:\; |\eta|=1\bigr\}\,,$ with
\be\label{M-def}\mm(\eta) \=\textrm{diag} \{ \eta, \eta^{-2},
\eta\}\,.\ee
The group $M$ commutes with~$A$, and $AM$ normalizes $N$. Namely,
\be \label{normN} \am(y) \nm(b,r) \am(y)^{-1}\= \nm\bigl( y
b,y^2r)\,,\qquad \mm(\eta)
\nm(b,r)
\mm(\eta)^{-1} \= \nm(\eta^3 b,r)\,.\ee

Furthermore, $G$ has a \emph{Bruhat decomposition}
\be\label{Brud} G = NAM \sqcup N\wm AMN\,,\ee
with $ \wm\= \textrm{diag}\{-1,-1,1\}$ and the set
$\mathcal C=P\wm P=N\wm AMN$ is called the big cell.

In Appendix \ref{app-Lie} we discuss the Lie algebras of~$G$, $N$, $A$
and~$K$.

\subsection {Upper half-plane model of the symmetric space} The space
\be \sym \= \bigl\{(z,u) \in \CC^2\;:\; \im z >
|u|^2\bigr\}\ee
is a model of the symmetric space $G/K$. As a variety, $\sym$ is
diffeomorphic to $NA$ with a left action
\be n(b,r)a(t)\cdot (z,u) = (t^2z + 2tbu + 2r + i|b|^2, tu + i b)\,. \ee
We do not need to describe the action of $K$ explicitly. The point
$(i,0)$ is fixed by $K$. The space $\sym$ inherits the complex
structure of $\CC^2$, which is preserved by the action of G. The
boundary of $\sym$ consists of the points $(z,u)$ with $\im z =|u|^2$,
plus a point at~$\infty$ which is fixed by the subgroup $NAM$ of $G$.
Thus, we have
\be \partial \sym = N\cdot(
0,0) \cup \{\infty\} . \ee

\subsection {Discrete subgroups}\label{discretesubgroups}
We will work with discrete subgroups $\Gm$ of $G$ of finite covolume
with one orbit of cusps, imposing one additional condition that
makes  the formulation of our results as simple as possible.

We can conjugate $\Gm$ by an element $g\in G$ such that the unipotent
subgroup $N$ intersects $\Gm$ in a lattice $\Gm_N $. We impose the
additional condition that $g$ can be chosen in such a way that $\Gm_N$
is a standard lattice $\Ld_\s$, generated by $\nm(1,0)$, $\nm(i,0)$ and
$\nm(0,2/\s)$, with $\s\in \ZZ_{\geq 1}$. In this way we can apply
results from \cite{BMSU21}.\smallskip

In Appendix C in \cite{BMSU21} we discuss two discrete subgroups
satisfying these conditions. The first one is the group
$\Gm_{\!0} = G \cap \SL_2\bigl( \ZZ[i]\bigr)$, which satisfies
$\Gm_{\!0} \cap N = g_\infty \Ld_4 g_\infty^{-1}$ with
$g_\infty = \am(\sqrt 2) \mm(e^{\pi i/4})$.  So
$g_\infty^{-1} \Gm_{\!0} g_\infty$ is a discrete subgroup satisfying
all our conditions.

Francsics and Lax \cite{FL5} study a discrete subgroup in another
realization of $\SU(2,1)$ of the form
\newcommand\fl{{\mathrm{FL}}}$G_\fl=U_\fl^{-1}GU_\fl$ with
\be U_\fl \= \begin{pmatrix}\frac{-i}{\sqrt 2}& 0 & \frac{1}{\sqrt 2}\\
0&1&0\\
\frac{-i}{\sqrt 2} &0& \frac{-1}{\sqrt 2}\end{pmatrix}\,.\ee
They give an explicit fundamental domain in~$G/K$ for
$G_\fl \cap \SL_3\bigl( \ZZ[i]\bigr)$, from which it is clear
that all cusps for this group are equivalent.  Conjugating
$G_\fl$ to our realization~$G$, we get the discrete subgroup
$\Gm_{\!\fl} = G \cap U_\fl \SL_3\bigl( \ZZ[i]\bigr) U_\fl^{-1}$. For
this group one has
$\Gm_{\!\fl} \cap N = \mm(e^{\pi i/4}) \Ld_4 \mm(e^{-\pi i/4})$.
So $\mm(e^{-\pi i/4}) \Gm_{\!\fl} \mm(e^{\pi i/4})$ satisfies
the conditions.
\medskip

The requirements for $\Gm$ simplify 
the book-keeping
of Poincar\'e series and Fourier expansions. These choices are not
essential for our results, but will simplify several expressions.

\subsection{Irreducible representations of \texorpdfstring{$K$}{K}} The
irreducible representations of $K$ are obtained from pairs of
irreducible representations of $U(1)$ and $\SU(2)$ having the same
parity. See \cite[(1), (2) on p.~185]{Wal76}. The irreducible
representations $\tau_p$ of $\SU(2)$ are classified by their dimension
$p+1$ with $p\in \ZZ_{\geq0}$. In this way we arrive at the
representations $\tau_p^h$ of $K$, with $h\equiv p\bmod 2$, $p\geq 0$
given by
\be \label{tauph} \tau^h_p \pmat{\z^{1/2}u&0\\0&\z^{-1}} \=
(\z^{1/2})^{-h}\, \tau_p(u)\,. \ee

The representation $\tau^h_p$ has $p+1$ orthogonal realizations in
$L^2(K)$, each with a basis of polynomial functions $\Kph h p r q$ on
$K$. See the description in Appendix~\ref{app-K}.

\subsection{Decomposition of
\texorpdfstring{$L^2(\Ld_\s\backslash N)$}{L2Ld4N}}\label{sect-decpmLdN}
We consider the quotient $\Ld_\s\backslash N$ for an arbitrary
$\s\in  \ZZ_{\geq 1}$.

We use $dn=dx\,dy\,dr$ for $n=\nm(x+iy,r)$ as the Haar measure on $N$,
so $\Ld_\s\backslash N$ has volume~$\frac2\s$.

\subsubsection{ Irreducible representations} The center of $N$
acts by a central character for any irreducible representation of $N$.
If this central character is trivial, the representation corresponds to
an irreducible representation of $N/Z(N)\cong \RR^2$, hence to a
character of $N$. Characters of $N$ occurring in
$L^2(\Ld_\s\backslash N)$ are trivial on $\Ld_\s$, hence they are of
the form
\be\label{chbt} \ch_\bt: \nm(b,r) \mapsto e^{2\pi i \im (\bar \bt
b)}\,,\ee
with $\bt\in \ZZ[i]$. We note that characters of $N$ are trivial on the
center $\bigl\{ \nm(0,r) \;:\; r\in \RR\bigr\}$.

If the center acts by $\nm(0,r) \mapsto e^{ir\ld}$, then for each fixed
$\ld \in \RR_{\neq 0}$, there is only one isomorphism class of
irreducible representations of $N$ with this central character. It is
unitary, and is called the Stone-von Neumann representation.

It can be realized on functions in $L^2(\RR)$ with a central character
of the form $\nm(0,r) \mapsto e^{2\pi i r \ell}$, with
$\ell \in \RR_{\neq 0}$. The action of $\nm(x+iy,r) \in N$ is given by
\be\label{pild}
\pi_{2\pi\ell}\bigl(\nm(x+iy,r)\bigr)\ph (\xi) \= e^{2\pi i \ell (
r-2\xi x
- xy )}\ph(\xi+y)\,. \ee
For $\ell \in \RR_{\neq 0}$, this defines a unitary representation on
$L^2(\RR)$ for the standard inner product that leaves the Schwartz
space invariant. It is called the Schr\"odinger representation.

\subsubsection{Theta-functions}\label{sect_thfonb}
The Stone-Von Neumann representation can also be realized in
$L^2(\Ld_\s\backslash N)$ by means of theta functions.

The central character equals $e^{2\pi i r \ell}$ on $\nm(0,r)$, and is
trivial on $\Ld_\s\cap Z(N)$, hence $\ell \in \frac\s 2 \ZZ$, with
$\s \in \ZZ_{\geq 1}$ as usual.

Thus, for each $c\in \ZZ\bmod 2|\ell|$, we get a realization given by
intertwining operators $\ph \mapsto \Th_{\ell,c}(\ph)$ from the
Schr\"odinger representation of $N$ in the space  $\Schw (\RR)$ of Schwartz functions
on $\RR$, into the module $C^\infty\bigl(\Ld_\s\backslash N\bigr)$ with
the action of $N$ by right translation. The theta functions are given
by
\be\label{Thla} \Th_{\ell,c}(\ph)(\nm(x+iy,r)) \= \sum_{k\in \ZZ}
\ph\Bigl( \frac c{2\ell}+k+y\Bigr)
e^{2\pi i \ell r }\, e^{-2\pi i \ell x(c/\ell+2k+y)}\,.\ee
In \S\ref{app-Theta} we indicate how one can arrive at this intertwining
operator, and how it leads to $2|\ell|$ orthogonal realizations of the
Stone-von Neumann representation in $L^2(\Ld_\s\backslash N)$.\medskip

We shall use a convenient orthogonal basis of $L^2(\RR)$. For
$k\in \ZZ_{\geq 0} $ the Hermite polynomial $H_k$ is determined by the
identity
\be e^{-\xi^2}\, H_k(\xi) \= (-1)^k \,\partial_\xi^k e^{-\xi^2}\,. \ee
The normalized Hermite functions are given by
\be\label{hlk} h_{\ell,k}(\xi)\= 4|\ell|^{1/4} 2^{-k/2}\, (k!)^{-1/2} \,
H_k\bigl(\sqrt{4\pi|\ell|}\xi\bigr)
e^{-2\pi|\ell|\xi^2}\,.\ee
In this way, for any $\ell \in \RR$, $\ell \ne 0$, we obtain an
orthogonal basis $\bigl\{ h_{\ell,k}\,:\, k\in \ZZ_{\geq 0}\bigr\}$ of
$L^2(\RR)$.

Going over to theta-functions, for each
$\ell \in \frac \s2\ZZ_{\neq 0}$, we get an orthogonal system
\be\label{Ldbas} \Bigl\{ \sqrt {\s/2} \Th_{\ell,c}(h_{\ell,k})\;:\;
0\leq c \leq 2|\ell|-1,\; k\geq 0 \Bigr\} \ee
of the subspace $L^2(\Ld_\s\backslash N)_\ell$ determined by the central
character $\nm(0,r)\mapsto e^{2\pi i \ell r}$. On the other hand, an
orthogonal basis of $L^2(\Ld_\s\backslash N)_0$ is given by
$\Bigl \{ \sqrt {\s/2} \; \ch_\bt \;:\; \ch_\bt \in \Z[i]\Bigr\}$. Thus
\bad \label{basisL2Nsigma} \B &\= \Bigl\{ \sqrt {\s/2} \; \ch_\bt,\, \bt
\in \Z[i]\}
\\ &\quad\hbox{} \cup \Bigl\{ \sqrt {\s/2} \;
\Th_{\ell,c}(h_{\ell,k})\;:\; \ell \in \frac \s2\ZZ_{\neq 0}\;,
c=0,1,\ldots,2|\ell|-1\,,\;k \in \Z _\geq 0 \Bigr\}\,
\ead
is an orthogonal basis of $L^2(\Ld_\s\backslash N)$.

\subsection{Haar measures}\label{Haarmeas}

A Haar measure of $dg$ on $G$ can be chosen of the form
$dg = a^{-2\rho}\,dn \, da\, dk$ with respect to the Iwasawa
decomposition $G=NAK$, where $2 \rho $ is the sum of the restricted
roots of $\nlie$, the Lie algebra of $N$. Thus $2 \rho = 4 \ld$, where
$\ld$ is the simple restricted root. We take the coordinate
$t \leftrightarrow \am(t)$ on $A$, and the measure $da = \frac {dt}t$.
The root $\ld$ satisfies $a^\ld = t$, thus
$dg = t^{-5}\,dn \, dt\, dk$.

Furthermore, we normalize the Haar measure on $K$ so that
$\int_K dk = 1$.  
In \S\ref{sect-decpmLdN} we have already fixed
the Haar measure $dn$ on~$N$.

\section{Fourier expansions}\label{sect-Ftm} In this section we give an
overview of the Fourier expansion of $\Ld_\s$-invariant $K$-finite
functions on $G=\SU(2,1)$.

\subsection{Fourier term operators} The basis $\B$
in~\eqref{basisL2Nsigma} can be used to give a generalized Fourier
expansion of elements $f\in C^\infty(\Ld_\s\backslash G)_K$. We
formulate it for a general $\s\in \ZZ_{\geq 1}$, though later we will
use it only for $\s=4$.
\be f(n g) \= \sum_{b\in \B} b(n) \, \int_{\Ld_\s\backslash N}
\overline{b(n_1)} \, f(n_1 g)\, dn_1\ee
with $g\in G$ and $n \in N$. This Fourier expansion converges in the
$L^2$-sense on $\Ld_\s\backslash N$ for $f$ and for all its derivatives
$uf$ with $u\in U(\glie)$.

For the terms with $b = \sqrt{\s/2} \,\ch_\bt$, $\bt\in \ZZ[i]$, we
define the Fourier term operator
\be\label{Fbtdef}
\Four_\bt f(g) \= \frac \s 2 \int_{\Ld_\s\backslash N} \ch_\bt(n_1)^{-1}
\, f(n_1 g)\, dn_1\,.\ee

The functions $\Four_\bt f$ are $\ch_\bt$-equivariant under left
translations by elements of $N$. The \emph{abelian part} of the Fourier
expansion of $f(g)$ is the sum
\be \sum_{\bt\in \ZZ[i]} \Four_\bt f(g)\,.\ee
The operator $\Four_\bt$ is an intertwining operator for the action of
$\glie$ by right differentiation:
$\Four_\bt(\XX f) = \XX\bigl( \Four_\bt f\bigr)$ for $\XX\in \glie$.

The action of $\glie$ does not preserve the individual terms
corresponding to the basis elements
$\sqrt{\s/2} \, \Th_{\ell,c}(h_{\ell,m})$. We group these basis
elements together so that the action of $\glie$ preserves the space of
linear combinations of terms in each group. This leads to operators
$\Four_{\ell,c,d}$ with $\ell \in \frac \s 2 \ZZ_{\neq 0}$,
$c\in \ZZ\bmod 2\ell \ZZ$, and $d\in 1+2\ZZ$. For $n\in N$, $a\in A$,
$k\in K$:
\badl{Flcd} \Four_{\ell,c,d} &f(nak)
\= \sum_{m,h,p,r,q} \Th_{\ell,c}(h_{\ell,m})(n) \, \frac{\Kph h p r
q(k)}{\bigl\|\Kph h p r q \bigr\|^2_K}\\
&\quad\hbox{}\cdot
\frac \s 2 \int_{n_1\in \Ld_\s\backslash N} \int_{k_1\in K}
\overline{\Th_{\ell,c}(h_{\ell,m}(n') }\, f(n_1 a k_1)
\, \overline{\Kph hprq(k_1)}\, dk_1\, dn_1\,,
\eadl
where the sum runs over integers satisfying $m \geq 0$,
$h\equiv p \equiv r \equiv q\bmod 2$, $|r|\leq p$, $|q|\leq p$, and
\be\label{mtpleq} (6m+3) \sign(\ell) + h - 3r \= d\,. \ee
When dealing with these Fourier terms we often abbreviate
$(\ell,c,d) = \n$. We call the sum $\sum_\n \Four_\n f$ is the
\emph{non-abelian part} of the Fourier expansion.

See \S\ref{app-Fto} for a further discussion.

\subsection{Fourier term modules}\label{sect-Ftm0}
As before, we define a \emph{Fourier term order} $\Nfu$ as a realization
of an irreducible representation of $N$ in
$C^\infty( \Ld_\s\backslash N)$; $\Nfu$ is the index in the Fourier
expansion of elements of $C^\infty(\Ld_\s\backslash G)_K$; so
$\Four_\Nfu=\Four_\bt$ for $\Nfu=\Nfu_\bt$ and
$\Four_\Nfu = \Four_{\ell,c,d}$ for $\Nfu=\Nfu_{\ell,c,d}=\Nfu_\n$.
Since the Fourier term operators are intertwining operators of
$(\glie,K)$-modules, the spaces
$\Ffu_\Nfu = \Four_\Nfu C^\infty(\Ld_\s\backslash G)_K$ are
$(\glie,K)$-modules. We call them \emph{ Fourier term modules}.

In the modules $\Ffu_\Nfu$ we have the submodules $\Ffu_\Nfu^\ps$ in
which the center $ZU(\glie)$ of the universal enveloping algebra of
$\glie$ acts according to the character $\ps$ of $ZU(\glie)$. These
characters $\ps$ are determined by their values on the generators $C$
and $\Dt_3$. So $f\in \Ffu_\Nfu$ is in $\Ffu^\ps_\Nfu$ if
$C f = \ld_2 \, f$ and $\Dt_3 f = \ld_3 f$ for fixed
$\ld_2, \ld_3\in \CC$. Later on we deal mainly with the Casimir element
$C$, and abbreviate $\ld_2$ as~$\ld$.

The characters of $ZU(\glie)$ are parametrized by the spectral
parameters $(\xi,\nu)$ where $\xi$ is a character of $M$ and
$\nu\in \CC$ corresponds to the character $\am(t) \mapsto t^{\nu}$
of~$A$. A character $\xi$ of $M$ has the form
$\mm(\z) \mapsto \z^{j_\xi}$ with $j_\xi\in \ZZ$. Furthermore, if
$(j_1,\nu_1)$ and $(j_2,\nu_2) $ in $\ZZ\times\CC$ are in the same
orbit of the Weyl group, then they determine the same character of
$ZU(\glie)$.

The Weyl group of $\SU(2,1)$ is isomorphic to the symmetric group on
three elements. Its action on $\CC\times\CC$ is generated by the
transformation of $\CC^2$ with matrices
\be \Ws 1 \= \begin{pmatrix}-\frac 12& \frac 32 \\ \frac12 & \frac12
\end{pmatrix}\quad\text{ and }\quad \Ws2\=\begin{pmatrix}-\frac 12&
-\frac 32 \\ -\frac12 & \frac12
\end{pmatrix}\,. \ee
The Weyl group does not preserve $\ZZ\times\CC\subset \CC^2$. By
$\Wo(\ps)
$ we indicate the intersection of the Weyl group orbit corresponding to
$\ps$ with $\ZZ \times \CC$, and by $\Wo(\ps)^+ $ the subset
$ \bigl\{ (j,\nu) \in \Wo(\ps)\;:\; \re\nu\geq 0 \bigr\}$. The set of
characters $\ps$ of $ZU(\glie)$ can be divided in the following way.
\begin{itemize}
\item \emph{Generic parametrization: } All $(j,\nu) \in \Wo(\ps)$
satisfy $\nu  \notin  j+2\ZZ$, or $\Wo(\ps)=\bigl\{(0,0)\bigr\}$.
\item \emph{Integral parametrization: } All $(j,\nu) \in \Wo(\ps)$
satisfy $\nu \in j+2\ZZ$ and $(j,\nu) \neq (0,0)$.
\end{itemize}
\medskip

The study of the structure of the modules $\Ffu^\ps_\Nfu$ is the main
object of \cite{BMSU21}. Here we summarize the main results, with more
details in the appendix. The main difference in the structure is
between the case when $\Nfu=\Nfu_0$ and the other Fourier term orders.

Under generic parametrization the modules $\Ffu_\Nfu^\ps$ for are
isomorphic for all choices of $\Nfu$. The main difference in the
structure occurs under integral parametrization, between $\Nfu=\Nfu_0$
and the other $\Nfu$. The module $\Ffu_{\Nfu_0}^\ps$ still is the sum
of a number of principal series modules, whereas for $\Nfu\neq \Nfu_0$
the structure is more complicated.

\subsubsection{\texorpdfstring{$N$}{N}-trivial Fourier term modules} The
$N$-trivial Fourier term module $\Ffu^\ps_0$ contain the principal
series modules $H^{\xi,\nu}_K$ for which $(j_\xi,\nu) \in \Wo(\ps)$.
They consist of the elements of $C^\infty(N\backslash G)_K$ that
transform on the left according to the character of $AM$,
$\am(t)\mm(\z) \mapsto \z^{j_\xi} t^{2+\nu}$. For different spectral
parameters $(\xi,\nu)$ the corresponding principal series are disjoint.
If $\Wo(\ps)$ does not contain elements of the form $(\xi,0)$, then
\be\label{Ffu0decomp} \Ffu^\ps_0 \= \bigoplus _{(j_\xi,\nu) \in
\wo(\ps)} H^{\xi,\nu}_K\,.\ee

\subsubsection{Submodules determined by boundary behavior} In the
modules $\Ffu^\ps_\Nfu$ with $\Nfu\neq \Nfu_0$ there are submodules
having the following characterization:
\begin{enumerate}
\item $\Wfu^\ps_\Nfu$ consists of the elements $f\in \Ffu^\ps_\Nfu$ with
exponential decay
\be f\bigl( n \am(t) k \bigr) \= \oh(e^{-\e t}) \qquad\text{ as
}t\uparrow \infty\,,\ee
uniform in $n$ and $k$ for some $\e>0$.
(In the non-abelian case $\Nfu=\Nfu_{\ell,c,d}$ we can replace
$e^{-\e t}$ by $e^{-\e t^2}$.)

\item $\Mfu^\ps_\Nfu$ consists of the linear combinations of elements
$f\in \Ffu^\ps_\Nfu$ for which, for some $(j_\xi,\nu) \in \Wo(\ps)^ +$,
the function
\be \label{nureg} t \mapsto f\bigl( n \am(t) k\bigr)
\, t^{-2-\nu} \ee
extends for each $n$ and $k$ to a holomorphic function of $t\in \CC$.
\end{enumerate}
These spaces are $(\glie,K)$-submodules by \cite[Proposition
10.6]{BMSU21}. In the particular case of $\Nfu=\Nfu_0$ we obtain that
$\Wfu_0^\ps=\{0\}$, and that $\Mfu^\ps_0$ contains all $H^{\xi,\nu}_k$
with $(j_\xi,\nu) \in \Wo(\ps)$ and $\re \nu \geq 0$.

The main theorems in \cite[\S2]{BMSU21} give a further description of
the modules $\Ffu^\ps_\Nfu$ with $\Nfu\neq \Nfu_0$. We summarize the
results.

\subsubsection{Abelian Fourier term modules}Let $\bt\in \ZZ[i]$,
$\bt\neq0$. Then there is a direct sum decomposition
\be \Ffu^\ps_\bt \= \Wfu^\ps_\bt \oplus \Mfu^\ps_\bt\,.\ee

For each $(j_\xi,\nu) \in \Wo(\ps)^+$ there are submodules
$\Wfu^{\xi,\nu}_\bt\subset\Wfu^\ps_\bt$ and
$\Mfu^{\xi,\nu}_\bt \subset\Mfu^\ps_\bt$ of the form
\badl{MWxinubt} \Wfu^{\xi,\nu}_\bt &\= U(\glie)\,
\om^{0,0}(j_\xi,\nu)\,,\qquad \Mfu^{\xi,\nu}_\bt \= U(\glie)\,
\mu^{0,0}(j_\xi,\nu)\,, \textrm{ where }\\
\om^{0,0}_\bt&(j_\xi,\nu)\bigl( n \am(t) k \bigr)
\= \ch_\bt(n) \, t^2 K_{\nu}(2\pi|\bt|t)\, \Kph{2j_\xi}000(k)\,\\
\mu^{0,0}_\bt&(j_\xi,\nu)\bigl( n \am(t) k \bigr)
\= \ch_\bt(n) \, t^2 I_{\nu}(2\pi|\bt|t)\, \Kph{2j_\xi}000(k)\,
\eadl
with the modified Bessel functions $I_\nu$ (exponentially increasing)
and $K_\nu$ (exponentially decreasing).

Under generic parametrization the modules $\Wfu^{\xi,\nu}_\bt$ and
$\Mfu^{\xi,\nu}_\bt$ are isomorphic to the principal series module
$H^{\xi,\nu}_K$, and
\be \Wfu_\bt^\ps \= \bigoplus_{(j_\xi,\nu) \in \Wo(\ps)^+}
\Wfu_\bt^{\xi,\nu}
\qquad \Mfu_\bt^\ps \= \bigoplus_{(j_\xi,\nu) \in \Wo(\ps)^+}
\Mfu_\bt^{\xi,\nu} \,.\ee

Under integral parametrization the modules $\Wfu^{\xi,\nu}_\bt$ and
$\Mfu^{\xi,\nu}_\bt$ are reducible, isomorphic to each other, but for
most $(j_\xi,\nu)$ not isomorphic to $H^{\xi,\nu}_K$. We now have
\be \Wfu_\bt^\ps \= \sum_{(j_\xi,\nu) \in \Wo(\ps)^+} \Wfu_\bt^{\xi,\nu}
\qquad \Mfu_\bt^\ps \= \sum_{(j_\xi,\nu) \in \Wo(\ps)^+}
\Mfu_\bt^{\xi,\nu} \,.\ee
If a $K$-type $\tau^h_p$ occurs in modules $\Wfu^{\xi,\nu}_\bt$ and
$\Wfu^{\xi',\nu'}_\bt$ then the (one-dimensional) subspaces
$\Wfu^{\xi,\nu}_{\bt;h,p}$ and $\Wfu^{\xi',\nu'}_{\bt;h,p}$ with this
$K$-type coincide, and similarly for $\Mfu^{\xi,\nu}_\bt$.

\subsubsection{Non-abelian Fourier term modules} In the non-abelian case
several complications occur. We consider $\n=(\ell,c,d)$ with
$\ell \in \frac \s2 \ZZ_{\neq 0}$, $c\bmod 2\ell$, and $d\in 1+2\ZZ$.

We need a further restriction on the elements of Weyl orbits. Set
\be \Wo(\ps)^+_\n \= \bigl\{ (j,\nu) \in \Wo(\ps)\;:\; \re\nu\geq 0,\;
\sign(\ell)(2j-d)+ 3 \leq 0 \bigr\}\,.\ee
In case the set $\Wo(\ps)^+_\n$ is empty, then $\Ffu^\ps_\n = \{0\}$.
For each $(j_\xi,\nu)_\n^+$ there are submodules
$\Wfu^{\xi,\nu}_\n \subset \Wfu^\ps_\n$ and
$\Mfu^{\xi,\nu}_\n \subset\Mfu^\ps_\n$. These modules contain the
elements
\badl{om-mu-nab} \om^{0,0}_\n(j_\xi,\nu) \bigl( n\am(t)k\bigr) &\=
\Th_{\ell,c}(h_{\ell,m_0}(n)) \, t W_{\k,\nu/2}(2\pi|\ell|t^2)\,
\Kph{2j_\xi}000(k)
&&\text{ for } \Wfu_\n^{\xi,\nu}\,,\\
\mu^{0,0}_\n(j_\xi,\nu) \bigl( n\am(t)k\bigr) &\=
\Th_{\ell,c}(h_{\ell,m_0}(n)) \, t M_{\k,\nu/2}(2\pi|\ell|t^2)\,
\Kph{2j_\xi}000(k)
&&\text{ for } \Mfu_\n^{\xi,\nu}\,,
\eadl
with
\be \label{m0kap}m_0 \= \frac{\sign(\ell)}6(d-2j_\xi)-\frac12\,,\qquad
\k \= - m_0
-\frac12(j_\xi\sign(\ell)+1)\,.\ee
We have that $m_0 \in \ZZ_{\geq 0}$, and $\k\in \frac12\ZZ$. Here we use
the Whittaker functions $W_{\k,s}$ (exponentially decreasing) and
$M_{\k,s}$ (exponentially increasing for most combinations of $\k$ and
$s$).

Under generic parametrization, the modules $\Wfu^{\xi,\nu}_\n $ and
$\Mfu^{\xi,\nu}_\n$ are generated respectively by
$\om^{0,0}_\n(j_\xi,\nu)$ and $\mu_\n^{0,0}(j_\xi,\nu)$. Both modules
are irreducible and isomorphic to the principal series module
$H^{\xi,\nu}_K$. Moreover,
\begin{align}
\label{na-WMd} \Wfu^\ps_\n &\= \bigoplus_{(j_\xi,\nu) \in \Wo(\ps)_\n^+}
\Wfu^{\xi,\nu}\,,
\qquad\Mfu^\ps_\n = \bigoplus_{(j_\xi,\nu) \in \Wo(\ps)_\n^+}
\Mfu^{\xi,\nu}\,,\\
\label{nads}
\Ffu^\ps_\n &\= \Wfu^\ps_\n \oplus \Mfu^\ps_\n\,.
\end{align}

Under integral parametrization both \eqref{nads} and \eqref{na-WMd}
break down. For some characters $\ps$ the modules $\Wfu^\ps_\n$ and
$\Mfu^\ps_\n$ have a non-empty intersection and do not span
$\Ffu^\ps_\n$. The modules $\Wfu^{\xi,\nu}_\n$ and $\Mfu^{\xi,\nu}_\n$
are reducible, and in most cases non-isomorphic. Also, the elements
in~\eqref{na-WMd} need not generate the modules.

There are several properties that stay true under integral
parametrization. In \eqref{na-WMd} we have to replace $\bigoplus$ by
$\sum$. The $K$-type $\tau^h_p$ occurs in $\Wfu^{\xi,\nu}_\n$ and in
$\Mfu^{\xi,\nu}$ with multiplicity one for $|h-2j_\xi|\leq p$. If
$\tau^h_p$ satisfies this condition for $(j_\xi,\nu)$ and
$(j_{\xi'},\nu')$ in $\Wo(\ps)^+_\n$, then
$\Wfu^{\xi,\nu}_{\n;h,p} = \Wfu^{\xi',\nu'}_{\n;h,p}$ and
$\Mfu^{\xi,\nu}_{\n;h,p} = \Mfu^{\xi',\nu'}_{\n;h,p}$.

In \S\ref{app-famFt} in the Appendix we give a further
discussion of elements of Fourier term modules.
Proposition~\ref{prop-MuOm} gives, for $\Nfu\neq \Nfu_0$, families
$\nu \mapsto \Om_{\Nfu;h,p,q}(\xi,\nu)$ and
$\nu \mapsto \Mu_{\Nfu;h,p,q}(\xi,\nu)$ that we will use in the
description of Fourier terms of automorphic forms and in the construction
of Poincar\'e series.

\section{Automorphic forms}\label{sect-afPs}
In this section we give the definitions of several modules of
automorphic forms, and discuss the corresponding Fourier expansions.
After that we turn to Poincar\'e series, which provide us with an
explicit construction.

\subsection{General facts}\label{sect-af}

In \S\ref{discretesubgroups} we have introduced the class of discrete
subgroups $\Gm$ that we will use in this paper.

\begin{defn}\label{def-aut}
An \emph{automorphic form} for $\Gm$ is a smooth function $f$ on
$G=\SU(2,1)$ with the following properties:
\begin{enumerate}
\item[a)] $f$ is $\Gm$-invariant on the left and $K$-finite on the
right: $f\in C^\infty(\Gm\backslash G)_K$\,,
\item[b)] $f$ is an eigenfunction of the center of the enveloping
algebra: $ u f= \ps(u)$ for all $u\in ZU(\glie)$, for some character
$\ps$ of $ZU(\glie)$\,,
\item[c)] $f$ has polynomial growth:
\be f\bigl( n \am(t) k \bigr) \= \oh(t^a)\qquad\text{ as
$t\uparrow\infty$ for some }a\in \RR\,,\ee
uniform in $n\in N$ and $k\in K$.
\end{enumerate}
\end{defn}

\begin{defn}\label{autps} We denote by $\A(\ps)$ the $(\glie,K)$-module
of automorphic forms with character $\ps$ of $ZU(\glie)$. By
$\A(\ps)_{h,p}$ we denote the subspace of $\A(\ps)$ of $K$-type
$\tau^h_p$, and by $\A(\ps)_{h,p,q}$ the subspace of weight $q$ in that
$K$-type.
\end{defn}
The spaces $\A(\ps)_{h,p}$ have finite dimension (see Theorem~1 of
\cite[p~8]{HCh68}, for instance).

\

As in \ref{Haarmeas}, we use the Haar measures $dn=dx\,dy\,dr$ on $N$,
for $n=\nm(x+\nobreak i y,r)$, $da(t)=t^{-1}dt$ on $A$, for $a=\am(t)$,
and the Haar measure $dk$ on $K$ that gives to $K$ volume~$1$. Then
$dg= t^{-5} \, dn\, dt\, dk$ for $g=n\am(t)k$ is a Haar measure on~$G$.

  In $L^2(\Gm\backslash G)$ we use the standard scalar product
\be\label{ip} (f_1,f_2)\= \int_{\Gm\backslash G}
f_1(g)\,\overline{f_2(g)}\, dg\,\ee
and right translation gives a unitary representation of $G$ on
$L^2(\Gm\backslash G)$. We will be concerned mainly with the subspace
$L^2(\Gm\backslash G)_K$ of $K$-finite vectors. The integral in
\eqref{ip} can be computed by letting $g=nak$ run over $K$ and
$na \in NA$ be such that $na\cdot (i,0)$ runs over a fundamental domain
of $\Gm\backslash \sym$. Then the norm in $L^2(\Gm\backslash G)_K$ can
be bounded by an integral over a Siegel set.

\begin{defn}The submodule $\Au{(2)}(\ps)$ of square integrable
automorphic forms consists of the functions $f\in \A(\ps)$ that
represent an element of $L^2(\Gm\backslash G)_K$.
\end{defn}

The space $\Au{(2)}(\ps) \subset L^2(\Gm\backslash G)$ decomposes as the
orthogonal direct sum of finitely many irreducible $(\glie,K)$-modules
$V_i$ with a unitary structure, in which $ZU(\glie)$ acts via the
character $\ps$. So there are finitely many isomorphism classes to
which the $V_i$ belong. Furthermore, the sets of $K$-types occurring in
different isomorphism classes are disjoint (see for instance
\cite[Proposition 12.2]{BMSU21}). That means that each space
$\Au{(2)}(\ps)_{h,p}$ generates a $(\glie,K)$-module that is a finite
sum of copies of one irreducible $(\glie,K)$-module with a unitary
structure.

   \rmrk{Fourier terms} There is an absolutely convergent Fourier
expansion
\be f (g) \= \sum_\Nfu \Four_\Nfu f (g)\,,\ee
where the Fourier term order $\Nfu$ runs over a system of realizations
of irreducible representations of $N$ in
$C^\infty(\Ld_\s\backslash N)$. So $\Nfu$ runs over the $\Nfu_\bt$ with
$\bt\in \ZZ[i]$ and the $\Nfu_\n$ with $\n=(\ell,c,d)$,
$\ell \in \frac \s2\ZZ_{\neq 0}$, $c\bmod 2\ell$, and $d\in 1+2\ZZ$.

  In the case of automorphic forms, the Fourier term operators are
intertwining operators of $(\glie,K)$-modules
$\Four_\Nfu : \A(\ps) \rightarrow \Ffu^\ps_\Nfu$. The property of
polynomial growth is preserved by the Fourier term operators, and hence
imposes restrictions on the Fourier terms. All elements in $\Ffu_0^\ps$
have polynomial growth. For $f\in \A(\ps)$ the $N$-trivial Fourier term
$\Four_0 f$ is, in general, a sum of contributions in the principal
series modules $H^{\xi,\nu}_K$ with $(\xi,\nu) \in \Wo(\ps)$, a set
with at most six elements. However, on each space $\A(\ps)_{h,p}$ for a
  fixed $K$-type, only two of these contributions can be non-zero. In
case $\Wo(\ps)$ contains elements of the form $(j_\xi,0)$ we may also
have logarithmic contributions.

For all $\Nfu\neq \Nfu_0$ the property of polynomial growth forces that
$\Four_\Nfu f \in \Wfu^\ps_\Nfu$. The multiplicity of each $K$-type in
$\Wfu^\ps_\Nfu$ is at most $1$. The operator
$\Four_\Nfu: \A(\ps)_{h,p,q} \rightarrow \Wfu_{\Nfu;h,p,q}^\ps$ lies in
a one-dimensional space for each $K$-type and weight.  (We use
the notation given in Definition~\ref{autps}.) For each
$f\in \A(\ps)_{h,p,q}$ there is a Fourier coefficient $C_\Nfu(f)$ such
that
\be \Four_\Nfu f \= c_\Nfu(f)\, \Om_{\Nfu;h,p,q}(\xi,\nu)\,,\ee
with $\ps$ determined by $(\xi,\nu)$. See Proposition~\eqref{prop-MuOm}
for the family $\Om_{\Nfu;h,p,q}$.
\smallskip

The integral in~\eqref{ip} can be restricted to a fundamental domain
contained in a Siegel set of the form
\be \Sie(t_0) \= \bigl\{ \nm(z,r) \am(t) k\;:\; |\re z|,\,|\im z|\leq
\tfrac12,\; |r|\leq \tfrac1\s,\, t\geq t_0,\, k\in K\bigr\} \ee
with $t_0>0$. For $f\in \Au{(2)}(\ps)$ the integral of $|f|^2$ over this
Siegel set is finite, a property inherited by the Fourier terms. This
implies that $\Four_0 f$ can have only components in principal series
modules $H^{\xi,\nu}_K$ with $ \re \nu<0$. The square integrability
imposes no further restriction on the other Fourier terms.\medskip

The form of the Fourier expansion can also be used to define other
modules of automorphic forms.

\begin{defn}\label{def-cu-meg}
The submodule $\Au 0(\ps)$ of \emph{ cusp forms } consists of the
functions $f\in \A(\ps)$ for which $\Four_0 f=0$.

An automorphic form is \emph{ rapidly decreasing } if for all $a\in \RR$
\be\label{qd} f\bigl( n\am(t)k\bigr) \= \oh(t^{-a})\quad \text{ as }
t\uparrow\infty\,,\ee
uniformly in $n$ and $k$.
\end{defn}

Cusp forms are rapidly decreasing
(see for instance \cite[\S4, Chap I]{HCh68}, or Corollary to Lemma~3.4
on \cite[p 46]{Lal76}).

All Poincar\'e series in this paper have exponential growth concentrated
  in a few Fourier terms. We define

\begin{defn}\label{def-A!}Let $\ps$ be a character of $ZU(\glie)$.\! We
say that a function $f\!\in\! C^\infty(\Gm\backslash G)_K$ that
satisfies $u f=\ps(u)\, f$ for all $u\in ZU(\glie)$, has \emph{
moderate exponential growth} if there exists a finite set $E$ of
Fourier term orders containing $\Nfu_0$ such that
\be f - \sum_{\Nfu\in E} \Four_\Nfu f \ee
is in $L^{2-\al}\bigl( \Ld_\s\backslash N A_T K)_K$ for some
$\al >0, T>0$, where $A_T = \bigl \{ \am(t) \;:\;t\geq T \bigr \}$. We
denote by $\Au!(\ps) \supset \A (\ps)$ the space of automorphic forms
of moderate exponential growth.
\end{defn}

The dimensions of the spaces $\Au0(\ps)_{h,p}$ are finite, by the
inclusion $\Au0(\ps)\subset\A (\ps)$. The choice of the set $E$ in
Definition~\ref{def-A!} depends on $f$. The spaces $\Au!(\ps)_{h,p}$
can have infinite dimensions. For instance for $\ps=\ps[\xi,\nu]$
described by generic parametrization and for one-dimensional $K$-types
$\tau^h_p=\tau^{2j_\xi}_0$, the construction of Poincar\'e series will
provide us with elements $f = \M_\bt(\ps,\nu) \in \Au!(\ps)_{2j_\xi,0}$
for which $\Four_\bt f $ has genuine exponential growth if $\bt$ lies
in one $\ZZ[i]^\ast$-orbit $\{ \bt_0, i \bt_0, - \bt_0,-i\bt_0\}$ in
$\ZZ[i]\setminus \{0\}$, and has exponential decay for all other
$\bt\in \Z[i]\setminus\{0\}$. See Proposition~\ref{prop-Ps}.

There is also the much larger module of functions $\A^{u} (\ps)$,
satisfying only conditions a) and~b) in Definition~\ref{def-cu-meg},
the space of automorphic forms with \emph{ unrestricted growth}.
However, for the purposes of this paper, the module $\Au!(\ps)$ is all
we need.

\subsection{Poincar\'e series}\label{sect-genPs}

Given a $K$-finite function $f$ on $\Gm_{\!N}\backslash G $, under some
standard growth conditions one can form the Poincar\'e series
\be\label{poindef} \poin f(g) \= \sum_{\gm \in \Gm_{\!N}\backslash \Gm}
f(\gm g)\,. \ee
For instance, if we take $f(n \am(t) k) = t^{2+\nu}\,\Kph h{p}{r}{q}(k)$
with $\re \nu>2$ and $h-3r=2 j_\xi$ then we have the Eisenstein series
\be \label{E}
E(\xi,\nu)_{ h,p,q} (g) \= \poin f(g) \,.\ee
The convergence is absolute and uniform on compact subsets for
$\re \nu>2$, and $E(\xi,\nu)_{h,p,q}$ depends holomorphically on~$\nu$
in this region. It is non-zero only if $\xi\bigl( \mm(i) \bigr)=1$ (see
\eqref{M-def}), which is equivalent to $j_\xi\in 4\ZZ$. The Eisenstein
series $E(\xi,\nu)_{h,p,q}$ are elements of $ \A_{h,p,q}(\ps)$ for
$\re\nu>2$, and have a meromorphic continuation to $\mathbb C$. We will
apply the same approach to elements of $\Mfu^{\xi,\nu}_\Nfu$ for
$\Nfu\neq \Nfu_0$.\smallskip

Lemma 2.1 in~\cite{MW89} applied to the present situation (with $\rho $
corresponding to $\nu=2$) implies that, if
$f\in C(\Gm_ N \backslash G)_K$ satisfies
\be\label{est0nu} f(n\am(t) k ) = \oh\bigl(\am(t)^{2 \rho+\e} \bigr) =
\oh \bigl( t^{4+\e} \bigr)\qquad\text{ as }t\downarrow 0,\ee
for some $\e>0$, $\poin f$ converges absolutely and uniformly for
$n\in N$, $k\in K$. Moreover, this convergence is uniform when $g$ and
$\nu$ vary over compact sets. This implies that if $\XX\in \glie$, and
both $f$ and $\XX f$ satisfy~\eqref{est0nu}, then $\XX \poin f$ is
well-defined and equal to $\poin \XX f$.

For $f\in C^\infty(\Gm_ N\backslash G)_K$ satisfying
condition~\eqref{est0nu} we have the decomposition
\be \label{poin-split}\poin f \= \poin^\infty f + \poin' f\,,\ee
where $\poin^\infty f$ is given by the finite sum over
$\Gm_{\!N}\backslash \Gm_P$, and $\poin' f$ is given by the remaining
sum over $\Gm_{\!N}\backslash (\Gm\cap N \wm AMN)$, with $N \wm AMN$
the big cell in the Bruhat decomposition.

We will make use of the following lemma. We give a proof for
completeness.
\begin{lem}\label{lem-p'}For $f\in C^\infty(\Gm_N\backslash G)_K$
satisfying \eqref{est0nu} one has
\be \poin' f\bigl( n \am(t) k \bigr) \= \oh(1)\qquad\text{ as }
t\uparrow\infty\ee
uniformly in $n\in N$, $k\in K$.
\end{lem}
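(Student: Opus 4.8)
The plan is to estimate $\poin' f\bigl(\nm(b,r)\am(t)k\bigr)$ uniformly for $t\uparrow\infty$ by going over to the symmetric space and using the structure of the big cell. Recall $\poin' f$ is the sum over $\gm\in\Gm_N\backslash(\Gm\cap N\wm AMN)$ of $f(\gm\, \nm(b,r)\am(t)k)$. First I would write each such $\gm$ using the Bruhat decomposition $\gm = \nm' \wm \am(s)\mm(\z)\nm''$ and note that, since the right $K$-translate only affects a compact variable and $f$ is $K$-finite, it suffices to bound $\sum_\gm \bigl|f\bigl(\gm\, \nm(b,r)\am(t)\bigr)\bigr|$, where by the Iwasawa decomposition $\gm\,\nm(b,r)\am(t) = \nm_\gm\, \am\bigl(t_\gm\bigr)\,k_\gm$ for appropriate $\nm_\gm\in N$, $t_\gm>0$, $k_\gm\in K$. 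The hypothesis \eqref{est0nu} gives $\bigl|f(\nm_\gm\am(t_\gm)k_\gm)\bigr| = \oh\bigl(t_\gm^{4+\e}\bigr)$ as $t_\gm\downarrow 0$, and for $t_\gm$ in a bounded range it is bounded since $f$ is continuous on $\Gm_N\backslash G$ and the relevant set of $k_\gm$ and the reduced $\nm_\gm$ lie in a compact set. So it is enough to show $\sum_{\gm}\, \max\bigl(t_\gm, 1\bigr)^{-(4+\e)} = \oh(1)$ uniformly as $t\uparrow\infty$, with a bound that does not depend on $b,r$.

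The key computation is the behavior of the Iwasawa $A$-component $t_\gm$ as a function of $\gm$ and of $(b,r,t)$. Using the action on $\sym\cong NA$ recorded in the preliminaries — namely $\nm(b,r)\am(t)\cdot(i,0) = (t^2 i + 2r + i|b|^2,\, ib)$ — and the fact that for $g\in\mathcal C$ the height function $\im z - |u|^2$ transforms by a factor $|c_g(z,u)|^{-2}$ for a suitable "denominator" $c_g$ polynomial in the point, one gets $t_\gm^2 \asymp \bigl(t^2\bigr)\big/ \bigl| c_\gm\bigl(\nm(b,r)\am(t)\cdot(i,0)\bigr)\bigr|^2$, where $c_\gm$ depends only on the $N\wm$-part of $\gm$ and is bounded below away from $0$ when $\gm$ ranges over $\Gm_N\backslash(\Gm\cap\mathcal C)$. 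Here the point is that for $\gm$ in the big cell $c_\gm$ is \emph{not} identically constant in $(z,u)$, and the relevant lattice points giving distinct cosets are separated, so that $\sum_\gm |c_\gm(z,u)|^{-(4+\e)}$ converges and is bounded uniformly in $(z,u)$ ranging over a Siegel set — this is exactly the kind of estimate underlying Lemma~2.1 of \cite{MW89}, applied now not at $t\downarrow 0$ but along the whole geodesic ray. Since $t_\gm \leq C\, t\, |c_\gm(z,u)|^{-1}$ and, crucially, $t_\gm$ stays bounded (indeed $\to 0$ as $t\to\infty$ for each fixed $\gm$, because moving toward the cusp $\infty$ pushes every $\Gm$-translate $\gm\cdot(z,u)$ with $\gm\notin\Gm_P$ away from $\infty$ and hence to small height), for $t$ large every term falls in the range where \eqref{est0nu} applies and we may sum the convergent series $\sum_\gm |c_\gm(z,u)|^{-(4+\e)}$, getting an $\oh(1)$ bound in $t$ that is uniform in $n,k$.

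I expect the main obstacle to be making the geometric claim "$t_\gm$ is bounded, uniformly in $n$ and $t\geq t_0$" fully rigorous: one must show that for $\gm\in\Gm\cap\mathcal C$ (not in $\Gm_P$) the height of $\gm\cdot(z,u)$ is bounded above by a constant depending only on $\Gm$, uniformly over all $(z,u)$ in the image of a Siegel-set–type region — equivalently, that the "denominators" $|c_\gm(z,u)|$ are bounded below uniformly. This is where the hypothesis that $\Gm$ has finite covolume with a single cusp, and the discreteness of $\Gm_N$ as the standard lattice $\Ld_\s$, enters: distinct cosets $\Gm_N\gm$ correspond to lattice-separated data in the $N\wm$-coordinates, so $|c_\gm|$ cannot accumulate at $0$. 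Once that separation is in hand, absolute convergence of $\sum_\gm |c_\gm|^{-(4+\e)}$ (again by the argument of \cite[Lemma~2.1]{MW89}, with $2\rho+\e \leftrightarrow 4+\e$) and the resulting uniform $\oh(1)$ bound follow routinely, and by the remark after \eqref{est0nu} the same reasoning applied to $\XX f$ shows the estimate is compatible with differentiation, though that is not needed for the statement.
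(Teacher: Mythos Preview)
Your outline has the right geometric intuition (the big cell sends high points to low height, so each $t_\gm$ stays bounded and even tends to $0$), but the final step does not close. From $t_\gm \le C\,t\,|c_\gm(z,u)|^{-1}$ and a uniform bound $\sum_\gm |c_\gm(z,u)|^{-(4+\e)}=O(1)$ over a Siegel set you obtain only
\[
\sum_\gm t_\gm^{4+\e}\;\le\;C^{4+\e}\,t^{4+\e}\sum_\gm |c_\gm(z,u)|^{-(4+\e)}\;=\;O\bigl(t^{4+\e}\bigr),
\]
not $O(1)$. Equivalently, since $t_\gm\asymp t/|c_\gm(z,u)|$, boundedness of $\sum_\gm |c_\gm(z,u)|^{-(4+\e)}$ is exactly the statement $\sum_\gm t_\gm^{4+\e}=O(t^{4+\e})$; to get $O(1)$ you would need the stronger decay $\sum_\gm |c_\gm(z,u)|^{-(4+\e)}=O(t^{-(4+\e)})$, which you have not argued and which does not follow from the convergence result of \cite[Lemma~2.1]{MW89}. (Also, the reduction ``it suffices to show $\sum_\gm \max(t_\gm,1)^{-(4+\e)}=O(1)$'' is not what you want: for $t_\gm<1$ that term is identically $1$, so the sum is infinite; you presumably mean $\sum_\gm t_\gm^{4+\e}$.)

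The paper fills this gap by a sum--to--integral comparison rather than a direct lattice estimate. First it uses the explicit Bruhat--Iwasawa formula to show that for $T$ large enough $\gm\,NA_TK\subset NA_{<1}K$ for every $\gm$ in the big cell; this lets one replace $f$ by the positive majorant $n\am(t)k\mapsto t^{2+\s}$ with $\s>2$. Then, choosing a small open $\Om\ni 1$ on which $f(g)<2f(g_1)$ for $g_1\in g\Om$, one bounds each term $f(\gm p)$ by $\frac{2}{\mathrm{vol}\,\Om}\int_{p\Om}f(\gm g_1)\,dg_1$ and sums over $\gm$ to get a single convergent integral $\int_{\Gm_N\backslash NA_{<T_1}K} t^{2+\s}\,t^{-5}\,dn\,dt\,dk<\infty$, uniformly in~$p$. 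This absorbs the infinite sum into a finite volume and gives $O(1)$ directly, without needing to analyze the decay rate of the lattice sum as $t\to\infty$. Your approach could in principle be pushed through by a careful integral comparison over the Bruhat coordinates of $\gm$ (as one does classically for $\SL_2$), but that is precisely the work the paper packages into the $\Om$-trick.
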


\begin{proof}

Let
\be A_T \=\bigl\{ \am(t) \;:\; t\ge T\bigr\}\,,\qquad A_{<T}\= A
\setminus A_T\,.\ee

We have to prove that $\poin'f$ is bounded on a region $N A_T K$ for
some sufficiently large $T>1$. Below we will argue that if $T$ is
sufficiently large, then
\be \label{gmincl} \gm N A_T K \subset NA_{<1}K \quad\text{ for all }
\gm\in \Gm \cap N \wm AMN\,.\ee
So we need the bound in~\eqref{est0nu} only for $t<1$, and can assume
that $f$ is the positive function $f(n \am(t) k ) = t^{2+\s}$ with
$\s>2$. So we consider a part of the Eisenstein series
$E(1,\s)_{0,0,0}$.

We will also show below that there are a relatively compact neighborhood
$\Om$ of $1\in G$ and a positive number $T_1$ such that for all
$g\in NA_{<T_1}$
\be \label{2est} f(g) < 2 f(g_1) \quad\text{ for all } g_1\in g\Om\,.\ee

Now we apply the well-known approach of estimating a sum by an integral.
For $p\in NA_T$\newcommand\vol{\mathrm{vol}}
\begin{align*}
\sum_{\gm\in \Gm_N \backslash(\Gm\cap N\wm AMN)}& f(\gm p) \;<\;
\sum_{\gm\in \Gm_N \backslash(\Gm\cap N\wm AMN)} \frac1{\vol\Om}\,
\int_{g_1 \in p\Om} 2\, f(\gm g_1) \, dg_1
\displaybreak[0]\\
&\;\leq\; \frac{2 n}{\vol \Om} \int_{\Gm_N \backslash N A_{<T_1}K } \,
f(g_1)\, dg_1 \;\leq \; C_1 \int_{t=0}^{T_1} t^{2+\s }
\frac{dt}t^5<\infty\,,
\end{align*}
uniform in~$g$. The constant $C_1$ depends on the maximal number of
fundamental domains for $\Gm\backslash G$ intersecting a set $g \Om$,
on the volume of $\Om$, on the volumes of $K$ and $\Gm\backslash N$,
and on $\s$. Since $f$ is left-invariant under~$N$ we can replace the
integral over $\gm p \Om$ by an integral over a $\Gm_N$-equivalent set
inside a fundamental domain for $\Gm_N\backslash G$.

The bound is uniform for $g\in NA_T$. Since $f$ is right invariant under
$K$ this gives the lemma.\smallskip

We still have to establish~\eqref{gmincl} and \eqref{2est}. We use
Lemma~2.1 in \cite{BMSU21} which gives explicitly the transition from
the big cell of the Bruhat decomposition to the Iwasawa decomposition.
In particular,
\badl{BrIw} N\wm &\am(t_1) m \nm(b,r) \am(t) K \= N \am(t') K\,,\\
t' &\= \frac t{t \, |D|}\,,\qquad D= 2ir+t^2+|b|^2\,.\eadl

We take $\Om$ of the form
\[ \Om=\bigl\{ \nm(b,r) \am(t) k\in NAK\;:\; |b|<\e, |r|<\e, \, k\in
K,\, (1-\e) < t < (1+\e) \bigr\} \]
with $\e >0$ small. Elements of $na(t)\Om\in NA\Om$ have the form
\[ \nm(b,r) \am(t) \nm(b_\e,r_\e)\am(t_\e) k_\e \in N \am(tt_\e) K\,,\]
with $tt_\e = t\bigl(1+\oh(\e)\bigr)$.

We apply this with
\[\gm= n_1 \wm \am(t_1) m \nm(b_1,r_1)\in N\wm AMN\,,\qquad p =
\nm(b_2,r_2) \am(t_2) k_2 \in \nm(b,r)\am(t) \Om\,.\]
Then
\begin{align*}
\gm p &\= n_1 \wm \am(t_1) m \nm(b_1,r_1) t \nm(b_2,r_2) \am(t_2) k_2\\
&\qquad\hbox{} \in N \wm \am(t_1) m
\nm\bigl(b_1+b_2,r_1+r_2+\im(\overline{b_1}b_2) \bigr) \am(t_2) K
\\
&\= N \am(t') K\,,\qquad \text{ with } t' \= \frac{t_2}{t_1 \bigl| 2i
r_3+t_2^2+ |b_3| ^2 \bigr|}\,.
\end{align*}
Now we note that $t_2= t \, \bigl(1+\oh(\e) \bigr)$, which implies that
$t'$ can be made uniformly small by increasing~$t$. This
gives~\eqref{gmincl}. \smallskip

For \eqref{2est} we take $p\in N \am(t)$ and
$q=n_\e\am(t_\e)k_\e\in \Om$. Then $ pq \in N \am(t t_\e) K $, with
$tt_\e<t(1+c\e)$ for some $c>1$. It suffices to take
$T_1<(1+c\e)^{-1}$.
\end{proof}
\emph{ Automorphic Poincar\'e series.} We now apply $\poin$ to a
meromorphic family of elements in a module $\Mfu^{\xi,\nu}_\Nfu$ with
$\re\nu>2$, to obtain an automorphic Poincar\'e series.

We work with a fixed Fourier term order $\Nfu=\Nfu_\bt$,
$\bt\in \ZZ[i]\setminus\{0\}$ or $\Nfu=\Nfu_{\ell,c,d}$, with a
character $\xi$ of~$M$, and a variable spectral parameter~$\nu$. All
functions $f \in \Mfu_\Nfu^{\xi,\nu}$ satisfy~\eqref{nureg}, hence the
Poincar\'e series $\poin f$ are well defined and $f\mapsto \poin f$ is
an intertwining operator of $(\glie,K)$-modules
$\Mfu_\Nfu^{\xi,\nu} \mapsto C^\infty(\Gm\backslash G)_K$.

\begin{prop}\label{prop-Mudef}Let $\xi$ be a character of~$M\subset K$
and let $\Nfu$ be a Fourier term order that is not $N$-trivial. Then,
there exist non-zero meromorphic families $\Mu_\Nfu(\xi,\nu)$ of
  functions on ${\Gm_ N} \backslash G $ 
   such that:
\begin{enumerate}
\item[a)] For each $g\in G$, the function
$\nu \mapsto \Mu_\Nfu(\xi,\nu)(g)$ is meromorphic in $\CC$ with
first order singularities occurring possibly at points in
$\ZZ_{\leq -1}$.
\item[b)] If $\Mu_\Nfu(\xi,\nu)$ is holomorphic at $\nu_0$, then its
value at $\nu = \nu_0$ is in $\Ffu^{\ps[\xi,\nu_0]}_\Nfu$, and if
furthermore $\re\nu_0\geq 0$, then its value is in
$\Mfu^{\xi,\nu_0}_\Nfu$. (By $\ps[\xi,\nu]$ we denote the character of
$ZU(\glie)$ parametrized by $( \xi,\nu)$.)

\item [c)] If $\Mu_\Nfu(\xi,\nu)$ has a singularity at
$\nu_0 \in \ZZ_{\leq -1}$, then it is a simple pole and the residue at
$\nu = \nu_0$ is an element of $\Ffu^{\ps[\xi,\nu_0]}$.
\end{enumerate}
\end{prop}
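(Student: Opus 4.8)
The plan is to obtain the families $\Mu_\Nfu(\xi,\nu)$ directly from the explicit generators of the modules $\Mfu^{\xi,\nu}_\Nfu$ provided by \cite{BMSU21}: the function $\mu^{0,0}_\bt(j_\xi,\nu)$ of \eqref{MWxinubt}, built from the Bessel function $I_\nu$, when $\Nfu=\Nfu_\bt$, and the function $\mu^{0,0}_\n(j_\xi,\nu)$ of \eqref{om-mu-nab}, built from the Whittaker function $M_{\k,\nu/2}$ with $m_0,\k$ as in \eqref{m0kap}, when $\Nfu=\Nfu_\n$. In both cases $\mu^{0,0}_\Nfu(j_\xi,\nu)$ is a smooth function on $\Gm_N\backslash G$, left $\Nfu$-equivariant, $K$-finite of the one-dimensional type $\tau^{2j_\xi}_0$, and a $ZU(\glie)$-eigenfunction for $\ps[\xi,\nu]$; and as $t\downarrow0$ its restriction to $n\am(t)k$ behaves like a fixed $\nu$-dependent scalar times $t^{2+\nu}$ (times the $n$- and $k$-factors). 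I would define $\Mu_\Nfu(\xi,\nu)$ to be $\mu^{0,0}_\Nfu(j_\xi,\nu)$ divided by that scalar: explicitly $(2\pi|\ell|)^{(\nu+1)/2}$, which is entire and non-vanishing, in the non-abelian case, and $(\pi|\bt|)^\nu/\Gamma(\nu+1)$ in the abelian case, so that $\Mu_\bt(\xi,\nu)=\Gamma(\nu+1)(\pi|\bt|)^{-\nu}\mu^{0,0}_\bt(j_\xi,\nu)$. All three assertions then reduce to classical facts about $I_\nu$ and $M_{\k,\nu/2}$ together with the structure theory of \cite[\S2]{BMSU21}.

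\textbf{Part (a).} Here I use that $\nu\mapsto I_\nu(z)$ is entire for each fixed $z>0$, while $\Gamma(\nu+1)$ has only simple poles, located exactly at $\ZZ_{\leq-1}$; and that $M_{\k,\nu/2}(z)=e^{-z/2}z^{(\nu+1)/2}\,{}_1F_1(\tfrac\nu2-\k+\tfrac12;\,\nu+1;\,z)$ has, as a function of $\nu$, at worst simple poles, at points where $\nu+1\in\ZZ_{\leq0}$, and no pole at such a point where $\tfrac\nu2-\k+\tfrac12$ is a non-positive integer in the relevant range (this is the source of the word ``possibly''). Reading off the $t$-dependence in $g=n\am(t)k$, one gets that $\nu\mapsto\Mu_\Nfu(\xi,\nu)(g)$ is meromorphic on $\CC$ with at most simple poles, all contained in $\ZZ_{\leq-1}$; and it is not the zero family because $I_\nu$ (resp. $M_{\k,\nu/2}$) is not identically zero. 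This is (a).

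\textbf{Part (b).} Let $\nu_0$ be a point of holomorphy; there $\Mu_\Nfu(\xi,\nu_0)$ is a non-zero scalar multiple of $\mu^{0,0}_\Nfu(j_\xi,\nu_0)$. If $\re\nu_0\geq0$ then $(j_\xi,\nu_0)\in\Wo(\ps[\xi,\nu_0])^+$ (resp. $\Wo(\ps[\xi,\nu_0])^+_\n$, using $m_0\in\ZZ_{\geq0}$), so by \eqref{MWxinubt}, \eqref{om-mu-nab} this vector lies in $\Mfu^{\xi,\nu_0}_\Nfu\subset\Ffu^{\ps[\xi,\nu_0]}_\Nfu$, giving the last two clauses of (b). If $\re\nu_0<0$ I invoke the classical connection formulas — $I_{\nu_0}=I_{-\nu_0}-\tfrac2\pi\sin(\pi\nu_0)K_{\nu_0}$, and the analogous expression of $M_{\k,\nu_0/2}$ as a combination of $M_{\k,-\nu_0/2}$ and $W_{\k,\nu_0/2}$ — to rewrite $\mu^{0,0}_\Nfu(j_\xi,\nu_0)$ as a linear combination of $\mu^{0,0}_\Nfu(j_\xi,-\nu_0)$ and $\om^{0,0}_\Nfu(j_\xi,-\nu_0)$. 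The Weyl-group orbit of $(j_\xi,\nu_0)$ in $\CC^2$ contains $(j_\xi,-\nu_0)$ (the transposition $\Ws1\Ws2\Ws1$ acts by $(j,\nu)\mapsto(j,-\nu)$), so $\ps[\xi,-\nu_0]=\ps[\xi,\nu_0]$, and since $\re(-\nu_0)>0$ one has $(j_\xi,-\nu_0)\in\Wo(\ps[\xi,\nu_0])^+$ (resp. $\Wo(\ps[\xi,\nu_0])^+_\n$); hence by \cite{BMSU21} both $\Mfu^{\xi,-\nu_0}_\Nfu$ and $\Wfu^{\xi,-\nu_0}_\Nfu$ are submodules of $\Ffu^{\ps[\xi,\nu_0]}_\Nfu$, so the value lies there. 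This proves (b).

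\textbf{Part (c) and the main obstacle.} At a pole $\nu_0\in\ZZ_{\leq-1}$ the residue is, by the standard expansions of $\Gamma(\nu+1)I_\nu(z)$ (resp. of the Kummer function inside $M_{\k,\nu/2}(z)$), a non-zero scalar multiple of $I_{-\nu_0}(z)$ (resp. $M_{\k,-\nu_0/2}(z)$), hence of $\mu^{0,0}_\Nfu(j_\xi,-\nu_0)$; since $-\nu_0\geq1$ and $\ps[\xi,-\nu_0]=\ps[\xi,\nu_0]$, it lies in $\Mfu^{\xi,-\nu_0}_\Nfu\subset\Ffu^{\ps[\xi,\nu_0]}_\Nfu$, which is (c). The special-function computations are routine; I expect the real work to lie in the structure theory. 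Under \emph{integral} parametrization the modules $\Mfu^{\xi,\nu}_\Nfu$ and $\Wfu^{\xi,\nu}_\Nfu$ are reducible, may intersect, and may fail to span $\Ffu^\ps_\Nfu$, so one must use only that they are submodules of $\Ffu^{\ps}_\Nfu$ (never the generic direct-sum decompositions), and one must treat with care the connection formulas at integer values of the parameter, where the two Whittaker solutions $M_{\k,\pm\nu_0/2}$ become proportional and a limiting argument is needed. A secondary point is to confirm that for the given $(\xi,\Nfu)$ the generator $\mu^{0,0}_\Nfu(j_\xi,\nu)$ is actually available — in the non-abelian case this is the condition $m_0\in\ZZ_{\geq0}$ of \eqref{m0kap}, failing which $\Ffu^\ps_\n$ is trivial for generic $\nu$ — and that the normalizing factor is not identically zero, so that the families $\Mu_\Nfu(\xi,\nu)$ are non-zero.
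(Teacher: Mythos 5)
Your proposal is correct and takes essentially the same route the paper does: it points to the explicit $(0,0)$-level families $\mu^{0,0}_\Nfu(j_\xi,\nu)$ of~\eqref{MWxinubt} and~\eqref{om-mu-nab} as the models, and the proof of (a)--(c) is delegated to classical facts about $I_\nu$ and $M_{\k,\nu/2}$ together with the structure of $\Wfu^{\xi,\nu}_\Nfu$, $\Mfu^{\xi,\nu}_\Nfu$ from \cite{BMSU21} (the paper itself defers to Appendix~\ref{app-famFt}, Proposition~\ref{prop-MuOm}, and to \cite{BMSU21}). One small remark: your abelian normalization $\Mu_\bt=\Gamma(\nu+1)(\pi|\bt|)^{-\nu}\mu^{0,0}_\bt$ manufactures poles at $\ZZ_{\leq-1}$ that are not present in the unnormalized family $\mu^{0,0}_\bt$, which is entire in $\nu$ since $I_\nu$ is; this is permitted by the word ``possibly'' in part (a), and when you check part (c) for these artificial poles the residue is again $I_{\nu_0}=I_{-\nu_0}$ (integer case), so nothing is harmed --- but the paper simply uses $\mu^{0,0}_\bt$ itself and in the abelian case (c) is vacuous. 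The one place you flag but do not fully close --- the degeneration of the connection formulas at integral parametrization --- is exactly where the paper also relies on \cite{BMSU21}; note that in the abelian case the integer point is actually the \emph{easy} one, since $I_{\nu_0}=I_{-\nu_0}$ identically, so only the non-abelian Whittaker case needs the logarithmic limiting argument you indicate.
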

Examples of such families are $\mu^{0,0}_\bt(j,\xi,\nu)$ in
\eqref{MWxinubt} and $\mu^{0,0}_\n(j,\xi, \nu)$ in~\eqref{om-mu-nab}.
 See Appendix B, Proposition~\ref{prop-MuOm} for more
details.

\rmrk{Remark}In this definition we only prescribe the Fourier term order
$\Nfu$ and the character $\xi$ of $M$. The holomorphy ensures that in a
given family only finitely many $K$-types occur.

\begin{prop}\label{prop-Ps}Let $\Nfu\neq \Nfu_0$ be a Fourier term
order, and let $\xi$ be a character of~$M$. For each family $\Mu_\Nfu$
as in Proposition~\ref{prop-Mudef}, the automorphic Poincar\'e series
\be \label{PMdef}\M_\Nfu(\xi,\nu) \;:= \;\poin \Mu_\Nfu(\xi,\nu) \ee
is a well-defined element of $\Au!\bigl([\ps[\xi,\nu]\bigr)$ for each
$\nu$ with $\re\nu>2$. Here, by $\ps[\xi,\nu]$ we denote the character
of $ZU(\glie)$ with spectral parameters $(\xi,\nu)$. Furthermore,
\begin{enumerate}
\item[(i)] The family $\nu \mapsto \M_\Nfu(\xi,\nu)$ is holomorphic on
the half-plane $\re\nu>2$.
\item[(ii)] For each $\XX\in \glie$ the relation
$\XX \M_\Nfu(\xi,\nu) \= \poin \XX \Mu_\Nfu(\xi,\nu)$ holds.
\item[(iii)] The Fourier terms $\Four_{\Nfu'} \M_\Nfu(\xi,\nu)$ are
holomorphic families of elements of $\Ffu_{\Nfu'}^{\xi,\nu}$ for
$\re\nu>2$. We have
\begin{enumerate}
\item[a)]\emph{Abelian cases. }If $\Nfu=\Nfu_\bt$,
$\bt\in \ZZ[i]\setminus\{0\}$, then
$\Four_{\Nfu'} \M_\bt(\xi,\nu) \in \Wfu_{\Nfu'}^{\xi,\nu}$ for all
$\Nfu'\not\in \Bigl\{\Nfu_{i^a\bt}\;:\; a\bmod 4\Bigr\}\cup\{\Nfu_0\}$.
\item[b)] \emph{Non-abelian cases. }If $\Nfu = \Nfu_{\ell,c,d}$, then
$\Four_{\Nfu'} \M_{\ell,c,d}(\xi,\nu)\in \Wfu_{\Nfu'}^{\xi,\nu}$ for
all $\Nfu'\not\in \Bigl\{ \Nfu_{\ell,c',d}\;:\; c'\bmod 2\ell\Bigr\}
\cup\{\Nfu_0\}$.
\end{enumerate}
\end{enumerate}
\end{prop}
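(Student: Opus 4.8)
The plan is to first verify the basic well-definedness, then establish holomorphy and the intertwining relation, and finally analyze the Fourier terms of the Poincaré series.

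\smallskip

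\emph{Well-definedness and membership in $\Au!(\ps[\xi,\nu])$.} Fix $\Nfu\neq\Nfu_0$ and $\xi$, and take $\nu$ with $\re\nu>2$. By Proposition~\ref{prop-Mudef}(b), since $\re\nu>2>0$ the family $\Mu_\Nfu(\xi,\nu)$ is holomorphic there with values in $\Mfu^{\xi,\nu}_\Nfu$; in particular each component satisfies the boundary-regularity condition~\eqref{nureg}, so the function $t\mapsto \Mu_\Nfu(\xi,\nu)(n\am(t)k)\,t^{-2-\nu}$ extends holomorphically to $t\in\CC$, hence $\Mu_\Nfu(\xi,\nu)(n\am(t)k)=\oh(t^{2+\re\nu})$ as $t\downarrow 0$. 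Since $\re\nu>2$ this gives the growth bound~\eqref{est0nu} (with $\e=\re\nu-2$) needed for the convergence criterion from Lemma~2.1 of~\cite{MW89} quoted in \S\ref{sect-genPs}. Hence $\poin\Mu_\Nfu(\xi,\nu)$ converges absolutely and locally uniformly, defining a $\Gm$-invariant smooth function, and the map $f\mapsto\poin f$ is an intertwining operator of $(\glie,K)$-modules, so $\M_\Nfu(\xi,\nu)$ is $K$-finite and $u$-annihilated according to $\ps[\xi,\nu]$ for all $u\in ZU(\glie)$. To place it in $\Au!(\ps[\xi,\nu])$ we must exhibit a finite set $E\ni\Nfu_0$ of Fourier term orders so that $\M_\Nfu(\xi,\nu)-\sum_{\Nfu'\in E}\Four_{\Nfu'}\M_\Nfu(\xi,\nu)$ lies in $L^{2-\al}$ over a Siegel-type region; this follows from part~(iii) below together with the splitting $\poin=\poin^\infty+\poin'$ from~\eqref{poin-split} and Lemma~\ref{lem-p'}: the $\poin'$ part is $\oh(1)$ hence in $L^{2-\al}$ over $NA_TK$, and $\poin^\infty\Mu_\Nfu(\xi,\nu)$ contributes only finitely many Fourier term orders (exactly those listed in (iii)(a),(b) plus $\Nfu_0$), the rest of the Fourier terms being in $\Wfu^\ps_{\Nfu'}$, hence exponentially decaying and square integrable.

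\smallskip

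\emph{Holomorphy and the $\glie$-action.} For (i), holomorphy of $\nu\mapsto\M_\Nfu(\xi,\nu)$ on $\re\nu>2$ follows from the stated local uniformity of the convergence of $\poin$ as $\nu$ varies over compact subsets of $\{\re\nu>2\}$, together with the holomorphy of $\nu\mapsto\Mu_\Nfu(\xi,\nu)$ there (Proposition~\ref{prop-Mudef}(a)): a locally uniform limit of holomorphic functions is holomorphic, and one may differentiate under the summation. For (ii), one uses that if both $f$ and $\XX f$ satisfy~\eqref{est0nu} then $\XX\poin f=\poin\XX f$, as recorded in \S\ref{sect-genPs}; since $\XX\Mu_\Nfu(\xi,\nu)\in\Mfu^{\xi,\nu}_\Nfu$ as well (the module is $\glie$-stable), it also satisfies~\eqref{est0nu} for $\re\nu>2$, so the relation holds.

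\smallskip

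\emph{Fourier terms.} For (iii), apply the Fourier term operator $\Four_{\Nfu'}$, an intertwining operator into $\Ffu^\ps_{\Nfu'}$, to $\M_\Nfu(\xi,\nu)=\poin^\infty\Mu_\Nfu(\xi,\nu)+\poin'\Mu_\Nfu(\xi,\nu)$. The contribution of $\poin'$ has exponential decay along $A$ (it is even better in the non-abelian case), hence its $\Four_{\Nfu'}$-image lies in the exponentially decaying submodule $\Wfu^\ps_{\Nfu'}$ for every $\Nfu'$. The contribution of $\poin^\infty$ is the finite sum over $\Gm_N\backslash\Gm_P$ of translates of $\Mu_\Nfu(\xi,\nu)$; since $\Gm_P$ normalizes $N$ and acts on Fourier term orders, each such translate has a single (translated) Fourier term order, which in the abelian case $\Nfu=\Nfu_\bt$ can only be one of the $\Nfu_{i^a\bt}$ (because $\Gm_P\cap AM$ acts on $\bt\in\ZZ[i]$ through the $\ZZ[i]^\ast$-action $\bt\mapsto i^a\bt$, using~\eqref{normN}), and in the non-abelian case $\Nfu=\Nfu_{\ell,c,d}$ can only change $c$ modulo $2\ell$ while fixing $\ell$ and $d$ (again by~\eqref{normN}, conjugation by $\am(y)\mm(\eta)$ preserves the central character parameter $\ell$ and the quantity $d$). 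Consequently, for $\Nfu'$ outside the exceptional finite set (together with $\Nfu_0$), the $\poin^\infty$-contribution to $\Four_{\Nfu'}$ vanishes, leaving only the $\poin'$-contribution, which lies in $\Wfu^{\xi,\nu}_{\Nfu'}$. Holomorphy in $\nu$ of each $\Four_{\Nfu'}\M_\Nfu(\xi,\nu)$ follows from (i) since $\Four_{\Nfu'}$ is a continuous operator commuting with the holomorphic dependence. I expect the main obstacle to be the bookkeeping in the last step: tracking precisely how elements of $\Gm_P$ act on Fourier term orders via the normalizer relations~\eqref{normN}, and confirming that the only orbits produced are exactly those asserted in (iii)(a) and (iii)(b) — in particular that no ``new'' non-abelian orders with a different $d$ can appear, which rests on the fact that $\Gm_P=\Gm_N(\Gm_P\cap AM)$ up to finite index under our standing assumptions on $\Gm$ and $\Ld_\s$.
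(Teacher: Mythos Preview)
Your approach is essentially the same as the paper's: split $\poin=\poin^\infty+\poin'$, use Lemma~\ref{lem-p'} for the $\poin'$ part, and identify the finitely many Fourier term orders touched by $\poin^\infty$.

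There is one inaccuracy worth correcting. You write that ``the contribution of $\poin'$ has exponential decay along $A$,'' but Lemma~\ref{lem-p'} only gives $\poin'\Mu_\Nfu(\xi,\nu)=\oh(1)$ as $t\uparrow\infty$, i.e.\ boundedness, not exponential decay. The correct inference is: boundedness implies polynomial growth, and for $\Nfu'\neq\Nfu_0$ any element of $\Ffu^{\ps}_{\Nfu'}$ with polynomial growth must lie in $\Wfu^{\ps}_{\Nfu'}$ (this is the general fact recorded in \S\ref{sect-af}: polynomial growth of automorphic forms forces $\Four_{\Nfu'}f\in\Wfu^\ps_{\Nfu'}$, and the same argument applies here since the Fourier term is an eigenfunction of $ZU(\glie)$). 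So the conclusion is right, but the justification you give is not quite the one that works.

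On your bookkeeping worry: the paper resolves this cleanly via the standing assumption on $\Gm$ in \S\ref{discretesubgroups}, which implies $\Gm_{\!N}\backslash\Gm_{\!P}$ is generated by $\Gm_{\!N}\mm(i)$, so $\poin^\infty$ has exactly four summands, indexed by $\mm(i)^a$ with $a\bmod 4$. In the abelian case one computes directly from~\eqref{Fbtdef} and~\eqref{normN} that left translation by $\mm(i)$ turns $\Mu_\bt$ into a multiple of $\Mu_{i\bt}$. In the non-abelian case one invokes Proposition~\ref{prop-actmi}, which gives the explicit action of $\mm(i)$ on the theta basis $\Th_{\ell,c}(h_{\ell,m})$: it fixes $\ell$ and $m$ (hence $d$, via~\eqref{mtpleq}) and only mixes the $c$-index. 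No appeal to a general $\Gm_P\cap AM$ description is needed, and in particular no elements $\am(y)$ with $y\neq 1$ enter.
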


\begin{proof}
It suffices to work with $\nu$ on a relatively compact open set in the
half-plane $\re\nu>2$. The uniform convergence of the Poincar\'e series
on compact sets in $(g,\nu)$ implies that holomorphy in $\nu$ is
preserved and that $\Mu_\Nfu(\xi,\nu) \mapsto \M_\Nfu(\xi,\nu)$ is an
intertwining operator
$\Mfu_\Nfu^{\xi,\nu} \rightarrow C^\infty(\Gm\backslash G)_K$. This
gives parts (i) and~(ii). The intertwining property implies that to
have $\M_\Nfu(\xi,\nu) \in \Au!(\ps_{\xi,\nu})$ we need only prove
parts a) and b) in~(iii); see Definition~\ref{def-cu-meg}.

The Fourier terms are obtained by finitely many integrals over compact
sets, hence they depend holomorphically on~$\nu$.

We have $\M_\Nfu(\xi,\nu)= \poin^\infty \Mu_\Nfu(\xi,\nu) 
+ \poin' \Mu_\Nfu(\xi,\nu)$ as in~\eqref{poin-split}. Since
$\Gm_{\!N} \backslash \Gm_{\!P} $ is generated by $\Gm_{\!N} \mm(i)$
for the group that we have fixed in Subsection~\ref{discretesubgroups},
there are four summands that contribute
to~$\poin^\infty \Mu_\Nfu(\xi,\nu)$. In the abelian case, a computation
based on \eqref{Fbtdef} shows that $\poin^\infty \Mu_\bt(\xi,\nu)$ is a
linear combination of the functions $\Mu_{i^a \bt}(\xi,\nu)$, with
$a=0,1,2,3$. These are among the excluded Fourier term orders in (iii)
(a). In the non-abelian case we proceed similarly on the basis of
Proposition~\ref{prop-actmi} in Appendix A.

The component $\poin' \Mu_\Nfu(\xi,\nu)$ is bounded, according to
Lemma~\ref{lem-p'}. The contribution of
$\poin^\infty \Mu_{\Nfu}(\xi,\nu)$ is in general not bounded, but it
contributes to only finitely many Fourier terms, with orders contained
in the sets in iii), a) and~b). This implies that the Poincar\'e series
$\M_\Nfu(\xi,\nu)$ has at most moderate exponential growth.
\end{proof}

In the next result we analyze the growth of the first sum in
\eqref{poin-split}.

\begin{prop}\label{prop-Minf} Let $\Nfu\neq \Nfu_0$. The finite sum
\be \poin^\infty\Mu_\Nfu(\xi,\nu)(g)\= \sum_{\gm\in \Gm_{\!N}\backslash
\Gm_P} \Mu_\Nfu(\xi,\nu)(\gm g)\ee
is holomorphic in $\nu \in \CC\setminus \ZZ_{\leq-1}$, and satisfies
\be \Four_\Nfu \poin^\infty\Mu_\Nfu(\xi,\nu) \= a \,
\Mu_\Nfu(\xi,\nu)\ee
for some $a\in \CC$. The Poincar\'e family
$\nu \mapsto \M_\Nfu(\xi,\nu)$ is identically zero for $\nu\in \CC$ if
and only if $a= 0$.
\end{prop}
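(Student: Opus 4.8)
The plan is to exploit the intertwining property of the Fourier term operator $\Four_\Nfu$ together with the decomposition $\poin = \poin^\infty + \poin'$ from \eqref{poin-split}. First I would observe that for $\re\nu>2$ (in the region of absolute convergence) the Poincar\'e series $\M_\Nfu(\xi,\nu) = \poin\,\Mu_\Nfu(\xi,\nu)$ is a well-defined automorphic form by Proposition~\ref{prop-Ps}, so its Fourier term $\Four_\Nfu\M_\Nfu(\xi,\nu)$ makes sense and lies in $\Ffu^{\xi,\nu}_\Nfu$. Applying $\Four_\Nfu$ to $\poin = \poin^\infty + \poin'$ and using that $\poin'\,\Mu_\Nfu(\xi,\nu)$ is bounded (Lemma~\ref{lem-p'}) and contributes only to Fourier term orders other than $\Nfu$ (this is the content of the unlabeled computation in the proof of Proposition~\ref{prop-Ps}, parts (iii) a) and b), since the $\Nfu$-contribution of $\poin'$ would have to come from the big-cell elements, which move the Fourier order), we get
\[
\Four_\Nfu \M_\Nfu(\xi,\nu) \= \Four_\Nfu \poin^\infty \Mu_\Nfu(\xi,\nu) \= a\,\Mu_\Nfu(\xi,\nu)
\]
for $\re\nu>2$, with $a\in\CC$ as in the statement (the equality $\Four_\Nfu\poin^\infty\Mu_\Nfu=a\,\Mu_\Nfu$ is exactly the displayed identity of the proposition). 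Hence on the half-plane $\re\nu>2$ the identity $\Four_\Nfu\M_\Nfu(\xi,\nu) = a\,\Mu_\Nfu(\xi,\nu)$ holds.

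Next I would argue the ``if'' direction: if $a=0$ then $\Four_\Nfu\M_\Nfu(\xi,\nu)=0$ for $\re\nu>2$. But $\Mu_\Nfu(\xi,\nu)$ has genuine exponential growth in the $\Nfu$-direction (it is built from the increasing Bessel/Whittaker functions $I_\nu$ or $M_{\k,\nu/2}$ in \eqref{MWxinubt}, \eqref{om-mu-nab}), while all \emph{other} Fourier terms $\Four_{\Nfu'}\M_\Nfu(\xi,\nu)$ with $\Nfu'\neq\Nfu$ lie in $\Wfu^{\xi,\nu}_{\Nfu'}$ or (for finitely many exceptional $\Nfu'$) are again built from $\poin^\infty$ and are therefore linear combinations of $\Mu_{\Nfu'}(\xi,\nu)$ whose $\Nfu$-term vanishes — so the only source of exponential growth in $\M_\Nfu(\xi,\nu)$ is precisely $\Four_\Nfu\M_\Nfu(\xi,\nu)$. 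If that term vanishes, $\M_\Nfu(\xi,\nu)$ has at most polynomial growth, i.e.\ lies in $\A(\ps[\xi,\nu])$. For $\re\nu>2$ and generic parametrization the module $\A(\ps[\xi,\nu])$ contains only Eisenstein-type contributions in the $N$-trivial term and has no component in $\Ffu^{\xi,\nu}_\Nfu$ with $\Nfu\neq\Nfu_0$ that can be produced this way; more directly, one checks that all Fourier terms $\Four_{\Nfu'}\M_\Nfu(\xi,\nu)$ are then either zero (by the $a=0$ hypothesis propagated through the $\poin^\infty$-computation of Proposition~\ref{prop-Ps}) or forced into $\Wfu^{\xi,\nu}_{\Nfu'}$; combined with square-integrability/growth constraints this forces $\M_\Nfu(\xi,\nu)\equiv 0$ on $\re\nu>2$, and then by holomorphy (Proposition~\ref{prop-Ps}(i)) and the meromorphic continuation it is identically zero on $\CC$.

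For the ``only if'' direction, if $\M_\Nfu(\xi,\nu)\equiv 0$ then in particular $\Four_\Nfu\M_\Nfu(\xi,\nu)=0$ for $\re\nu>2$, and by the identity of Step 1 this forces $a\,\Mu_\Nfu(\xi,\nu)=0$; since $\Mu_\Nfu(\xi,\nu)$ is a non-zero meromorphic family (Proposition~\ref{prop-Mudef}), we conclude $a=0$. The holomorphy statement for $\poin^\infty\Mu_\Nfu(\xi,\nu)$ on $\CC\setminus\ZZ_{\leq-1}$ follows from Proposition~\ref{prop-Mudef}(a): $\poin^\infty\Mu_\Nfu$ is a finite sum $\sum_{\gm\in\Gm_N\backslash\Gm_P}\Mu_\Nfu(\xi,\nu)(\gm g)$ of translates of the meromorphic family $\Mu_\Nfu(\xi,\nu)$, whose only singularities are simple poles in $\ZZ_{\leq-1}$, and a finite sum of such functions is again holomorphic off $\ZZ_{\leq-1}$ (one should note that the translates by $\gm\in\Gm_N\backslash\Gm_P\subset\Gm_N\mm(i)$ act through the $M$-action, which is given by algebraic, $\nu$-independent operators, so no new poles appear). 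The main obstacle is making precise, for \emph{integral} parametrization, that no ``hidden'' exponentially growing contribution can arise in the other finitely many exceptional Fourier terms $\Four_{\Nfu'}\M_\Nfu$ with $\Nfu'\in\{\Nfu_{i^a\bt}\}$ resp.\ $\{\Nfu_{\ell,c',d}\}$ when $a=0$: here one has to track carefully, via the explicit $\poin^\infty$-computation of Proposition~\ref{prop-Ps} and the action of $\mm(i)$ (Proposition~\ref{prop-actmi}), that these terms are again scalar multiples of the $\Mu_{\Nfu'}(\xi,\nu)$ whose $\Nfu$-pieces vanish together with $a$, so that vanishing of the $\Nfu$-term truly propagates to vanishing of all Fourier terms and hence of $\M_\Nfu(\xi,\nu)$ itself.
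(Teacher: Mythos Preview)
Your argument contains a genuine error and misses the key structural insight of the paper's proof.

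\textbf{The error about $\poin'$.} You claim that $\poin'\Mu_\Nfu(\xi,\nu)$ ``contributes only to Fourier term orders other than $\Nfu$''. This is false. Lemma~\ref{lem-p'} tells you only that $\poin'\Mu_\Nfu$ is bounded; boundedness forces its $\Nfu$-Fourier term into $\Wfu_\Nfu^{\xi,\nu}$, not to zero. Indeed, later in the paper (see~\eqref{FoNN}) one has
\[
\Four_\Nfu \M_\Nfu(\xi,\nu) \;=\; a\,\Mu_\Nfu(\xi,\nu) \;+\; C_{\Nfu,\Nfu}(\xi,\nu)\,\Om_\Nfu(\xi,\nu),
\]
where the $\Om_\Nfu$-term comes precisely from $\poin'$. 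So your displayed equality $\Four_\Nfu \M_\Nfu = a\,\Mu_\Nfu$ is wrong, and the chain of reasoning you build on it (``$a=0$ gives $\Four_\Nfu\M_\Nfu=0$, hence no exponential growth, hence \dots'') does not get off the ground.

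\textbf{The missing idea for ``$a=0\Rightarrow\M_\Nfu\equiv 0$''.} Even if you repair the above, your route through growth estimates and module constraints is circuitous and incomplete; you never actually force all Fourier terms to vanish. The paper's argument is purely algebraic and much sharper. One introduces the subgroup $\Dt\subset\Gm_{\!P}$ consisting of those $\gm$ for which $\Mu_\Nfu(\xi,\nu)(\gm g)$ is a scalar multiple of $\Mu_\Nfu(\xi,\nu)(g)$; the sum over $\Gm_{\!N}\backslash\Dt$ then equals $a\,\Mu_\Nfu(\xi,\nu)$ by definition of~$a$. Factoring $\Gm_{\!N}\backslash\Gm_{\!P}$ through $\Dt$ gives
\[
\poin^\infty\Mu_\Nfu(\xi,\nu)(g) \;=\; a \sum_{\gm\in\Dt\backslash\Gm_{\!P}} \Mu_\Nfu(\xi,\nu)(\gm g).
\]
This simultaneously proves the displayed identity of the proposition (the $\gm\notin\Dt$ contribute to orders $\Nfu'\neq\Nfu$) and shows that $a=0$ kills the \emph{entire function} $\poin^\infty\Mu_\Nfu$, not just its $\Nfu$-component. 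Since $\M_\Nfu(g)=\sum_{\gm_1\in\Gm_{\!P}\backslash\Gm}\poin^\infty\Mu_\Nfu(\gm_1 g)$, vanishing of $\poin^\infty\Mu_\Nfu$ immediately yields $\M_\Nfu\equiv 0$.

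Your treatment of the holomorphy claim and of the converse direction (``$\M_\Nfu\equiv 0\Rightarrow a=0$'') is essentially fine once the first error is corrected: for generic $\nu$ the functions $\Mu_\Nfu(\xi,\nu)$ and $\Om_\Nfu(\xi,\nu)$ are linearly independent, so $\Four_\Nfu\M_\Nfu=0$ forces $a=0$.
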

\begin{proof}

In the proof of Proposition~\ref{prop-Ps} we saw that the finite sum
$\poin^\infty\Mu_\Nfu(\xi,\nu)$ is a finite linear combination of
$\Mu_{\Nfu'}(\xi,\nu)$. Let $\Dt$ be the subgroup of $\gm\in\Gm_{\!P}$
for which $\Mu_\Nfu(\xi,\nu)(\gm g)$ is a multiple of
$\Mu_\Nfu(\xi,\nu)(g)$. Then
\[ \sum_{\dt\in \Gm_{\!N} \backslash \Dt} \Mu_\Nfu(\xi,\nu)(\dt g)
\= a\, \Mu_\Nfu(\xi,\nu)(g)\]
for some $a\in \CC$. Thus, we have
\[ \poin^\infty \Mu_\Nfu(\xi,\nu)(g)
\= a\sum_{\gm\in \Dt\backslash \Gm_{\!P}} \Mu_\Nfu(\xi,\nu)(\gm g)\,.\]
If $a\neq 0$ then all $\gm\not\in \Dt$ in this sum contribute to a
Fourier term of order $\Nfu'\neq \Nfu$, and
$\Four_\Nfu \poin^\infty \Mu_\Nfu(\xi,\nu)$ is equal to
$a\, \Mu_\Nfu(\xi,\nu)(g)$. If $a=0$, then
\[ \M_\Nfu(\xi,\nu) (g)
\= \sum_{\gm_1\in \Gm_{\!P}\backslash \Gm} \, a\, \sum_{\gm\in
\Dt\backslash \Gm_{\!P}}\, \Mu_\Nfu(\xi,\nu)(\gm\gm_1 g) \=0\,.
\qedhere\]
\end{proof}

\rmrk{Growth}If $\M_\Nfu(\xi,\nu)$ is non-zero, then
$\poin^\infty \Mu_\Nfu(\xi,\nu)$ can have exponential growth. For
instance if $\Mu_\Nfu(\xi,\nu) = \mu_\n^{0,0}(\xi,\nu)$, see
\eqref{om-mu-nab}, this follows from the fact that the $M$-Whittaker
function has in general exponential growth. There are exceptions, since
for special combinations of $\k$ and~$s$, $M_{\k,s}$ and $W_{\k,s}$ are
proportional.

\section{Meromorphic continuation of Poincar\'e series} In this section
we will prove the following result.
\begin{thm}\label{thm-mer-cont}Let $\Nfu\neq \Nfu_0$ be a Fourier term
order, and let $\xi$ be a character of~$M$.

Then, for each family $\nu \mapsto \Mu_\Nfu(\xi,\nu)$ as in
Proposition~\ref{prop-Mudef}, the family defined by
$\nu \mapsto \M_\Nfu(\xi,\nu) = \poin \Mu_\Nfu(\xi,\nu)$ for $\re\nu>2$
has a meromorphic continuation to $\CC$, and
$\M_\Nfu(\xi,\nu) \= \poin_\Nfu(\xi,\nu)
\Mu_\Nfu(\xi,\nu) \in \Au!\bigl(\ps[\xi,\nu]\bigr)$, where defined.
\begin{enumerate}
\item[(i)] The singularities of $\nu\mapsto \M_\Nfu(\xi,\nu)
\= \poin_\Nfu(\xi,\nu) \Mu_\Nfu(\xi,\nu)
$ in the closed right half-plane occur at $\nu_0$ for which there exists
a square integrable automorphic form in
$\Au{(2)}\bigl( \ps[\xi,\nu_0]\bigr)$. The singularities at these
   points are simple poles, except for $\nu_0 = 0$, that may be a double
   pole.
\item[(ii)] For each $\XX\in \glie$ the relation
$\XX\Poin_\Nfu(\xi,\nu)  = \poin_\Nfu(\xi,\nu) \, \XX \Mu_\Nfu(\xi,\nu)$
holds as an identity of meromorphic families.
\end{enumerate}
\end{thm}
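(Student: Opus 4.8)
The plan is to establish the meromorphic continuation by resolving the Poincar\'e series spectrally, following the strategy of Miatello--Wallach \cite{MW89} adapted to the Fourier term modules of \cite{BMSU21}. First I would fix the Fourier term order $\Nfu$ and the character $\xi$, and consider the two-piece decomposition $\M_\Nfu(\xi,\nu) = \poin^\infty\Mu_\Nfu(\xi,\nu) + \poin'\Mu_\Nfu(\xi,\nu)$ from \eqref{poin-split}. By Proposition~\ref{prop-Minf} the first term is already holomorphic on $\CC\setminus\ZZ_{\leq-1}$, with at worst simple poles at the negative integers coming from the meromorphic family $\Mu_\Nfu$ itself. The real work is the continuation of $\poin'\Mu_\Nfu(\xi,\nu)$, which by Lemma~\ref{lem-p'} is bounded for $\re\nu>2$ but whose sum over the big cell does not converge outside that half-plane. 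The idea is to produce an auxiliary globally defined automorphic object whose difference with $\M_\Nfu(\xi,\nu)$ is governed by an operator that is already known to have a resolvent with meromorphic continuation.

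The key technical device I would use is the resolvent of the Casimir operator $C$ (together with the constraint $\Dt_3 f = \ld_3 f$) acting on a suitable space of automorphic forms with prescribed Fourier support. Concretely: the function $\Mu_\Nfu(\xi,\nu)$ lies in $\Mfu^{\xi,\nu}_\Nfu$, hence is a $C$-eigenfunction with eigenvalue $\ld(\xi,\nu)$. One writes $\poin\Mu_\Nfu(\xi,\nu)$ as the image under the resolvent $\bigl(C - \ld(\xi,\nu)\bigr)^{-1}$ of a compactly supported (hence $L^2$, in fact rapidly decreasing) source term obtained by applying $C-\ld$ to a truncation of $\Mu_\Nfu$. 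More precisely, let $\chi$ be a smooth cutoff that is $1$ near the cusp and $0$ on a compact set, form $\chi\cdot\poin'\Mu_\Nfu(\xi,\nu)$ (a function on $\Gm\backslash G$ obtained by folding), and compute $g_\nu := (C-\ld(\xi,\nu))\bigl(\chi\cdot\poin'\Mu_\Nfu\bigr)$. Because $\Mu_\Nfu$ is a $C$-eigenfunction and $\poin'$ is bounded, $g_\nu$ is supported where $d\chi\neq 0$, so it is a compactly supported smooth family, holomorphic in $\nu\in\CC$, lying in $\Au0(\ps[\xi,\nu])^\perp$-complemented pieces. Then $\poin'\Mu_\Nfu(\xi,\nu) = $ (holomorphic correction terms) $+ (C-\ld(\xi,\nu))^{-1} g_\nu$ on $\Gm\backslash G$, where the resolvent is that of the self-adjoint extension of $C$ on $L^2(\Gm\backslash G)$ restricted to the relevant $K$-types and the $\Dt_3$-eigenspace. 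The spectral theorem gives $\bigl(C-\ld(\xi,\nu)\bigr)^{-1}$ a meromorphic continuation from $\re\nu>2$ (where $\ld(\xi,\nu)$ avoids the spectrum) to all of $\CC$, with poles exactly at $\nu_0$ where $\ld(\xi,\nu_0)$ is an eigenvalue --- i.e. where $\Au{(2)}(\ps[\xi,\nu_0])\neq\{0\}$ --- and these poles are simple because the point spectrum of $C$ on each $K$-isotypic, $\Dt_3$-eigen piece is a discrete set of eigenvalues of finite multiplicity (here one uses that $\A(\ps)_{h,p}$ is finite-dimensional, cited after Definition~\ref{autps}). The exceptional behaviour at $\nu_0=0$, where the map $\nu\mapsto\ld(\xi,\nu)$ is two-to-one (ramified by the Weyl element $\Ws1$ fixing $\nu=0$), produces a possible double pole: a simple pole in the spectral variable $\ld$ pulls back to a double pole in $\nu$ because $d\ld/d\nu$ vanishes at $\nu=0$. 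Item (ii) then follows formally: $\XX$ commutes with $C$ and $\Dt_3$ up to lower order, more simply $\XX$ and $\poin$ and the resolvent all intertwine, and since both sides are meromorphic families agreeing on $\re\nu>2$ (by Proposition~\ref{prop-Ps}(ii)) they agree everywhere by analytic continuation.

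For the order of operations I would: (1) recall the decomposition \eqref{poin-split} and use Proposition~\ref{prop-Minf} to dispatch $\poin^\infty\Mu_\Nfu$; (2) fold $\poin'\Mu_\Nfu(\xi,\nu)$ against a cutoff to get a function on $\Gm\backslash G$, verifying via Lemma~\ref{lem-p'} that it is bounded and via Proposition~\ref{prop-Ps}(iii) that its non-$\Wfu$ Fourier support is finite; (3) apply $C-\ld(\xi,\nu)$ to obtain a compactly supported holomorphic source family $g_\nu$, identifying the correction terms as holomorphic; (4) invoke the spectral decomposition of $C$ on $L^2(\Gm\backslash G)_K$ in the fixed $K$-type and $\Dt_3$-eigenspace to meromorphically continue $(C-\ld(\xi,\nu))^{-1}g_\nu$, locating poles at spectral parameters of $\Au{(2)}$; (5) reassemble to get the continuation of $\M_\Nfu(\xi,\nu)$ and read off that it lies in $\Au!(\ps[\xi,\nu])$ where defined, using that moderate exponential growth is a closed condition and is preserved by the holomorphic correction terms; (6) analyze $\nu_0=0$ separately via the local behaviour of $\ld(\xi,\cdot)$ to get the possible double pole; (7) deduce (ii) by uniqueness of meromorphic continuation. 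The main obstacle I expect is step (4)--(5): one must be careful that the source term $g_\nu$ genuinely lands in the Hilbert space on which $C$ is essentially self-adjoint with the claimed spectrum --- this requires knowing that the continuous spectrum contributes no poles in the closed right half-plane (so that all singularities there are from the discrete/residual spectrum, i.e. $\Au{(2)}$), which in turn uses the theory of Eisenstein series for $\SU(2,1)$ and the absence of continuous-spectrum embedded eigenvalues obstruction; and that the growth bookkeeping (which Fourier terms of the continued $\M_\Nfu$ can acquire exponential growth) stays confined to the finite set from Proposition~\ref{prop-Ps}(iii), so that the continued family really is in $\Au!$ and not merely in $\A^u$. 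Handling the ramification at $\nu=0$ cleanly, distinguishing a true double pole from a removable one depending on whether $\Au{(2)}(\ps[\xi,0])$ is nonzero and on parity of $\xi$, is the other delicate point.
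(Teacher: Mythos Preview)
Your high-level strategy---truncate, apply $C-\ld(\xi,\nu)$ to obtain a compactly supported source, then invert via the spectral decomposition---is the same as the paper's. But your implementation has a genuine gap at step~(3), and it is not a technicality.

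You propose to truncate $\poin'\Mu_\Nfu(\xi,\nu)$ \emph{after} summing, and then claim that the source $g_\nu = (C-\ld)\bigl(\chi\cdot\poin'\Mu_\Nfu\bigr)$ is ``holomorphic in $\nu\in\CC$''. But $\poin'\Mu_\Nfu$ is only defined by a convergent series for $\re\nu>2$; compact support of $g_\nu$ in the space variable says nothing about holomorphy in $\nu$ beyond that half-plane. You would need $g_\nu$ to be globally meromorphic in $\nu$ as \emph{input} to the resolvent step, and that is exactly what remains to be proved---the argument is circular. There is also a structural confusion: $\poin'\Mu_\Nfu$ is only $\Gm_{\!N}$-invariant, not $\Gm$-invariant (the big-cell cosets are not stable under right translation by $\Gm$), so multiplying by a $\Gm$-invariant cutoff does not produce a function on $\Gm\backslash G$, and ``obtained by folding'' does not resolve this.

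The paper avoids the circularity by truncating the \emph{germ} rather than the series. With a cutoff $\ph$ equal to $1$ near $t=0$ and $0$ near $t=\infty$, one sets $\tilde\M_\Nfu(\xi,\nu)=\poin\bigl(\ph\,\Mu_\Nfu(\xi,\nu)\bigr)$, which is bounded, hence in $L^2(\Gm\backslash G)$, for $\re\nu>2$. The point is that the germ $(C-\ld(\xi,\nu))\bigl(\ph\,\Mu_\Nfu(\xi,\nu)\bigr)$ has compact support in $\Gm_{\!N}\backslash G$ (it lives where $d\ph\neq 0$) and is already meromorphic in $\nu$ on all of~$\CC$, because $\Mu_\Nfu$ is. Its Poincar\'e series $\check\M_\Nfu(\xi,\nu)$ is therefore a \emph{locally finite} sum, inheriting meromorphy in $\nu\in\CC$ immediately---no circularity. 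Likewise $\M_\Nfu-\tilde\M_\Nfu=\poin\bigl((1-\ph)\Mu_\Nfu\bigr)$ is a locally finite sum since $(1-\ph)\Mu_\Nfu$ vanishes near $t=0$, hence meromorphic. The resolvent then continues $\tilde\M_\Nfu$ from the globally defined $\check\M_\Nfu$, split into discrete and continuous pieces; the continuous piece requires a careful contour-deformation argument with Eisenstein series, separating the cases $|j_r|=|j|$ and $|j_r|\neq|j|$ because of the factor $\ld^2-\nu^2+\frac13(j_r^2-j^2)$ in the denominator. Your instinct that this is where the real difficulty lies is correct.
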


\rmrk{Remarks}(1)
Here, for $\re\nu>2$, $\poin_\Nfu(\xi,\nu)$ is given by the operator
$\poin$, while for other values of $\nu$ it is obtained by meromorphic
continuation. From now on, we will just write
$\M_\Nfu(\xi,\nu) \= \poin_\Nfu \Mu_\Nfu(\xi,\nu)$ in all cases, for
simplicity.

(2) Part~ii) implies that $\poin_\Nfu(\xi,\nu)$ is an intertwining
operator of $(\glie,K)$-modules for all $\nu\in \CC$ where the
Poincar\'e families have no singularity.

\smallskip
The proof of Theorem~\ref{thm-mer-cont} follows the methods in
\cite[\S2]{MW89}. We shall give it in subsections
\ref{sect-trunc}--\ref{sect-mainpart}. We carry out a \emph{reduction}
before we start on the actual proof. Proposition~\ref{prop-Mudef}
provides us with a family $\Mu_\Nfu(\xi,\nu)$ that may contain many
$K$-types. As noted after Proposition~\ref{prop-MuOm} in the appendix,
such a function is obtained as a holomorphic linear combinations of
families $\Mu_{\Nfu;h,p,q}$ with $K$-type $\tau^h_p$ and weight $q$ in
that $K$-type. Since the Poincar\'e series satisfies
\be \poin \biggl( \sum_j c_j(\nu) \, \Mu_{\Nfu;h_j,p_j,\nu_j}(\xi,\nu)
\biggr)
\= \sum_j c_j(\nu) \poin \Mu_{\Nfu;h_j,p_j,q_j}(\xi,\nu) \,,\ee
  it will thus suffice to prove the theorem for Poincar\'e series of the
  form $\poin \Mu_{\Nfu;h,p,q}$.

\subsection{Truncation}\label{sect-trunc}

We take a function $\ph\in C^\infty(0,\infty)$ that is equal to $1$ on a
neighborhood of $0$, and equal to $0$ on a neighborhood of~$\infty$. We
use the same symbol for the extended function $\ph\in C^\infty(G)$
defined by $\ph\bigl( n \am(t) k \bigr) =  \ph(t)$.

We fix $(h,p,q)$ satisfying $h\equiv q \equiv p\bmod 2$,
$p\in \ZZ_{\geq0}$, and consider a family
$\nu \mapsto \Mu_{\Nfu;h,p,q}(\xi,\nu)$ as in
Proposition~\ref{prop-MuOm}. We allow ourselves to drop the subscript
${}_{h,p,q}$ from the notation of the family, and from related families
occurring later in the proof.

We write $\Mu_\Nfu(\xi,\nu)$ as \be \Mu_\Nfu(\xi,\nu) \= \ph
\Mu_\Nfu(\xi,\nu) + \bigl( 1- \ph\bigr)
\Mu_\Nfu(\xi,\nu)\,.\ee

For $\re\nu>2$ this leads to the decomposition
\bad \M_\Nfu(\xi,\nu) &\= \tilde\M_\Nfu(\xi,\nu) + \Bigl(
\M_\Nfu(\xi,\nu) - \tilde\M_\Nfu(\xi,\nu)\Bigr)\,,\\
\tilde\M_\Nfu(\xi,\nu) &\= \poin \bigl( \ph \,\Mu_\Nfu(\xi,\nu)
\bigr)\,.
\ead
The behavior of
$\bigl( \ph \Mu_\Nfu(\xi,\nu)\bigr) \bigl(n\am(t)k\bigr)$ as
$t\downarrow 0$ is the same as that of $\Mu_\Nfu(\xi,\nu)$. The
contribution of
$\poin' \bigl( \ph \Mu_\Nfu(\xi,\nu)\bigr)\bigl(n \am(t)k\bigr)$ as
$t\uparrow \infty$ is bounded (see Lemma~\ref{lem-p'}). The finite sum
$\poin^\infty\bigl( \ph \Mu_\Nfu(\xi,\nu)\bigr)\bigl(n \am(t)k\bigr)$
is zero for sufficiently large values of~$t$. Hence
$\tilde \M_\Nfu(\xi,\nu)$ is bounded, and
\be \tilde \M_\Nfu(\xi,\nu) \in L^2\bigl( \Gm\backslash G)_K\qquad\text{
for }\re\nu>2\,.\ee

The function
$\bigl( (1-\ph)\Mu_\Nfu(\xi,\nu) \bigr)\bigl( n \am(t) k \bigr)$
vanishes for all $t$ near zero, uniformly in $n$ and~$k$. Hence the sum
$\M_\Nfu(\xi,\nu) - \tilde\M_\Nfu(\xi,\nu)$ is given, locally in $g$,
by a finite sum. So $\M_\Nfu(\xi,\nu) - \tilde\M_\Nfu(\xi,\nu)$ is
meromorphic in $\nu \in \CC$, with at most first order singularities in
$\ZZ_{\leq -1}$. This difference has in general exponential growth as
$t\uparrow\infty$. In this way, the proof of the meromorphic
continuation is reduced to the continuation of the truncated Poincar\'e
series $\tilde\M_\Nfu(\xi,\nu)$.\medskip

For each $\XX\in \glie$
\be\label{XtM} \XX \tilde \M_\Nfu (\xi,\nu)(g) =\! \sum_{\gm\in
\Gm_{\!N}\backslash \Gm} \ph(\gm g)\, \XX \Mu_\Nfu(\xi,\nu)(\gm g) +
\sum_{\gm\in \Gm_{\!N}\backslash \Gm} \XX\ph(\gm g)\,
\Mu_\Nfu(\xi,\nu)(\gm g)\,.\ee
The decay of $\XX \ph$ near zero is better than the decay of $\ph$, so
both terms in \eqref{XtM} yield a bounded function (see also
\cite[Lemma 2.2]{MW89}).
\smallskip

The Casimir operator $C$ acts on $\Mfu_\Nfu^{\xi,\nu}$ by multiplication
by $\ld_2(\xi,\nu) = \nu^2-4+\frac13 j_\xi^2 $ (see (9.3) in [rFtm]), which
we now denote by
$\ld(\xi,\nu)$. 
  The quantity 
\[ \bigr(C-\ld(\xi,\nu)\bigr)\bigl(\ph  \Mu_\Nfu (\xi,\nu)\bigr) (g) \] 
is non-zero only in the region where the cut-off function $\ph$ is not
constant. So, the function
\be\label{chM} \check \M_\Nfu (\xi,\nu)(g) :\= \bigl( C -\nobreak \ld(\xi,\nu)
\bigr)\allowbreak \, \tilde \M_\Nfu (\xi,\nu)(g)\ee
has compact support in $\Gm\backslash G$. The argument in Lemma~2.3 in
\cite{MW89} gives the meromorphic continuation of
$\check \M_\Nfu (\xi,\nu)(g)$ to $\nu \in \CC$ with at most first order
singularities in $\ZZ_{\leq -1}$.

Since $\Mu_\Nfu(\xi,\nu) = \Mu_{\Nfu;h,p,q}$ we have
\bad\tilde\M_\Nfu(\xi,\nu) &\in L^2(\Gm\backslash G)_{h,p,q}&\text{ for
}& \re\nu>2\,,\\
\check \M_\Nfu(\xi,\nu) &\in C_c^\infty(\Gm\backslash G)_{h,p,q}
&& \nu \in \CC\setminus \ZZ_{\leq -1}\,,\\
\M_\Nfu(\xi,\nu)&-\tilde \M_\Nfu(\xi,\nu) \in C^\infty(\Gm\backslash
G)_{h,p,q}&& \nu \in \CC\setminus \ZZ_{\leq -1}\,.
\ead

Here $L^2_{h,p,q}(\Gm\backslash G) \subset L^2(\Gm\backslash G)$ is the
Hilbert space of $\Gm$-invariant square integrable functions of
$K$-type $\tau^h_p$ and weight $q$, with $p\geq 0$,
$h\equiv p \equiv q\bmod 2$, $|q|\leq p$. There is a direct sum
decomposition
\be\label{L2decomp} L^2_{h,p,q}(\Gm\backslash G)\=
L^{2,\discr}_{h,p,q}(\Gm\backslash G)
\oplus L^{2,\cont}_{h,p,q}(\Gm\backslash G)\,,\ee
of subspaces in which the Casimir operator induces a self-adjoint
operator having discrete (resp.~continuous) spectrum on
$L^{2,\discr}_{h,p,q}(\Gm\backslash G)$ \big(resp.~ on
$L^{2,\cont}_{h,p,q}(\Gm\backslash G)$\big). The bounded function
$\tilde\M_\Nfu (\xi, \nu)$ is square integrable, and hence has a
decomposition
\be \label{Mf-dcp}
\tilde \M_\Nfu (\xi, \nu) \= \tilde\M_\Nfu^{\discr}
(\xi,\nu) + \tilde \M_\Nfu^{\cont} (\xi,\nu) \qquad
(\re\nu>2)\,.\ee
We shall consider the meromorphic continuation of these two components
separately, with the help of the analogous decomposition of
$\check \M_\Nfu(\xi,\nu)$, valid at least for
$\nu \in \CC\setminus \ZZ_{\leq -1}$.

\subsection{Contribution of the discrete spectrum} \label{sect-discr}
The space $L^{2,\discr}_{h,p,q}(\Gm\backslash G)$ has an orthonormal
basis $\bigl\{ f_l \bigr\}_{l \in \NN}$ of eigenfunctions of $C$ with
real eigenvalues $\{\mu_l \}$ with finite multiplicity, ordered
increasingly, and chosen so that the $f_l$ are eigenfunctions of
$\Delta_3$ as well.

We define for $\re\nu>2$
\be c_l(\nu) \= \left( \tilde \M_\Nfu (\xi,\nu),\, f_l\right)\quad
\textrm{ and } 
d_l(\nu) \= \left( \check \M_\Nfu (\xi,\nu),\, f_l\right)\,. \ee
Since $\check \M_\Nfu (\xi,\nu) $ is compactly supported and smooth, the
series $\sum_j d_l(\nu) f_l$ converges absolutely and uniformly on any
compact subset $\Omega$ of $\CC \smallsetminus \ZZ_{\le -1}.$
Furthermore, the coefficient $d_l$ inherits from
$\check \M_{\Nfu}(\xi,\nu)$ meromorphy in $\CC$ with at most first
order singularities in points of $\ZZ_{\leq -1}$, and we obtain from
  the fact that
$\check \M_\Nfu(\xi,\nu) \= (C-\ld(\xi,\nu) \bigr) \tilde \M_\Nfu(\xi,\nu)$,
the relation
\be d_l(\nu) \= \bigl(\mu_l - \ld(\xi,\nu) \bigr)\, c_l(\nu)
\qquad \textrm{ for } \re\nu>2 \,. \ee

So $c_l$ extends meromorphically to $\CC\setminus\ZZ_{\leq -1}$, and if
$\ld(\xi,\nu) \neq \mu_l$ we have
\be\label{tMdext} \tilde \M_\Nfu^{\discr} (\xi, \nu) \= \sum_l \frac
{d_l(\nu)}{\mu_l-\ld(\xi,\nu)} \,f_l\,.\ee
This series converges in the $L^2$-sense, uniformly for $\nu$ on compact
sets $\Omega$ such that $\ld(\xi,\nu)\ne \mu_l$ for all
$\nu \in \Omega$ and all~$l$
(see Theorem~2.5 in \cite[p.~428]{MW89}). In this way,
$\tilde \M_\Nfu^{\discr}  (\xi, \nu)$ has been extended to a
meromorphic family in $\CC$ with values in
$L^{2,\discr}(\Gm\backslash G)$.
Furthermore, we can repeatedly apply elements of $\glie$, and get that
$u \tilde\M^\discr_\Nfu(\xi,\nu)$ is square integrable on
$\Gm\backslash G$ for all $u\in U(\glie)$. This implies that we have a
family that is smooth in $(\nu,g)$ jointly, and holomorphic in~$\nu$ as
long as $\ld(\xi,\nu) \neq \mu_l$ for all~$l$. If there is a
singularity at a point $\nu_0$, it is also a singularity of the sum of
the terms in~\eqref{tMdext} for which $\mu_l  =  \ld(\xi, \nu_0 )$.

\subsection {Contribution of the continuous spectrum}\label{sect-cont}
We now consider the meromorphic continuation of
$\tilde \M_\Nfu^{\cont} (\xi, \nu)$. In~\eqref{E} we have introduced
the Eisenstein series in their domain of absolute convergence
$\re\nu > 2$. For $L^{2,\cont}(\Gm\backslash G)_{h,p,q}$ we need
$E(\xi_r,\nu)_{h,p,q}$ for the finitely many $r \in \ZZ$ such that
$r\equiv p \bmod 2$, $|r|\leq p$, and $j_r=\frac12(h-3r) \in 4\ZZ$.

The Eisenstein series have a meromorphic continuation to $\CC$, and the
possible singularities in the closed half-plane $\re\nu\geq 0$ lie at a
finite set of points in the interval $(0,2]$. The residue at
$\nu_1 \in (0,2]$ is a function in
$\Au{(2)}\bigl( \ps[\xi_r,\nu_1]\bigr)$, and gives a contribution to
the discrete spectrum.

The projection onto the continuous spectrum of a function
$f \in L^2(\Gm\backslash G)_{h,p,q}$ takes the form $ f_\cont(g) 
= \sum_{r} \frac{m_{h,p,q}}{2\pi i} \int_{\re \ld=0} f_{j_r}(\ld)\,
E(\xi_r,\ld)_{h,p,q} (g)\, d\ld\,,$ where
\be\label{Eisint} f_{j_r}(\ld) =\int_{\Gm\backslash G}f(g)
E(\xi_r,\ld)_{h,p,q} (g)\, dg \,,\ee
for $\re (\ld)=0$.

For $\varepsilon$ sufficiently small, if $0 \le \re \ld < \varepsilon $,
the functions $E(\xi_r,\ld)=  E(\xi_r,\ld)_{h,p,q}$ do not have poles
and are integrable on $\Gm\backslash G$, and hence
\badl{phrdef} \varphi_{j_r}(\ld,\nu) &\= \int_{\Gm\backslash G}
\tilde\M_\Nfu \,
(\xi, \nu)(g)\, \overline{E(\xi_r,\bar \ld)(g)} \, dg
\\
&\= \int_{\Gm_{\!N} \backslash G} \ph(g) \, \Mu_\Nfu (\xi, \nu)(g)\,
\overline{E(\xi_r,\bar \ld)(g)}\, dg\,.
\eadl
is a holomorphic function of $(\ld,\nu)$ for
$0\le \re\ld < \varepsilon $ and $\re \nu >2$.

\begin{lem}\label{lem-mercph}The family $(\ld,\nu) \mapsto 
\ph_{j_r}(\ld,\nu)$ extends as a meromorphic function of $(\ld,\nu) $
on $\CC^2$ with singularities occurring only in the union of the
following sets:
\begin{align*} &\CC\times \left( \CC\setminus \ZZ_{\leq -1}\right),
\qquad \Bigl\{ (\ld,\nu) \;:\; \ld+\nu \text{ or } \ld-\nu \in \ZZ_{\leq
0} \Bigr\}\,,\\
&\Bigl\{ (\ld,\nu)\;:\; \nu\in \CC\,,\;E(\xi_r,\cdot)\text{ has a
singularity at }\bar \ld\Bigr\}\,.
\end{align*}
\end{lem}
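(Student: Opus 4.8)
The plan is to establish the meromorphic continuation of $\ph_{j_r}(\ld,\nu)$ in two stages, corresponding to the two expressions in \eqref{phrdef}, just as in the Miatello--Wallach argument \cite[\S2]{MW89}. In the region $0\le\re\ld<\e$, $\re\nu>2$ both expressions agree; I would work with the second one,
\[ \ph_{j_r}(\ld,\nu) \= \int_{\Gm_{\!N}\backslash G} \ph(g)\,\Mu_\Nfu(\xi,\nu)(g)\,\overline{E(\xi_r,\bar\ld)(g)}\,dg\,, \]
since the integrand is supported where $\ph\neq 0$, i.e.\ on a region $NA_{<T}K$ with $T$ bounded, which makes the $g$-integral manageable. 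The key point is the unfolding: because the domain is $\Gm_{\!N}\backslash G$ and not $\Gm\backslash G$, I can insert the Fourier expansion of the Eisenstein series along $\Gm_{\!N}\backslash N$ and pick out the single Fourier term matching the equivariance type of $\Mu_\Nfu(\xi,\nu)$. Concretely, $\Mu_\Nfu(\xi,\nu)$ is $\Nfu$-equivariant under left $N$-translation, so only the $\Four_\Nfu$-component of $E(\xi_r,\bar\ld)$ survives the $n_1$-integral over $\Ld_\s\backslash N$, and the integral collapses to an $NA$-integral (with the $K$-integral handled by $K$-type orthogonality, using that both $\Mu_\Nfu$ and the relevant Fourier term of $E$ have been reduced to a fixed $K$-type $\tau^h_p$ and weight $q$).

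After unfolding, the $t$-integral is an integral over $(0,\infty)$ — cut off near $\infty$ by $\ph$, so convergence there is automatic — of a product of the radial part of $\Mu_\Nfu(\xi,\nu)$ (a modified Bessel function $I_\nu$ in the abelian case, or an $M$-Whittaker function $M_{\k,\nu/2}$ in the non-abelian case, times a power of $t$) against the radial part of $\Four_\Nfu E(\xi_r,\bar\ld)$ (which by Proposition~\ref{prop-MuOm} / the structure of $\Ffu^\ps_\Nfu$ is a combination of the corresponding $K$-Bessel or $W$-Whittaker function and the exponentially increasing solution, with coefficients that are themselves meromorphic in $\ld$ and carry the only $E$-singularities). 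The remaining finite $t$-integral near $0$ is then a classical Mellin–Barnes type integral of two Bessel/Whittaker functions; it converges for $\re(\ld+\nu)$, $\re(\ld-\nu)$ large and has an explicit evaluation (Weber–Schafheitlin, or the Whittaker analogue) producing a ratio of Gamma functions in $\ld\pm\nu$. That evaluation exhibits the claimed poles: the factors $\Gamma$-type singularities at $\ld+\nu\in\ZZ_{\le 0}$ and $\ld-\nu\in\ZZ_{\le 0}$; the poles of the Eisenstein-series expansion coefficients at the $\bar\ld$ where $E(\xi_r,\cdot)$ is singular; and, when I instead invoke the \emph{first} expression in \eqref{phrdef} via the already-established meromorphic continuation of $\tilde\M_\Nfu(\xi,\nu)$ (Theorem~\ref{thm-mer-cont} type data is not yet available, so I lean on the $t$-integral evaluation together with the meromorphy of $\Mu_\Nfu(\xi,\nu)$ from Proposition~\ref{prop-Mudef}) the only new $\nu$-singularities come from $\nu\in\ZZ_{\le-1}$, accounting for $\CC\times(\CC\setminus\ZZ_{\le-1})$ being the locus where holomorphy is guaranteed. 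Patching the two regions $\re\ld<\e$ and $\re\ld$ large (where the unfolded integral converges and gives the Gamma-quotient directly) yields a single meromorphic function on $\CC^2$ with exactly the stated singular set.

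The main obstacle I expect is the bookkeeping in the unfolding step in the non-abelian case: one must verify carefully that only a finite, controlled set of Fourier terms of $E(\xi_r,\bar\ld)$ contributes (cf.\ the abelian analysis in the proof of Proposition~\ref{prop-Ps} and Proposition~\ref{prop-actmi}), that the $K$-integral genuinely diagonalizes, and — crucially — that the radial integral $\int_0^\infty t^{a}\,\ph(t)\,M_{\k,\nu/2}(ct^2)\,\overline{[\text{Whittaker combination}]}(c't^2)\,\tfrac{dt}{t^5}$, once the cutoff is removed by analytic continuation in $\re\ld$, reduces to a standard Barnes integral whose poles are precisely at $\ld\pm\nu\in\ZZ_{\le 0}$. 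Tracking which singularities are genuine versus removable, and matching them against the three sets in the statement, is where the care is needed; the convergence and holomorphy claims in the initial region are routine given the compact support of $\ph$ in the $t$-variable and the integrability of $E(\xi_r,\ld)$ for $0\le\re\ld<\e$ already noted before the lemma.
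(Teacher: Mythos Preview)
Your unfolding step is correct and matches the paper: the $\Gm_{\!N}\backslash G$ integral against $\overline{E(\xi_r,\bar\ld)}$ does collapse to a single Fourier term, and the Eisenstein singularities enter only through the coefficient $C_\Nfu(\xi_r,\bar\ld)$. One correction: since the Eisenstein series has polynomial growth, its $\Nfu$-Fourier term for $\Nfu\neq\Nfu_0$ lies entirely in $\Wfu^{\xi_r,\ld}_\Nfu$, so it is a multiple of $\Om_\Nfu(\xi_r,\ld)$ alone, with no exponentially increasing component. The integrand is thus $\ph(t)$ times (a sum over $r$ of) $h^\mu_r(\nu,t)\,\overline{h^\om_r(\bar\ld,t)}\,t^{-5}$.

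Where you diverge from the paper is in how the resulting $t$-integral is continued. You propose to remove the cutoff and evaluate the full integral via Weber--Schafheitlin / Barnes formulas, reading off the $\ld\pm\nu\in\ZZ_{\le0}$ poles from the resulting Gamma quotients. The paper does \emph{not} remove $\ph$; instead it writes the exponentially decreasing factor as $\Om_\Nfu(\xi_r,\ld)=A_+(\ld)\,\Mu_\Nfu(\xi_r,\ld)+A_-(\ld)\,\Mu_\Nfu(\xi_r,-\ld)$ (the standard connection formula), so that both factors in the integrand have convergent power-series expansions at $t=0$ with leading exponents $t^{2+\nu}$ and $t^{2\pm\ld}$. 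On the compact support of $\ph$ one may then integrate term by term; each term $\int_0^\infty\ph(t)\,t^{\nu\pm\ld+m+l-1}\,dt$ is continued by partial integration, yielding first-order poles along $\nu\pm\ld\in\ZZ_{\le0}$. The coefficients $A_\pm$ contribute possible singularities at $\ld\in\ZZ$, which the paper checks are removable because $\Om_\Nfu$ itself is holomorphic there (with logarithmic terms appearing, possibly raising the pole order to two).

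Your route can be made to work in the one-dimensional $K$-type case, where $h^\mu$ and $h^\om$ are single Bessel or Whittaker functions and the relevant Mellin integrals are tabulated. For general $K$-types, however, the $h^\mu_r,h^\om_r$ are only known to be linear combinations of such functions with shifted parameters, and tracking the pole locations through a Weber--Schafheitlin computation for each summand is cumbersome. The paper's power-series argument bypasses this: it uses nothing about $h^\mu_r$ beyond its regularity structure $t^{2+\nu}\sum_m\al_m(\xi,\nu)t^m$, which holds uniformly for all $K$-types by the definition of $\Mfu^{\xi,\nu}_\Nfu$. That is what the approach buys --- no special-function identities, only the asymptotic shape at $t=0$ and the compact support of $\ph$. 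Your ``patching $\re\ld<\e$ with $\re\ld$ large'' is unnecessary in either approach.
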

This is a preliminary continuation result, comparable to \cite[Lemma
2.4]{MW89}.

\begin{proof}The Eisenstein series $E(\xi_r,\ld)_{h,p,q}$ has a Fourier
expansion in which the term of order $\Nfu$ has the form
$C_\Nfu(\xi_r,\ld) \, \Om_{\Nfu;h,p,q}(\xi_r,\ld)$, with the
exponentially decreasing basis function given in
Proposition~\ref{prop-MuOm}. The coefficient $C_\Nfu(\xi_r,\ld)$ is
holomorphic where $E(\xi_r,\ld)$ is holomorphic.

We start with $(\ld,\nu)$ for which \eqref{phrdef} holds. The second
integral in \eqref{phrdef} is, up to a multiple, equal to
\be \label{CE} \overline{ C_\Nfu(\xi_r,\bar\ld) }
\int_{\Gm_{\!N}\backslash G} \ph (g) \, \Mu_{\Nfu;h,p,r} (\xi,\nu)(g)
\overline{\Om_{\Nfu;h,p,q} (\xi_r,\bar \ld)}\, dg\,.\ee
The influence of the singularities of the Eisenstein series is given by
the Fourier coefficient $C_\Nfu(\xi_r,\cdot)$.
The integral in~\eqref{CE} is a quantity depending on the basis
functions $\Mu_\Nfu$ and $\Om_\Nfu$ and the cut-off function $\ph$.
Only the Fourier coefficient depends on the group~$\Gm$.

For general $K$-types $\tau^h_p$ we do not have an explicit description
of the function $\Mu_{\Nfu;h,p,q}(\xi,\nu)$. However we know that for
$n\am(t)k\in NAK$
\be \Mu_{\Nfu;h,p,q}(\xi,\nu)\bigl( n am(t) k \bigr)
\= \sum_r w_r(n)\, t^{2+\nu}\, h^\mu_r(\xi,\nu,t)\, \Kph h p r q(k)\,,
\ee
with $r$ satisfying the conditions $|r|\leq p$, $r\equiv p\bmod 2$, and
\be h^\mu_r( \xi,\nu, t) \= \sum_{m=0}^\infty \al_m(\xi,\nu) t^m\ee
holomorphic in
$(\nu,t)\in \left( \CC\setminus \ZZ_{\leq-1}\right)\times \CC$, with
possible singularities at points $\nu  =\nu_0$ with $
\nu_0 \in \ZZ_{\leq -1}$. The functions $w_r$ are a character of $N$ or
elements of an orthogonal system of theta functions on~$N$. The
$\Kph h p r q$ belong to an orthogonal system of polynomial functions
on $K$. See Proposition~\ref{prop-MuOm}, and the discussion of
$\nu$-regular behavior at $0$ in \cite[\S10.2]{BMSU21}.

The function $\Om_{\Nfu;h,p,q}(\xi_r,\ld)\bigl( n \am(t)\bigr)$ does not
have this form as $t\downarrow 0$. However, for $\ld\not\in \ZZ$ we can
write it as a linear combination of $\Mu_{\Nfu;h,p,q}(\xi,\nu)$ and
$\Mu_{\Nfu;h,p,q}(\xi,-\nu)$ with coefficients that are holomorphic in
$\CC\setminus \ZZ$. See \cite[(10.21)]{BMSU21}. In this way we find
that the integral in \eqref{CE} has the form
\be \label{intser} \int_{t=0}^\infty \sum_\pm A_\pm(\ld)\, \ph(t)
t^{\nu\pm \ld} \, \sum_{m,l\geq 0} \al_m(\xi,\nu) \,
\overline{\al_l(\xi_r,\bar\ld)}\, t^{m+l-1}\, dt\,,\ee
where $A_+$ and $A_-$ are meromorphic functions which may have
singularities only in~$\ZZ$. The series in $m$ and $l$ converge
absolutely on the compact support of~$\ph$.

For $\re\nu - |\re\ld|>0$ the integral converges and defines a
holomorphic function of $(\ld,\nu)$. We apply partial integration to
the integral of individual terms to see that they have a meromorphic
extension with singularities along lines
$\nu \pm \ld \in \ZZ_{\leq 0}$. Splitting off more and more terms, we
get holomorphy of the remaining contributions on larger sets. Thus we
have a meromorphic extension which is holomorphic outside the sets
indicated in the lemma.

If $\ld_0 \in \ZZ$, then there will be coinciding terms in
\eqref{intser}. The singularities of $A_\pm(\ld)$ at $\ld_0$ are
canceled, and the integrand in \eqref{intser} extends holomorphically
to~$\ld_0$. This reflects the fact that the family
$\nu\mapsto \Om_{\Nfu;h,p,q}(\xi,\nu)$ is holomorphic in~$\CC$. There
arises a series with terms $t^{\nu \pm \ld+N-1}$ and
$t^{\nu\pm \ld+N-1}\,\log t$, with $N\geq 0$. Partial integration now
leads to singularities that may have second order.

In this way we arrive at a meromorphic extension of $\ph_{j_r}$ with
singularities along lines $\ld-\nu = n$ and $\ld+\nu=n$, with
$n\in \ZZ_{\leq 0}$. The possibility that $\Mu_{\Nfu;h,p,q}(\xi,\nu)$
has singularities at $\nu\in \ZZ_{\leq -1}$ may cause singularities of
$\ph_{j_r}$. The Fourier coefficient $C_\Nfu(\xi_r,\bar \ld)$ may also
cause singularities.
\end{proof}

The function $\check \Mu_\Nfu(\xi,\nu)$ has a compact support determined
by the cut-off function~$\ph$. It depends meromorphically on $\nu$ with
singularities only in $\ZZ_{\leq -1}$. Hence
\be \check \varphi_{j_r}(\ld,\nu) \= \int_{\Gm\backslash G} \check
\M_\Nfu \, (\xi, \nu)(g)\, \overline{E(\xi_r,\bar\ld)(g)} \, dg \ee
is meromorphic, with singularities along $\nu =n$, $n\in \ZZ_{\leq -1}$
and along $\ld=\overline{\ld_0}$ for points $\ld_0$ at which the
Eisenstein series has a singularity.

The functions $\tilde \M_\Nfu \, (\xi, \nu)(g)$ and
$\check \M_\Nfu \, (\xi, \nu)(g)$ and their derivatives by
$\XX \in \mathfrak g$ are bounded. Differentiation under the integral
sign gives
\begin{align}\nonumber
\check\varphi_{j_r}(\ld,\nu) & \=\Bigl( \tilde \M_\Nfu \, (\xi, \nu),
\bigl( C - \overline{\ld_2(\xi,\nu)}\bigr) E(\xi_r,\bar\ld) \Bigr)
\displaybreak[0]\\
\nonumber
&\= \bigl(\ld_2(\xi_r,\ld)- \ld_2(\xi,\nu) \bigr) \, \bigl( \tilde
\M_\Nfu(\xi,\nu), E(\xi_r,\bar\ld) \bigr)
\displaybreak[0]\\
\label{chphd}
&\= \Bigl( \ld^2-\nu^2+ \frac13\bigl( (j_r)^2-j^2\bigr) \Bigr) \,
\ph_{j_r}(\ld,\nu)\,,
\end{align}
valid for $\ld\in \Om$ and $\re \nu >2$, and extended as a relation
between meromorphic functions on $\CC^2$, with use of
Lemma~\ref{lem-mercph}. This relation describes the function
$\ph_{r_j}(\ld,\nu)$ in terms of the better behaved function
$\check \ph_{r_j}(\ld,\nu)$. The singularities of $\ph_{j_r}$ come
either from singularities of $\check \ph_{j_r}$, or from the polynomial
factor in~\eqref{chphd}. By repeating the differentiation in
\eqref{chphd} we obtain quick decay of $\check \ph_r(\nu, \ld)$ for
$\nu$ on vertical strips and for $\re\ld$ in bounded sets. This implies
the same result for $\ph_{j_r}(\ld,\nu)$ outside neighborhoods of its
singularities.

We use the meromorphy of $\check \ph_{j_r}$ and $\ph_{j_r}$ to get the
meromorphic continuation of the projections of
$\check \M_\Nfu(\xi,\nu)$ and $\M_\Nfu(\xi,\nu)$ to
$L^{2,\cont}(\Gm\backslash G)$.
\begin{align*}
\check \M_\Nfu^{\cont} \, (\xi, \nu)(g) &\= \sum_{r}
\frac{m_{h,p,q}}{2\pi i} \int_{\re \ld=0} \check
\varphi_{j_r}(\ld,\nu)\, E(\xi_r,\ld)_{h,p,q} (g)\, d\ld\,,
\displaybreak[0]\\
\tilde \M_\Nfu^{\cont} \, (\xi, \nu)(g) &\= \sum_{r}
\frac{m_{h,p,q}}{2\pi i} \int_{\re \ld=0} \varphi_{j_r}(\ld,\nu)\,
E(\xi_r,\ld)_{h,p,q} (g)\, d\ld
\displaybreak[0]\\
&\= \sum_{r} \frac{m_{h,p,q}}{2\pi i} \int_{\re \ld=0} \frac{\check
\ph_{j_r}(\ld,\nu)}{(-\ld)^2 -\nu^2+\frac{(j_r)^2-j^2}3}\,
E(\xi_r,\ld)_{h,p,q} (g)\, d\ld\,.
\end{align*}
Initially, these relations hold for $\re\nu>2$. The aim is to give a
meromorphic extension. In the proof of Theorem~2.5 in~\cite{MW89} an
analogous situation arose; see p~428--429.

In \cite{MW89}, the complex plane is divided into connected regions on
which the integrals above describe a holomorphic function of~$\nu$.
Deformation of the path of integration causes changes of these regions.
The resulting functions of $\nu$ are related by Cauchy's residue
theorem. Suitable choices of a complex square root of
$\nu^2 + \frac 13\bigl( j^2-{j_r}^2 \bigr)$ lead to the desired
meromorphic continuation. Below, we work out the proof of the
meromorphic continuation of $\tilde \M_\Nfu^\cont(\xi,\nu)$  in an alternative way, in the
present context and notation.

\rmrk{The general integral} We consider functions $I^+_r(\Ld,\nu)$ on
the region $\bigl\{ (\Ld,\nu)\in \CC^2\;:\; \re\Ld >0 \bigr\}$, and
$I^-_r(\Ld,\nu)$ on
$\bigl\{ (\Ld,\nu) \in \CC^2\;:\; \re \Ld <0\bigr\}$, both given by the
integral
\be\label{i0} \frac 1{2 \pi i} \int_{\re \ld = 0} \frac {\check
\varphi_{j_r}(\ld,\nu)}{\ld^2 -\Ld^2} E (j_r,\ld)_{h,p,q}(g)\,
d\ld\,.\ee
If $\Ld^2=\nu^2+\frac13(j^2-j_r^2)$, then $I^+_r(\Ld,\nu)$ corresponds
to the integral in the term of order $r$ in the expression for
$\tilde\M_\Nfu^\cont(\xi,\nu)(g)$. We first consider $\Ld$ and $\nu$ as
two independent variables. We drop $g$ from the notation in the further
computations.

The functions $I^\pm_r$ are meromorphic, with singularities at most
along (complex) lines of the form
$\bigl\{ (\Ld,n)\;:\ \pm \re\Ld>0,\; n\in \ZZ_{\leq -1}\bigr\}$. We
have the relation $I^-_r(\Ld,\nu) = I^+_r(-\Ld,\nu)$. The singularities
of the integrand in \eqref{i0} as a function of $\ld$ do not occur on
the path of integration.

We take $a>0$, and consider two deformations of the path of integration,
as depicted in Figure~\ref{fig-Ce}.
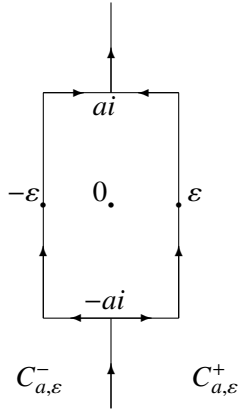
\begin{figure}[ht]
\[ \setlength\unitlength{.6cm} 
\begin{picture}(6,9)(-3,-4.5)
\put(0,-4.5){\line(0,1){2}}
\put(0,-4.5){\vector(0,1){1}}
\put(0,-2.5){\line(1,0){1.5}}
\put(0,-2.5){\vector(1,0){.9}}
\put(0,-2.5){\line(-1,0){1.5}}
\put(0,-2.5){\vector(-1,0){.9}}
\put(1.5,-2.5){\line(0,1){5}}
\put(1.5,-2.5){\vector(0,1){1.6}}
\put(-1.5,-2.5){\line(0,1){5}}
\put(-1.5,-2.5){\vector(0,1){1.6}}
\put(1.5,2.5){\line(-1,0){1.5}}
\put(1.5,2.5){\vector(-1,0){.9}}
\put(-1.5,2.5){\line(1,0){1.5}}
\put(-1.5,2.5){\vector(1,0){.9}}
\put(0,2.5){\line(0,1){2}}
\put(0,2.5){\vector(0,1){1}}
\put(0,0){\circle*{.1}}
\put(1.5,0){\circle*{.1}}
\put(-1.5,0){\circle*{.1}}
\put(-.4,.1){$0$}
\put(-.4,2.0){$ai$}
\put(-.6,-2.3){$-ai$}
\put(1.7,.1){$\e$}
\put(-2.3,.1){$-\e$}
\put(1.8,-4){$C^+_{a,\e}$}
\put(-2.1,-4){$C^-_{a,\e}$}
\end{picture}
\]
\caption{\it Deformed contours in the $\ld$-plane, inside the
neighborhood $\Om$ of the imaginary axis.} \label{fig-Ce}
\end{figure}
We take $\e>0$ sufficiently small such that both deformed paths are
contained in~$\Om$. Denote by $R_{a,\e}$ the open rectangular region
bounded by the joined paths $C^\pm_{a,\e}$.

Deforming the path $\re\ld=0$ to $C_{a,\e}^-$ we get an extension of
$I^+_r(\Ld,\nu)$ with $\Ld$ in the region on the right of $C^-_{a,\e}$.
Similarly, we have an extension of $I^-_r(\Ld,\nu)$ with $\Ld$ in the
region on the left of $C^+_{a,\e}$.

For $\Ld\neq 0 \in R_{a,\e}$, the difference of these two extensions is
given by
\begin{align} \nonumber
I^-_r(\Ld,\nu) &- I^+_r(\Ld,\nu) \= \frac 1{2\pi i} \biggl(
\int_{C_{a,\e}^+ }
-\int_{C_{a,\e}^-}\biggr)
\frac{\check \ph_{j_r}(\ld,\nu)}{\ld^2-\Ld^2} E(j_r,\ld) \, d\ld
\displaybreak[0]\\
\nonumber
&\= \frac1{2\pi i} \int_{\partial R_{a,\e}} \frac{\check
\ph_{j_r}(\ld,\nu)}{\ld^2-\Ld^2} E(j_r,\ld) \, d\ld
\displaybreak[0]\\
&\= \sum_\pm \mathop{\mathrm{Res}}_{\ld=\pm \Ld} \frac{\check
\ph_{j_r}(\ld,\nu)}{\ld^2-\Ld^2} E(j_r,\ld) = \sum_\pm \frac{\pm
1}{2\Ld} \, \check \ph_{j_r}(\pm \Ld,\nu)\, E(j_r,\pm \Ld).
\label{p+p-}
\end{align}
The right hand side is meromorphic on $\CC^2$. The singularities are
along lines $\CC \times \{n\}$ with $n\in \ZZ_{\leq -1}$, and along
lines $\{\ld_0\}\times \CC$ with $\pm \ld_0$ or $\pm \overline{\ld_0}$
a singularity of the Eisenstein series. Such points $\ld_0$ do not
occur on the paths $C^\pm_{a,\e}$ or in the rectangle $R_{a,\e}$. In
the sum $\sum_\pm$ the singularities at $\Ld=0$ cancel each other. The
formula
\be I^+_r(\Ld,\nu) \= I^-_r(\Ld,\nu) + \sum_\pm \frac{\mp 1}{2\Ld} \,
\check \ph_{j_r}(\pm \Ld,\nu)\, E(j_r,\pm \Ld)\,,\ee
valid at first for $\Ld\in R_{a,\e}$, gives the meromorphic extension
for $(\Ld, \nu)$ on $\CC^2$ minus the subsets $i[a,\infty) \times \CC$
and $-i[a,\infty)\times \CC$. Since $a>0$ can be chosen arbitrarily,
the function $(\Ld,\nu)\mapsto I^+_r(\Ld,\nu)$ is meromorphic on
$\CC^2$. On the region $\re\Ld\geq 0$ it can have singularities only
along lines $\nu=n$, $n\in \ZZ_{\leq -1}$. For $\re\Ld<0$ there may be
singularities along lines $\{\ld_0\}\times \CC $ as well.

\rmrk{Meromorphic continuation for $|j_r|= |j|$}To obtain the
meromorphic continuation as a function of~$\nu$ of the contributions to
$\tilde\M_\Nfu^\cont(\xi,\nu)$, we restrict the functions
$I_r^+( \Lambda, \nu)$ to the curve of equation $\Ld^2 = \nu^2+\dt$,
with $\dt=\frac13\left( j^2-j_r^2\right)$. In the case $|j_r|=|j|$ we
can do that by taking $\Ld=\nu$. The singular set of $I^+_r(\Ld,\nu)$
is contained in a union of lines $\CC\times\{n\}$,
$n\in \ZZ_{\leq -1}$, and lines $\{\ld_0\}\times \CC$ with $\ld_0$
determined by the singularities of the Eisenstein series. This singular
set intersects the line $\Ld=\nu$ discretely. Hence the restriction
$\nu \mapsto I^+_r(\nu,\nu)$ gives a meromorphic function of $\nu$ with
singularities at values related to singularities of Eisenstein series
and at negative integers. The extension satisfies the following
identity of meromorphic functions on~$\CC$.
\be\label{fe0}
I_r^+(\nu,\nu) \= I^+_r(-\nu,\nu)+ \sum_\pm \frac{\mp 1}{2\nu} \, \check
\ph_{j_r}(\pm \nu,\nu) \, E(j_r,\pm \nu)\,. \ee We note that the
singularities of $\check \ph_{j_r}(\Ld,\nu)$ occur along lines of the
form $\CC\times\{\ld_0\}$ and $\{n\}\times \CC$, although
$\ph_{j_r}(\Ld,\nu) $ can have singularities along the line
$\pm \Ld=\nu$. See Lemma~\ref{lem-mercph}.

\rmrk{Meromorphic continuation for $|j_r|\neq |j|$} Now we are concerned
with the restriction of $I^+_r(\Ld,\nu)$ to a quadratic curve $C_r$
with equation $\Ld^2 = \nu^2+\dt$ with $\dt\neq0$.

The meromorphic function $(\Ld,\nu)\mapsto 
\frac12 I^+(\Ld,\nu) + \frac12 I^-(\Ld,\nu)$ on $\CC^2$ is even in
$\Ld$. Hence it is of the form
$(\Ld,\nu) \mapsto \tilde I_r(\Ld^2,\nu)$ for some meromorphic function
$\tilde I_r$ on~$\CC^2$. This function inherits from $I^+_r$ and
$I^-_r$ the property that its singularities occur only along lines
$\CC\times \{n\}$ and $\{\ld_0\}\times \CC$. Hence $(\Ld,\nu) 
\mapsto \tilde I_r(\Ld^2,\nu)$ has a meromorphic restriction to $C_r$.
Relation \eqref{p+p-} implies the following
\[ \tilde I_r(\Ld^2,\nu) \= I^+_r(\Ld,\nu) + \sum_\pm \frac{\pm 1}{2\nu}
\, \check \ph_{j_r}(\pm \Ld,\nu) \, E(j_r,\pm \Ld)\,.\]

The set of singularities of $\ph_{j_r}$ is a union of lines described in
Lemma~\ref{lem-mercph}. This set intersects $C_r$ in isolated points.
So the restriction of $\ph_{j_r}$ to $C_r$ is a meromorphic function,
and by \eqref{chphd}, the restriction of
\[ \check \ph_{j_r}(\Ld,\nu) \= ( \Ld^2-\nu^2-\dt)\,
\ph_{j_r}(\Ld,\nu)\]
to $C_r$ is zero. Hence we get the equality
$\tilde I_r(\Ld^2,\nu) = I^+_r(\Ld,\nu)$ on~$C_r$. The meromorphic
function $\nu \mapsto \tilde I_r\bigl( \nu^2+\dt,\nu\bigr)$ gives the
desired meromorphic continuation.

\rmrk{Regularity} In each of the three cases, the meromorphically
continued quantity is given by integrals and terms with a factor that
is quickly decreasing for $\ld$ on vertical strips. So all
contributions to the meromorphic continuation of
$\tilde\Mu^\cont_\Nfu(\xi,\nu)(g)$ are locally $C^\infty$ in $(\nu,g)$
and holomorphic in $\nu$ at all points $(\nu,g)$ with $\re \nu \ge 0$.

\rmrk {Remark} We note that identity \eqref{p+p-} shows that in the cases
$|j_r|=|j|$ there is no pole of the continuation of $I^0_+ (\nu)$
at $\nu =0$, and hence $\M_\Nfu^\cont(\xi,\nu)$ is holomorphic at
$\nu =0$. Actually, in the same way one can see  that
$\M_\Nfu^\cont(\xi,\nu)$ is holomorphic at $\nu =0$ for the abelian
Poincar\'e series studied in \cite{MW89} in the case of arbitrary
semisimple Lie groups of real rank one.

\subsection {Conclusion} \label{sect-conclusion}
The considerations in the previous subsections show that the meromorphic
continuation can be given in the form
\be \M_\Nfu(\xi,\nu) \= \tilde\M_\Nfu^\discr(\xi,\nu)+
\tilde\M_\Nfu^\cont(\xi,\nu)
+ \poin (1-\ph) \Mu_\Nfu(\xi,\nu)\,.\ee
For open sets $\Om \subset \CC$ in which no singularities occur,
$\M_\Nfu(\xi,\nu) \in C^\infty( \Om \times G)$, and it depends
holomorphically on $\nu \in \Om$. At a singularity at $\nu=\nu_0$ with
$\re\nu_0\geq 0$ the function $\M_\Nfu(\xi,\nu)$ is the sum of an
explicit finite sum, describing the singularity, plus an element of
$C^\infty(\Om\times G)$ that is holomorphic in $\nu$ on a neighborhood
$\Om$ of $\nu_0$. The order of singularities in the closed right
half-plane is $1$, or possibly $2$ at $\nu=0$. The Laurent series at a
singularity at $\nu=\nu_0$ with $\re\nu_0\geq 0$, starts with a term
given by a square integrable automorphic form for which the eigenvalue
of the Casimir operator is equal to~$\ld_2(\xi,\nu_0)$.

It might happen that the sum
$\sum_{\gm\in \Gm_{\!N} \backslash \Gm_P} \Mu_\Nfu(\xi,\nu;\gm g)$ is
identically zero. In this case, $\poin \Mu_\Nfu(\xi,\nu)$ is zero, and
Theorem~\ref{thm-mer-cont} is trivially true for $\Poin_\Nfu(\xi,\nu)$.

\subsection {Main part and order} \label{sect-mainpart}
To be able to complete the proof of Theorem~\ref{thm-mer-cont}, we now
introduce a useful concept.
\begin{defn}\label{mainpart} Assume the family
$\nu\mapsto\poin \Mu_\Nfu(\xi,\nu)$ is not identically zero. We let the
{\it order} at the point $\nu_0$ of the Poin\-ca\-r\'e family, denoted
$\po_\Nfu(\nu_0)=\po\bigl( \M_\Nfu;\nu_0)$, be the integer such that
the limit
\be \label{def:mainpart} P_{\nu_0}\bigl(\M_\Nfu\bigr)=
\lim_{\nu\rightarrow\nu_0} (\nu-\nu_0)^{\po_\Nfu(\nu_0)} \,
\M_\Nfu(\xi,\nu)\ee
exists and is non-zero. We call the limit function, 
$P_\Nfu(\nu_0)$, the \emph{main part} of $\M_\Nfu(\xi,\cdot)$ at
$\nu_0$. If the Poincar\'e family is identically zero we put
$\po_\Nfu(\nu_0)=-\infty$ and $P_\Nfu(\nu_0)=0$ for all $\nu_0\in \CC$.
\end{defn}

Thus, if the order is a positive integer, then $\poin \Mu_\Nfu(\xi,\nu)$
has a singularity at $\nu=\nu_0$ and if it equals $1$, the main part is
the residue at $\nu_0$. If the order is non-positive, the main part at
$\nu_0$ is the value $\M_\Nfu(\xi,\nu_0)$ (for $m=0$) or the value of a
derivative at $\nu_0$ (for $m\geq 1$).

\begin{prop}\label{prop-mpPf}
Let $\re\nu\geq 0$.
\begin{enumerate}
\item[i)] The main part $P_{\nu_0}\bigl(\M_\Nfu\bigr)$ is an automorphic
form with moderate exponential growth lying in
$\Au!\bigl(\ps[\xi,\nu_0]\bigr)$.
\item[ii)] If $\M_\Nfu(\xi,\nu)$ has a singularity at $\nu=\nu_0$, then
the main part is a square integrable automorphic form in
$\Au{(2)}\bigl( \ps[\xi,\nu_0]\bigr)$.
\end{enumerate}
\end{prop}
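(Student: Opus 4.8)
The plan is to read off both statements from the explicit form of the meromorphic continuation obtained in \S\ref{sect-conclusion}, namely
\[ \M_\Nfu(\xi,\nu) \= \tilde\M_\Nfu^\discr(\xi,\nu)+ \tilde\M_\Nfu^\cont(\xi,\nu) + \poin (1-\ph)\, \Mu_\Nfu(\xi,\nu)\,, \]
together with the structural facts established along the way. First I would fix $\nu_0$ with $\re\nu_0\ge0$ and inspect the contribution of each of the three summands to the Laurent expansion at $\nu_0$. The last term, $\poin(1-\ph)\Mu_\Nfu(\xi,\nu)$, is, locally in $g$, a finite sum, holomorphic on $\CC\setminus\ZZ_{\le-1}$; in particular it is holomorphic at $\nu_0$ (recall $\re\nu_0\ge0$), so it contributes only to the regular part and is itself an automorphic form of moderate exponential growth in $\Au!$, with the correct infinitesimal character since it lies in the image of the intertwining map out of $\Mfu_\Nfu^{\xi,\nu_0}$. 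The continuous part $\tilde\M_\Nfu^\cont(\xi,\nu)$ was shown in the ``Regularity'' paragraph of \S\ref{sect-cont} to be locally $C^\infty$ in $(\nu,g)$ and holomorphic in $\nu$ at every point with $\re\nu\ge0$; thus it too contributes only to the regular part. Consequently every singularity of $\M_\Nfu(\xi,\cdot)$ in the closed right half-plane comes from the discrete part $\tilde\M_\Nfu^\discr(\xi,\nu)$.

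For part i), I would then argue that $P_{\nu_0}(\M_\Nfu)$, being the leading Laurent coefficient, is a finite limit (or limit of a derivative, if the order is non-positive) of the family $\nu\mapsto\M_\Nfu(\xi,\nu)$, each member of which lies in $\Au!\bigl(\ps[\xi,\nu]\bigr)$ by Proposition~\ref{prop-Ps} (for $\re\nu>2$) and by the continuation just described for other $\nu$. Since the three summands are jointly $C^\infty$ in $(\nu,g)$ off the singular set, the limit exists as a smooth $\Gm$-invariant $K$-finite function; it satisfies $uf=\ps[\xi,\nu_0](u)f$ because the Casimir and $\Dt_3$ eigenvalue equations pass to the limit (they hold identically in $\nu$ with eigenvalues holomorphic in $\nu$). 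Moderate exponential growth in the sense of Definition~\ref{def-A!} is inherited: by Proposition~\ref{prop-Ps}(iii) only finitely many Fourier term orders (those in the sets in iii~a),~b) together with $\Nfu_0$) can carry the exponential growth, and after subtracting those the remainder is $L^2$ on $\Ld_\s\backslash NA_TK$; taking the appropriate leading Laurent coefficient of the Fourier-term decomposition preserves this, since the exceptional set $E$ does not depend on $\nu$ and the $L^{2-\al}$-estimate on the complement is uniform on compact $\nu$-sets.

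For part ii), suppose $\M_\Nfu(\xi,\nu)$ has an actual pole at $\nu_0$ with $\re\nu_0\ge0$. By the first paragraph the pole lives entirely in $\tilde\M_\Nfu^\discr(\xi,\nu)=\sum_l \frac{d_l(\nu)}{\mu_l-\ld(\xi,\nu)}f_l$; its principal part is the principal part of the finite sub-sum over those $l$ with $\mu_l=\ld(\xi,\nu_0)$, each $d_l$ being holomorphic at $\nu_0$. The leading Laurent coefficient is therefore a finite linear combination of the $f_l$ with $\mu_l=\ld(\xi,\nu_0)$, hence an element of $L^{2,\discr}_{h,p,q}(\Gm\backslash G)$ on which $C$ acts by $\ld(\xi,\nu_0)$ and which is also a $\Dt_3$-eigenfunction; this is exactly a square-integrable automorphic form in $\Au{(2)}\bigl(\ps[\xi,\nu_0]\bigr)$. (The regular summands $\tilde\M_\Nfu^\cont$ and $\poin(1-\ph)\Mu_\Nfu$ contribute nothing to the leading coefficient when the order is $\ge1$.) I expect the one genuine subtlety to be bookkeeping at $\nu_0=0$, where the continuation may have order $2$: there one must check that the leading (order-$2$) coefficient still comes purely from $\tilde\M_\Nfu^\discr$ — which it does, since $\tilde\M_\Nfu^\cont$ is holomorphic at $\nu=0$ by the Remark at the end of \S\ref{sect-cont} and $\poin(1-\ph)\Mu_\Nfu$ has at most simple poles, and those only in $\ZZ_{\le-1}$ — and that the coefficient of $(\nu-\nu_0)^{-\po_\Nfu(\nu_0)}$ extracted from the spectral sum is still a finite combination of $L^2$-eigenfunctions. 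Apart from this, the argument is a direct consequence of the decomposition already in hand.
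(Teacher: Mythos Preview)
Your overall architecture matches the paper's: decompose $\M_\Nfu(\xi,\nu)$ into $\tilde\M_\Nfu^\discr + \tilde\M_\Nfu^\cont + \poin(1-\ph)\Mu_\Nfu$, observe that only the discrete part can be singular for $\re\nu_0\ge0$, and read off~ii) from the spectral sum~\eqref{tMdext}. Your argument that the $ZU(\glie)$-eigenfunction property passes to the main part is correct and is exactly what the paper does.

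There is one step where your argument is loose and the paper proceeds differently: the claim that moderate exponential growth is inherited by the main part. You argue that the $L^{2-\al}$-estimate on the complement of the finite exceptional set $E$ is uniform on compact $\nu$-sets and therefore passes to the leading Laurent coefficient. This is not justified as stated --- uniform $L^{2-\al}$-bounds for a family do not automatically transfer to a $\nu$-derivative, and near a singularity they are not uniform at all. The paper avoids this entirely: it notes that each Fourier term operator $\Four_{\Nfu'}$, being integration over a compact set, preserves holomorphy in~$\nu$, and that for $\Nfu'$ outside the exceptional set one has $\Four_{\Nfu'}\M_\Nfu(\xi,\nu)\in\Wfu_{\Nfu';h,p,q}^{\xi,\nu}$, a holomorphic family of one-dimensional spaces. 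Hence the leading Laurent coefficient of $\Four_{\Nfu'}\M_\Nfu(\xi,\nu)$ at $\nu_0$ lies in $\Wfu_{\Nfu';h,p,q}^{\xi,\nu_0}$, which has exponential decay. This gives $P_{\nu_0}(\M_\Nfu)\in\Au!\bigl(\ps[\xi,\nu_0]\bigr)$ directly, without any uniformity in an integral norm.

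A second, smaller point in~ii): the finite combination of $f_l$ with $\mu_l=\ld(\xi,\nu_0)$ need not a priori be a $\Dt_3$-eigenfunction, since the $f_l$ may have different $\Dt_3$-eigenvalues. The paper closes this by invoking~i): since the main part is already known to lie in $\Au!\bigl(\ps[\xi,\nu_0]\bigr)$, only those $f_l$ with $\ps_l=\ps[\xi,\nu_0]$ can contribute. You have the ingredients for this but should make the inference explicit.
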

\begin{proof}On the region $\re\nu>0$ we have
$u \M_\Nfu(\xi,\nu) = \ps[\xi,\nu](u) \, \M_\Nfu(\xi,\nu)$ for all
$u \in ZU(\glie)$. The pointwise holomorphy implies that this relation
persists in the complement of all singularities. The same reasoning
applies to the function
$\nu \mapsto (\nu-\nu_0)^{\po_\Nfu(\nu_0)} \, \M_\Nfu(\xi,\nu)$ on a
neighborhood of $\nu_0$, and extends to the main part by holomorphy.

The Fourier terms $\Four_{\Nfu'}\M_\Nfu(\xi,\nu)$ depend holomorphically
on~$\nu$, since they are defined by integration over compact sets. For
almost all Fourier term orders $\Nfu'$ we have for $\re\nu>2$
\[ \Four_{\Nfu'}\M_\Nfu(\xi,\nu) \in \Wfu_{\Nfu';h,p,q}^{\xi,\nu}\,.\]
Since $\nu \mapsto \Wfu_{\Nfu';h,p,q}^{\xi,\nu}$ is a holomorphic family
of one-dimensional subspaces of
$C^\infty(\Gm_{\!N}\backslash G)_{h,p,q}$ this relation is preserved by
holomorphy. Thus, we obtain part~i).

If $\M_\Nfu(\xi,\nu)$ has a singularity at~$\nu=\nu_0$, then
by~\eqref{tMdext}, its main part 
at $\nu_0$ is a linear combination of basis functions
$f_l \in L^{2,\discr}(\Gm\backslash G)_{h,p,q}$ which lie in
$\Au{(2)}\bigl( \ps_l \bigr)$ for a character of $ZU(\glie)$ that
satisfies $\ps_l(C) = \ld_2(\xi,\nu_0)$ and
$\ps_l(\Dt_3)= \ld_3(\xi,\nu_0)$. By i), the only $f_l$ that can occur
satisfy $\ps_l = \ps[\xi,\nu_0]$. This gives ~ii).
\end{proof}

Proposition~\ref{prop-mpPf} implies the assertion in
Theorem~\ref{thm-mer-cont} that $\M_\Nfu(\xi,\nu)$ is a family of
automorphic forms with moderate exponential growth. It also proves
part~(i) of the theorem, and gives the additional information that the
main part is in $\Au{(2)}\bigl( \ps[\xi,\nu_0]\bigr)$.

Having shown that the family is locally in $C^\infty(\Om\times G)$, it
follows that the intertwining property in part~ii) of the theorem stays
valid under meromorphic continuation. This completes the proof of
Theorem~\ref{thm-mer-cont}.

\section{Statements of completeness results}\label{sect-stcr}
For cuspidal Maass wave forms on $\Gm\backslash\SL_2(\RR)$ all square
integrable automorphic forms arise from Poincar\'e series. The spaces
of holomorphic cusp forms are spanned by absolutely convergent
Poincar\'e series. Spaces of real analytic Maass forms are obtained by
  resolving meromorphically continued Poincar\'e series. Theorem~3.2
in~\cite{MW89} gives a generalization of the latter results to Lie
groups of real rank one under restrictions on the isomorphism class of
the $(\glie,K)$-module generated by the square integrable automorphic
form.

For $\SU(2,1)$ not all square integrable automorphic forms are values or
residues of Poincar\'e series. However, we will see that in all cases
when this does not hold, a non-zero square integrable automorphic form
can be detected by using the family of scalar products
$\nu \mapsto \bigl( f, \tilde\M_\Nfu(\xi,\nu) \bigr)_{\Gm\backslash G}$
with a suitable family of truncated Poincar\'e series. As a preparation
for a more precise formulation, we  start with a discussion  on $(\glie,K)$-modules
generated by square integrable automorphic forms.

\subsection{Preparation}\label{sect-mp}
As in Definition~\ref{mainpart}, given a \emph{non-zero} meromorphic
family $\nu \mapsto F(\nu)$ of functions in $C^\infty(G)$ and
$\nu_0\in \CC$, the \emph{main part} of $F$ at $\nu_0$ is defined as
the non-zero function
\be \lim_{\nu\rightarrow\nu_0} (\nu-\nu_0)^m \, F(\nu) \in
C^\infty(G)\,,\ee
where $m$ is the \emph{order} of $F$ at $\nu_0$, i.e. the unique integer
such that the limit exists and is non-zero. For the family that is
identically zero we define the order to be $-\infty$ and the main part
to be zero.

We will work with Poincar\'e families for Fourier term orders
$\Nfu \neq \Nfu_0$. We allow $\nu\mapsto\poin^\infty \Mu_\Nfu(\xi,\nu)$
(and hence $\nu\mapsto\poin \Mu_\Nfu(\xi,\nu)$)
to be identically zero.

As before, for $\nu_0$ with $\re\nu_0\geq 0$, we denote the main part at
$\nu_0$ of Poincar\'e families $\M_\Nfu(\xi,\nu)$ by
$P_\Nfu(\xi,\nu_0)$. For truncated Poincar\'e families
$\tilde\M_\Nfu(\xi,\nu)$, it will be denoted by
$\tilde P_\Nfu(\xi,\nu_0)$. We denote by $\po_\Nfu(\xi,\nu_0)$
the order of $\nu \mapsto \M_\Nfu(\xi,\nu)$ at $\nu_0$. If
$\po_\Nfu(\nu_0)\leq 0$, the order of the truncated family may depend
on the choice of the cut-off function.

We note that for singularities at $\nu=\nu_0$ we have
$P_\Nfu(\xi,\nu_0) = \tilde P_\Nfu(\xi,\nu_0)$, independently of the
cut-off function used for truncation. This follows from the fact that
$\M_\Nfu(\xi,\nu)-\tilde \M_\Nfu(\xi,\nu)$ is holomorphic at
$\nu \in \CC\setminus Z_{\leq -1}$.

\subsubsection{Modules of square integrable automorphic forms}Let $\ps$
be a character of $ZU(\glie)$. There are only finitely many isomorphism
classes of $(\glie,K)$-modules in which $ZU(\glie)$ acts according to a
given character. The $K$-types in these different isomorphism
classes are disjoint (see for instance \cite[Proposition 12.2]{BMSU21}).
Hence $\Au{(2)}(\ps)$ is the direct sum of finitely many irreducible
$(\glie,K)$-modules, and for a given $K$-type $\tau^h_p$ the space
$\Au{(2)}(\ps)_{h,p}$ generates a finite sum of irreducible
$(\glie,K)$-modules with isomorphism class determined by $\ps$
and~$\tau^h_p$.

We leave the one-dimensional $(\glie,K)$-module out of our
consideration. It consists 
of constant functions, which occur as residues of Eisenstein series. We
proceed with an infinite-dimensional irreducible $(\glie,K)$-module $V$
  contained in $\Au{(2)}(\ps)$. Since it does not consist of constant
functions, some of the Fourier term operators
\be \Four_\Nfu : V \rightarrow \Wfu_\Nfu (\ps)\ee
with $\Nfu\neq \Nfu_0$ must be non-zero.

We will need a number of quantities, determined by the isomorphism class
of~$V$:
\begin{itemize}
\item A specific choice of spectral parameters for the character $\ps$
of $ZU(\glie)$: a character $\xi$ of $M$ and a complex number $\nu_0$
in the closed right half plane.

\item The minimal $K$-type $\tau^{h_0}_{p_0}$ of~$V$.
\end{itemize}
Table~\ref{tab-Vlist} gives a list of relevant isomorphism classes and
the corresponding values of the $4$-tuple $(\xi,\nu_0,h_0,p_0)$.
\begin{table}[htp]
{\small
\[
\begin{array}{|ccc|}\hline
&\text{type}&\text{ parameters}\\ \hline\hline
\multicolumn{3}{|c|}{\text{irreducible principal series}}
\\ \hline
\text{unitary principal series}& \II(j,\nu_0) & \nu_0 \in i[0,\infty)
\\
&&\text{if }\nu_0=0,\text{ then }j\in \{0\}\cup 1+2\ZZ
\\
&& h_0= 2j\,,\; p_0=0\\\hline
\text{complementary \ series}&\II(j,\nu_0) & 0<\nu_0<2,\; j=0\\
&&\text{ or }0<\nu_0<1,\; j\in 1+2\ZZ
\\
&& h_0= 2j\,,\; p_0=0\\ \hline \hline
\multicolumn{3}{|c|}{\text{discrete series type}} \\ \hline
\text{large d.s.t.}&\II_+(j,\nu_0)& \nu_0\equiv j\bmod 2,\; \nu_0 \geq
|j|, \; \nu_0\geq 1 \\
&& h_0= -j\,,\; p_0=\nu_0\\\hline
\text{holomorphic d.s.t.}& \IF(j,\nu_0)& \nu_0\equiv j \bmod 2,\; 0 \leq
\nu_0 \leq j-2\\
&& h_0=2j\,,\; p_0=0 \\\hline
\text{antiholomorphic d.s.t.}& \FI(j,\nu_0)& \nu_0 \equiv j\bmod 2,\; 0
\leq \nu_0 \leq -j-2\\
&& h_0=2j\,,\; p_0=0 \\\hline \hline
\multicolumn{3}{|c|}{\text{thin representations (non-tempered)}} \\
\hline
T^+_{-1} & \IF(1,-1)& j=1\,,\; \nu_0=1\quad h_0=2\,,\;p_0=0\\\hline
T^+_k,\, k\in \ZZ_{\geq 0} & \IF_+(j,-1) & \nu_0=1,\; j=3+2k\\
&& h_0=k+3\,,\; p_0=k+1 \\ \hline
T^-_{-1} & \FI(-1,-1) & j=1,\; \nu_0=1\quad h_0=-2\,,\; p_0=0 \\
\hline
T^-_k, \, k \in \ZZ_{\geq 0}
&\FI_+(j,-1)& \nu_0=1,\; j = -3-2k\\
&& h_0=-k-3\,,\; p_0=k+1 \\
\hline
\end{array}
\] }
\caption[]{List of isomorphism classes of infinite-dimensional
irreducible $(\glie,K)$-modules with a unitary structure, from
\cite[Theorem 16.1]{BMSU21}. We use the notation for the isomorphism
types given in \cite[\S12.2]{BMSU21}.
\\
We give spectral parameters $(j,\nu_0)=(j_\xi,\nu_0)$. The Weyl group
orbit of $(j,\nu_0)$ determines a character $\ps=\ps[j,\nu_0]$ of
$ZU(\glie)$. We give also the $K$-type $\tau^{h_0}_{p_0}$ with minimal
dimension $p_0+1$ occurring in the module (and generating it).}
\label{tab-Vlist}
\end{table}

We fix such a $4$-tuple and consider square integrable automorphic forms
$f\in \Au{(2)}\bigl(\ps[\xi,\nu_0]\bigr)_{h_0,p_0,q_0}$. Each non-zero
$f$ in this space generates a $(\glie,K)$-module
$V_f \subset \Au{(2)}\bigl(\ps[\xi,\nu_0]\bigr)$. These modules $V_f$
are all submodules of $L^{2,\discr}(\Gm\backslash G)$ in the
isomorphism class specified by $(\xi,\nu_0,h_0,p_0)$.

Given the family of basis functions
$\nu \mapsto \Om_\Nfu(\xi,\nu)= \Om_{\Nfu;h_0,p_0,q_0}(\xi,\nu)$ in
Proposition~\ref{prop-MuOm}, we define the Fourier coefficient
$c_\Nfu(f)$ by
\be\label{Cdef} \Four_\Nfu f = c_\Nfu(f)\, \Om_\Nfu(\xi,\nu_0)\,.\ee

\subsection{General completeness result} We can now state the relation
between the non-vanishing of Fourier term operators and Poincar\'e
series, valid in the context indicated above.

\begin{thm}\label{thm-complgen}Let $V$ be an infinite-dimensional
irreducible $(\glie,K)$-module of square integrable automorphic forms,
with isomorphism type determined by the quadruple
$(\xi,\nu_0,h_0,p_0)$, as indicated in Table~\ref{tab-Vlist}. For each
Fourier term order $\Nfu\neq \Nfu_0$ we use the Poincar\'e family
$\M_\Nfu(\xi,\nu) = \poin_\Nfu \Mu_{\Nfu;h_0,p_0,p_0}(\xi,\nu)$. Let
$f \in  V_{h_0,p_0,p_0}$ be nonzero. Then the following statements are
equivalent:
\begin{itemize}
\item[i)] The Fourier
coefficient $c_\Nfu(f)$ is non-zero.
\item[ii)] The scalar product
$\bigl( f, \tilde P_\Nfu(\xi,\nu_0) \bigr)_{\Gm\backslash G}\neq 0$ for
some cut-off function $\ph$. Here $\tilde P_\Nfu(\xi,\nu_0)$ is the
main part at $\nu_0$ of the truncated Poincar\'e family
$\nu \mapsto \poin_\Nfu\bigl( \ph\Mu_\Nfu(\xi,\nu) \bigr)$.
\end{itemize}
In ii), in many cases depending on $V$, one has that
$P_\Nfu(\xi,\nu_0) = \tilde P_\Nfu(\xi,\nu_0)$, independently of the
cut-off function.

\end{thm}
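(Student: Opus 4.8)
\emph{Sketch of proof.} By the reduction carried out just before \S\ref{sect-trunc} we may fix the minimal $K$-type $\tau^{h_0}_{p_0}$ of~$V$ and work with $\M_\Nfu(\xi,\nu)=\poin_\Nfu\Mu_{\Nfu;h_0,p_0,p_0}(\xi,\nu)$; note that $V_{h_0,p_0,p_0}$ is one-dimensional, so $f$ generates~$V$. The key point is an unfolding identity. For $\re\nu>2$ the series $\tilde\M_\Nfu(\xi,\nu)=\poin(\ph\,\Mu_\Nfu(\xi,\nu))$ converges absolutely; unfolding over $\Gm_N\backslash\Gm$, integrating first over $\Gm_N\backslash N$, and using the orthogonality in $L^2(\Gm_N\backslash N)$ of the realizations of the irreducible representations of $N$ occurring in the basis~$\B$, the factor $f$ is replaced by $\Four_\Nfu f=c_\Nfu(f)\,\Om_\Nfu(\xi,\nu_0)$ and one obtains
\be\label{planunf}
(\tilde\M_\Nfu(\xi,\nu),f)_{\Gm\backslash G}\=\overline{c_\Nfu(f)}\;Q^\ph_\Nfu(\nu)\,,\qquad Q^\ph_\Nfu(\nu):=\bigl(\ph\,\Mu_\Nfu(\xi,\nu),\Om_\Nfu(\xi,\nu_0)\bigr)_{\Gm_N\backslash G}\,,
\ee
where $Q^\ph_\Nfu$ is an integral over the \emph{compact} set $(\Gm_N\backslash N)\times K\times\mathrm{supp}\,\ph$ depending only on $\Nfu$, $\xi$, $(h_0,p_0)$ and~$\ph$, and not on $f$ or on $\Gm$ beyond the lattice~$\Ld_\s$. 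Both sides of~\eqref{planunf} extend meromorphically to $\nu\in\CC$: the left-hand side by Theorem~\ref{thm-mer-cont}, since $\nu\mapsto\tilde\M_\Nfu(\xi,\nu)$ is meromorphic with values in $L^2(\Gm\backslash G)_{h_0,p_0,p_0}$; the right-hand side because $\nu\mapsto\Mu_\Nfu(\xi,\nu)$ is meromorphic and the integrand has compact support. Hence \eqref{planunf} holds for all~$\nu$.

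The implication ii)$\Rightarrow$i) is now immediate from~\eqref{planunf}: if $c_\Nfu(f)=0$ then $(\tilde\M_\Nfu(\xi,\nu),f)\equiv0$ for every cut-off, so $(f,\tilde P_\Nfu(\xi,\nu_0))=0$ always. For i)$\Rightarrow$ii) assume $c_\Nfu(f)\neq0$; then by~\eqref{planunf} the order at $\nu_0$ of $\nu\mapsto(\tilde\M_\Nfu(\xi,\nu),f)$ equals that of $Q^\ph_\Nfu$, and $(f,\tilde P_\Nfu(\xi,\nu_0))\neq0$ precisely when this equals the order at $\nu_0$ of the $L^2$-valued family $\tilde\M_\Nfu(\xi,\cdot)$. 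Suppose first that $\M_\Nfu(\xi,\cdot)$ has a singularity at $\nu_0$; then so does $\tilde\M_\Nfu(\xi,\cdot)$, with the same cut-off-independent main part $\tilde P_\Nfu(\xi,\nu_0)=P_\Nfu(\xi,\nu_0)$, a non-zero element of $W:=\Au{(2)}(\ps[\xi,\nu_0])_{h_0,p_0,p_0}$ by Proposition~\ref{prop-mpPf}. Applying~\eqref{planunf} with $f$ replaced by the generator $v$ of the $(h_0,p_0,p_0)$-line of each irreducible summand of $\Au{(2)}(\ps[\xi,\nu_0])$ isomorphic to~$V$, and reading off the coefficient of $(\nu-\nu_0)^{-m}$ where $m$ is the order of the singularity, gives $(v,P_\Nfu(\xi,\nu_0))=c_\Nfu(v)\,\overline\beta$, with $\beta$ the corresponding Laurent coefficient of the $f$-independent function $Q^\ph_\Nfu$ — the same for all such $v$, and independent of $\ph$ since $Q^\ph_\Nfu-Q^{\ph'}_\Nfu$ is holomorphic at $\nu_0$. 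As these summands exhaust $W$ (the $K$-type $\tau^{h_0}_{p_0}$ occurs in no other isomorphism class with character $\ps[\xi,\nu_0]$), linearity gives $(v,P_\Nfu(\xi,\nu_0))=c_\Nfu(v)\,\overline\beta$ for all $v\in W$; since $P_\Nfu(\xi,\nu_0)\neq0$ the left-hand functional is non-zero, forcing $\beta\neq0$, and hence $(f,P_\Nfu(\xi,\nu_0))=c_\Nfu(f)\,\overline\beta\neq0$.

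If instead $\M_\Nfu(\xi,\cdot)$ is holomorphic at $\nu_0$, then $\tilde\M_\Nfu(\xi,\cdot)$ and (as $c_\Nfu(f)\neq0$) $Q^\ph_\Nfu$ are holomorphic there, and $Q^\ph_\Nfu(\nu_0)=\int_0^\infty\ph(t)\,\kappa(t)\,dt$, where $\kappa$ results from carrying out the $N$- and $K$-integrals in~\eqref{planunf} (producing positive norm factors). Now $\kappa$ is not identically zero on $(0,\infty)$, because $\Mu_\Nfu(\xi,\nu_0)$ and $\Om_\Nfu(\xi,\nu_0)$ have the same $K$-type, weight and $N$-realization (Proposition~\ref{prop-MuOm}), so $\kappa$ is a non-trivial combination of products of exponentially increasing and exponentially decreasing Whittaker (resp.\ Bessel) functions. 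One can therefore choose an admissible cut-off $\ph$ with $\int_0^\infty\ph\,\kappa\neq0$ — otherwise $\int_0^\infty\psi\,\kappa=0$ for every $\psi\in C^\infty_c(0,\infty)$ and $\kappa\equiv0$. For such a $\ph$ we get $(\tilde\M_\Nfu(\xi,\nu_0),f)\neq0$, so the truncated family has order $0$ at $\nu_0$, $\tilde P_\Nfu(\xi,\nu_0)=\tilde\M_\Nfu(\xi,\nu_0)$, and $(f,\tilde P_\Nfu(\xi,\nu_0))\neq0$. The final assertion of the theorem is the singularity case just treated, together with the remark preceding the theorem.

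The step I expect to be the main obstacle is this last one: matching the order at $\nu_0$ of the universal scalar factor $Q^\ph_\Nfu$ with the order there of the function-valued truncated family, i.e.\ ruling out that pairing against the specific form $f$ kills the leading Laurent term of $\tilde\M_\Nfu(\xi,\cdot)$. It forces the split into the orthogonality/linearity argument over all copies of $V$ in the singularity case, and the non-vanishing of the radial kernel $\kappa$ together with the cut-off freedom in the holomorphic case.
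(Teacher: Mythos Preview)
Your approach mirrors the paper's closely: the unfolding identity giving $Q^\ph_\Nfu$ is exactly Lemma~\ref{lem-fr0}, and your singularity-case argument (pass to all $v\in W$ and take $v=P_\Nfu(\nu_0)$ to force $\beta\neq0$) is the contradiction step of Lemma~\ref{lem-complgen}. The implication ii)$\Rightarrow$i) is handled identically.

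There is, however, a genuine gap in your treatment of $Q^\ph_\Nfu$. You claim it extends meromorphically because it is ``an integral over the compact set $(\Gm_N\backslash N)\times K\times\mathrm{supp}\,\ph$'', and in the holomorphic case you equate $Q^\ph_\Nfu(\nu_0)$ with $\int_0^\infty\ph\,\kappa$. But $\mathrm{supp}\,\ph$ contains $t=0$ and the Haar measure carries $t^{-5}\,dt$; from~\eqref{OmMu-est} the integrand behaves like $t^{-1+\re\nu-\al}$ near~$0$, so the integral converges only for $\re\nu$ large. At $\nu=\nu_0$ it need not converge, and your passage from cut-offs $\ph$ (equal to~$1$ near~$0$) to test functions $\psi\in C^\infty_c(0,\infty)$ is unjustified as written. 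The paper circumvents this by partial integration (Proposition~\ref{prop-scpf}): the Casimir eigenvalue relation converts $Q^\ph_\Nfu(\nu)$ into $(\nu^2-\nu_0^2)^{-1}\int_0^\infty\ph'(t)\,\W\bigl(\Mu_\Nfu(\xi,\nu),\Om_\Nfu(\xi,\nu_0)\bigr)(t)\,t^{-3}\,dt$, where now $\ph'\in C^\infty_c(0,\infty)$ and the integral honestly converges everywhere off $\ZZ_{\le-1}$. This exhibits the possible singularity explicitly and recasts the order question as the \emph{Wronskian order} (Definition~\ref{def-wo}); Lemma~\ref{lem-gco} on good cut-offs is the rigorous version of your ``choose $\ph$ with $\int\ph\kappa\neq0$''. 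Your argument can in fact be repaired along the line you hint at: differences $\ph_1-\ph_2$ of cut-offs do lie in $C^\infty_c(0,\infty)$, and $Q^{\ph_1}_\Nfu-Q^{\ph_2}_\Nfu$ is a genuinely convergent integral against $\ph_1-\ph_2$, whence $\kappa\not\equiv0$ forces some $Q^\ph_\Nfu(\nu_0)\neq0$. But that bridging step is missing from your write-up.
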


\rmrk{Remarks}(1)
See Proposition~\ref{prop-MuOm} for the family
$\Mu_\Nfu(\xi,\nu) = \Mu_{\Nfu;h_0,p_0,p_0}(\xi,\nu)$. For the
completeness result we use Poincar\'e families with minimal $K$-type
$\tau^{h_0}_{p_0}$ of $V$, and maximal weight $p_0$ in that $K$-type.
We will often omit the index $(h_0,p_0,p_0)$ from the notation.
\smallskip

\noindent
(2) The spaces $V_{h_0,p_0,p_0}$ and
$\Wfu_{\Nfu;h_0,p_0,p_0}^{\xi,\nu_0}$ with minimal $K$-type
$\tau^{h_0}_{p_0}$ and weight~$p_0$ have dimension one, so the Fourier
term operator $\Four_\Nfu$ is determined by the constant $c_\Nfu(f)$.
\smallskip

\noindent
(3) Since $V$ is non-zero and is not the trivial module, there are
Fourier term orders $\Nfu$ for which statement~i) holds. This implies
that the scalar products in statement~ii) detect all non-zero elements
in $\Au{(2)}(\ps)_{h_0,p_0,p_0}$.\\
\medskip
The proof of Theorem~\ref{thm-complgen} will be completed in
Subsection~\ref{sect-pfcomplgen}.

\subsection{Singularities and zeros of Poincar\'e
families}\label{sect-rPs}
The completeness result Theorem~\ref{thm-complgen} puts the emphasis on
square integrable automorphic forms. Now we put the emphasis on
Poincar\'e families and their singularities and zeros at a point
$\nu_0$.

We handle not all isomorphism classes in Table~\ref{tab-Vlist} together,
but consider the various cases separately. Given $(\xi,\nu_0,h_0,p_0)$,
we consider the behavior of the associated meromorphic family
$\nu \mapsto \poin_\Nfu \Mu_{\Nfu; h_0,p_0,p_0}(\xi,\nu) =  \M_\Nfu(\xi,\nu)$
at $\nu=\nu_0$.

The proofs of the following propositions are
in~\S\ref{sect-pfs}.

\begin{prop}\label{prop-upcstr} Let $(\xi,\nu_0,h_0,p_0)$ be the
parameters of a representation in the {\bf irreducible unitary
principal series}, the {\bf complementary series}, or the {\bf thin
representations} $T^+_{-1}$ or $T^-_{-1}$. In particular $h_0=2j_\xi$
and $p_0=0$. Let $\Nfu$ be a generic abelian or non-abelian Fourier
term order.
If $\nu_0\neq 0$ (resp. $\nu_0 = 0$), then either the Poincar\'e family
$\nu \mapsto \M_\Nfu(\xi,\nu)$ is holomorphic at~$\nu=\nu_0$ or it has
a singularity of order one (resp. 2).
\begin{enumerate}
\item[a)]The family $\nu \mapsto \M_\Nfu(\xi,\nu)$ has a singularity
at $\nu=\nu_0$ if and only if there are 
$f\in \Au{(2)}(\ps)_{2j_\xi,0,0}$ with $c_\Nfu(f) \neq 0$.
\item[b)] If the family is holomorphic at $\nu=\nu_0$ and is not
identically zero, then $\M_\Nfu(\xi,\nu_0)$ in
$ \Au!(\ps)_{2j_\xi,0,0} $ is not square integrable.
\end{enumerate}
\end{prop}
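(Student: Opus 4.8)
The plan is to reduce Proposition~\ref{prop-upcstr} to the spectral decomposition of the truncated Poincar\'e family established in \S\ref{sect-trunc}--\S\ref{sect-cont}, together with an unfolding computation that pairs $\M_\Nfu(\xi,\nu)$ against square integrable forms. Since $p_0=0$ in all the cases listed, the minimal $K$-type $\tau^{2j_\xi}_0$ is one-dimensional, and we work with $\M_\Nfu(\xi,\nu)=\poin_\Nfu\Mu_{\Nfu;2j_\xi,0,0}(\xi,\nu)$, which has $K$-type $\tau^{2j_\xi}_0$ and weight $0$. The dichotomy in the first assertion is immediate from Theorem~\ref{thm-mer-cont}(i): in the closed right half-plane the only singularities of $\nu\mapsto\M_\Nfu(\xi,\nu)$ are simple poles, with the possible exception of a double pole at $\nu=0$.

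For part~a), recall from \S\ref{sect-conclusion} the decomposition
\[ \M_\Nfu(\xi,\nu)\=\tilde\M_\Nfu^\discr(\xi,\nu)+\tilde\M_\Nfu^\cont(\xi,\nu)+\poin(1-\ph)\Mu_\Nfu(\xi,\nu)\,, \]
with $\tilde\M_\Nfu^\discr(\xi,\nu)=\sum_l d_l(\nu)\,\bigl(\mu_l-\ld(\xi,\nu)\bigr)^{-1}f_l$ over an orthonormal basis $\{f_l\}$ of $L^{2,\discr}_{2j_\xi,0,0}(\Gm\backslash G)$, where $d_l(\nu)=\bigl(\mu_l-\ld(\xi,\nu)\bigr)c_l(\nu)$ and $c_l(\nu)=\bigl(\tilde\M_\Nfu(\xi,\nu),f_l\bigr)_{\Gm\backslash G}$. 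By the Regularity remark in \S\ref{sect-cont}, $\tilde\M_\Nfu^\cont$ is holomorphic at $\nu_0$ when $\re\nu_0\ge0$, while $\poin(1-\ph)\Mu_\Nfu$ is holomorphic on $\CC\setminus\ZZ_{\le-1}$; hence, by orthonormality of the $f_l$, the family $\M_\Nfu(\xi,\cdot)$ is singular at $\nu_0$ if and only if $d_l(\nu_0)\ne0$ for some $l$ with $\mu_l=\ld(\xi,\nu_0)$. The key step is to evaluate $c_l(\nu)$ by unfolding: for $\re\nu>2$,
\[ c_l(\nu)\=\int_{\Gm_{\!N}\backslash G}\ph(g)\,\Mu_\Nfu(\xi,\nu)(g)\,\overline{f_l(g)}\,dg\,, \]
and integrating out $N$ with the help of the $\Nfu$-equivariance of $\Mu_\Nfu$, the Fourier expansion of $f_l$ and the definition~\eqref{Cdef} of $c_\Nfu$, this collapses to $c_l(\nu)=b\,\overline{c_\Nfu(f_l)}\,J_\ph(\nu)$, where $b\neq0$ is a product of $N$- and $K$-norms and, in the abelian case $\Nfu=\Nfu_\bt$,
\[ J_\ph(\nu)\=\int_0^\infty\ph(t)\,t^{-1}\,I_\nu(2\pi|\bt|t)\,K_{\nu_0}(2\pi|\bt|t)\,dt \]
(and the analogous integral pairing $M_{\k,\nu/2}$ against $W_{\k,\nu_0/2}$, after the substitution $x=t^2$, in the non-abelian case).

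Since $\ph\equiv1$ near $0$, the only obstruction to holomorphy of $J_\ph$ lies in $\int_0^\e$, where the integrand expands as $c_1(\nu)\,t^{\nu-\nu_0-1}+c_2(\nu)\,t^{\nu+\nu_0-1}+(\text{higher powers of }t)$; integrating term by term and continuing analytically shows that $J_\ph$ is meromorphic with a pole at $\nu=\nu_0$ of order $1$, resp.\ of order $2$ when $\nu_0=0$ (from the $\log t$ in $K_0$), whose leading Laurent coefficient is a nonzero universal constant (equal to $\tfrac1{2\nu_0}$ for $\nu_0\neq0$). As $\mu_l-\ld(\xi,\nu)=\nu_0^2-\nu^2$ vanishes at $\nu_0$ to the matching order, $d_l(\nu_0)$ equals $b\,\overline{c_\Nfu(f_l)}$ times a nonzero constant, so $d_l(\nu_0)\neq0$ exactly when $c_\Nfu(f_l)\neq0$; moreover, by Proposition~\ref{prop-mpPf}(ii) the main part of $\M_\Nfu$ at a singularity lies in the single space $\Au{(2)}(\ps[\xi,\nu_0])$, so only $f_l$ whose $ZU(\glie)$-character equals $\ps[\xi,\nu_0]$ can satisfy $d_l(\nu_0)\neq0$. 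Since $f\mapsto c_\Nfu(f)$ is linear on $\Au{(2)}(\ps[\xi,\nu_0])_{2j_\xi,0,0}$ and this space is spanned by exactly those $f_l$, part~a) follows. I expect this analysis of $J_\ph$---the precise pole order and the non-vanishing of its leading coefficient, together with the bookkeeping that isolates the $f_l$ of character $\ps[\xi,\nu_0]$ and the factor $\overline{c_\Nfu(f_l)}$---to be the main obstacle.

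For part~b), suppose $\M_\Nfu(\xi,\cdot)$ is holomorphic at $\nu_0$ and not identically zero. For $\re\nu>2$, Proposition~\ref{prop-Minf} and Lemma~\ref{lem-p'} give $\Four_\Nfu\M_\Nfu(\xi,\nu)=a\,\Mu_\Nfu(\xi,\nu)+w(\nu)$ with $a\neq0$ (the family is not identically zero) and $w(\nu)=\Four_\Nfu\poin'\Mu_\Nfu(\xi,\nu)$ bounded; since for $\Nfu\neq\Nfu_0$ a bounded element of $\Ffu^{\ps[\xi,\nu]}_\Nfu$ is necessarily exponentially decreasing and $\nu\mapsto\Wfu^{\xi,\nu}_{\Nfu;2j_\xi,0,0}$ is a holomorphic family of lines, this identity persists at $\nu_0$ with $w(\nu_0)$ exponentially decreasing. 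As $\Mu_\Nfu(\xi,\nu_0)$ is a nonzero multiple of $\mu^{0,0}_\Nfu(j_\xi,\nu_0)$ and is exponentially increasing---automatic in the abelian case since $I_{\nu_0}$ is, and precisely what the genericity of $\Nfu$ ensures in the non-abelian case, namely that $M_{\k,\nu_0/2}$ is not proportional to $W_{\k,\nu_0/2}$---the Fourier term $\Four_\Nfu\M_\Nfu(\xi,\nu_0)$ has genuine exponential growth. Hence the $L^2$-norm of $\M_\Nfu(\xi,\nu_0)$ over a Siegel set diverges, and $\M_\Nfu(\xi,\nu_0)\in\Au!(\ps)_{2j_\xi,0,0}$ is not square integrable.
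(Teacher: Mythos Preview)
Your approach to part~a) is genuinely different from the paper's and, for the one-dimensional $K$-type case $p_0=0$, more direct. The paper routes everything through the Wronskian machinery: it first converts the unfolded inner product of Lemma~\ref{lem-fr0} into the form of Proposition~\ref{prop-scpf}, $\bigl(\tilde\M_\Nfu(\xi,\nu),f\bigr)\dis\overline{c_\Nfu(f)}\,(\nu^2-\nu_0^2)^{-1}\int\ph'(t)\,\W(\Mu_\Nfu,\Om_\Nfu)\,t^{-3}dt$, then invokes the explicit Wronskian computations of Theorem~\ref{thm:orders} (Table~\ref{tab-ord} gives $\wo_\Nfu(\nu_0)=1$ or $2$), and finally appeals to Lemma~\ref{lem-complgen} and Proposition~\ref{prop-po}. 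You instead analyse $J_\ph(\nu)$ directly by expanding $I_\nu K_{\nu_0}$ (or $M_{\k,\nu/2}W_{\k,\nu_0/2}$) near $t=0$; for $p_0=0$ this bypasses the partial integration and the whole of \S\ref{sect-Wr}. The paper's route has the advantage that the Wronskian formula works uniformly for all $K$-types, which it needs for Propositions~\ref{prop-ldstr} and~\ref{prop-hads}; your shortcut is specific to the one-dimensional minimal $K$-type.

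Two points need tightening. First, your claim ``$\M_\Nfu$ is singular at $\nu_0$ iff $d_l(\nu_0)\neq0$ for some $l$'' is stated before you know the pole structure of $J_\ph$, and for $\nu_0=0$ it is not a priori true (a term $d_l(\nu)/(-\nu^2)$ with $d_l(0)=0$, $d_l'(0)\neq0$ would give a simple pole). What actually rules out order~$1$ at $\nu_0=0$ is your factorisation $c_l(\nu)=b\,\overline{c_\Nfu(f_l)}\,J_\ph(\nu)$: each relevant term $c_l$ is either identically zero or has a genuine double pole, so no simple pole can arise. The paper handles this separately by a contradiction argument with $f=P_\Nfu(0)$. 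Second, in part~b) your phrase ``genericity of $\Nfu$'' misreads the hypothesis: the statement allows \emph{generic abelian} ($\Nfu_\bt$, $\bt\neq0$) \emph{or} non-abelian $\Nfu$, and there is no genericity condition on non-abelian orders. The reason $M_{\k,\nu_0/2}$ and $W_{\k,\nu_0/2}$ are not proportional is that for unitary principal series, complementary series, and $T^\pm_{-1}$ one has $\tfrac{1+\nu_0}{2}-\k\notin\ZZ_{\le0}$ (checked in \S\ref{d1na}); this depends on the isomorphism class, not on $\Nfu$.
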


Proposition~\ref{prop-upcstr} a) couples the presence of a singularity
to the presence of square integrable automorphic forms for which the
specific Fourier coefficient $c_\Nfu$ does not vanish. If there is no
singularity at $\nu_0$, there may exist non-zero square integrable
automorphic forms in $\Au{(2)}(\ps)_{2j_\xi,0,0}$, but $c_\Nfu$
vanishes for all of them.

\begin{prop}\label{prop-ldstr}Let $(\xi,\nu_0,h_0,p_0)$ be parameters of
a representation of {\bf large discrete series type} or of a {\bf thin
representation} $T^\pm_k$ with $k\in \ZZ_{\geq 0}$. In particular
$h_0=-j_\xi$ and $p_0 = \nu_0\in \ZZ_{\geq 1}$ for the large discrete
series and $h_0=\pm(k+3)$, $p_0=k+1$ for $T^\pm_k$.

If the Poincar\'e family $\nu \mapsto \M_\Nfu(\xi,\nu)$ is not
identically zero, then it is holomorphic at $\nu=\nu_0$, with a value
at $\nu_0$ that is not square integrable.
Furthermore, in this case there are cut-off functions $\ph$ such that
for any $f\in \Au{(2)}(\ps)_{h_0,p_0,p_0}$
\[ c_\Nfu( f) \neq 0 \;\Longleftrightarrow \Bigl( \poin_\Nfu\bigl( \ph\,
\Mu_\Nfu(\xi,\nu_0)\bigr),f \Bigr) \neq 0\,. \]
\end{prop}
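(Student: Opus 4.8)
The plan is to rerun the truncation argument of Sections~\ref{sect-trunc}--\ref{sect-mainpart} with the location $\nu_0\in\ZZ_{\ge1}$ in mind, and then to unfold against $f$. As in the reduction preceding the proof of Theorem~\ref{thm-mer-cont} we may assume $\Mu_\Nfu=\Mu_{\Nfu;h_0,p_0,p_0}(\xi,\nu)$ has the single $K$-type $\tau^{h_0}_{p_0}$ and weight $p_0$, so that $\Four_\Nfu f=c_\Nfu(f)\,\Om_\Nfu(\xi,\nu_0)$ and the radial factor of $\Mu_\Nfu(\xi,\nu)$ (regular as $t\downarrow0$) and that of $\Om_\Nfu(\xi,\nu_0)$ (exponentially decreasing) are solutions of the same second order radial equation coming from the Casimir.

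\emph{Holomorphy.} By Theorem~\ref{thm-mer-cont} and Proposition~\ref{prop-mpPf} a failure of holomorphy at $\nu_0$ would be a simple pole whose main part is a nonzero square integrable automorphic form in $\Au{(2)}(\ps)_{h_0,p_0,p_0}$; since the classes of Table~\ref{tab-Vlist} do not occur in the continuous spectrum, and since $\tilde\M_\Nfu^\cont$ is holomorphic at $\nu_0$ (regularity remark in Section~\ref{sect-cont}), it suffices to show $d_l(\nu_0)=0$ (notation of Section~\ref{sect-discr}) for those $f_l\in\Au{(2)}(\ps)_{h_0,p_0,p_0}$ with $\mu_l=\ld(\xi,\nu_0)$ --- by Proposition~\ref{prop-mpPf} the only $f_l$ that can contribute. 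As $(C-\ld(\xi,\nu))\Mu_\Nfu(\xi,\nu)=0$, the germ $(C-\ld(\xi,\nu))\bigl(\ph\,\Mu_\Nfu(\xi,\nu)\bigr)$ is supported where $\ph$ is non-constant; hence $\check\M_\Nfu(\xi,\nu)=\poin\bigl((C-\ld(\xi,\nu))(\ph\,\Mu_\Nfu(\xi,\nu))\bigr)$ is an absolutely convergent Poincar\'e series of a compactly supported germ for all $\nu$, and unfolding gives $d_l(\nu_0)=\bigl((C-\ld(\xi,\nu_0))(\ph\,\Mu_\Nfu(\xi,\nu_0)),f_l\bigr)_{\Gm_N\backslash G}$. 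Integrating by parts on $\Gm_N\backslash G$ --- the boundary contributions at the two ends of $A$ being controlled by the radial behaviour of $\Mu_\Nfu(\xi,\nu_0)$ (vanishing to order $2+\nu_0\ge3$ as $t\downarrow0$) and of $\Four_\Nfu f_l$ --- reduces $d_l(\nu_0)$, by Abel's identity, to a fixed non-zero constant times $\overline{c_\Nfu(f_l)}$ times the Wronskian of the radial factors of $\Mu_\Nfu(\xi,\nu_0)$ and $\Om_\Nfu(\xi,\nu_0)$. Whenever $c_\Nfu$ is not identically zero on $\Au{(2)}(\ps)_{h_0,p_0,p_0}$ this Wronskian vanishes, because then $\Om_\Nfu(\xi,\nu_0)$ has terminating-Whittaker radial factor, proportional to the regular one of $\Mu_\Nfu(\xi,\nu_0)$; otherwise the factor $\overline{c_\Nfu(f_l)}$ vanishes. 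So $d_l(\nu_0)=0$, $c_l(\nu)=d_l(\nu)/(\mu_l-\ld(\xi,\nu))$ is holomorphic at $\nu_0$ (since $\ld'(\xi,\nu_0)=2\nu_0\ne0$), and $\M_\Nfu$ is holomorphic at $\nu_0$.

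\emph{The equivalence, and non-integrability.} Fix $f\in\Au{(2)}(\ps)_{h_0,p_0,p_0}$ and a cut-off $\ph$ equal to $1$ on $(0,T]$. For $\re\nu>2$ the truncated series converges absolutely, and unfolding, expanding $f$ in its Fourier series, and using orthogonality of distinct Fourier term orders on $\Ld_\s\backslash N$ and of the $\Kph hprq$ on $K$, one gets $\bigl(\tilde\M_\Nfu(\xi,\nu),f\bigr)_{\Gm\backslash G}=\overline{c_\Nfu(f)}\,\kappa\,J_\ph(\nu)$, $\kappa>0$ fixed, where $J_\ph(\nu)=\int_0^\infty\ph(t)\,\rho^\mu(\xi,\nu;t)\,\overline{\rho^\omega(\xi,\nu_0;t)}\,t^{-5}\,dt$ with $\rho^\mu,\rho^\omega$ the radial factors of $\Mu_\Nfu,\Om_\Nfu$. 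The left side, hence the identity, extends holomorphically to a neighbourhood of $\nu_0$, since $\tilde\M_\Nfu=\tilde\M_\Nfu^\discr+\tilde\M_\Nfu^\cont$ is there a holomorphic $L^2$-valued family (previous step, with $\M_\Nfu-\tilde\M_\Nfu=\poin((1-\ph)\Mu_\Nfu)$ holomorphic off $\ZZ_{\le-1}$). If $c_\Nfu\equiv0$ on $\Au{(2)}(\ps)_{h_0,p_0,p_0}$ both sides of the equivalence are always false; otherwise $\rho^\omega(\xi,\nu_0)$ is the terminating Whittaker function and $\rho^\mu(\xi,\nu_0)$ a non-zero multiple of it, so $J_\ph$ is holomorphic at $\nu_0$ and $J_\ph(\nu_0)$ is a positive multiple of $\int_0^\infty\ph(t)\,|\rho^\omega(\xi,\nu_0;t)|^2\,t^{-5}\,dt>0$; since $\poin_\Nfu(\ph\,\Mu_\Nfu(\xi,\nu_0))=\tilde\M_\Nfu(\xi,\nu_0)$, this gives $c_\Nfu(f)\ne0\iff\bigl(\poin_\Nfu(\ph\,\Mu_\Nfu(\xi,\nu_0)),f\bigr)\ne0$. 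Finally, that the untruncated value $\M_\Nfu(\xi,\nu_0)$ is not square integrable is read off, case by case along Table~\ref{tab-Vlist}, from its Fourier terms using the explicit radial factors.

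\emph{Main obstacle.} The whole argument turns on the radial analysis: for each class of Table~\ref{tab-Vlist} covered here and each admissible $\Nfu$, deciding whether the exponentially increasing and exponentially decreasing solutions of the radial equation are proportional --- equivalently, whether the pertinent Whittaker series terminates. That single dichotomy governs the vanishing of the Wronskian in the holomorphy step, the positivity of $J_\ph(\nu_0)$, and the growth of $\M_\Nfu(\xi,\nu_0)$; establishing it requires the structure of the Fourier term modules of \cite{BMSU21} together with Proposition~\ref{prop-MuOm}, and is carried out class by class in Section~\ref{sect-pfs}.
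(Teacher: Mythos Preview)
Your argument pivots on the claim that, for the classes at hand, $\Mu_\Nfu(\xi,\nu_0)$ and $\Om_\Nfu(\xi,\nu_0)$ are proportional (``terminating Whittaker''). That is precisely what distinguishes these classes from the holomorphic/antiholomorphic ones: it is \emph{false} here. Proposition~\ref{prop-po} (see the paragraph after~\eqref{FoNN}) records that linear dependence of $\Mu_\Nfu(\xi,\nu_0)$ and $\Om_\Nfu(\xi,\nu_0)$ occurs only in the non-abelian case for holomorphic/antiholomorphic discrete series type. For the large discrete series and $T^\pm_k$ ($k\ge0$), the minimal $K$-type has dimension $p_0+1\ge2$, and the total Wronskian $\W$ in Proposition~\ref{prop-MSW} is a \emph{sum over components} $r$; each component Wronskian is a nonzero multiple of $t^3$, and the total vanishes by the binomial cancellation $\sum_{n=0}^{p_0}\binom{p_0}{n}(-1)^n=0$ (see \S\ref{hda} and \S\ref{hdna-app}). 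So your holomorphy step accidentally reaches the right conclusion ($\W(\Mu_\Nfu(\xi,\nu_0),\Om_\Nfu(\xi,\nu_0))=0$) for the wrong reason.

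The false proportionality then breaks both remaining steps. Your equivalence argument asserts $J_\ph(\nu_0)\ \dot=\ \int\ph(t)\,|\rho^\omega|^2\,t^{-5}\,dt>0$, but with $\rho^\mu\not\propto\rho^\omega$ there is no such positivity, and the direct integral from Lemma~\ref{lem-fr0} need not even converge at $\nu=\nu_0$; one has to pass through the Wronskian form of Proposition~\ref{prop-scpf} and Proposition~\ref{prop-ord}\,ii), where the non-vanishing of $\{\Mu_\Nfu(\xi,\nu_0),\Om_\Nfu(\xi,\nu_0)\}$ (not of a squared modulus) is what makes $\wo_\Nfu(\nu_0)=0$, and then Lemma~\ref{lem-complgen} supplies the equivalence for a good cut-off. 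Likewise your non-integrability claim is self-defeating: if $\Mu_\Nfu(\xi,\nu_0)\propto\Om_\Nfu(\xi,\nu_0)$ then $\Four_\Nfu\M_\Nfu(\xi,\nu_0)$ decays exponentially and the growth argument evaporates. The paper's route is exactly the opposite: because the two are \emph{independent}, the $\Mu_\Nfu$-piece in~\eqref{FoNN} forces exponential growth of $\M_\Nfu(\xi,\nu_0)$. Finally, your dichotomy ``if $c_\Nfu\equiv0$ \dots\ otherwise $\rho^\omega$ is terminating'' conflates a spectral condition with a special-function identity; the radial factors $h^\omega_r$, $h^\mu_r$ depend only on $(\xi,\nu_0,h_0,p_0,\Nfu)$, not on whether any automorphic form happens to have $c_\Nfu\neq0$.
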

\begin{prop}
\label{prop-hads}Let $(\xi,\nu_0,h_0,p_0)$ be parameters of a
representation of {\bf holomorphic or antiholomorphic discrete series
type}.
In particular, $h_0=2j_\xi$, $p_0=0$. Let $\Nfu=\Nfu_\n$ be a
non-abelian Fourier term order.
\begin{enumerate}
\item[i)] If $\nu_0\geq 1$,
the Poincar\'e family $\nu \mapsto \M_\n(\xi,\nu)$ is holomorphic at
$\nu_0$, and its value $\M_\n(\xi,\nu_0)$ is in
$ \Au{(2)}(\ps)_{2j_\xi,0,0}$.
\begin{enumerate}
\item[a)] The value $\M_\n(\xi,\nu_0)$ is non-zero if and only if there
are square integrable automorphic forms
$f\in \Au{(2)}(\ps)_{2j_\xi,0,0}$ for which $c_\Nfu(f) \neq 0$.
\item[b)] If $\M_\n(\xi,\nu_0)=0$ and the family is not identically
zero, then the family has a first order zero at $\nu=\nu_0$ and
$\partial_\nu \M_\n(\xi,\nu)\bigr|_{\nu=\nu_0}$ is not square
integrable.
\end{enumerate}
\item[ii)] If $\nu_0=0$, then $\M_\n(\xi,\nu)$ can have at most a first
order singularity at $\nu_0$. Furthermore:
\begin{enumerate}
\item[a)] The family $\nu\mapsto \M_\n(\xi,\nu)$ has a first order pole
at $\nu=0$ if and only if there are $f\in \Au{(2)}(\ps)_{2j_\xi,0,0}$
with $c_\Nfu(f) \neq 0$.
\item[b)] If the family is holomorphic at $\nu=0$ and not identically
zero, then $\M_\n(\xi,0)=0$, and
$\partial_\nu \M_\n(\xi,\nu) \bigr|_{\nu=0} \in
\Au!(\ps)_{2j_\xi,0,0} \setminus \Au{(2)}(\ps)_{2j_\xi,0,0}$.
\end{enumerate}\end{enumerate}
\end{prop}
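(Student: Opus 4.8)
\emph{Reduction and the key input.} By the reduction carried out in \S\ref{sect-mainpart} it suffices to treat the family generated by $\Mu_\n(\xi,\nu)=\mu^{0,0}_\n(j_\xi,\nu)$ of \eqref{om-mu-nab} at the one-dimensional minimal $K$-type $\tau^{2j_\xi}_0$ and weight $0$. Everything hinges on one structural fact: at parameters of (anti)holomorphic discrete series type, i.e.\ $\nu_0\equiv j_\xi\bmod2$ and $0\le\nu_0\le|j_\xi|-2$, the $M$-Whittaker function $M_{\k,\nu_0/2}$ occurring in \eqref{om-mu-nab} reduces, up to a non-zero scalar, to the exponentially decreasing $W_{\k,\nu_0/2}$ --- with $\k$ as in \eqref{m0kap} the confluent hypergeometric series defining $M_{\k,\nu_0/2}$ terminates, so it is a Whittaker polynomial. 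Hence $\Mu_\n(\xi,\nu_0)$ is an exponentially decreasing function on $\Gm_{\!N}\backslash G$, proportional at this $K$-type and weight to $\om^{0,0}_\n(j_\xi,\nu_0)$, and thus to the basis function $\Om_\n(\xi,\nu_0)$ of Proposition~\ref{prop-MuOm}. I would establish this from the description of $\Ffu^{\xi,\nu_0}_\n$ under integral parametrization in \cite[\S2]{BMSU21} together with a direct special-function check; this is the only place requiring explicit computation.

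\emph{Part i), $\nu_0\ge1$.} For $\nu_0>2$ the germ satisfies \eqref{est0nu}, so $\poin\Mu_\n(\xi,\nu_0)$ converges absolutely; since $\Mu_\n(\xi,\nu_0)$ and (by the action of $\mm(i)$, Proposition~\ref{prop-actmi}) the finitely many germs $\mu^{0,0}_{\n'}(j_\xi,\nu_0)$ making up $\poin^\infty\Mu_\n(\xi,\nu_0)$ are rapidly decreasing, the estimate in Lemma~\ref{lem-p'} shows $\M_\n(\xi,\nu_0)$ is rapidly decreasing, hence lies in $\Au0(\ps)\subseteq\Au{(2)}(\ps)_{2j_\xi,0,0}$; holomorphy there is automatic. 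For the finitely many $\nu_0\in\{1,2\}$ I would use Theorem~\ref{thm-mer-cont}: a singularity would be a simple pole whose residue, by Proposition~\ref{prop-mpPf}(ii), is a non-zero element of $\Wfu^{\xi,\nu_0}_\n$ at weight $0$, a multiple of $\om^{0,0}_\n(j_\xi,\nu_0)$, and evaluating it through $\tilde\M_\n^\discr=\sum_l d_l(\nu)\bigl(\mu_l-\ld(\xi,\nu)\bigr)^{-1}f_l$ of \S\ref{sect-discr} shows it vanishes. For i)a), I would unfold, for $\re\nu>2$ and a rapidly decreasing $f\in V_{h_0,p_0,p_0}$,
\[\bigl(\M_\n(\xi,\nu),f\bigr)_{\Gm\backslash G}=\bigl(\Mu_\n(\xi,\nu),f\bigr)_{\Gm_{\!N}\backslash G}=\overline{c_\Nfu(f)}\,J(\nu),\]
with $J(\nu)$ the radial Rankin--Selberg integral of $M_{\k,\nu/2}$ against $W_{\k,\nu_0/2}$, an explicit ratio of $\Gamma$-factors; continuing and using $J(\nu_0)\neq0$ for $\nu_0\ge1$ (a positive multiple of $\int_0^\infty W_{\k,\nu_0/2}^2\,x^{-2}\,dx$ at $\nu=\nu_0$) gives $\bigl(\M_\n(\xi,\nu_0),f\bigr)\neq0\iff c_\Nfu(f)\neq0$, whence i)a) on taking $f=\M_\n(\xi,\nu_0)$. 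For i)b), assuming $\M_\n(\xi,\nu_0)=0$ with the family not identically zero, Proposition~\ref{prop-Minf} gives $a\neq0$ with $\Four_\n\poin^\infty\Mu_\n=a\,\Mu_\n$; since $M_{\k,\nu/2}$ is a non-trivial combination of $W_{\k,\nu/2}$ and an exponentially growing solution whose connection coefficient has a simple zero at $\nu_0$, the exponentially growing Fourier term of $\M_\n(\xi,\nu)$, of some order $(\ell,c',d)$, has coefficient with a simple zero at $\nu_0$; hence the zero of $\nu\mapsto\M_\n(\xi,\nu)$ at $\nu_0$ is simple and $\partial_\nu\M_\n(\xi,\nu)\bigr|_{\nu=\nu_0}$ retains a non-zero exponentially growing term, so is not square integrable.

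\emph{Part ii), $\nu_0=0$.} Here $0\in[0,|j_\xi|-2]$, so $\Mu_\n(\xi,0)$ is still exponentially decreasing. By the remark after \S\ref{sect-cont} the continuous-spectrum part is holomorphic at $\nu=0$, so any singularity comes from $\tilde\M_\n^\discr$; since $\ld(\xi,\nu)-\ld(\xi,0)=\nu^2$ and $d_l(\nu)=\bigl(\mu_l-\ld(\xi,\nu)\bigr)c_l(\nu)$, checking that $d_l$ vanishes at $\nu=0$ for each $l$ with $\mu_l=\ld(\xi,0)$ rules out the double pole allowed by Theorem~\ref{thm-mer-cont}, leaving a simple pole at most. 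Then ii)a) and ii)b) go as in part i): a simple pole has a square-integrable residue, which by Theorem~\ref{thm-complgen} is equivalent to $c_\Nfu(f)\neq0$ for some $f$; and if the family is holomorphic and not identically zero, $J(0)=0$ makes $\M_\n(\xi,0)$ orthogonal to all square-integrable forms of this type, which together with the vanishing of its exponentially growing Fourier term at $\nu=0$ gives $\M_\n(\xi,0)=0$, while $\partial_\nu\M_\n(\xi,\nu)\bigr|_{\nu=0}$ again carries a non-zero exponentially growing Fourier term (as $a\neq0$) and so lies in $\Au!(\ps)_{2j_\xi,0,0}\setminus\Au{(2)}(\ps)_{2j_\xi,0,0}$.

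\emph{Main obstacle.} The hard part is the explicit special-function input: the termination of $M_{\k,\nu_0/2}$ to a Whittaker polynomial at discrete-series parameters, the evaluation of $J(\nu)$ (its non-vanishing for $\nu_0\ge1$ and its zero at $\nu_0=0$), and the non-vanishing of the constant $a$ in Proposition~\ref{prop-Minf} produced by the $\mm(i)$-action on non-abelian Fourier term modules. These rest on the fine structure of $\Ffu^{\xi,\nu_0}_\n$ under integral parametrization from \cite{BMSU21} and on confluent-hypergeometric identities; the degeneration at $\nu_0=0$, where the solutions $M_{\k,\pm\nu/2}$ coalesce, is the most delicate point.
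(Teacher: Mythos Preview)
Your key structural input is right and is exactly what the paper exploits: at (anti)holomorphic discrete-series parameters $\frac{1+\nu_0}2-\k\in\ZZ_{\le0}$, so $M_{\k,\nu_0/2}$ is proportional to $W_{\k,\nu_0/2}$ and $\Mu_\n(\xi,\nu_0)$ is exponentially decreasing. Your treatment of i)b) via the simple zero of the coefficient of $V_{\k,\nu/2}$ in~\eqref{MinWV} also matches the paper's Proposition~\ref{prop-po}. For $\nu_0>2$ the absolute-convergence argument is essentially fine (Lemma~\ref{lem-p'} gives only boundedness of $\poin'\Mu_\n$, not rapid decrease, but that still yields $L^2$ on the finite-volume Siegel set).

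The real gap is the direct unfolding. The identity $\bigl(\M_\n(\xi,\nu),f\bigr)_{\Gm\backslash G}=\bigl(\Mu_\n(\xi,\nu),f\bigr)_{\Gm_{\!N}\backslash G}$ does not converge absolutely for $\re\nu>2$: the germ grows like $e^{\pi|\ell|t^2}$, while a cusp form in the (anti)holomorphic discrete series --- even with only non-abelian Fourier terms --- is controlled only by $e^{-\pi(\s/2)t^2}$; for $|\ell|>\s/2$ the product blows up pointwise. This is precisely why the paper works with the truncation $\ph$ throughout (Lemma~\ref{lem-fr0}, Proposition~\ref{prop-scpf}). Your $J(\nu)$ is replaced by the Wronskian integral $I_\n(\ph;\nu,\nu_0)$ of~\eqref{difWrint}; by~\eqref{Wr0} the Wronskian at coinciding parameters is $\tfrac{-4\pi|\ell|\,\Gf(1+\nu)}{\Gf((1+\nu)/2-\k)}\,t^3$, whose simple zero at $\nu=\nu_0$ encodes your termination identity.

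The pole bounds for small $\nu_0$ do not go through as written. For $\nu_0\in\{1,2\}$, a hypothetical residue lies in $\Au{(2)}(\ps)$, not ``a multiple of $\om^{0,0}_\n$'', and nothing in the spectral expansion $\sum_l d_l(\nu)/(\mu_l-\ld)\,f_l$ by itself forces it to vanish. For $\nu_0=0$, your claim $d_l(0)=0$ would need the unfolded integral $\int_{\Gm_{\!N}\backslash G}\ph\,\Mu_\n(\xi,0)\,\bar f_l\,dg$, which diverges as $t\downarrow0$ (the integrand is $\sim t^{-3}$); moreover the Rankin--Selberg integral $J(\nu)$ has a \emph{simple pole} at $\nu=0$, not a zero. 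The paper settles both through the Wronskian order: Theorem~\ref{thm:orders} gives $\wo_\n(\nu_0)=0$ for $\nu_0\ge1$ and $\wo_\n(0)=1$, and Lemma~\ref{lem-complgen} forces $\po_\n(\nu_0)\le\wo_\n(\nu_0)$ whenever a square-integrable main part would arise, yielding holomorphy at $\nu_0\ge1$ and at most a simple pole at $\nu_0=0$. The computation $\wo_\n(0)=1$ in \S\ref{d1na-app}, via $\partial_\nu\W\bigl(\Mu_\n(\xi,\nu),\Om_\n(\xi,0)\bigr)\bigr|_{\nu=0}$ and the contiguous relations for Whittaker functions, is the delicate limit-of-discrete-series step you anticipated; Part~ii)b) then follows by showing $c_\n\bigl(\M_\n(\xi,0)\bigr)=0$ from the simple pole of the truncated scalar product and letting $\ph\uparrow1$.
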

\rmrk{Remark} Part i), with $\nu_0 \in \ZZ_{\geq 1}$, refers to discrete
series representations. In part ii) we deal with limits of discrete
series, occurring as direct summands of reducible principal series
representations. The proofs of the propositions will be given in
Section~\ref{sect-pfc} (see Subsection~\ref{sect-pfs}).

\subsection{Comparison}\label{comparison}
In \cite{MW89} Poincar\'e series attached to a character $\chi_\bt$ of
$\Ld_\sigma\backslash N$ were studied for $G$ semisimple Lie group of
real rank one. It was proved that the meromorphically continued family
$\M_\bt(\xi,\nu)$ has at most simple poles located at spectral points
$\nu_0$, except for $\nu_0 =0$ where it may have a double pole. In
particular, if $\nu_0\neq 0$,\emph { under suitable conditions on the
parameters}, for any square integrable automorphic form $f$ one has the
relation
\be \Bigl(f, \textrm{Res}_{\nu=\nu_0} \M_\bt(\xi,\nu) \Bigr) \ddis
c_\bt(f), \label{innerprfla}\ee
where $\ddis$ indicates equality up to a nonzero constant. That is, the
residues of the  Poincar\'e  families $\M_\bt(\xi,\cdot)$ 
detect all cusp forms having a
non-zero abelian Fourier coefficient.
There is a similar result for
$\nu_0=0$. However, these results do not give information on
non-abelian Fourier terms, and in particular, on non-generic
automorphic forms, i.e. those having all abelian coefficients
$c_\bt(f)$ equal to zero. Thus, for the group $\SU(2,1)$, they do not
include cusp forms generating a $(\glie,K)$-module lying in the
holomorphic (or antiholomorphic)
discrete series or in the thin representations $T^\pm_k$, (with
$\k\ge -1$), which do not admit abelian Whittaker models.

In Theorem~\ref{thm-complgen} we prove a version of \eqref{innerprfla}
that involves both, abelian and non-abelian Fourier terms, showing that
the main parts of the corresponding Poincar\'e series (in some cases
their truncated main parts) do detect the whole space of cusp
forms.\medskip

If we compare with the classical results for $\SL(2,\RR)$ (\cite{Pe32},
\cite{Neu73},\cite{Nie73},\cite{Ra77}), we see that there are
similarities and some differences.

In the case of the unitary principal series and the complementary series
the results in Proposition~\ref{prop-upcstr} are similar to the results
for Maass forms of weight zero.

In the case of holomorphic and antiholomorphic series, for
$\nu_0 \in \ZZ_{\geq 3}$ the absolutely convergent Poincar\'e series
span the corresponding spaces of square integrable automorphic forms.
This extends to $\nu_0=2$ and $\nu_0=1$ by working with the values of
the analytically continued Poincar\'e series. In the case when
$\nu_0=0$, we deal with limits of holomorphic or antiholomorphic
discrete series, and, for the detection, we need the residues at the
first order singularities of the non-abelian Poincar\'e series. All
this is similar to what happens with holomorphic automorphic forms on
the upper half-plane. For weights $k\in \ZZ_{\geq 2}$ the values of
Poincar\'e series span the spaces of integrable automorphic forms, with
absolute convergence for $k\geq 3$ and conditional convergence for
$k=2$. Weight $1$ corresponds to $\nu_0=0$. Then we need residues of
Poincar\'e families to span the spaces of square integrable automorphic
forms. In Appendix~\ref{PSSl2R} we give a more detailed discussion
following \cite{B94}.

There are also differences between $\SL_2(\RR)$ and $\SU(2,1)$. Maximal
unipotent subgroups of $\SL_2(\RR)$ are abelian, and the Fourier
expansions are simpler than for $\SU(2,1)$. Since $\SL_2(\RR)$ has a
non-trivial center, we need to work with automorphic forms with a
character of $\Gm$ when we consider odd weights.

The cases of the large discrete series and of the representations
$T_k^{\pm}$ do not have analogues for $\SL(2,\RR)$. For $\SL_2(\RR)$
the square integrable automorphic forms are always obtained from
values or by resolving singularities of Poincar\'e series. We do not
need statements like in Proposition~\ref{prop-ldstr}, where we had to use
scalar products of automorphic forms with truncated Poincar\'e
families.

\section{Proofs of completeness results}\label{sect-pfc}

\subsection{Scalar product formulas and Wronskian
order}\label{sect-spwo}
The results to prove depend on spectral parameters $(\xi,\nu_0)$ and on
a $K$-type $\tau^{h_0}_{p_0}$ 
that together determine an irreducible class of irreducible
$(\glie,K)$-modules in Table~\ref{tab-Vlist}. We first state and prove
some facts that are valid more generally. Proposition~\ref{prop-MSW} is
valid for an arbitrary $k$-type $\tau^{h}_{p}$ and a weight $q$ in that
$K$-type. In Subsection~\ref{scalarprods} we assume that the $K$-type
$\tau^{h}_{p}$ occurs in $L^{2,\discr}(\Gm\backslash G)$. For a square
integrable automorphic form $f$ and a Fourier term order
$\Nfu \neq \Nfu_0$, the Fourier coefficient $c_\Nfu(f)$ is defined as
in \eqref{Cdef} by the equation
\be \Four_\Nfu f = c_\Nfu(f)\, \Om_\Nfu(\xi,\nu_0)\,.\ee

The first main point in this section is to obtain the
relation~\eqref{fr}, which expresses the scalar product
$\bigl( \tilde\M_\Nfu(j,\nu) , f \bigr)_{\Gm\backslash G}$ as the
product of $c_\Nfu(f)$ and a quantity that is a meromorphic function
of~$\nu$, not depending on~$f$. After that, we specialize to
$(h,p,q) = (h_0,p_0,p_0)$ and proceed to prove the results in
Section~\ref{sect-stcr}. We shall make use of Theorem~\ref{thm:orders},
concerning Wronskian orders.
Comparison of the orders with the Wronskian orders will lead to a proof
of Theorem~\ref{thm-complgen}.

\subsection{Sesquilinear forms} The relations we will obtain in
Proposition~\ref{prop-MSW} below are valid in more generality than we
need it. In this subsection the $K$-type $\tau^h_p$ is general.

Given $F_1 , F_2\in L^2( \Ld_\s\backslash G)_K$ we have
\bad \bigl( F_1,F_2\bigr)_{\Gm\backslash G}
&\= \int_{n\in \Ld_\s\backslash N} \int_{t=0}^\infty \int_{k\in K}
F_1\bigl( n\am(t)k \bigr)\, \overline{ F_2\bigl( n \am(t) k \bigr)}\,
dn \, \frac{dt}{t^5}\, dk
\\
&\= \int_{t=0}^\infty \bigl\{ F_1,F_2\bigr\}(t)\,\frac{dt}{t^5}\,,
\ead
where we use the sesquilinear form
\be (F_1,F_2)\mapsto \bigl\{ F_1,F_2\bigr\}(t)\= \int_{\Ld_\s\backslash
N} \int_K F_1\bigl( n \am(t) k \bigr)\,\overline{F_2\bigl(n\am(t) k
\bigr)}\, dn\, dk\ee
on $C^\infty(\Ld_\s\backslash G)_K$ with values in $C^\infty(0,\infty)$.
The integration is over compact sets, so the sesquilinear map is
well-defined.

We shall also use the Maass-Selberg form
\be \MS(F_1,F_2) (t) \= \bigl\{ F_1,CF_2\bigr\}(t) - \bigl\{ C
F_1,F_2\bigr\}(t).\ee

Functions in $C^\infty(\Ld_\s\backslash G)_{h,p,q}$ of weight $q$ in the
$K$-type $\tau^h_p$ that transform on the left according to the same
representation of $N$ have an expansion 
\be \label{Fjexp}
f\bigl( n \am(t) k \bigr) \= \sum_r w_r(n)\, f_r(t)\, \Kph h p r
q(k)\,,\ee
where $r$ runs over $|r|\leq p$, $r\equiv p\bmod 2$. If $\Nfu=\Nfu_\bt$,
then all $w_r$ are equal to the character $\ch_\bt$ of~$N$ in
\eqref{chbt}. If $\Nfu=\Nfu_{\ell,c,d}$ then the $w_r$ are
theta-functions $\Th_{\ell,c}(h_{\ell,m})$ as in \S\ref{sect_thfonb}.
They are mutually orthogonal in $L^2(\Ld_\s\backslash N)$ and have norm
$\sqrt {\frac 2\s}$. The $\Kph hprq$ are elements of an orthogonal
system in $L^2(K)$, with norms satisfying relation~\eqref{Kph-norm}.

We compute
\begin{align*}
\bigl\{ F_1,F_2\bigr\}(t) &:\= \int_{n\in \Ld_\s\backslash N} \int_{k\in
K} \sum_r w_r(n)\, f_{1,r}(t)\, \Kph h p r q(k)\\
&\qquad\hbox{} \cdot
\sum_{r'} \overline{ w_{r'}(n)} \, \bar f_{2,r}(t)\, \overline{ \Kph
hp{r'}q(k)}\, dn\, dk
\displaybreak[0]
\end{align*}
Using the properties of the functions $w_r$ and $\Kph hprq$, we arrive
at
\be \label{bcp} \bigl\{ F_1,F_2\bigr\}(t) \=\frac 2\s\,\bigl\|\Kph
h{p}{p}{q}\bigr\|_K^2\, \sum_r \binom p{\frac{p+r}2} f_{1,r}(t)\, \bar
f_{2,r}(t)\,. \ee

We apply this formula to get an expression for the Maass-Selberg form.
\begin{prop} 
\label{prop-MSW} Let $F_1,F_2\in C^\infty(\Ld_\s\backslash G)_{h,p,q}$
have expansions as in~\eqref{Fjexp}. Consider the generalized Wronskian
for $h_1,h_2\in C^\infty(0,\infty)$ and $n\in \ZZ_{\geq 1}$, given by
\be \Wr_n(h_1,h_2)(t) \= h_1(t)\, h_2^{(n)}(t) - h_1^{(n)}(t)
\,h_2(t)\,.\ee
Then
\badl{MSW2} \MS(F_1,F_2)(t)&\= \frac 2\s\,\, \bigl\|\Kph
h{p}{p}{q}\bigr\|_K^2\sum_{r\equiv p(2),\, |r|\leq p} \binom
p{\frac{p+r}2}\\
&\qquad\hbox{} \Bigl( -3t \, \Wr_1 \bigl(f_{1,r},\bar f_{2,r} \bigr)(t)
+ t^2\,\Wr_2\bigl( f_{1,r},\bar f_{2,r}\bigr )(t) \Bigr)\,,\\
&\=\frac 2\s\,\, \bigl\|\Kph h{p}{p}{q}\bigr\|_K^2 \, t^5\, \frac{d}{dt}
\bigl( t^{-3}\, \W(F_1,F_2)(t)\bigr),
\eadl
with
\be \W(F_1,F_2)(t) \= \sum_{r\equiv p(2),\, |r|\leq p} \binom
p{\frac{p+r}2}\, \Wr_1(f_{1,r}, \bar f_{2,r})(t)\,. \ee

The form $\Wr=\Wr_1$ is the usual Wronskian.
\end{prop}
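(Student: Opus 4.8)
The plan is to reduce everything to the radial variable $t$. Since $C\in ZU(\glie)$ commutes with left translations and with the right $K$-action, it preserves the finite-dimensional bundle of functions $F(n\am(t)k)=\sum_r w_r(n)\,f_r(t)\,\Kph hprq(k)$ with the prescribed left $N$-behaviour, $K$-type $\tau^h_p$ and weight $q$, the index $r$ running over the finitely many admissible values; write $(CF)(n\am(t)k)=\sum_r w_r(n)\,\Kph hprq(k)\,\bigl(\mathcal L F\bigr)_r(t)$ for the induced second-order matrix differential operator $\mathcal L$ in $t$. I would first record the structure of $\mathcal L$. Its top-order part is the scalar $t^2\partial_t^2\cdot\mathrm{Id}$: only the square of the generator of the $A$-direction contributes a second $t$-derivative, and in the coordinate $\am(t)$ this generator is $t\,\partial_t$, which does not mix the realisations $\Kph hprq$. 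Formal self-adjointness of $C$ for $(\cdot,\cdot)_{\Gm\backslash G}$ — whose radial density is $t^{-5}$, from $dg=t^{-5}\,dn\,dt\,dk$, and whose fibre pairing is \eqref{bcp} — then forces $\mathcal L$ into Sturm–Liouville form
\[
\bigl(\mathcal L F\bigr)_r(t)=t^{5}\frac{d}{dt}\Bigl(t^{-3}f_r'(t)\Bigr)+\sum_{r'}B_{r,r'}(t)\,f_{r'}(t)=t^2f_r''(t)-3t\,f_r'(t)+\sum_{r'}B_{r,r'}(t)\,f_{r'}(t),
\]
where $\bigl(B_{r,r'}(t)\bigr)$ is a real matrix of multiplication operators, symmetric with respect to the weights $\binom{p}{(p+r)/2}$.

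With this in hand I would substitute into $\MS(F_1,F_2)(t)=\{F_1,CF_2\}(t)-\{CF_1,F_2\}(t)$ and apply \eqref{bcp} to each term, so that $\{F_1,CF_2\}(t)=\frac2\s\|\Kph hppq\|_K^2\sum_r\binom{p}{(p+r)/2}f_{1,r}(t)\,\overline{(\mathcal L F_2)_r(t)}$ and similarly for $\{CF_1,F_2\}$. The contribution of the zeroth-order part is
\[
\frac2\s\|\Kph hppq\|_K^2\sum_{r,r'}\binom{p}{(p+r)/2}B_{r,r'}(t)\bigl(f_{1,r}(t)\,\overline{f_{2,r'}(t)}-f_{1,r'}(t)\,\overline{f_{2,r}(t)}\bigr),
\]
which vanishes because $B$ is real and $\binom{p}{(p+r)/2}B_{r,r'}$ is symmetric in $(r,r')$. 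For the remaining, diagonal, part, fixing $r$ and writing $a=f_{1,r}$, $b=f_{2,r}$, the reality of the coefficients $t^2$ and $-3t$ gives
\[
a\,\overline{\bigl(t^2b''-3tb'\bigr)}-\bigl(t^2a''-3ta'\bigr)\bar b=t^2\bigl(a\bar b''-a''\bar b\bigr)-3t\bigl(a\bar b'-a'\bar b\bigr)=t^2\Wr_2(a,\bar b)-3t\,\Wr_1(a,\bar b).
\]
Summing over $r$ against $\frac2\s\|\Kph hppq\|_K^2\binom{p}{(p+r)/2}$ yields the first equality in \eqref{MSW2}.

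Finally, the last equality in \eqref{MSW2} is the elementary identity $t^2\Wr_2(a,\bar b)-3t\,\Wr_1(a,\bar b)=t^5\frac{d}{dt}\bigl(t^{-3}\Wr_1(a,\bar b)\bigr)$: differentiating $\Wr_1(a,\bar b)=a\bar b'-a'\bar b$ the cross terms $a'\bar b'$ cancel, leaving $\frac{d}{dt}\Wr_1(a,\bar b)=\Wr_2(a,\bar b)$, and then expanding $t^5\frac{d}{dt}(t^{-3}\,\cdot\,)$. Summing over $r$ with the binomial weights turns the right-hand side into $\frac2\s\|\Kph hppq\|_K^2\,t^5\frac{d}{dt}\bigl(t^{-3}\W(F_1,F_2)(t)\bigr)$. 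The one genuinely substantive point is the structural input of the first paragraph — that the radial Casimir has second-order part $t^2\partial_t^2\cdot\mathrm{Id}$ and that everything of lower order is real and multiplicative — which is where the explicit description of the $\glie$-action on these Fourier-term bundles (from \cite{BMSU21}, or from the Iwasawa form of the Casimir behind the eigenvalue $\ld_2(\xi,\nu)=\nu^2-4+\frac13 j_\xi^2$) enters; once it is granted, everything else is the bookkeeping above. One should also double-check that the raising/lowering operators connecting different realisations $\Kph hprq$ carry no $t$-derivative, so that $B$ is indeed multiplicative; this is implicit in the same structural description.
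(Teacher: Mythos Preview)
Your approach is correct and more conceptual than the paper's, which simply says to carry out the explicit computation of $CF_j$ in components via the Mathematica routines of \cite[\S9.2]{BMSU21} and then to verify the second equality by a direct check. You instead isolate the structural reason the formula holds: the radial Casimir has scalar principal part $t^2\partial_t^2-3t\,\partial_t$, and a zeroth-order matrix part that is Hermitian for the weighted pairing, so only the scalar derivative terms survive in $\MS(F_1,F_2)$. The payoff is that your argument explains \emph{why} the answer has the Sturm--Liouville shape $t^5\tfrac{d}{dt}(t^{-3}\,\cdot\,)$, whereas the paper's computation merely produces it as output.

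Two small points. First, what self-adjointness of $C$ actually gives is that the weighted matrix $\bigl(\binom{p}{(p+r)/2}B_{r,r'}\bigr)$ is Hermitian, not real; your cancellation argument needs only Hermitian symmetry, and in the non-abelian case the $\nlie$-action on theta functions does produce genuinely imaginary coefficients, so ``real'' is an over-claim. Second, self-adjointness by itself does not force the first-order part of $\mathcal L$ to be the scalar $-3t\,\partial_t$: it would in principle allow off-diagonal first-order terms balanced against zeroth-order ones. That there are none comes from the Iwasawa-adapted form~\eqref{Cas} of the Casimir, in which only $\HH_r^2-4\HH_r$ carries $t$-derivatives once one knows how the remaining basis elements act on these Fourier-term functions --- precisely the input you attribute to \cite{BMSU21}. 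So your closing caveat is not a mere double-check but the load-bearing step; once it is granted, the rest of your argument is clean and complete, and your derivation of the second equality via $\tfrac{d}{dt}\Wr_1=\Wr_2$ is exactly the ``direct check'' the paper alludes to.
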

\begin{proof}The action of the Casimir element $C$ on $F_1$ and $F_2$
can be made explicit in the components $f_{j,r}$ with the Mathematica
routines discussed in \cite[\S9.2]{BMSU21}. This gives the first
relation in~\eqref{MSW2}. The second equality in \eqref{MSW2} becomes
clear in a direct check.
\end{proof}

\subsection{Scalar product formulas} \label{scalarprods} Our goal in
this subsection is to prove two useful relations. We will work with the
basis families
$\nu \mapsto \Mu_\Nfu(\xi,\nu) =\Mu_{\Nfu;h,p,q}(\xi,\nu)$, and the
Poincar\'e family
$\nu \mapsto \poin(\xi,\nu) \Mu_{\Nfu;h,p,q}(\xi,\nu) = \M_\Nfu(\xi,\nu)$.
Given a square integrable automorphic form $f$, the Fourier
coefficients $c_\Nfu(f)$ are defined by the identity
\be\label{Fourcoeff}
\Four_\Nfu f \= c_\Nfu(f)\, \Om_\Nfu(\xi,\nu_0)\qquad (\Nfu\neq
\Nfu_0)\,.\ee
See Proposition~\ref{prop-MuOm} for the basis function
$\Om_\Nfu$.

\begin{lem}\label{lem-fr0}For $\re\nu$ sufficiently large
\be \label{fr0old}
\bigl( \tilde \Poin_\Nfu(\xi,\nu) , f \bigr)_{\Gm\backslash G} \=
\overline{c_\Nfu(f)}\, \int_{t=0}^\infty
\ph(t)\,\bigl\{\Mu_\Nfu(\xi,\nu), \Om_\Nfu(\xi,\nu_0) \bigr\} (t)\,
\frac{dt}{t^5}\,.\ee
\end{lem}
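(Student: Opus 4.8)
The plan is to unfold both the Poincar\'e series and the scalar product, use the standard unfolding trick to pass from an integral over $\Gm\backslash G$ to one over $\Gm_N\backslash G$, and then recognise the inner $\Ld_\s\backslash N$-integral as a Fourier term operator applied to $f$. First I would write, for $\re\nu$ large enough that the Poincar\'e series converges absolutely and $\tilde\M_\Nfu(\xi,\nu)\in L^2(\Gm\backslash G)_K$ (as established in Subsection~\ref{sect-trunc}),
\[
\bigl( \tilde\M_\Nfu(\xi,\nu), f \bigr)_{\Gm\backslash G}
\= \int_{\Gm\backslash G} \Bigl( \sum_{\gm\in \Gm_N\backslash \Gm} \ph(\gm g)\,\Mu_\Nfu(\xi,\nu)(\gm g) \Bigr)\,\overline{f(g)}\, dg\,.
\]
Since $f$ is $\Gm$-invariant, the absolute convergence lets me interchange sum and integral and fold the sum into the domain of integration, obtaining
\[
\bigl( \tilde\M_\Nfu(\xi,\nu), f \bigr)_{\Gm\backslash G}
\= \int_{\Gm_N\backslash G} \ph(g)\,\Mu_\Nfu(\xi,\nu)(g)\,\overline{f(g)}\, dg\,.
\]

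Next I would write $g=n\am(t)k$ with $n$ running over $\Ld_\s\backslash N$ (note $\Gm_N=\Ld_\s$ for the groups fixed in Subsection~\ref{discretesubgroups}), $t>0$, $k\in K$, using $dg=t^{-5}\,dn\,dt\,dk$ and the fact that $\ph$ depends only on $t$. This gives
\[
\bigl( \tilde\M_\Nfu(\xi,\nu), f \bigr)_{\Gm\backslash G}
\= \int_{t=0}^\infty \ph(t)\,\Bigl( \int_{\Ld_\s\backslash N}\int_K \Mu_\Nfu(\xi,\nu)(n\am(t)k)\,\overline{f(n\am(t)k)}\, dn\, dk \Bigr)\frac{dt}{t^5}\,.
\]
The inner double integral is almost the sesquilinear form $\{\Mu_\Nfu(\xi,\nu),\,\cdot\,\}(t)$, except that the second slot holds $f$ rather than a single Fourier term. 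Here I would invoke that $\Mu_\Nfu(\xi,\nu)$ transforms on the left under $N$ according to the fixed irreducible realization $\Nfu$ (a character $\ch_\bt$ or a theta-system $\Th_{\ell,c}(h_{\ell,m})$), so that the orthogonality of the basis $\B$ of $L^2(\Ld_\s\backslash N)$ collapses the $n$-integration: only the $\Nfu$-isotypic part of $f$ survives, which is exactly $\Four_\Nfu f$ up to the normalising constant $\tfrac\s2$ built into \eqref{Fbtdef} and \eqref{Flcd}. Thus the inner integral equals $\{\Mu_\Nfu(\xi,\nu),\,\Four_\Nfu f\}(t)$, and substituting the definition \eqref{Fourcoeff} of $c_\Nfu(f)$, namely $\Four_\Nfu f = c_\Nfu(f)\,\Om_\Nfu(\xi,\nu_0)$, pulls out the scalar $\overline{c_\Nfu(f)}$ (conjugated because $f$ sits in the antilinear slot) and leaves $\{\Mu_\Nfu(\xi,\nu),\,\Om_\Nfu(\xi,\nu_0)\}(t)$, which is \eqref{fr0old}.

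The main obstacle is the justification of the two interchanges — folding the Poincar\'e sum into the integral, and commuting the $\Ld_\s\backslash N$-integration with the Fourier expansion of $f$ — both of which require absolute convergence. For the first, I would cite the estimate \eqref{est0nu} together with Lemma~\ref{lem-p'} and the boundedness of $\tilde\M_\Nfu(\xi,\nu)$ (shown in Subsection~\ref{sect-trunc}), ensuring $\tilde\M_\Nfu(\xi,\nu)\in L^2$ so that pairing against the rapidly decreasing $f$ is legitimate and the unfolded integral over $\Gm_N\backslash G$ converges absolutely. For the second, I would use that $f$ is $K$-finite of a fixed $K$-type, so its Fourier expansion $f=\sum_\Nfu \Four_\Nfu f$ converges in $L^2(\Ld_\s\backslash N)$ uniformly in the relevant compact $(t,k)$-ranges where $\ph\neq0$, which suffices since $\ph$ has compact support in $t$. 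Everything else is the bookkeeping of the normalising constants $\tfrac\s2$, $\|\Kph hppq\|_K^2$ and $\sqrt{2/\s}$, which I would not grind through here.
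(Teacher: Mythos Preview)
Your overall approach --- unfold the truncated Poincar\'e series to an integral over $\Gm_N\backslash G$, then use the $N$-equivariance of $\Mu_\Nfu(\xi,\nu)$ to collapse the $\Ld_\s\backslash N$-integral onto the single Fourier term $\Four_\Nfu f = c_\Nfu(f)\,\Om_\Nfu(\xi,\nu_0)$ --- is exactly the paper's approach, and the identification of the inner $(n,k)$-integral with $\{\Mu_\Nfu(\xi,\nu),\Four_\Nfu f\}(t)$ is correct.

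There is, however, a concrete error in your convergence discussion. You write that ``$\ph$ has compact support in $t$'', and use this to dispose of the second interchange. But $\ph$ does \emph{not} have compact support: by definition $\ph\equiv 1$ on a neighbourhood of $0$ and $\ph\equiv 0$ only near $\infty$. So the unfolded integral runs all the way down to $t=0$, and the integrand there behaves like
\[
t^{-5}\cdot h^\mu_r(\nu,t)\,\overline{h^\om_r(\nu_0,t)} \;=\; O\bigl(t^{-5}\cdot t^{2+\re\nu}\cdot t^{2-\al_r}\bigr) \;=\; O\bigl(t^{\re\nu-\al_r-1}\bigr)
\]
for some $\al_r>0$ coming from the behaviour of $\Om_\Nfu$ near $0$ (see \eqref{OmMu-est}). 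This is integrable at $0$ only if $\re\nu>\al_r$, which is precisely why the lemma is stated ``for $\re\nu$ sufficiently large'' rather than merely $\re\nu>2$. The paper makes this explicit: the cut-off handles $t\uparrow\infty$, while the assumption on $\re\nu$ makes $h^\mu_r(\nu,t)$ counteract the possible growth of $h^\om_r(\nu_0,t)$ as $t\downarrow 0$. Your argument as written misses this point entirely; once you replace the incorrect compact-support claim by this estimate, the proof is complete.
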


\begin{proof}
The automorphic form $f$ has a decomposition into Iwasawa coordinates
\be f\bigl( n ak \bigr) \= \sum_r f_r(na ) \,\Kph hpr{ q }(k)\,,\ee
with components $f_r \in C^\infty(\Ld_\s\backslash NA)$. The sum is over
$r\equiv p \bmod 2$, $|r|\leq p$.

The function $\Om_\Nfu(\xi,\nu)$ has also a description in components:
\be \Om_\Nfu(\xi,\nu)\bigl( n \am(t)k \bigr) \= \sum_r w_r(n) \,
h_r^\om(\nu,t)\, \Kph hprq(k)\,,\ee
where the functions $h_r^\om(\nu,\cdot)$ are linear combinations of
$K$-Bessel functions or $W$-Whit\-taker functions with exponential
decay as $t\uparrow \infty$, that satisfy an estimate
$\oh\bigl( t^{2-\al_r}\bigr)$ as $t\downarrow 0$ for some $\al_r > 0$.
See \eqref{OmMu-est}.

In the abelian case the Fourier term in \cite[(5.5)]{BMSU21} is
\begin{align*}
\Four_\bt f \bigl(n\am(t)k\bigr)
&\= \frac 2\s\, \int_{n'\in\Ld_\s\backslash N}\overline{\ch_\bt(n')}\,
\sum_r f_r\bigl(n'n\am(t)\bigr) \, \Kph h p r q(k) \,
dn'\displaybreak[0]\\
&\=\frac 2\s\, \ch_\bt(n) \int_{n'\in \Ld_\s\backslash N}
\overline{\ch_\bt(n')} f_r\bigr(n'\am(t) \bigr) \, dn'\; \Kph
hprq(k)\,.
\end{align*}
Using \eqref{Fourcoeff} and the notation in \eqref{OmMuh}, we
see that the  
left-hand side is equal to
\begin{align*} c_\bt(f) &\, \Om_\bt(\xi,\nu_0)\bigl( n\am(t) k \bigr)
\= c_\bt(f) \, \sum_r \ch_\bt(n)\, h_r^\om(\nu_0,t) \, \Kph hprp(k)\,.
\end{align*}
This gives
\be\label{cof} c_\bt(f)\, h_r^\om(\nu_0,t)
\= \frac 2\s\, \int_{\Ld_\s\backslash N} \overline{\ch_\bt(n)}\,
f_r\bigl( n \am(t)\bigr)\, dn\,.\ee

In the non-abelian case we use \cite[(8.21)]{BMSU21}, with
$\n=(\ell,c,d)$, $w_r = \Th_{\ell,c}(h_{\ell,m(h,r)})$, and $m(h,r)$ as
in \eqref{parms-nab}.
\begin{align*}
\Four_\n f \bigl( n \am(t)k \bigr)&\= \sum_r w_r(n) \, \frac{\Kph
hprq(k)}{\bigl\|\Kph h p r q \bigr\|^2_K}\, 2 \int_{n'\in
\Ld_\s\backslash N} \int_{k'\in K} \overline{w_r(n')}\\
&\qquad\hbox{} \cdot
\sum_{r'} f_{r'}\bigl( n' \am(t)\bigr)\, \Kph hp{r'}q(k')\,
\overline{\Kph hprp(k')}\, dk'\, dn'
\displaybreak[0]\\
&\= \frac 2\s\,\sum_r w_r(n) \, \Kph hprq(k)\, \int_{n'\in
\Ld_\s\backslash N} \overline{w_r(n')} \, f_r\bigl( n' \am(t)\bigr) \,
dn'\,.
\end{align*}
We compare this with
\[ \Four_\n f \= c_\n(f) \,\Om_\n(\xi,\nu_0)\,,\]
to obtain
\be \label{cof1} c_\n(f) \, h_r^\om(\nu_0,t) \= \frac 2\s\,\int_{n'\in
\Ld_\s\backslash N} \overline{w_r(n)} \, f_r\bigl( n \am(t)
\bigr)\, dn\,,\ee
which is similar to the expression in the abelian case.

For the function $\Mu_\Nfu(\xi,\nu)$ as well, we use the
notation in~\eqref{OmMuh}:
\be \Mu_\Nfu(\xi,\nu)\bigl( n\am(t) k \bigr) \= \sum_r w_r(n)\,
h^\mu_r(\nu,t) \, \Kph hprq(k)\,,\ee
with $h^\mu_r$ expressed in $I$-Bessel functions or $M$-Whittaker
functions, and with $w_r$ as above. We have
\be \label{bound-mur} h^\mu_r(\nu,t) \= \oh(t^{2+\nu })\,.\ee

Now we take $\re\nu>2$ to have convergence of the Poincar\'e series, and
$\re \nu > \al_r$ in the exponent of the estimate for
$h^\om_r(\nu_0,t)$. We take apart the absolutely convergent Poincar\'e
series and with the use of \eqref{bcp} we arrive at the scalar product
formula in the lemma. Indeed
\begin{align*}
\bigl( \tilde\Poin_\Nfu&(\xi,\nu) , f\bigr)_{\Gm\backslash G} \=
\int_{\Gm \backslash G}\sum_{\gm\in \Gm_{\!N}\backslash G} \ph(\gm g)
\Mu_\Nfu(\xi,\nu)(\gm g)
\,\overline{f(g)}\, dg\\
&\= \int_{\Gm_{\!N} \backslash G } \ph(g)\,\Mu_\Nfu(\xi,\nu)(g)\,
\overline{f(g)}\, dg\displaybreak[0]\\
&\= \int_{n\in \Gm_{\!N}\backslash N} \int_{t=0}^\infty \int_{k\in K}
\ph(t)
\, \sum_r w_r(n)\, h^\mu_r(\nu;t) \, \Kph hprq(k) \\
&\qquad\hbox{} \cdot
\sum_{r'} \overline{f_{r'}\bigl( n \am(t) \bigr)} \,\overline{\Kph
hp{r'}q(k)}\, dn\, \frac{dt}{t^5}\, dk
\displaybreak[0]\\
&\= \overline{c_\Nfu(f)}\,\bigl\| \Kph hprq\bigr\|_K^2 \, \frac 2\s
\,\,\sum_r \int_{t=0}^\infty \ph(t) \, h^\mu_r(\nu,t) \,
\overline{h_r^\om(\nu_0,t)}\, \frac{dt}{t^5} \,.\end{align*}
The convergence as $t\uparrow\infty$ is no problem by the presence of
the cut-off function $\ph$. The assumption on $\re\nu$ makes
$h^\mu_s(\nu,t)$ counteract the growth of the functions
$h^\om_r(\nu_0,t)$ as $t\downarrow0$.
\end{proof}

\begin{prop}\label{prop-scpf} Let $\Nfu\neq \Nfu_0$, and let
$f\in \Au{(2)}\bigl(\ps[\xi,\nu_0]\bigr)_{h,p,q}$. Then we have
\badl{fr} \bigl( \tilde \Poin_\Nfu(\xi,\nu)& , f \bigr)_{\Gm\backslash
G}\= \overline{c_\Nfu(f)}\, \bigl\|\Kph hppq\bigr\|^2_K \frac 2\s\\
&\qquad\hbox{} \cdot
\frac1{(\nu^2-\nu_0^2)}\, \int_{t=0}^\infty \ph'(t)\, \W\bigl(
\Mu_\Nfu(\xi,\nu),\Om_{\Nfu}(\xi,\nu_0)
\bigr)(t) \, \frac{dt}{t^3}\,,
\eadl
as an identity of meromorphic functions in $\nu\in \CC$. The
singularities of the function
$\nu \mapsto \W\bigl( \Mu_\Nfu(\nu),\Om_{\Nfu}(\nu_0)
\bigr)$ are contained in the set~$\ZZ_{\leq -1}$.
\end{prop}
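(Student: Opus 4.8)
The plan is to reduce \eqref{fr} to the scalar product formula of Lemma~\ref{lem-fr0}, valid for $\re\nu$ large, by rewriting the $t$-integral there with the help of the Casimir operator and Proposition~\ref{prop-MSW}, and then to invoke meromorphic continuation. First I would recall that $C$ acts on the family $\Mu_\Nfu(\xi,\nu)$ by the scalar $\ld(\xi,\nu)=\nu^2-4+\frac13 j_\xi^2$ and on $\Om_\Nfu(\xi,\nu_0)$ by $\ld(\xi,\nu_0)=\nu_0^2-4+\frac13 j_\xi^2$, the two functions carrying the same $M$-parameter $j_\xi$ attached to $\xi$. Since $f\in\Au{(2)}\bigl(\ps[\xi,\nu_0]\bigr)_{h,p,q}$, the number $\ld(\xi,\nu_0)$ is the Casimir eigenvalue of a square integrable form, hence real. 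Therefore, by the definition of the Maass--Selberg form,
\bad \MS\bigl(\Mu_\Nfu(\xi,\nu),\Om_\Nfu(\xi,\nu_0)\bigr)(t)&\= \bigl(\ld(\xi,\nu_0)-\ld(\xi,\nu)\bigr)\,\bigl\{\Mu_\Nfu(\xi,\nu),\Om_\Nfu(\xi,\nu_0)\bigr\}(t)\\ &\= (\nu_0^2-\nu^2)\,\bigl\{\Mu_\Nfu(\xi,\nu),\Om_\Nfu(\xi,\nu_0)\bigr\}(t)\,,\ead
the term $-4+\frac13 j_\xi^2$ cancelling cleanly.

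Next I would apply the second equality in \eqref{MSW2} of Proposition~\ref{prop-MSW} with $F_1=\Mu_\Nfu(\xi,\nu)$ and $F_2=\Om_\Nfu(\xi,\nu_0)$ (both lie in $C^\infty(\Ld_\s\backslash G)_{h,p,q}$ and transform under $N$ according to the representation $\Nfu$), which rewrites $\MS\bigl(\Mu_\Nfu(\xi,\nu),\Om_\Nfu(\xi,\nu_0)\bigr)(t)$ as $\frac2\s\bigl\|\Kph hppq\bigr\|_K^2\,t^5\,\frac{d}{dt}\bigl(t^{-3}\,\W(\Mu_\Nfu(\xi,\nu),\Om_\Nfu(\xi,\nu_0))(t)\bigr)$. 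Dividing by $\nu_0^2-\nu^2$ — harmless for $\re\nu$ large, where Lemma~\ref{lem-fr0} applies and $\nu\neq\pm\nu_0$ — and substituting the resulting expression for $\bigl\{\Mu_\Nfu(\xi,\nu),\Om_\Nfu(\xi,\nu_0)\bigr\}(t)$ into Lemma~\ref{lem-fr0}, the factor $t^5$ cancels the $t^{-5}$ in the measure, leaving the integral $\int_0^\infty\ph(t)\,\frac{d}{dt}\bigl(t^{-3}\,\W(\cdots)(t)\bigr)\,dt$. I would then integrate by parts. The product $\ph(t)\,t^{-3}\,\W(\Mu_\Nfu(\xi,\nu),\Om_\Nfu(\xi,\nu_0))(t)$ vanishes identically for $t$ near $\infty$, since $\ph$ does, and tends to $0$ as $t\downarrow 0$ for $\re\nu$ large: the components of $\Mu_\Nfu(\xi,\nu)$ satisfy $h^\mu_r(\nu,t)=\oh(t^{2+\nu})$ (see \eqref{bound-mur}), with the corresponding estimate for the $t$-derivative, while the components $h^\om_r(\nu_0,t)$ of $\Om_\Nfu(\xi,\nu_0)$ are $\oh(t^{2-\al_r})$ for some $\al_r>0$ (as recalled in the proof of Lemma~\ref{lem-fr0}), so each summand of $t^{-3}\,\W$ is $\oh(t^{\nu-\al_r})$. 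Hence the boundary terms drop out, $\int_0^\infty\ph(t)\,\frac{d}{dt}(t^{-3}\,\W)(t)\,dt=-\int_0^\infty\ph'(t)\,t^{-3}\,\W(t)\,dt$, and writing $-1/(\nu_0^2-\nu^2)=1/(\nu^2-\nu_0^2)$ produces exactly \eqref{fr} in the region $\re\nu$ large.

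Finally I would upgrade this to an identity of meromorphic functions on $\CC$. The left-hand side of \eqref{fr} continues meromorphically because $\nu\mapsto\tilde\Poin_\Nfu(\xi,\nu)$ was produced in Subsections~\ref{sect-trunc}--\ref{sect-conclusion} as a meromorphic $L^2(\Gm\backslash G)$-valued family and $f$ is a fixed element of $L^2(\Gm\backslash G)$. On the right-hand side, each component $h^\mu_r(\nu,\cdot)$ of $\Mu_\Nfu(\xi,\nu)$, together with its $t$-derivative, depends holomorphically on $\nu$ in $\CC\setminus\ZZ_{\leq-1}$ by Proposition~\ref{prop-MuOm}; since the components of $\Om_\Nfu(\xi,\nu_0)$ do not depend on $\nu$, the function $\nu\mapsto\W(\Mu_\Nfu(\xi,\nu),\Om_\Nfu(\xi,\nu_0))(t)$ is meromorphic on $\CC$ with singularities contained in $\ZZ_{\leq-1}$, which is the last assertion of the proposition. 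As $\ph'$ is supported in a fixed compact subset of $(0,\infty)$, integration against $\ph'(t)\,t^{-3}\,dt$ preserves this meromorphy, and the prefactor $1/(\nu^2-\nu_0^2)$ keeps the result meromorphic on all of $\CC$. Since both sides of \eqref{fr} are meromorphic on $\CC$ and coincide on the half-plane where $\re\nu$ is large, they coincide everywhere. I expect the main obstacle to be of a purely bookkeeping nature: checking that the boundary contribution at $t=0$ in the integration by parts genuinely vanishes in the half-plane of convergence — that is, assembling the decay estimates for the components of $\Mu_\Nfu$ and $\Om_\Nfu$ and of their $t$-derivatives — and keeping the Casimir computation straight so that $\ld(\xi,\nu_0)-\ld(\xi,\nu)$ collapses to $\nu_0^2-\nu^2$ with no residual $j_\xi$-terms.
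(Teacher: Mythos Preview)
Your proposal is correct and follows essentially the same route as the paper: combine Lemma~\ref{lem-fr0} with the Casimir identity \eqref{Crel} and the Wronskian expression \eqref{MSW2} from Proposition~\ref{prop-MSW}, integrate by parts using the decay estimates for the components $h^\mu_r$ and $h^\om_r$ to kill the boundary term at $t=0$, and then extend from large $\re\nu$ to all of $\CC$ by meromorphic continuation. Your explicit remark that $\ld(\xi,\nu_0)$ is real (because it is the Casimir eigenvalue of a square integrable form) is a detail the paper leaves implicit in writing \eqref{Crel}, but otherwise the arguments coincide.
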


\begin{proof}
Since 
$\Mu_\Nfu(\xi,\nu)$ and 
$\Om_\Nfu(\xi,\nu_0)$ are eigenfunctions of the Casimir element~$C$, we
have
\be\label{Crel}\MS\bigl( \Mu_\Nfu(\xi,\nu),\Om_\Nfu(\xi,\nu_0) \bigr)(t)
\=
(\nu_0^2-\nu^2)\, \bigl\{ \Mu_\Nfu(\xi,\nu),\Om_\Nfu(\xi,\nu_0)
\bigr\}(t)\,. \ee

Applying Lemma~\ref{lem-fr0} and \eqref{MSW2} we get for $\re\nu$
sufficiently large
\begin{align*}
\bigl( \tilde\M_\Nfu(\xi,\nu),& f\bigr)_{\Gm\backslash G} \=
\overline{c_\Nfu(f)} \frac1{\nu_0^2-\nu^2} \, \bigl\|\Kph h p p q
\bigr\|_K^2 \frac 2\s\\
&\qquad\hbox{} \cdot
\int_{t=0}^\infty \ph(t)\, \frac{d}{dt} \Bigl( t^{-3}\, \W\bigl(
\Mu_\Nfu(\xi,\nu), \Om_\Nfu(\xi,\nu_0)(t) \bigr) \Bigr) \, dt.
\end{align*}
To apply partial integration, the Wronskian function has to have
sufficient decay at zero. This is based on the components
$h^\mu_r(\nu,t)$ and $h^\om_r(\nu_0,t)$ and their derivatives, with
expressions in modified Bessel functions or Whittaker functions for
which we have \eqref{IKest} and~\eqref{MWest}. Using the contiguous
relations in Appendix \ref{sect-rpfWr} we can handle the derivatives.
Enlarging $\re\nu$ sufficiently, we arrive at \eqref{fr} in the
proposition.

The left-hand side depends meromorphically on $\nu$. The integral in the
right-hand side is over a compact set in $(0,\infty)$ and we thus get
its meromorphic continuation with singularities only in
$\ZZ_{\leq -1}$, due to the Wronskian functions.
\end{proof}
\subsection{Proof of Theorem~\ref{thm-complgen}} \label{sect-pfcomplgen}
The results up till here hold for any square integrable automorphic form
$f\in \Au{(2)}\bigl( [\xi,\nu_0] \bigr)_{h,p,q}$. The isomorphism class
of the irreducible module $V_f = U(\glie)\, f$ is determined by the
spectral parameters $(\xi,\nu_0)$ and the $K$-type. To use
Proposition~\ref{prop-scpf} we have to study the Wronskian function
$ \W\bigl( \Mu_\Nfu(\xi,\nu), \Om_\Nfu(\xi,\nu_0)\bigr)$. We now
proceed with the minimal $K$-type $\tau^{h_0}_{p_0}$ in the irreducible
module under consideration in order to use the explicit formulas
available for that $K$-type.

Let $V$ be an irreducible submodule of
$\Au{(2)}\bigl(\ps[\xi,\nu_0]\bigr)$, with isomorphism class
characterized by $(\xi,\nu_0,h_0,p_0)$ as indicated in
Table~\ref{tab-Vlist}. This submodule is of the form $V_f$ for any
non-zero $f\in \Au{(2)}\bigl( \ps[\xi,\nu_0]\bigr)_{h_0,p_0,p_0}$.

For $\Nfu\neq\Nfu_0$ we put
\be\label{difWrint} I_\Nfu(\ph;\nu,\nu_0) \= \int_{t=0}^\infty \ph'(t)\,
\W\bigl( \Mu_\Nfu(\xi,\nu),\Om_{\Nfu}(\xi,\nu_0)
\bigr)
(t) \, \frac{dt}{t^3}\,.\ee
The parameters $\xi$, $h_0$ and $p_0$ are not visible in the notation.

At points $\nu=\nu_0$ with $\re\nu_0\geq 0$ the function
$\nu\mapsto I_\Nfu(\ph;\nu,\nu_0)$ is holomorphic. So the order at
$\nu=\nu_0$ of the function
\be\label{W-f} \nu \mapsto \frac1{\nu^2-\nu_0^2} \,
I_\Nfu(\ph;\nu,\nu_0)
\ee
is at most $1$ if $\nu_0\neq0$, and at most $2$ if $\nu_0=0$. The
cut-off function $\ph$ may have an influence on this order.

\begin{defn}\label{def-wo} The \emph{Wronskian order}, denoted by
$\wo_\Nfu(\nu_0)$, is the 
maximum of the orders of the functions \eqref{W-f} as $\ph$ varies over
all cut-off functions. A cut-off function $\ph$ is said to be
\emph{good} if the order of the function in \eqref{W-f} coincides with
the Wronskian order.
\end{defn}

\begin{lem}\label{lem-gco}For each $T>0$ there is a good cut-off
function $\ph_T$ for $\wo_\Nfu(\nu_0)$ such that $\ph_T=1$ on $[0,T]$,
$\ph_T(t)\geq 0$ for $T\leq t\leq T=1$ and $\ph_T=0$ on $[T+1,\infty)$.
\end{lem}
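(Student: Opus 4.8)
The statement is essentially a density/genericity claim: among all admissible cut-off functions, the ones achieving the maximal (Wronskian) order are abundant, and in particular can be chosen to be supported near any prescribed threshold $T$ and normalized to equal $1$ on $[0,T]$ and to be nonnegative. The plan is to start from the definition of $\wo_\Nfu(\nu_0)$ as a supremum of orders and show the supremum is attained by a function of the prescribed shape. First I would fix $T>0$ and introduce the class $\mathcal C_T$ of $\ph\in C^\infty(0,\infty)$ with $\ph\equiv 1$ on a neighborhood of $0$ that in fact contains $[0,T]$, $0\le \ph\le 1$, and $\ph\equiv 0$ on $[T+1,\infty)$. Any $\ph$ in $\mathcal C_T$ has $\ph'$ supported in $[T,T+1]$, so by \eqref{difWrint} the quantity $I_\Nfu(\ph;\nu,\nu_0)$ is the pairing of the fixed meromorphic function $\nu\mapsto \W\bigl(\Mu_\Nfu(\xi,\nu),\Om_\Nfu(\xi,\nu_0)\bigr)(t)\,t^{-3}$ against the distribution $\ph'(t)\,dt$ on the compact interval $[T,T+1]$.

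The key step is to understand how the order at $\nu=\nu_0$ of $I_\Nfu(\ph;\nu,\nu_0)$ depends on $\ph$. Write the Laurent (Taylor) expansion in $\nu-\nu_0$ of the Wronskian kernel: $\W\bigl(\Mu_\Nfu(\xi,\nu),\Om_\Nfu(\xi,\nu_0)\bigr)(t) = \sum_{k\ge k_0} (\nu-\nu_0)^k\, w_k(t)$, with $w_k\in C^\infty[T,T+1]$ and $k_0$ the order of the kernel itself (recall from Proposition~\ref{prop-scpf} that the only singularities of the kernel in $\nu$ lie in $\ZZ_{\le -1}$, so near $\nu_0$ with $\re\nu_0\ge0$ it is in fact holomorphic, i.e. $k_0\ge 0$). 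Then $I_\Nfu(\ph;\nu,\nu_0)=\sum_{k\ge k_0}(\nu-\nu_0)^k \int_{t} \ph'(t)\,t^{-3} w_k(t)\,dt$, and the order of this series at $\nu_0$ is $\min\{k : \int \ph'(t)\,t^{-3}w_k(t)\,dt\ne 0\}$. Dividing by $\nu^2-\nu_0^2=(\nu-\nu_0)(\nu+\nu_0)$ shifts this by $1$ (by $2$ when $\nu_0=0$). Hence $\wo_\Nfu(\nu_0)$ is determined by the first index $k$ for which the linear functional $\ph\mapsto \int \ph'(t)\,t^{-3}w_k(t)\,dt$ is not identically zero on $\mathcal C_T$; call it $k_\ast$, so that $\wo_\Nfu(\nu_0)=k_\ast - 1$ (or $k_\ast-2$ at $\nu_0=0$), and a $\ph$ is good precisely when $\int \ph'(t)\,t^{-3}w_{k_\ast}(t)\,dt\ne 0$.

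It remains to exhibit a nonnegative $\ph_T\in\mathcal C_T$ with $\int \ph'(t)\,t^{-3}w_{k_\ast}(t)\,dt\ne 0$. Here I would argue as follows: the functional in question is $\ph\mapsto -\int \ph(t)\,\frac{d}{dt}\bigl(t^{-3}w_{k_\ast}(t)\bigr)\,dt$ after integration by parts (the boundary terms vanish since $\ph$ is $1$ near $T$ but $\ph'$ still vanishes there, or more simply since $t^{-3}w_{k_\ast}$ is smooth and $\ph$ is constant near both ends of $[T,T+1]$ — one must be a little careful, but the standard trick is to note $\int\ph' g = [\ph g] - \int \ph g'$ and choose the normalization so the bracket contributes a genuine term). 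More robustly: since the functional is not identically zero on $\mathcal C_T$ there is \emph{some} admissible $\psi$ with nonzero value; if $\psi$ is not already nonnegative, replace it — using that $\mathcal C_T$ is convex and that a good cut-off may be taken with small $C^1$-perturbations — by a nonnegative monotone one. Concretely, pick a single smooth nonincreasing $\ph_T$ going from $1$ to $0$ on $[T,T+1]$; then $\ph_T'\le 0$, and $\int \ph_T'(t) t^{-3}w_{k_\ast}(t)\,dt = 0$ would force $t^{-3}w_{k_\ast}$ (hence $w_{k_\ast}$) to be constant on $[T,T+1]$ for every such choice of profile, which — as $T$ was arbitrary and the profile can concentrate near any subinterval — would force $w_{k_\ast}$ to vanish identically, contradicting $k=k_\ast$ being the first nonzero index. \textbf{The main obstacle} is exactly this last point: ruling out the degenerate possibility that the relevant Wronskian coefficient $w_{k_\ast}$ is locally constant (equivalently, that $t^{-3}w_{k_\ast}$ is annihilated by every admissible $\ph'$). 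This is handled by the fact that $\W(\Mu_\Nfu,\Om_\Nfu)$, built from Bessel or Whittaker functions via Proposition~\ref{prop-MSW}, is real-analytic in $t$ on $(0,\infty)$ and is not locally constant unless identically zero; combined with $w_{k_\ast}\not\equiv 0$ this yields a good nonnegative $\ph_T$ of the prescribed form on $[0,T]$.
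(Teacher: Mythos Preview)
Your approach is essentially the same as the paper's, though the paper is considerably more streamlined. The paper writes the Wronskian directly as $(\nu-\nu_0)^u A(\nu,t)$ with $A$ real-analytic and $A(\nu_0,\cdot)\not\equiv 0$; since a nonzero real-analytic function on $(0,\infty)$ has isolated zeros, $A(\nu_0,\cdot)$ is nonzero somewhere on $[T,T+1]$, so one can pick a nonnegative bump $\alpha\in C_c^\infty(T,T+1)$ with $\int\alpha=1$ and $\int \alpha(t)\,A(\nu_0,t)\,t^{-3}\,dt\neq 0$, and then set $\ph_T(t)=\int_t^\infty \alpha(x)\,dx$. This is exactly your Taylor-expansion argument with $w_{k_\ast}=A(\nu_0,\cdot)$, but it avoids your detours: the integration-by-parts attempt is unnecessary, and the passage ``as $T$ was arbitrary'' is a red herring (in the lemma $T$ is \emph{fixed}; real analyticity alone guarantees $w_{k_\ast}\not\equiv 0$ on $[T,T+1]$). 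Also, your claim that vanishing of all the pairings forces $t^{-3}w_{k_\ast}$ to be \emph{constant} should say \emph{zero}: the functions $-\ph_T'$ range over all smooth probability densities on $[T,T+1]$, so $\int(-\ph_T')g=0$ for all of them forces $g\equiv 0$ there. With those slips corrected your proof goes through and matches the paper's.
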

\begin{proof}The function
$(\nu,t) \mapsto \W\bigl( \Mu_\Nfu(\xi,\nu),\Om_{\Nfu}(\xi,\nu_0)
\bigr)(t)$ is of the form $(\nu,t) \mapsto (\nu-\nu_0)^u\, A(\nu,t)$,
with $u\in \ZZ$ and $A\in C^\om\left( U \times(0,\infty) \right)$ a
real-analytic function which is holomorphic in $\nu$ on a neighborhood
$U$ of $\nu_0$ in~$\CC$, and such that $t\mapsto A(\nu_0,t)$ is not
identically zero. Then the order of the function in \eqref{W-f} is at
most $1-u$ if $\nu_0\neq 0$ and $2-u$ if $\nu_0=0$.

For each $T>0$ there exists $\al\in C_c^\infty(0,\infty)$ with
$\mathrm{Supp} \,\al \subset[T,T+1]$, $\al\geq 0$, such that
$\int_{t=0}^\infty \al(t)\, A(\nu_0,t)\, \frac{dt}{t^3}\neq 0$ and
$\int_{t=T}^{T+1}\al(t) \, dt=1$. Then
$\ph_T(t) = \int_{x=t}^\infty \al(x)\, dx$ is a cut-off function for
which the order of the function in~\eqref{W-f} is equal to $1-u$ if
$\nu_0\neq 0$ and $2-u$ if $\nu_0=0$. So $\ph_T$ is a good cut-off
function.
\end{proof}

Each non-zero square integrable automorphic form
$ f\in \Au{(2)}(\ps)_{h_0,p_0,p_0}$ generates a $(\glie,K)$-module
$V_f$ in $\Au{(2)}$ in the isomorphism class specified by
$(\xi,\nu_0,h_0,p_0)$. We now consider Poincar\'e families
$\M_\Nfu(\xi,\nu)_{h_0,p_0,p_0}$ for a Fourier term order
$\Nfu\neq \Nfu_0$.

\begin{lem}\label{lem-complgen}Let $\ph$ be a good cut-off function. Let
$\ps=\ps[\xi,\nu_0]$ and let $f$ be a non-zero square integrable
automorphic form in $\Au{(2)}(\ps)_{h_0,p_0,p_0}$.

Then the Fourier coefficient $c_\Nfu(f)$ 
is non-zero if and only if the scalar product
$\bigl( \tilde P_\Nfu(\nu_0), f \bigr)_{\Gm\backslash G}$ is non-zero.
  Furthermore, if for some $f \in  \Au{(2)}(\ps)_{h_0,p_0,p_0}$ these
  equivalent conditions hold, then 
  $\po_\Nfu(\nu_0) =\wo_\Nfu(\nu_0)$.

In particular, if $\M_\Nfu(\xi,\nu)$ is not identically zero and the
main part $P_\Nfu(\nu_0)$ is square integrable then
$c_\Nfu (P_\Nfu(\nu_0))\neq 0$. If $\nu_0=0$ and
$\po_\Nfu(0) = 2 \;(\textrm {resp.} 1)$ then
$\wo_\Nfu(0) =2 \;(\textrm {resp.} 1)$.
\end{lem}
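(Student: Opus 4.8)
The plan is to read everything off the scalar product formula of Proposition~\ref{prop-scpf}, specialised to the minimal $K$‑type $(h,p,q)=(h_0,p_0,p_0)$. By \eqref{fr}, the function $\nu\mapsto\bigl(\tilde\M_\Nfu(\xi,\nu),f\bigr)_{\Gm\backslash G}$ equals $\overline{c_\Nfu(f)}$ times a fixed non-zero constant times the function $\nu\mapsto(\nu^2-\nu_0^2)^{-1}I_\Nfu(\ph;\nu,\nu_0)$ appearing in \eqref{W-f}; write $E_0(\nu)$ for this fixed meromorphic function, which depends only on $\Nfu,\xi,h_0,p_0$ and $\ph$, not on $f$. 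Since $\ph$ is good, the order of $E_0$ at $\nu_0$ is exactly $\wo_\Nfu(\nu_0)$, by Definition~\ref{def-wo}. Next I would pass to the main part. A square integrable automorphic form lies in $L^{2,\discr}(\Gm\backslash G)$, so $\bigl(\tilde\M_\Nfu(\xi,\nu),f\bigr)=\bigl(\tilde\M_\Nfu^\discr(\xi,\nu),f\bigr)$, and by \eqref{tMdext} the family $\tilde\M_\Nfu^\discr$ is an $L^2$‑valued meromorphic function near $\nu_0$; moreover, its continuous part being holomorphic at $\nu_0$ and orthogonal to the discrete one, $\tilde\M_\Nfu$ is holomorphic at $\nu_0$ unless $\tilde\M_\Nfu^\discr$ has a pole there, in which case the orders agree. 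Writing $\tilde\po=\po(\tilde\M_\Nfu;\nu_0)$, the family $(\nu-\nu_0)^{\tilde\po}\tilde\M_\Nfu(\xi,\nu)$ converges in $L^2$ to $\tilde P_\Nfu(\nu_0)$, so the limit passes through the pairing with $f$ and
\[ \bigl(\tilde P_\Nfu(\nu_0),f\bigr)\;=\;\overline{c_\Nfu(f)}\;\lim_{\nu\to\nu_0}(\nu-\nu_0)^{\tilde\po}E_0(\nu)\,. \]
This already gives the easy half of the equivalence: if $c_\Nfu(f)=0$ the right side is $0$ so both conditions fail, while conversely $\bigl(\tilde P_\Nfu(\nu_0),f\bigr)\neq0$ forces $c_\Nfu(f)\neq0$ (and, incidentally, $\lim_{\nu\to\nu_0}(\nu-\nu_0)^{\tilde\po}E_0(\nu)\neq0$, hence $\tilde\po=\wo_\Nfu(\nu_0)$).

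The substance is the remaining implication together with the equality $\po_\Nfu(\nu_0)=\wo_\Nfu(\nu_0)$, and both reduce to the single claim: \emph{if $c_\Nfu$ does not vanish identically on $\Au{(2)}(\ps)_{h_0,p_0,p_0}$, then $\tilde\po=\wo_\Nfu(\nu_0)$.} One inequality is automatic: the order of the $L^2$‑valued family $\tilde\M_\Nfu$ at $\nu_0$ is at least that of its scalar pairing with an $f$ having $c_\Nfu(f)\neq0$, and that pairing has order $\wo_\Nfu(\nu_0)$, so $\tilde\po\geq\wo_\Nfu(\nu_0)$. For the converse I would distinguish two cases. If $\wo_\Nfu(\nu_0)\geq1$, then $\tilde\M_\Nfu$ has a genuine pole at $\nu_0$, and by \eqref{tMdext} its main part $\tilde P_\Nfu(\nu_0)$ is a non-zero combination of eigenfunctions $f_l$ with Casimir eigenvalue $\ld(\xi,\nu_0)$ (and the right $\Dt_3$‑eigenvalue and $K$‑type), hence, by the Fourier‑term argument of Proposition~\ref{prop-mpPf}, a non-zero element of $\Au{(2)}(\ps)_{h_0,p_0,p_0}$. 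Applying the displayed identity with $f=\tilde P_\Nfu(\nu_0)$ yields $\|\tilde P_\Nfu(\nu_0)\|^2=\overline{c_\Nfu(\tilde P_\Nfu(\nu_0))}\lim_{\nu\to\nu_0}(\nu-\nu_0)^{\tilde\po}E_0(\nu)\neq0$, so the limit is non-zero, $\tilde\po=\wo_\Nfu(\nu_0)$, and $c_\Nfu(\tilde P_\Nfu(\nu_0))\neq0$. Since $\M_\Nfu(\xi,\nu)-\tilde\M_\Nfu(\xi,\nu)=\poin(1-\ph)\Mu_\Nfu(\xi,\nu)$ is holomorphic at $\nu_0$, one gets $\po_\Nfu(\nu_0)=\tilde\po=\wo_\Nfu(\nu_0)$, and the displayed identity, now reading $\bigl(\tilde P_\Nfu(\nu_0),f\bigr)=\overline{c_\Nfu(f)}\cdot(\text{non-zero constant})$, delivers the full equivalence.

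The case $\wo_\Nfu(\nu_0)\leq0$ is the delicate one, and I expect it to be the main obstacle: then $\tilde\M_\Nfu$ is holomorphic at $\nu_0$, its value $\tilde\M_\Nfu(\xi,\nu_0)$ carries a (generically non-zero) contribution from the continuous spectrum, so $\tilde\po=0$, but this value need not lie in the eigenspace $\Au{(2)}(\ps)_{h_0,p_0,p_0}$, and the clean self-pairing argument is no longer available. Here I would use the explicit value of the Wronskian $\W\bigl(\Mu_\Nfu(\xi,\nu),\Om_\Nfu(\xi,\nu_0)\bigr)$ at $\nu=\nu_0$ computed in Theorem~\ref{thm:orders} for each isomorphism class of Table~\ref{tab-Vlist}, together with the fact — from Propositions~\ref{prop-Minf} and \ref{prop-Ps} — that the $\Nfu$‑Fourier term of $\M_\Nfu(\xi,\nu)$ has the form $a(\nu)\Mu_\Nfu(\xi,\nu)+b(\nu)\Om_\Nfu(\xi,\nu)$ with $a,b$ meromorphic. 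The point is that $E_0(\nu_0)\neq0$ precisely when $c_\Nfu$ is not identically zero on $\Au{(2)}(\ps)_{h_0,p_0,p_0}$, equivalently when $a(\nu_0)\neq0$; in that case $\tilde\po=\wo_\Nfu(\nu_0)=0$, the equivalence holds as before, and the exponential growth of $\Mu_\Nfu$ (via $a(\nu_0)\neq0$) shows $\M_\Nfu(\xi,\nu_0)$ is not square integrable while $\po_\Nfu(\nu_0)=0$; if instead $c_\Nfu\equiv0$ the equivalence holds vacuously and the hypothesis of the ``furthermore'' clause is empty. The detailed verification of these identities is what is carried out class by class in Subsection~\ref{sect-pfs} with the help of Theorem~\ref{thm:orders}.

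Finally, the two ``in particular'' statements follow. If $\M_\Nfu\not\equiv0$ and $P_\Nfu(\nu_0)$ is square integrable and non-zero, then $P_\Nfu(\nu_0)\in\Au{(2)}(\ps)_{h_0,p_0,p_0}$; in the pole case $P_\Nfu(\nu_0)=\tilde P_\Nfu(\nu_0)$ independently of $\ph$, so its self-pairing is positive and the easy half of the equivalence gives $c_\Nfu(P_\Nfu(\nu_0))\neq0$, while the holomorphic case is covered by the $a(\nu_0)\neq0$ analysis above. And if $\nu_0=0$ with $\po_\Nfu(0)=2$ (resp.\ $1$), then $\M_\Nfu$, hence also $\tilde\M_\Nfu$, has a pole of that order at $0$, so the pole case applies and $\wo_\Nfu(0)=\tilde\po=\po_\Nfu(0)=2$ (resp.\ $1$).
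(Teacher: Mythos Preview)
Your overall strategy matches the paper's: reduce to Proposition~\ref{prop-scpf}, compare the order of $\tilde\M_\Nfu$ with $\wo_\Nfu(\nu_0)$, and close with a self-pairing argument. The easy direction and your case $\wo_\Nfu(\nu_0)\geq 1$ are correct and essentially the paper's argument.

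The gap is in your case $\wo_\Nfu(\nu_0)\leq 0$. First, the opening claim ``then $\tilde\M_\Nfu$ is holomorphic at $\nu_0$'' is unjustified: you have only shown $\tilde\po\geq\wo_\Nfu(\nu_0)$, which does not rule out $\tilde\po>0$. Second, the assertion ``$E_0(\nu_0)\neq0$ precisely when $c_\Nfu$ is not identically zero on $\Au{(2)}(\ps)_{h_0,p_0,p_0}$'' is simply false: $E_0$ is an integral of a Wronskian of special functions and is entirely independent of the automorphic spectrum; likewise your ``equivalently when $a(\nu_0)\neq 0$'' is wrong, since $a$ is a nonzero constant whenever the family is not identically zero (Proposition~\ref{prop-Minf}). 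The correct fix is that the self-pairing argument already works uniformly once one knows $\wo_\Nfu(\nu_0)\geq 0$ (Theorem~\ref{thm:orders}, which the paper does forward-reference): if $\tilde\po>\wo_\Nfu(\nu_0)\geq 0$ then $\tilde\po\geq 1$, hence $\tilde P_\Nfu(\nu_0)=P_\Nfu(\nu_0)\in\Au{(2)}(\ps)_{h_0,p_0,p_0}$ is nonzero, and taking $f=P_\Nfu(\nu_0)$ in \eqref{spf} forces $\|P_\Nfu(\nu_0)\|^2=0$, a contradiction. No Fourier-term analysis is needed here.

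A second gap concerns the ``in particular'' clause when $\po_\Nfu(\nu_0)=0$ and $P_\Nfu(\nu_0)$ is square integrable (holomorphic/antiholomorphic types). You defer to the same faulty discussion. The paper instead varies the cut-off: choose good $\ph_T$ as in Lemma~\ref{lem-gco} and let $T\to\infty$; since in these cases $\Mu_\Nfu(\xi,\nu_0)\dis\Om_\Nfu(\xi,\nu_0)$ has quick decay, $\tilde\M_\Nfu(\xi,\nu_0)\to\M_\Nfu(\xi,\nu_0)=P_\Nfu(\nu_0)$ in $L^2$, so $\bigl(\tilde P_\Nfu(\nu_0),P_\Nfu(\nu_0)\bigr)\to\|P_\Nfu(\nu_0)\|^2>0$, and the easy direction of the equivalence yields $c_\Nfu(P_\Nfu(\nu_0))\neq 0$.
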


\begin{proof}
Since
$\tilde \M(\xi,\nu) = \poin_\Nfu \bigl( \ph \Mu_\Nfu(\xi,\nu) \bigr)$
is a meromorphic family, the scalar product in the lemma is also
meromorphic in~$\nu$. We use Proposition~\ref{prop-scpf} to see that
\be\label{spf} \bigl( \tilde\M_\Nfu(\xi,\nu) , f \bigr)_{\Gm\backslash
G} \dis \overline{c_\Nfu(f)}\, \frac{ I_\Nfu(\ph;
\nu,\nu_0)}{\nu^2-\nu_0^2}\,.\ee
If $c_\Nfu(f)=0$, then the scalar product vanishes identically in~$\nu$,
and hence
\[\bigl( \tilde P_\Nfu(\nu_0) ,f\bigr)_{\Gm\backslash G}\=0\,.\]
If $c_\Nfu(f) \neq 0$, then
\begin{align}\notag &\bigl( \tilde \M_\Nfu(\xi,\nu),
f\bigr)_{\Gm\backslash G} \;\stackrel.\sim\;
(\nu-\nu_0)^{-\wo_\Nfu(\nu_0)} \quad\textrm{ and hence } \\\label{ri}
&\bigl( \tilde P_\Nfu(\nu_0) , f\bigr)_{\Gm\backslash G}
\;\stackrel.\sim\;
(\nu-\nu_0)^{\po_\Nfu(\nu_0) -\wo_\Nfu(\nu_0)}
\end{align}
as $\nu \rightarrow\nu_0$.

We see from \eqref{ri} that $\po_\Nfu(\nu_0)  <\wo_\Nfu(\nu_0)$ is not
possible, and if $\po_\Nfu(\nu_0)  > \wo_\Nfu(\nu_0)$ then
$\po_\Nfu(\nu_0)>0$ and $\tilde P_\Nfu(\nu_0) =  P_\Nfu(\nu_0)$ is
square integrable and nonzero. Now by taking $f=  P_\Nfu(\nu_0)$ we
have that
\be \bigl(P_\Nfu(\nu_0) , P_\Nfu(\nu_0)\bigr)_{\Gm\backslash G}
\;\stackrel.\sim\; \overline{c_\Nfu(P_\Nfu(\nu_0))}
(\nu-\nu_0)^{\po_\Nfu(\nu_0) -\wo_\Nfu(\nu_0)}, \ee
a contradiction, since the r.h.s tends to $0$ as $\nu\rightarrow \nu_0$.

Therefore we must have that $\po_\Nfu(\nu_0) = \wo_\Nfu(\nu_0)$, hence
$\bigl( \tilde P_\Nfu(\nu_0) , f\bigr)_{\Gm\backslash G}\stackrel.\sim 1$,
and $\bigl( \tilde P_\Nfu(\nu_0) , f\bigr)_{\Gm\backslash G}$ is
non-zero, as asserted.

\medskip

Suppose that $P_\Nfu(\nu_0)$ is square integrable. By the first
assertion, applied with $f=P(\nu_0)$, the coefficient
$c_\Nfu\bigl( P_\Nfu(\nu_0) \bigr)\neq 0$ if and only if
$\bigl( \tilde P(\nu_0),P(\nu_0) \bigr)\neq0$ for all good cut-off
functions.

If $\po_\Nfu(\nu_0)>0$, then $\tilde P_\Nfu(\nu_0) \= P_\Nfu(\nu_0)$,
and the statement follows.

If $\po_\Nfu(\nu_0)=0$, then
$\bigl( \tilde P_\Nfu(\nu_0), P_\Nfu(\nu_0) \bigr) =
\bigl( \tilde M_\Nfu(\xi,\nu_0) ,\M_\Nfu(\xi,\nu_0) \bigr)$
approximates $\bigl\|\M_\Nfu(\xi,\nu_0) \bigr\|^2 \neq 0$, if we take
good cut-off functions $\ph_T$ as in Lemma~\ref{lem-gco} and let
$T\uparrow\infty$.

The case $\po_\Nfu(\nu_0)<0$ is impossible, by \eqref{ri} and the
relation $\wo_\Nfu(\nu_0)\geq 0$ (Theorem~\ref{thm:orders}).

The last assertions follow by similar arguments, by taking
$f=P_\Nfu(\nu_0)$ and again using \eqref{ri}.
\end{proof}
\begin{proof}[Proof of Theorem~\ref{thm-complgen}] Let $V$ be an
irreducible $(\glie,K)$-module generated by a square integrable
automorphic form~$f$. Then
$\Four_\Nfu f = c_\Nfu(f)\,\Om_\Nfu(\xi,\nu_0)$, and the vanishing of
the Fourier term operator is determined by the vanishing of
$c_\Nfu (f)$. Thus the theorem follows from Lemma~\ref{lem-complgen}.
\end{proof}

\subsection{Singularities and zeros of Poincar\'e
families}\label{sect-szPf}

The general completeness result in Theorem~\ref{thm-complgen} tells us
about the relation between vanishing of Fourier terms of square
integrable automorphic forms and its detection by truncated Poincar\'e
series. The results in \S\ref{sect-rPs} involve the Poincar\'e families
themselves, their singularities and zeros. We point out that the proof
does not use any of the results in the previous section, since it only
considers Wronskians and not Poincar\'e series.

\begin{prop}\label{prop-po}Let $\Nfu$, $j=j_\xi$ and $\nu_0$ be as
above. Let $\ps= \ps[j,\nu_0]$.

Then, the order $\po_\Nfu(\nu_0)$ at $\nu_0$ of the Poincar\'e family
$\nu \mapsto \M_\Nfu(\xi,\nu)$ can only have values in
$\{2,1,0,-1,-\infty\}$ and the main part $P_\Nfu(\nu_0)$ at $\nu_0$ of
the family is an automorphic form 
in $\Au!(\ps)_{h_0,p_0,p_0}$.
\begin{itemize}
\item[i)] If $\po_\Nfu(\nu_0) >0$, then 
$P_\Nfu(\nu_0)= \tilde P_\Nfu(\nu_0)$
for each choice of the cut-off function $\ph$, and $P_\Nfu(\nu_0)$ is a
square integrable automorphic form in the space
$\Au{(2)}(\ps)_{h_0,p_0,p_0}$. 
\item[ii)] If $\po_\Nfu(\nu_0)=0$, then $P_\Nfu(\nu_0)\in \Au{(2)}(\ps)$
if and only if the $V_f$ are of holomorphic or antiholomorphic discrete
series type.
\item[iii)] If $\po_\Nfu(\nu_0)=-1$, then the $V_f$ are of holomorphic
or antiholomorphic discrete series type, and $P_\Nfu(\nu_0)$ is not a
square integrable automorphic form.
\item[iv)] If $\po_\Nfu(\nu_0)=-\infty$, then $P_\Nfu(\nu_0)=0$.
\end{itemize}
\end{prop}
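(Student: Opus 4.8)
The plan is to deduce all four values of the order from a single dichotomy, using only the machinery already in place. I would fix $\Nfu\neq\Nfu_0$ and the quadruple $(\xi,\nu_0,h_0,p_0)$ from Table~\ref{tab-Vlist}, set $\ps=\ps[\xi,\nu_0]$, and distinguish whether or not the Fourier coefficient $c_\Nfu$ vanishes identically on the space $\Au{(2)}(\ps)_{h_0,p_0,p_0}$ (which carries a single isomorphism type of $(\glie,K)$-module by the disjointness of the sets of $K$-types of distinct isomorphism classes, so ``the $V_f$'' is unambiguous). The ingredients are: Theorem~\ref{thm-mer-cont} (a singularity in $\re\nu\geq0$ is a simple pole, or a double pole only at $\nu_0=0$, with square integrable leading Laurent coefficient); Lemma~\ref{lem-complgen} ($\po_\Nfu(\nu_0)=\wo_\Nfu(\nu_0)$ as soon as some $f$ has $c_\Nfu(f)\neq0$); Theorem~\ref{thm:orders} ($\wo_\Nfu(\nu_0)\geq0$); Proposition~\ref{prop-mpPf} ($P_\Nfu(\nu_0)\in\Au!(\ps)_{h_0,p_0,p_0}$ always, and $P_\Nfu(\nu_0)\in\Au{(2)}(\ps)$ at a singularity); and Proposition~\ref{prop-Minf} ($\Four_\Nfu\M_\Nfu(\xi,\nu)=a\,\Mu_\Nfu(\xi,\nu)+\Four_\Nfu\poin'\Mu_\Nfu(\xi,\nu)$, with $a=0$ iff the family is identically zero).

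When some $f\in\Au{(2)}(\ps)_{h_0,p_0,p_0}$ has $c_\Nfu(f)\neq0$, Lemma~\ref{lem-complgen} gives $\po_\Nfu(\nu_0)=\wo_\Nfu(\nu_0)$, which is at least $0$ (Theorem~\ref{thm:orders}) and at most $1$, resp.\ $2$ for $\nu_0=0$ (Theorem~\ref{thm-mer-cont}); so $\po_\Nfu(\nu_0)\in\{0,1,2\}$. If it is $\geq1$ the family has a pole, its main part is square integrable by Proposition~\ref{prop-mpPf}(ii), and since $\M_\Nfu(\xi,\nu)-\tilde\M_\Nfu(\xi,\nu)$ is holomorphic at $\nu_0$ this main part equals $\tilde P_\Nfu(\nu_0)$ for every cut-off function; this gives~(i). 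If it is $0$, the value $\M_\Nfu(\xi,\nu_0)$ is a non-zero element of $\Au!(\ps)_{h_0,p_0,p_0}$, and I would decide its square integrability from the structure of $\Ffu^{\xi,\nu_0}_\Nfu$ recalled in \S\ref{sect-Ftm0}: for $V$ of holomorphic or antiholomorphic discrete series type $\nu_0$ is an integral spectral parameter at which the $M$- and $W$-Whittaker functions in $\Mu_\Nfu$ and $\Om_\Nfu$ become proportional, so $\Mu_\Nfu(\xi,\nu_0)$ is rapidly decreasing and $\M_\Nfu(\xi,\nu_0)\in\Au{(2)}(\ps)$; for any other type $\Four_\Nfu\M_\Nfu(\xi,\nu_0)=a\,\Mu_\Nfu(\xi,\nu_0)$ is exponentially increasing, so the value is not square integrable. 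This is~(ii) in the present case.

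When $c_\Nfu$ vanishes on all of $\Au{(2)}(\ps)_{h_0,p_0,p_0}$, the family cannot have a pole at $\nu_0$: a pole would give, by Proposition~\ref{prop-mpPf}(ii) and the $K$-type disjointness, a square integrable main part in $\Au{(2)}(\ps)_{h_0,p_0,p_0}$, whereas Lemma~\ref{lem-complgen} forces $c_\Nfu(P_\Nfu(\nu_0))\neq0$ for such a main part of a non-trivial family, a contradiction. So $\po_\Nfu(\nu_0)\leq0$; if the family is identically zero, $\po_\Nfu(\nu_0)=-\infty$ and $P_\Nfu(\nu_0)=0$, which is~(iv). Otherwise $a\neq0$, so $\Four_\Nfu\M_\Nfu(\xi,\nu)$ is $a\,\Mu_\Nfu(\xi,\nu)$ plus a contribution lying in the rapidly decreasing submodule $\Wfu^{\xi,\nu}_\Nfu$; for $\nu_0$ not an integral spectral parameter these sit in complementary one-dimensional subspaces and cannot cancel, so $\M_\Nfu(\xi,\nu_0)\neq0$ and $\po_\Nfu(\nu_0)=0$. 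A zero at $\nu_0$ therefore requires $\Mu_\Nfu(\xi,\nu_0)$ itself to be proportional to $\Om_\Nfu(\xi,\nu_0)$, which by the structure of the integral-parametrization Fourier term modules means $V$ is of holomorphic or antiholomorphic discrete series type (with $M_{\k,\nu/2}\propto W_{\k,\nu/2}$ at $\nu_0$), and then the order of the zero equals the order at $\nu=\nu_0$ of the generalized Wronskian $\W\bigl(\Mu_\Nfu(\xi,\nu),\Om_\Nfu(\xi,\nu_0)\bigr)$ of Proposition~\ref{prop-MSW}, which I would show to be at most $1$ using the contiguous relations of Appendix~\ref{sect-rpfWr}. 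Hence $\po_\Nfu(\nu_0)\in\{0,-1\}$: at order $0$ the value is not square integrable unless $V$ is of holomorphic or antiholomorphic discrete series type (same dichotomy as above), which is~(ii); at order $-1$ the vanishing forces that same type, and then $\partial_\nu\M_\Nfu(\xi,\nu)|_{\nu_0}$ has, up to a non-zero factor, the $\Nfu$-Fourier term $a\,\Mu_\Nfu(\xi,\nu_0)$, which for those parameters is exponentially increasing, so it is not square integrable; this is~(iii).

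The main obstacle is the Wronskian step of the last paragraph: proving the sharp lower bound $\po_\Nfu(\nu_0)\geq-1$ and locating the square-integrable value, resp.\ the zero, exactly at holomorphic and antiholomorphic discrete series parameters. I would isolate this as a one-variable computation --- use Proposition~\ref{prop-MSW} to reduce to the order at $\nu=\nu_0$ of $t\mapsto\W\bigl(\Mu_\Nfu(\xi,\nu),\Om_\Nfu(\xi,\nu_0)\bigr)(t)$, then expand the modified Bessel, resp.\ Whittaker, functions entering $\Mu_\Nfu$ and $\Om_\Nfu$ through their contiguous relations and check that the leading $\nu$-coefficient is non-zero at first order --- after which everything else is the assembly of Theorems~\ref{thm-mer-cont} and~\ref{thm:orders}, Lemma~\ref{lem-complgen}, and Propositions~\ref{prop-mpPf} and~\ref{prop-Minf} carried out above.
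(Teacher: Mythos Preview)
Your dichotomy on whether $c_\Nfu$ vanishes identically is a different organization than the paper's, and it brings in Lemma~\ref{lem-complgen} and Theorem~\ref{thm:orders} where the paper's proof of this proposition does not need them. That is harmless logically (no circularity), and parts~i), iv), and the generic half of~ii) go through essentially as you say. The substantive problem is in your treatment of the holomorphic/antiholomorphic case, which is exactly where the proposition has content.

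Two concrete errors. First, in your last paragraph you write that at order $-1$ the $\Nfu$-Fourier term of $\partial_\nu\M_\Nfu(\xi,\nu)\big|_{\nu_0}$ is, up to a non-zero factor, $a\,\Mu_\Nfu(\xi,\nu_0)$ and that this is exponentially increasing. Both claims are wrong: differentiating $\Four_\Nfu\M_\Nfu(\xi,\nu)=c\,\Mu_\Nfu(\xi,\nu)+C_{\Nfu,\Nfu}(\xi,\nu)\,\Om_\Nfu(\xi,\nu)$ produces $c\,\partial_\nu\Mu_\Nfu(\xi,\nu)\big|_{\nu_0}$, not $c\,\Mu_\Nfu(\xi,\nu_0)$; and in precisely the holomorphic/antiholomorphic case $\Mu_\Nfu(\xi,\nu_0)$ is proportional to $\Om_\Nfu(\xi,\nu_0)$ and hence exponentially \emph{decreasing}, so even if your formula were right it would not give exponential growth. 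Second, the claimed equality between the order of the zero of $\M_\Nfu$ and the order of $\W\bigl(\Mu_\Nfu(\xi,\nu),\Om_\Nfu(\xi,\nu_0)\bigr)$ at $\nu_0$ is unjustified: the Poincar\'e family could vanish to higher order than the proportionality of $\Mu_\Nfu(\xi,\nu)$ and $\Om_\Nfu(\xi,\nu)$ does, since vanishing of $\M_\Nfu(\xi,\nu)$ also constrains the holomorphic coefficient $C_{\Nfu,\Nfu}(\xi,\nu)$ and all other Fourier terms.

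The paper avoids both issues by introducing the auxiliary family $\Ups_\n(j_\xi,\nu)$ of \S\ref{sect-afU}, so that $\{\Om_\n,\Ups_\n\}$ is a basis of $\Ffu^{\xi,\nu}_{\n;h_0,p_0,p_0}$ for \emph{every} $\nu$ near $\nu_0$. Writing $\Four_\n\M_\n(\xi,\nu)=a(\nu)\,\Ups_\n(j_\xi,\nu)+b(\nu)\,\Om_\n(j_\xi,\nu)$, one has $a(\nu_0)=0$, and relation~\eqref{MinWV} shows that $a(\nu)$ is essentially $\Gf\bigl(\tfrac{1+\nu}2-\k\bigr)^{-1}$, hence has a \emph{simple} zero at $\nu_0$. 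This immediately gives $\po_\n(\nu_0)\geq-1$; and since $a'(\nu_0)\neq0$ and $\Ups_\n(j_\xi,\nu_0)$ has genuine exponential growth (see~\eqref{MWest}), the derivative $\partial_\nu\M_\n(\xi,\nu)\big|_{\nu_0}$ has an exponentially increasing $\n$-Fourier term, so it is not square integrable. Your ``one-variable Wronskian computation'' can be made to work, but only after you rewrite $\Mu_\n(\xi,\nu)$ in a basis like $\{\Om_\n,\Ups_\n\}$ that stays a basis at $\nu_0$; working with $\Mu_\n$ and $\Om_\n$ alone, which become proportional there, is what causes both of your errors.
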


\begin{proof} Most of these statements have been established in earlier
sections. Theorem~\ref{thm-mer-cont} (meromorphic continuation)
and Proposition~\ref{prop-mpPf} imply that $P_\Nfu(\nu_0)\in \Au!(\ps)$,
and $P_\Nfu(\nu_0) \in \Au{(2)}(\ps)$ if $\po_\Nfu(\nu_0)>0$. Also, we
have that $\po_\Nfu(\nu_0) \leq 1$ if $\nu_0\neq 0$, and
$\po_\Nfu(0) \leq 2$. The fact that
$P_\Nfu(\nu_0) = \tilde P_\Nfu(\nu_0)$ if $\po_\Nfu(\nu_0)>0$ was
discussed in~\S\ref{sect-mp}. Part iv) of the proposition is a direct
consequence of the definitions. This leaves us to prove ii)
and~iii) and the fact that Poincar\'e orders in $\ZZ_{\leq -2}$ do not
occur.

We suppose that 
$\M_\Nfu(\xi,\nu)$ is holomorphic at $\nu=\nu_0$. 
We use the
decomposition~\eqref{poin-split}. In the region of absolute
convergence, the term $\poin' \Mu_\Nfu(\xi,\nu)$ has polynomial decay,
so its Fourier term of order $\Nfu$ is a multiple of
$\Om_\Nfu(\xi,\nu)$. Hence the 
Fourier term of order $\Nfu$
has the 
following form, first in the region of absolute
convergence, and then extended by meromophic continuation:
\bad \Four_\Nfu\M_\Nfu(\xi,\nu) &\= \Four_\Nfu\M^\infty_\Nfu(\xi,\nu) +
C_{\Nfu,\Nfu}(\xi,\nu) \Om_\Nfu(\xi,\nu)\,,\\
\M^\infty_\Nfu(\xi,\nu)(g) &\= \sum_{\gm\in \Gm_{\!N}\backslash \Gm_P}
\Mu_\Nfu(\xi,\nu)(\gm g)\,.
\ead The function $\nu\mapsto C_{\Nfu,\Nfu}(\xi,\nu)$ is holomorphic on
a neighborhood of~$\nu_0$.

If $\M_\Nfu^\infty(\xi,\nu)$ is identically zero, then we are in the
case $\po_\Nfu(\nu_0)=-\infty$. Otherwise, the term $\M_\Nfu^\infty$
is a linear combination of non-zero elements in a finite number of
one-dimensional spaces $\Mfu_{\Nfu';h_0,p_0,p_0}^{\xi,\nu}$, among
which $\Nfu'=\Nfu$ occurs with a non-zero factor. 
 With
Proposition~\ref{prop-Ps} this  
leads to
\be\label{FoNN} \Four_\Nfu\M_\Nfu(\xi,\nu) \= c\, \Mu_\Nfu(\xi,\nu) +
C_{\Nfu,\Nfu}(\xi,\nu) \Om_\Nfu(\xi,\nu)\,,\ee
with a non-zero factor~$c$. If $\Mu_\Nfu(\xi,\nu_0)$ and
$\Om_\Nfu(\xi,\nu_0)$ are linearly independent, then the total Fourier
term is non-zero, and $\po_\Nfu(\nu_0) = 0$. Moreover, the function
$t\mapsto \Mu_\Nfu(\xi,\nu_0)\bigl(  \am(t) \bigr)$ has exponential
growth, and $P_\Nfu(\nu_0) = \M_\Nfu(\xi,\nu_0)$ is not square
integrable. Linear dependence of the two functions occurs only in the
non-abelian case, when $V$ is of holomorphic or antiholomorphic
discrete series type. See \S\ref{sect-afU}.

Thus, to complete the proof we have to take a closer look at the case
when $V$ is of holomorphic or of antiholomorphic discrete series type.
Then, $\Mu_\n(\xi,\nu_0) $ and $\Om_\n(\xi,\nu_0)$ are proportional,
and we better use the basis family $\Ups_\n(j_\xi,\nu)$ in
\S\ref{sect-afU}. The Fourier term takes the form
\be \Four_\n \M_\n(\xi,\nu) \= a(\nu) \, \Ups_\n(j_\xi,\nu) + b(\nu)\,
\Om_\n(j_\xi,\nu)\,,\ee
with holomorphic functions $a(\nu)$ and $b(\nu)$ in a neighborhood
of~$\nu_0$, with $a(\nu_0)=0$. The function $a(\nu)$ is essentially the
factor in front of $V_{\k,s}$ in \eqref{MinWV}. This implies that the
zero of $a(\nu)$ at $\nu_0$ has order~$1$.

Clearly, $\po_\n(\nu_0)=0$ implies that $b(\nu_0) \neq 0$. If
$\M_\n(\xi,\nu_0)=0$, then $b(\nu_0)=0$ as well. The derivative
$ \partial_\nu \M_\n(\xi,\nu) \bigm|_{\nu=\nu_0}$
is an element of $\Au!(\ps)_{h_0,p_0,p_0}$ (possibly zero). It has
Fourier term of order~$\n$ of the form
\[ a'(\nu_0) \ups_\n^{0,0}(j_\xi,\nu_0) + b'(\nu_0)
\om^{0,0}_\n(j_\xi,\nu_0)\,.\]
Since $a'(\nu_0) \neq 0$, the limit is non-zero, and $\po_\n(\nu_0)=-1$,
the main part $P_\n(\xi,\nu_0) $ is the derivative
$ \partial_\nu \M_\n(\xi,\nu) \bigm|_{\nu=\nu_0}$,
and $\Four_\n P_\n(\xi,\nu_0)$ has an exponentially increasing Fourier
term; see~\eqref{MWest}.
\end{proof}

\subsection{Proofs of the results in~\S\ref{sect-rPs}}\label{sect-pfs}

For this task we will make use of the values of the Wronskian order for
each type of irreducible $(\glie, K)$-module, which are given in
Theorem~\ref{thm:orders}. The proof of the theorem is very technical
and we postpone it to Section~\ref{sect-Wr}.

\begin{thm}\label{thm:orders}
Table~\ref{tab-ord} specifies the values of $\ord_\Nfu(\nu_0)$ for each
type of irreducible $(\glie, K)$-module. When two values are given, the
value between brackets applies to $\nu_0=0$. The empty spaces do not
occur in the Fourier expansion.
\end{thm}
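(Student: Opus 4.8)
The plan is to reduce Theorem~\ref{thm:orders} to a purely Wronskian computation and then to carry it out type by type. Recall from the proof of Lemma~\ref{lem-gco} that, for a Fourier term order $\Nfu\neq\Nfu_0$ and $\nu_0$ in the closed right half-plane, the function $(\nu,t)\mapsto \W\bigl(\Mu_{\Nfu;h_0,p_0,p_0}(\xi,\nu),\Om_{\Nfu;h_0,p_0,p_0}(\xi,\nu_0)\bigr)(t)$ factors as $(\nu-\nu_0)^{u}A(\nu,t)$ with $u=u_\Nfu(\nu_0)\in\ZZ$, $A$ holomorphic in $\nu$ near $\nu_0$ and real-analytic in $t$, and $t\mapsto A(\nu_0,t)$ not identically zero; hence, by Lemma~\ref{lem-gco},
\[ \ord_\Nfu(\nu_0)\=1-u_\Nfu(\nu_0)\quad(\nu_0\neq0),\qquad \ord_\Nfu(\nu_0)\=2-u_\Nfu(\nu_0)\quad(\nu_0=0). \]
So the theorem reduces to computing the integer $u_\Nfu(\nu_0)$ for each isomorphism type of Table~\ref{tab-Vlist} and each relevant $\Nfu$. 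The empty slots of Table~\ref{tab-ord} correspond exactly to the pairs with $\Ffu^{\ps[\xi,\nu_0]}_\Nfu=\{0\}$ --- the parity obstruction $\xi(\mm(i))\neq1$ in the abelian case, or $\Wo(\ps)^+_\n=\emptyset$ in the non-abelian case --- where there is no Fourier term to measure.

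Next I would insert the explicit radial parts of $\Mu_\Nfu$ and $\Om_\Nfu$ from Proposition~\ref{prop-MuOm}. With respect to the expansion \eqref{Fjexp} of functions of $K$-type $\tau^{h_0}_{p_0}$ and weight $p_0$, the radial components $h^\mu_r(\nu,\cdot)$ of $\Mu_{\Nfu;h_0,p_0,p_0}$ are linear combinations of $t^2I_\nu(2\pi|\bt|t)$ (abelian case) or of $tM_{\k,\nu/2}(2\pi|\ell|t^2)$ (non-abelian case, with $\k$ as in \eqref{m0kap} and independent of $\nu$), while the $h^\om_r(\nu_0,\cdot)$ of $\Om_\Nfu$ are the corresponding combinations of $t^2K_{\nu_0}(\cdot)$ and $tW_{\k,\nu_0/2}(\cdot)$. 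Then $\W\bigl(\Mu_\Nfu(\xi,\nu),\Om_\Nfu(\xi,\nu_0)\bigr)(t)=\sum_r\binom{p_0}{(p_0+r)/2}\,\Wr_1\bigl(h^\mu_r(\nu,\cdot),\overline{h^\om_r(\nu_0,\cdot)}\bigr)(t)$ becomes explicit in Bessel or Whittaker functions, from which $u_\Nfu(\nu_0)$ can be read off. The classical inputs are $\Wr(I_\nu,K_\nu)(z)=-1/z$ and $\Wr\bigl(M_{\k,\mu},W_{\k,\mu}\bigr)(z)=-\Gamma(2\mu+1)/\Gamma(\mu-\k+\tfrac12)$, combined with the three-term contiguous relations collected in Appendix~\ref{sect-rpfWr}, which replace the mixed-index Wronskians ($\nu$ against $\nu_0$, resp.\ $\nu/2$ against $\nu_0/2$) by the equal-index case plus explicit corrections carrying a factor $(\nu-\nu_0)$.

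For the minimal $K$-types with $p_0=0$ --- the irreducible unitary and complementary principal series, the thin modules $T^\pm_{-1}$, and the holomorphic and antiholomorphic discrete series type --- the Wronskian is the single term $\Wr_1\bigl(h^\mu_0(\nu,\cdot),\overline{h^\om_0(\nu_0,\cdot)}\bigr)$, so $u_\Nfu(\nu_0)$ is the $\nu$-order of vanishing at $\nu_0$ of $\Wr(I_\nu,K_{\nu_0})$ (abelian) or $\Wr(M_{\k,\nu/2},W_{\k,\nu_0/2})$ (non-abelian). By the equal-index identities this order is $0$ whenever the two solutions stay linearly independent at $\nu=\nu_0$ --- the generic situation, in particular for the principal and complementary series, giving $\ord_\Nfu(\nu_0)=1$ (and $=2$ at $\nu_0=0$, where one additionally checks that the collision $\nu_0=-\nu_0$ produces no further vanishing). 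The order jumps to $1$ exactly when the parameters force $M_{\k,\nu_0/2}$ to be proportional to $W_{\k,\nu_0/2}$, equivalently $\tfrac{\nu_0}2-\k+\tfrac12\in\ZZ_{\le0}$ --- the degeneracy occurring for the discrete-series-type and thin modules at their integral $\nu_0$ --- provided one also checks, from the contiguous relations, that $\partial_\nu M_{\k,\nu/2}\big|_{\nu_0}$ is not proportional to $W_{\k,\nu_0/2}$; this yields $\ord_\Nfu(\nu_0)=0$ (and $1$ at $\nu_0=0$) for those rows.

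The technically heavy entries are the large discrete series type $\II_+(j,\nu_0)$ and the thin modules $T^\pm_k$ with $k\ge0$, where the minimal $K$-type has $p_0=\nu_0$ (resp.\ $p_0=k+1$), so $p_0\ge1$ and $\W$ is a genuine sum of $p_0+1$ terms whose radial components are linked by the $\glie$-action raising and lowering the weight inside $\tau^{h_0}_{p_0}$. Here I would use that $\tau^{h_0}_{p_0}$ generates the module, translate the weight-shifting operators into contiguous relations for the $h^\mu_r$ and $h^\om_r$, and so obtain a recursion in $p_0$ for the weighted Wronskian sum $\W$; the base case $p_0=0$ or $p_0=1$ is handled with the Mathematica routines of \cite[\S9.2]{BMSU21} already used in Proposition~\ref{prop-MSW}, and then propagates to all $p_0$. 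I expect this last step --- extracting the recursion for a sum of mixed-index Whittaker Wronskians over the whole $K$-type, and controlling its behaviour at the degenerate parameter, where the value of $u_\Nfu(\nu_0)$ only becomes visible after summing over the weight index $r$ rather than term by term --- to be the main obstacle, together with the separate bookkeeping needed at $\nu_0=0$ (limits of discrete series, and $\nu_0=0$ in the unitary principal series), where the coincidence $\nu_0=-\nu_0$ can raise the order. Assembling all cases produces Table~\ref{tab-ord}; in particular every entry is $\ge0$, which is the form in which the theorem is invoked in the proof of Lemma~\ref{lem-complgen}.
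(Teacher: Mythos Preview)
Your reduction via Lemma~\ref{lem-gco} to computing the integer $u_\Nfu(\nu_0)$ is correct and is essentially what the paper encodes in Proposition~\ref{prop-ord}. However, there is a concrete error and a methodological gap.

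\textbf{The error.} You group $T^\pm_{-1}$ with the holomorphic/antiholomorphic discrete series types and assert that the degeneracy $\tfrac{\nu_0+1}{2}-\k\in\ZZ_{\le0}$ occurs for them, giving $u=1$ and $\ord_\n(\nu_0)=0$. This is false. For $T^\pm_{-1}$ one has $j_\xi=\pm1$, $\nu_0=1$, $m_0=0$, $\sign(\ell)=\mp1$, hence $\tfrac{\nu_0+1}{2}-\k = m_0+1+\tfrac12(\nu_0+j_\xi\sign(\ell)) = 1$, so the Wronskian \eqref{Wr0} is nonzero at $\nu_0$, $u=0$, and $\ord_\n(1)=1$ as Table~\ref{tab-ord} states. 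The degeneracy is specific to the holomorphic and antiholomorphic discrete series types, where the conditions \eqref{IFFIcond} force $\tfrac{\nu_0+1}{2}-\k\le0$.

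\textbf{The gap.} For $p_0\ge1$ (large discrete series type and $T^\pm_k$ with $k\ge0$) your proposed ``recursion in $p_0$'' is not a meaningful induction: $p_0$ is fixed by the representation, and the minimal $K$-type does not sit above a smaller one inside the same module. The paper proceeds quite differently. It uses the closed-form expressions \eqref{hdWa}--\eqref{hdMna} for the minimal-$K$-type vectors, available from \cite{BMSU21}, and evaluates the sum $\W(\Mu_\Nfu(\xi,\nu_0),\Om_\Nfu(\xi,\nu_0))$ directly; a binomial identity $\sum_{n=0}^{p_0}\binom{p_0}{n}(-1)^n=0$ shows it vanishes. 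To establish $u=1$ (not $u\ge2$), the paper does \emph{not} differentiate in $\nu$ as you propose. Instead, Proposition~\ref{prop-ord}~ii) uses the Maass-Selberg identity \eqref{Crel} and \eqref{MSW2} to convert the question into whether the \emph{product} $\{\Mu_\Nfu(\xi,\nu_0),\Om_\Nfu(\xi,\nu_0)\}(t)$ is not identically zero. This is then settled by picking out the dominant term in the asymptotics as $t\downarrow0$ (a logarithmic term when some $s(r)=0$, or a delicate binomial sum for $T^\pm_k$). This Maass--Selberg trick is the missing idea in your outline: it replaces a $\nu$-derivative of a Whittaker Wronskian, which your contiguous relations do not give cleanly, by an undifferentiated product that is amenable to explicit asymptotics.
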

\begin{table}[ht]{\small
\renewcommand\arraystretch{}
\begin{tabular}{|c|c|c|c|c|c|c|}
\hline
$\Nfu$
&u.principal s.&{compl.s.}&{large d.s.}&{holo-aholo d.s.}&$T^\pm_{-1}$
& $T^\pm_k,\, k\ge 0$\\ \hline
{gen.abelian}&1\;(2)&1&0&\;&&
\\
{non-abelian}&1\;(2)&1&0&0\;(1)&1&0\\ \hline
\end{tabular}}
\smallskip
\caption[]{Values of the Wronskian order $\ord_\Nfu(\nu_0)$}
\label{tab-ord}
\end{table}
We now turn to the proofs of the results in ~\S\ref{sect-rPs}.

\begin{proof}[Proof of Proposition~\ref{prop-upcstr}]We have
$h_0=2j_\xi$ and $p_0=0$ in the cases of irreducible unitary principal
series, complementary series and thin representations $T^\pm_{-1}$. The
Wronskian order is positive. By Table~\ref{tab-ord},
$\wo_\Nfu(\nu_0)=1$ if $\nu_0\neq 0$ and $\wo_\Nfu(0)=2$. We consider
Fourier term orders $\Nfu\neq \Nfu_0$.

Proposition~\ref{prop-po} i) implies that if
$\nu\mapsto \M_\Nfu(\xi,\nu)$ has a singularity at $\nu_0$, then
$P_\Nfu(\nu_0)\in \Au{(2)}(\ps)_{2j_\xi,0,0}$. If $\nu_0\neq 0$, the
second statement in Lemma~\ref{lem-complgen} implies that
$c_\Nfu (P_\Nfu(\nu_0)) \neq 0$ and
$\po_\Nfu(\nu_0) = \wo_\Nfu(\nu_0)$.

If there is a singularity at $\nu_0=0$ we have to rule out the
possibility that $\po_\Nfu(0)=1$. 
From \eqref{spf} with $f=P_\Nfu(0)$ we get the relation
\[ \bigl( \tilde\M_\Nfu(\xi,\nu) , P_\Nfu(0) \bigr)_{\Gm\backslash G}
\dis \overline{ c_\Nfu\bigl( P_\Nfu(0)
\bigr)}\,\frac{I_\Nfu(\ph;\nu,\nu_0) } {\nu^2}\,.\]
If $c_\Nfu\bigl( P_\Nfu(0)\bigr)\neq 0$, the right-hand side has order
two by Lemma~\ref{lem-complgen} and Theorem~\ref{thm:orders}. So, the
assumption that $\po_\Nfu(0)=1$ implies that
$c_\Nfu\bigl( P_\Nfu(0) \bigr)$ vanishes, and that
\[0 = \lim_{\nu \rightarrow 0} \bigl( \nu \tilde\M_\Nfu(\xi,\nu) ,
P_\Nfu(0) \bigr)_{\Gm\backslash G} = \bigl\| P_\Nfu(0) \bigr\|^2\,.\]
This implies that $P_\Nfu(0)=0$, a contradiction. Thus $\po_\Nfu(0)=2$.

Conversely, suppose that there exists a square integrable automorphic
form $f\in \Au{(2)}(\ps)_{2j_\xi,0,0}$ satisfying $c_\Nfu(f) \neq 0$.
For such an $f$ we have by Lemma~\ref{lem-complgen}
\[ \bigl( \tilde \M_\Nfu(\xi,\nu), f \bigr) _{\Gm\backslash G}
\;\stackrel.\sim \;
(\nu-\nu_0)^{-\wo(\Nfu(\nu_0)}\qquad (\nu \rightarrow \nu_0) \,,\]
and, since $\po_\Nfu(\nu_0)=\wo(\Nfu(\nu_0)$,
\[ \bigl( (\nu-\nu_0)^{\po_\Nfu(\nu_0)}\tilde \M_\Nfu(\xi,\nu), f \bigr)
_{\Gm\backslash G} \;\stackrel.\sim \;
1 \qquad
(\nu \rightarrow \nu_0) \,.\]

Since $\wo_\Nfu(\nu_0)>0$, this implies that $ \M_\Nfu(\xi,\nu)$ has a
singularity at $\nu=\nu_0$.

In case $\M_\Nfu(\xi,\nu)$ is holomorphic at $\nu_0$, we use
Proposition~\ref{prop-Minf}. We have the Fourier term
\be \Four_\Nfu \M_\Nfu(\xi,\nu_0) \= a \, \Mu_\Nfu(\xi,\nu) +
C_{\Nfu,\Nfu}(\xi,\nu_0) \, \Om_\Nfu(\xi,\nu_0)\,,\ee
with $a \neq 0$.

The second term on the right is square integrable and the first term has
exponential growth for the isomorphism classes under consideration.
This proves assertion b) in the proposition and completes the proof.
\end{proof}

\begin{proof}[Proof of Proposition \ref{prop-ldstr}]For the large
discrete series type and the thin representations $T^\pm_k$ with
$k\geq 0$, the Wronskian order is $\wo_\Nfu(\nu_0)=0$, according to
Table~\ref{tab-ord}. A singularity of the Poincar\'e family at
$\nu=\nu_0$ would contradict the fact that
$\po_\Nfu(\nu_0) \leq \wo_\Nfu(\nu_0)$, as shown in
Lemma~\ref{lem-complgen}. Further, one shows that $\M_\Nfu(\xi,\nu_0)$
is not square integrable by arguing as in
Proposition~\ref{prop-upcstr}.

The last assertion in the proposition is a direct consequence of
Lemma~\ref{lem-complgen}.
\end{proof}

\begin{proof}[Proof of Proposition \ref{prop-hads}]For the holomorphic
and antiholomorphic discrete series types we need to consider only
non-abelian Fourier term orders $\Nfu=\Nfu_\n$ with $\n=(\ell,c,d)$.
The Wronskian order is $\wo_\n(\nu_0)=0$ for $\nu_0\neq 0$, and
$\wo_\n(0)=1$.

\rmrk{Case $\nu_0\neq 0$}Parts ii)--iv) of Proposition~\ref{prop-po}
show that the Poincar\'e family is holomorphic at $\nu=\nu_0$. If it
has a non-zero value it is square integrable, if it has a zero of order
$1$ then the derivative at $\nu=\nu_0$ is not square integrable. The
last possibility is that the Poincar\'e family is identically zero.

If there are $f\in \Au{(2)}(\ps)_{2j_\xi,0,0}$ with $\Four_\n f\neq 0$,
then we have, by Proposition~\ref{prop-scpf} and \eqref{Wr0}, for each
cut-off function~$\ph$:
\be \Bigl( \tilde \Poin_\n(\xi,\nu) , f\Bigr)_{\Gm\backslash G} \;\sim\;
\overline{c_\n(f)} \frac{-4\pi |\ell|}{\nu^2-\nu_0^2}
\frac{\Gf(1+\nu)}{\Gf\bigl( \frac{1+\nu}2-\k_0\bigr)} \cdot (-1)\,, \ee
as $\nu \rightarrow \nu_0$, using that
$\int_{t=0}^\infty \ph'(t) \, dt=-1$.

Since $\frac{1+\nu_0}2- \k_0 \in \ZZ_{\leq 0}$ the gamma factor in the
denominator has a pole at $\nu_0$, canceling the zero in
$\nu^2-\nu_0^2$. This implies that the limit as $\nu \rightarrow\nu_0$
of the left-hand side $\bigl( \tilde \M_\n(\xi,\nu_0),f \bigr)$ does
not depend on~$\ph$. Letting $\ph=1$ on a growing interval we have that
$\tilde \M_\n(\xi,\nu_0)\rightarrow \M_\n(\xi,\nu_0)$ in the
$L^2$-sense, since $\Mu_\n(\xi,\nu_0) \dis \Om_\n(\xi,\nu_0)$ has quick
decay. Thus we get $\bigl( \M_\n(\xi,\nu_0),f \bigr)_{\Gm\backslash G}
\dis \overline{c_\n(f)}\neq 0$, and hence $\po_\n(\nu_0)=0$.

Conversely, if $\po_\n(\nu_0)=0$, by taking $f=\M_\n(\xi,\nu_0)$ and
arguing as before we see that ${c_\n(f)}\neq 0$. In this way we have
obtained the assertions in part~i) of the proposition.

\rmrk{Case $\nu_0=0$}Now $\wo_\n(0)=1$, hence the singularity of
$\M_\n(\xi,\nu)$ at $\nu=0$ can be of order at most $1$, by
Lemma~\ref{lem-complgen}. At a singularity the main part~$P_\n(0)$ is
square integrable, and $\tilde P_\n(0)=P_\n(0)$ for each cut-off
function~$\ph$. So if $c_\n(f)\neq 0$ for some
$f\in \Au{(2)}(\ps)_{2j_\xi,0,0}$, then $\po_\n(0)$ must be equal to
one.

Assume now $\M_\n(\xi,\nu)$ is holomorphic at $\nu=0$, and not
identically zero. Then $\M_\n(\xi,0) \in \Au{(2)}(\ps)_{2j_\xi,0,0}$,
since $\Mu_\n(\xi,0) \dis \Om_\n(\xi,0)$. Proposition~\ref{prop-scpf},
together with the fact that $\wo_\n(0)=1$, implies that
\[ \bigl( \tilde \M_\n(\xi,\nu), \M_\n(\xi,0) \bigr)_{\Gm\backslash G}
\= \overline{ c_\n \bigl( \M_\n(\xi,0)\bigr)} \frac1\nu \bigl( c_0 +
\oh(\nu) \bigr)\]
as $\nu\rightarrow 0$, with $c_0\neq 0$. The left-hand side is
holomorphic in a neighborhood of $\nu=0$. Hence $c_\n
\bigl( \M_\n(\xi,0)\bigr)=0$. This gives
\[ \bigl( \tilde \M_\n(\xi,0) , \M_\n(\xi,0) \bigr)_{\Gm\backslash
G)}\=0\,.\]
On the other hand, $\tilde \M_\n(\xi,0) \rightarrow \M_\n(\xi,0)$ in
$L^2(\Gm\backslash G)$ as $\ph\uparrow 1$. Hence we have
$\M_\n(\xi,0) = 0$. Then Proposition~\ref{prop-po} leaves only the
possibility that $\po_\n(0)=-1$, and $P_\n(0) $ not square integrable.
This completes the proof.
\end{proof}

\section{Wronskian orders for irreducible
\texorpdfstring{$(\glie,K)$}{(g,K)}-modules.} \label{sect-Wr}
Our last and final task in the paper will be to prove
Theorem~\ref{thm:orders}. We first formulate and prove a general result
on the Wronskian order and next we carry out a separate scrutiny for
each type of irreducible $(\glie,K)$-module in Table~\ref{tab-Vlist}.

\begin{prop}\label{prop-ord}Let $\nu\mapsto \Mu_\Nfu(\xi,\nu)$ and
$\Om_\Nfu(\xi,\nu_0)$ be as above, with
$\nu_0 \in i \RR \cup(0,\infty)$. Then
\begin{enumerate}
\item[i)] If
$\W\bigl(\Mu_\Nfu(\xi,\nu_0), \Om_\Nfu(\xi,\nu_0) \bigr)(t)\neq 0$ for
some $t>0$, then $\ord_\Nfu(\nu_0)=2$ if $\nu_0=0$ and
$\ord_\Nfu(\nu_0)=1$ otherwise.
\item[ii)] Let $\nu_0\neq0$. If
$\,\W\bigl(\Mu_\Nfu(\xi,\nu_0),\Om_\Nfu(\xi,\nu_0)\bigr)(t)$ is
identically zero and, for some $t>0$,
$\bigl\{ \Mu_\Nfu(\xi,\nu_0),\Om_\Nfu(\xi,\nu_0) \bigr\}(t)\neq 0$,
then $\ord_\Nfu(\nu_0)=0$.
\item[iii)]Let $\nu_0=0$, and suppose that
$\W\bigl(\Mu_\Nfu(\xi,0),\Om_\Nfu(\xi, 0 )\bigr)$ is identically zero.
Then the derivative $\partial_\nu \W\bigl(\Mu_\Nfu(\xi,\nu),
  \Om_\Nfu(\xi,0)\bigr)(t)\bigr|_{\nu=0}$ has the form $ a t^3$ for some
$a\in \CC$.
\begin{enumerate}
\item[a)] If $a\neq 0$, then $\ord_\Nfu(0)=1$.
\item[b)] If $a=0$, and
$\bigl\{ \Mu_\Nfu(\xi,0),\Om_\Nfu(\xi,0) \bigr\}(t)\neq 0$ for some
$t>0$, then $\ord_\Nfu(0)=0$.
\end{enumerate}
\end{enumerate}
\end{prop}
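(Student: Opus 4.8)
The goal is to prove Proposition~\ref{prop-ord}, which determines the Wronskian order $\ord_\Nfu(\nu_0)$ from the vanishing behaviour of the Wronskian $\W\bigl(\Mu_\Nfu(\xi,\nu_0),\Om_\Nfu(\xi,\nu_0)\bigr)$. The starting point is the formula~\eqref{fr} of Proposition~\ref{prop-scpf}: for a good cut-off function $\ph$, the relevant quantity is $\nu\mapsto \frac1{\nu^2-\nu_0^2}\,I_\Nfu(\ph;\nu,\nu_0)$ with $I_\Nfu(\ph;\nu,\nu_0)=\int_0^\infty \ph'(t)\,\W\bigl(\Mu_\Nfu(\xi,\nu),\Om_\Nfu(\xi,\nu_0)\bigr)(t)\,\frac{dt}{t^3}$, and Definition~\ref{def-wo} says $\ord_\Nfu(\nu_0)$ is the maximum over $\ph$ of the order of this function at $\nu_0$. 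So everything comes down to the order in $\nu$ at $\nu=\nu_0$ of the holomorphic function $\nu\mapsto I_\Nfu(\ph;\nu,\nu_0)$, combined with the factor $(\nu^2-\nu_0^2)^{-1}$, which contributes a simple pole when $\nu_0\neq0$ and a double pole when $\nu_0=0$.

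First I would record the structural fact, already used in the proof of Lemma~\ref{lem-gco}, that $(\nu,t)\mapsto \W\bigl(\Mu_\Nfu(\xi,\nu),\Om_\Nfu(\xi,\nu_0)\bigr)(t)$ is real-analytic in $t$ and holomorphic in $\nu$ near $\nu_0$, hence factors as $(\nu-\nu_0)^u A(\nu,t)$ with $u\in\ZZ_{\geq0}$ and $t\mapsto A(\nu_0,t)\not\equiv0$; then $I_\Nfu(\ph;\nu,\nu_0)=(\nu-\nu_0)^u\int_0^\infty\ph'(t)A(\nu,t)\,t^{-3}\,dt$, and by Lemma~\ref{lem-gco} a good cut-off function achieves a nonzero value of the integral at $\nu=\nu_0$, so the order of $I_\Nfu$ at $\nu_0$ equals $u$ and $\ord_\Nfu(\nu_0)=u+1$ if $\nu_0\neq0$, $=u+2$ if $\nu_0=0$. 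In case (i) the hypothesis $\W\bigl(\Mu_\Nfu(\xi,\nu_0),\Om_\Nfu(\xi,\nu_0)\bigr)(t)\neq0$ for some $t$ says precisely $u=0$, giving $\ord_\Nfu(\nu_0)=1$ (resp.\ $2$ at $\nu_0=0$). In case (ii), $u\geq1$; I must show $u=1$ exactly, i.e.\ rule out $u\geq2$. Here the extra hypothesis $\{\Mu_\Nfu(\xi,\nu_0),\Om_\Nfu(\xi,\nu_0)\}(t)\neq0$ enters via the Casimir/Maass--Selberg identity~\eqref{Crel}: $\MS(\Mu_\Nfu,\Om_\Nfu)(t)=(\nu_0^2-\nu^2)\{\Mu_\Nfu(\xi,\nu),\Om_\Nfu(\xi,\nu_0)\}(t)$, together with the second line of~\eqref{MSW2} relating $\MS$ to $t^5\frac{d}{dt}(t^{-3}\W)$. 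At $\nu=\nu_0$ the right-hand side vanishes (factor $\nu_0^2-\nu^2$), consistent with $\W(\ldots,\nu_0)\equiv0$; differentiating in $\nu$ at $\nu_0$ gives $\partial_\nu\MS\big|_{\nu_0}= -2\nu_0\,\{\Mu_\Nfu(\xi,\nu_0),\Om_\Nfu(\xi,\nu_0)\}(t)$, which by hypothesis is not identically zero, so $\partial_\nu\W\big|_{\nu_0}(t)$ is not the function $at^3$ only (that would force $t^{-3}\W$ constant and $\MS$-derivative zero). Hence $u=1$ and $\ord_\Nfu(\nu_0)=0$.

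Case (iii) handles $\nu_0=0$, where the $(\nu^2-\nu_0^2)^{-1}=\nu^{-2}$ factor is a double pole, so I expect orders in $\{2,1,0\}$. Again write $\W\bigl(\Mu_\Nfu(\xi,\nu),\Om_\Nfu(\xi,0)\bigr)(t)=\nu^u A(\nu,t)$. The key observation is that for each $\nu$ the combination $\Mu_\Nfu(\xi,\nu)$ and $\Om_\Nfu(\xi,0)$ are both eigenfunctions of $C$, with eigenvalues $\ld(\xi,\nu)$ and $\ld(\xi,0)$ that coincide at $\nu=0$; so at $\nu=0$ the Maass--Selberg form $\MS(\Mu_\Nfu(\xi,0),\Om_\Nfu(\xi,0))$ vanishes, forcing $\frac{d}{dt}\bigl(t^{-3}\W(\Mu_\Nfu(\xi,0),\Om_\Nfu(\xi,0))\bigr)=0$, i.e.\ $t^{-3}\W(\ldots,0)$ is constant; combined with the hypothesis $\W(\ldots,0)\equiv0$ this is automatic, and differentiating once in $\nu$ at $0$ the same argument shows $\partial_\nu\bigl(t^{-3}\W\bigr)\big|_{\nu=0}$ is constant, hence $\partial_\nu\W\big|_{\nu=0}(t)=at^3$ for some $a\in\CC$ — that is the asserted form. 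If $a\neq0$, then $u=1$ and $\ord_\Nfu(0)=u+1=2$... no: here one must be careful. When $\W(\ldots,0)\equiv0$ we have $u\geq1$; if $a\neq0$ then $u=1$ and $I_\Nfu(\ph;\nu,0)=\nu\int\ph'(t)\,at^3\,t^{-3}\,dt+O(\nu^2)=\nu\,a\int\ph'(t)\,dt+O(\nu^2)$. Since a good cut-off function has $\int\ph'=-1\neq0$, $I_\Nfu$ has a simple zero, and $\nu^{-2}I_\Nfu$ has a simple pole: $\ord_\Nfu(0)=1$. This matches (iii)(a). If $a=0$, then $u\geq2$; but then by the $\MS$/$\{\cdot,\cdot\}$ identity expanded to second order in $\nu$, the leading nonzero $\nu^2$-coefficient of $t^{-3}\W$ is governed by $\{\Mu_\Nfu(\xi,0),\Om_\Nfu(\xi,0)\}(t)$, which is nonzero by hypothesis, so $u=2$ exactly and $\nu^{-2}I_\Nfu$ is holomorphic and nonzero at $0$: $\ord_\Nfu(0)=0$, giving (iii)(b).

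The main obstacle I anticipate is the bookkeeping in case (iii): one needs to Taylor-expand the identity $\MS\bigl(\Mu_\Nfu(\xi,\nu),\Om_\Nfu(\xi,0)\bigr)(t)=(\text{const}\cdot\nu^2+\cdots)\,\{\Mu_\Nfu(\xi,\nu),\Om_\Nfu(\xi,0)\}(t)$ — where the eigenvalue difference $\ld(\xi,0)-\ld(\xi,\nu)=-\nu^2$ is itself $O(\nu^2)$ — together with the relation $\MS=\tfrac2\s\|\Kph{h_0}{p_0}{p_0}{p_0}\|_K^2\,t^5\frac{d}{dt}(t^{-3}\W)$ from~\eqref{MSW2}, and carefully match orders of vanishing in $\nu$ on both sides; the fact that the eigenvalue gap vanishes to second order is exactly what makes the jump from $\ord=2$ down to $\ord=0$ possible and must be tracked precisely. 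Everything else is a clean consequence of Proposition~\ref{prop-scpf}, Lemma~\ref{lem-gco}, and the factorisation of the Wronskian, so the write-up should be short once the $\nu=0$ case is set up correctly.
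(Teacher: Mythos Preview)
Your approach is essentially identical to the paper's: both proofs pivot on the identity $t^5\frac{d}{dt}\bigl(t^{-3}\W\bigr)\ddis\MS=(\nu_0^2-\nu^2)\{\cdot,\cdot\}$ from~\eqref{MSW2} and~\eqref{Crel}, Taylor-expand in $\nu$ at $\nu_0$, and read off the order of vanishing of $I_\Nfu(\ph;\nu,\nu_0)$ for a well-chosen cut-off.

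Two small points to fix. First, your stated formula $\ord_\Nfu(\nu_0)=u+1$ (resp.\ $u+2$) has the sign wrong: it should be $\ord_\Nfu(\nu_0)=1-u$ (resp.\ $2-u$), as in the proof of Lemma~\ref{lem-gco}. You apply the correct version everywhere (and explicitly catch yourself in case~iii)(a)), so this is only a typo in the summary sentence, but clean it up. Second, in case~(ii) what you actually need is $\partial_\nu\W\big|_{\nu_0}\not\equiv0$, which follows immediately from $\partial_t\bigl(t^{-3}\partial_\nu\W\big|_{\nu_0}\bigr)\ddis t^{-5}\cdot(-2\nu_0)\{\cdot,\cdot\}\not\equiv0$; the phrasing ``not the function $at^3$ only'' is a roundabout way to say this and obscures the one-line argument. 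Similarly, in~iii)(b) spell out that the $\nu^2$-coefficient of $t^{-3}\W$ is a \emph{primitive} of $t^{-5}\{\Mu_\Nfu(\xi,0),\Om_\Nfu(\xi,0)\}$, hence nonconstant, hence one can choose $\ph$ making the integral nonzero --- exactly as the paper does.
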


\begin{proof}
We first recall some notation. If two quantities $a$ and $b$ differ by a
non-zero factor, we write $a\dis b$. Furthermore, by
$a \stackrel.\sim b$ we indicate that the ratio between $a$ and $b$
tends to a non-zero value.

We get into the proof of the lemma. The integral $I_\Nfu(\ph;\nu,\nu_0)$
in~\eqref{difWrint} is a holomorphic function of $\nu$ in a
neighborhood of~$\nu_0$. If
$\W\bigl( \Mu_\Nfu(\xi,\nu_0),\Om_\Nfu(\xi,\nu_0) \bigr)\not \equiv 0$,
we can find $\al\in C_c^\infty[0,\infty)$ such that
$\nu \mapsto \int_0^\infty \al(t)\, \W\bigl( \Mu_\Nfu(\xi,\nu), \Om_\Nfu(\xi,\nu_0)\bigr)\, t^{-3}\, dt  \neq 0$.
This implies that there is a smooth function $\ph$, such that $\ph=1$
on $[0,T]$ and $\ph=0$ on $[T_1,\infty)$, with $0<T<T_1$, and such that
$I_\Nfu(\ph;\nu,\nu_0)$ is non-zero. This gives part~i).

We now consider ii). Since
$\W\bigl( \Mu_\Nfu(\xi,\nu_0),\Om_\Nfu(\xi,\nu_0) \bigr)$ is
identically zero, then $I_\Nfu(\ph;\nu,\nu_0)$ has a zero at
$\nu=\nu_0$. To establish the order of this zero we denote by
$\dot\Mu_\Nfu$ the derivative
$\frac{d\Mu_\Nfu(\xi,\nu)}{d\nu}\bigr|_{\nu = \nu_0}$. Differentiation
with respect to $\nu$ commutes with the differentiations involved in
the definition of the Maass-Selberg form in~\eqref{MSW2}, hence we get
from \eqref{Crel}
\begin{align*} \MS\bigl( \dot \Mu_\Nfu,\Om_\Nfu(\xi,\nu_0) \bigr) &\=
\tfrac{d}{d\nu}{\bigr|_{\nu = \nu_0}}
\MS\bigl(\Mu_\Nfu(\xi,\nu),\Om_\Nfu(\xi,\nu_0) \bigr)\\
&\= -2\nu_0\, \bigl \{
\Mu_\Nfu(\xi,\nu_0),\Om_\Nfu(\xi,\nu_0)\bigr\}\,.\end{align*}
Now, by Proposition~\ref{prop-MSW} we have
\be \label{dtWr}\partial_ t \bigl( t^{-3}\, \W\bigl( \Mu_\Nfu(\xi,\nu),
\Om_\Nfu(\xi,\nu_0)
\bigr)(t) \bigr)
\= c_1 \,t^{-5}\, \MS(\Mu_\Nfu(\xi,\nu),\Om_\Nfu(\xi,\nu_0)
\bigr)(t)\ee
for some $c_1\neq 0$. Differentiations with respect to $t$ and $\nu$
commute. Thus, by \eqref{dtWr}
\begin{align}
\nonumber
\partial_t \bigl( t^{-3} \,\W \bigl( \dot \Mu_\Nfu,\Om_\Nfu(\xi,\nu_0)
\bigr)(t)
\bigr) \= &
\partial_\nu \partial_t \bigl( t^{-3} \, \W\bigl(
\Mu_\Nfu(\xi,\nu),\Om_\Nfu(\xi,\nu_0)
\bigr)(t)\bigr)
\bigr|_{\nu=\nu_0}\\ \= &
-2\nu_0 \, c_1\, t^{-5}\,\bigl\{ \Mu_\Nfu(\xi,\nu_0),\Om_\Nfu(\xi,\nu_0)
\bigr\}(t)\,. \label{diftdifnu}
\end{align}

If $\nu_0\neq 0$, then the assumption that
$\bigl\{ \Mu_\Nfu(\xi,\nu_0),\Om_\Nfu(\xi,\nu_0)\bigr\}$ is non-zero
implies that
$t\mapsto t^{-3}\, \W\bigl( \dot \Mu_\Nfu,\Om_\Nfu(\xi,\nu_0) \bigr)(t)$
is a real-analytic non-zero function. Hence, by the assumption,
as a function of $\nu$ 
\begin{align*} \int_{t=0}^\infty \ph'(t)&\,
\W\bigl(\Mu_\Nfu(\xi,\nu),\Om_\Nfu(\xi,\nu_0)\bigr)(t)\,
\frac{dt}{t^3}\\
&\;\sim\; (\nu-\nu_0) \, \int_{t=0}^\infty \ph'(t)\,
\W\bigl(\dot\Mu_\Nfu,\Om_\Nfu(\xi,\nu_0)\bigr)(t)\, \frac{dt}{t^3}
\ddis (\nu-\nu_0) \,, \end{align*}
by \eqref{diftdifnu}, for some choice of cut-off function $\ph$. This
gives part~ii).
\smallskip

Now let $\nu_0=0$. Put
$\ddot\Mu_\Nfu= \partial_\nu^2\Mu_\Nfu(\xi,\nu) \bigr|_{\nu=0}$.
We start with the holomorphic family
$\nu \mapsto \W\bigl( \Mu_\Nfu(\xi,\nu),\Om(\xi,0)\bigr)(t)$. On a
neighborhood of $\nu=0$ it has an expansion
\badl{e0} 
\W\bigl( \Mu_\Nfu(\xi,&\nu),\Om_\Nfu(\xi,0)\bigr)(t) \= \W\bigl(
\Mu_\Nfu(\xi,0),\Om_\Nfu(\xi,0) \bigr)(t)\\
&\quad\hbox{} + \nu \, \W\bigl( \dot \Mu_\Nfu,\Om_\Nfu(\xi,0)\bigr)(t)
+ \nu^2 \, \W\bigl( \ddot \Mu_\Nfu,\Om_\Nfu(\xi,0)\bigr)(t) +
\oh(\nu^3)\,.
\eadl
Since we are in part iii) of the lemma, this expansion starts with a
zero term. This brings us to
\badl{e1} t^5 \partial_t t^{-3} &\W\bigl(
\Mu_\Nfu(\xi,\nu),\Om_\Nfu(\xi,0)\bigr)(t) \= \nu \, t^5 \partial_t
t^{-3} \W\bigl( \dot \Mu_\Nfu,\Om_\Nfu(\xi,0)\bigr)(t)\\
&\qquad\hbox{}
+ \nu^2 \, t^5 \partial_t t^{-3} \W\bigl( \ddot
\Mu_\Nfu,\Om_\Nfu(\xi,0)\bigr)(t) + \oh(\nu^3)\,.
\eadl
On the other hand, by \eqref{dtWr}
\badl{e2} t^5 \partial_t t^{-3} \W\bigl(
\Mu_\Nfu(\xi,\nu),\Om_\Nfu(\xi,0)\bigr)(t) &\ddis \MS\bigl(
\Mu_\Nfu(\xi,\nu), \Om_\Nfu(\xi,0) \bigr)(t)\\
& \= - \nu^2 \,\bigl\{ \Mu_\Nfu(\xi,\nu), \Om_\Nfu(\xi,0) \bigr\}(t)\\
& \= -\nu^2 \, \bigl\{ \Mu_\Nfu(\xi,0), \Om_\Nfu(\xi,0) \bigr\}(t) +
\oh(\nu^3)\,.
\eadl
Comparison of the expansions in \eqref{e1} and~\eqref{e2} gives
\badl{c1}
\partial_t t^{-3} \W \bigl( \dot \Mu_\Nfu,\Om_\Nfu(\xi,0) \bigr)(t) &\=
0\,,\\
\partial_t t^{-3} \W \bigl( \ddot \Mu_\Nfu,\Om_\Nfu(\xi,0)
\bigr)(t)&\ddis t^{-5}\bigl\{ \Mu_\Nfu(\xi,0),\Om_\Nfu(\xi,0)
\bigr\}(t)\,.
\eadl
The first relation gives
$\W\bigl( \dot\Mu_\Nfu(\xi,0),\Om_\Nfu(\xi,0) \bigr)(t) = at^3$ for
some $a\in \CC$.

If $a\neq 0$, we get \red from \eqref{e0} \endred
\be \int_{t=0}^\infty \ph'(t)\, \W\bigl(\dot\Mu_\Nfu(\xi,\nu)
,\Om_\Nfu(\xi,0)
\bigr) \,\frac{dt}{t^3} \ddis a\nu+ \oh(\xi,\nu^2)\,. \ee
This implies that $\wo_\Nfu(\xi,0)=1$, and gives part iii)~a).

Suppose now that $a = 0$. From the second line in \eqref{c1} we see that
\be \W\bigl( \ddot \Mu_\Nfu, \Om_\Nfu(\xi,0)\bigr) (t) \= t^3 F_\Nfu
(t),\ee
where $F_\Nfu$ is a primitive of
$t \rightarrow t^{-5}\bigl\{ \Mu_\Nfu(\xi,0),\Om_\Nfu(\xi,0) \bigr\}(t).$
The assumptions in iii)~b) imply that $F_\Nfu$ is a non-zero real
analytic function on $(0, \infty)$.

Then, by \eqref{e0} we get
\bad \int_{t=0}^\infty \ph'(t)&\, \W\bigl(\Mu_\Nfu(\xi,\nu)
,\Om_\Nfu(\xi,0)
\bigr) (t) \,\frac{dt}{t^3} \\
&\ddis \nu^2 \int_{t=0}^\infty \ph'(t) \, t^3 F_\Nfu(t)\, {dt} +
O(\nu^3)\,.
\ead
The assumptions in iii)~b) imply that we can choose the cut-off function
in such a way that the integral on the right is non-zero. This gives
part iii)~b).
\end{proof}

\subsection{Proof of Theorem~\ref{thm:orders}} Our next and final task
will be to check the values of $\ord_\Nfu(\nu_0)$ in
Table~\ref{tab-ord} 
by applying Proposition~\ref{prop-ord}. We have to handle the various
isomorphism classes in Table~\ref{tab-Vlist} for the generic abelian
cases $\Nfu=\Nfu_\bt$, $\bt\neq0$, and for the non-abelian cases
$\Nfu=\Nfu_\n$. Some cases will require quite long computations and
references to~\cite{BMSU21}. Namely, we will consider:
\begin{itemize}
\item \emph{Unitary principal series and complementary series } in
\S\ref{d1a} and \S\ref{d1na}
\item \emph{Holomorphic and antiholomorphic discrete series types}
mostly in \S\ref{d1na}. Some computations for the case $\nu_0=0$
(limits of discrete series) in \S\ref{d1na-app}.
\item \emph{Large discrete series type} in \S\ref{hdna-app}.
\item \emph{Thin representations: } $T^\pm_{-1}$ in \S\ref{d1na},
$T^\pm_k$ with $k\geq 0$ in \S\ref{hdna-app}.
\end{itemize}

\subsubsection{One-dimensional minimal $K$-types, generic abelian
cases}\label{d1a}
Let $p_0=0$. Then $h_0=2j_\xi$, and
\begin{align*} \Mu_{\bt;2j_\xi,0,0}(\xi,\nu) \bigl( n \am(t)k \bigr)
&\= \ch_\bt(n) \, t^2 I_\nu(2\pi |\bt|t) \, \Kph{2j_\xi}000(k)\,,\\
\Om_{\bt;2j_\xi,0,0} (\xi,\nu_0) \bigl( n \am(t)k \bigr)
&\= \ch_\bt(n) \, t^2 K_{\nu_0}(2\pi |\bt|t) \, \Kph{2j_\xi}000(k)\,,
\end{align*}
which leads with \eqref{Wr0} to
\be \W\bigl( \Mu_\bt(\xi,\nu_0) , \Om_\bt(\xi,\nu_0) \bigr)(t)
\= - t^3\,. \ee
We use that $\nu_0\in (0,\infty) \cup i \RR$, and that $K_\nu$ is even
in $\nu$, to get rid of the complex conjugation.

Proposition~\ref{prop-ord} i) gives $\wo_\bt(\nu_0) = 1$ if
$\nu_0\neq 0$ and $\wo_\bt(0)=2$. This concerns the unitary principal
series and the complementary series.

\subsubsection{One-dimensional minimal \texorpdfstring{$K$}{K}-types,
non-abelian cases}\label{d1na}
Again $p_0=0$, $h_0=2j_\xi$. $\Om_\n(\xi,\nu_0)=\om_\n^{0,0}(j_\xi,\nu)$
and $\Mu_\n(\xi,\nu) = \mu_\n^{0,0}(\xi,\nu)$ in \eqref{om-mu-nab}.
From~\eqref{Wr0} we obtain
\be \W\bigl( \Mu_\bt(\xi,\nu_0) , \Om_\bt(\xi,\nu_0) \bigr)(t)
\= - 4\pi |\ell| \,\Gf(\nu_0+1) \, t^3 \, \bigm/ \Gf\bigl(
\tfrac{\nu_0+1}2-\k\bigr)\,.\ee
Since $\re\nu_0\geq 0$, zeros may be due only to the gamma factor in the
denominator. 
In \eqref{m0kap} we see that
$\k = - m_0 -\frac12 - \frac12 j_\xi\,\sign(\ell)$, with a non-negative
integer $m_0$. In the cases under consideration we use 
\be \frac{\nu_0+1}2-\k \= m_0 +1 + \frac12 \bigl( \nu_0 +
j_\xi\sign(\ell) \bigr)\,.\ee

The present assumption concerns the following isomorphism classes:
\begin{itemize}
\item Isomorphism class $\II(j,\nu_0)$, \emph{unitary principal series
and complementary series,} with $\nu_0 \not \equiv j_\xi \bmod 2$ or
$(j_\xi,\nu_0)=(0,0)$. The quantity $W(\nu_0)$ is non-zero, and we have
$\wo_\n(\nu_0)=1$ if $\nu_0 \neq 0$, and $\wo_\n(0)=\red 2 \endred$.

\item Isomorphism class $\IF(j_\xi,\nu_0)$, \emph{holomorphic discrete
series type, } and isomorphism class $\FI(j_\xi,\nu_0)$,
\emph{antiholomorphic discrete series type,} with
$\nu_0 \equiv j_\xi\bmod 2$, $0 \leq \nu_0 \leq |j_\xi|-2$.

Table~23 in \cite{BMSU21} shows that it occurs in
$\Wfu_\n^{\ps[j_\xi,\nu_0]}$ under the conditions
\badl{IFFIcond} \text{ for }\IF(j_\xi,\nu_0) : \quad& \ell<0,\; j_\xi >
0,\;0 \leq m_0 <\frac12(j_\xi-\nu_0)\,,\\
\text{ for }\FI(j_\xi,\nu_0):\quad&\ell>0,\; j_\xi<0,\; 0 \leq m_0 <
-\frac12(j_\xi+\nu_0)\,.
\eadl
This gives in both cases $\frac{\nu_0+1}2-\k \in \ZZ_{\leq 0}$, and
vanishing of the Wronskian function at $\nu=\nu_0$. We cannot apply
Proposition~\ref{prop-ord}, i).

\item \emph{Thin representations } $T^+_{-1}$ (type $\IF(1,-1)$) and
$T^-_{-1}$ (type $(\IF(-1,-1)$), with $\nu_0=1$, $j_\xi=\pm 1$. It
occurs in $\Wfu_\n^{\ps[\pm 1,1]}$ under the condition $m_0 =0$,
$\mp\ell>0$. We obtain a non-zero Wronskian function, and can apply
part i) in Proposition~\ref{prop-ord} to conclude that $\wo_\n(1)=1$.
\end{itemize}

We proceed with the holomorphic and antiholomorphic series types. Since
$\W\bigl( \Mu_\bt(\xi,\nu_0) , \Om_\bt(\xi,\nu_0) \bigr)=0$, the
components $ t\,M_{\k,\nu_0/2}(2\pi|\ell|t^2)$ and
$\, W_{\k,\nu_0/2}(2\pi |\ell|t^2)$ are proportional,
\be \bigl\{\Mu_\Nfu(\nu_0),\Om_\Nfu(\nu_0) \bigr\}(t) \ddis t^2\,
W_{\k_0,\nu_0/2}(2\pi|\ell|t^2)^2\ee
is non-zero. This gives $\wo_\n(\nu_0)=0$ if $\nu_0\neq 0$, by
Proposition~\ref{prop-ord}, ii). See \S\ref{d1na-app} for the remaining
case $\nu_0=0$.

\subsubsection{Higher-dimensional minimal K-types, generic abelian
case}\label{hda}Here only the large discrete series type
$\II_+(j_\xi,\nu_0)$ has to be considered, with the basis functions as
in~\eqref{hdWa} and~\eqref{hdMa}.

The Wronskian function has the following value at $\nu=\nu_0$.
\begin{align*}
\W\bigl( \Mu_\bt&(\xi,\nu_0), \Om_\bt(\xi,\nu_0) \bigr)
\ddis \sum_r \binom {p_0}{\frac{p_0+r} 2} \ \Wr\biggl( \Bigl(
\frac{-i\bt}{|\bt|} \Bigr)^{(r+p_0)/2}\, t^{2+p_0}\,
I_{|h_0-r|/2}(2\pi|\bt|t),\\
&\qquad\hbox{} \overline{ \Bigl( \frac{i \bt}{|\bt|}\Bigr)^{(r+p_0)/2}\,
t^{2+p_0} \, K_{|h_0-r|/2}(2\pi|\bt|t) } \biggr)\= t^{2p_0}\,
\sum_{n=0}^{p_0} \binom{p_0}n \, (-1)^n (-t^3) \= 0\,.
\end{align*}
Here we have used~\eqref{Wr0}.

We have to turn to Proposition~\ref{prop-ord} ii).
\bad \bigl\{ \Mu_\bt(\xi,&\nu_0) \, \Om_\bt(\xi,\nu_0) \bigr\} \ddis
\sum_{n=0}^{p_0} \binom{p_0}n\, (-1)^n\, t^{4+p_0}\\
&\qquad\hbox{} \cdot
I_{| (h_0+p_0)/2-n|}(2\pi|\bt |t) \, K_{|(h_0+p_0)/2-n|}(2\pi|\bt |t)
\,.
\ead
In a higher-dimensional minimal $K$-type $\tau^{h_0}_{p_0}$ we have
$h_0=-j_\xi$ and $p_0=\nu_0$. So $m:=\frac{h_0+p_0}2-n$ runs through
the range of integers from $-\frac{j_\xi+\nu_0}2 $ to
$\frac{\nu_0-j_\xi}2$. Since $\nu_0\geq j_\xi$, the value $m=0$ occurs
in this range. With~\eqref{KI-expint} we have the following asymptotic
behavior as $t\downarrow0$
\be I_{|m|}(2\pi|\bt| t)\, K_{|m|}(2\pi|\bt| t) \;\sim\;\begin{cases}
- \log\pi |\bt|t & \text{ if }m=0\,,\\
\frac1{2|m|}& \text{ otherwise}\,.
\end{cases}\ee
For $t$ near zero, the (non-zero) term with $m=0$ is larger than the
other terms. Hence we have that $\wo_\bt(\nu_0)=0$ by ii) in
Proposition~\ref{prop-ord}.

\subsubsection{Limits of holomorphic and antiholomorphic discrete
series}\label{d1na-app}For the isomorphism classes $\IF(j_\xi,0)$ and
$\FI(j_\xi,0)$ the Wronskian function is zero if $\nu$ takes the value
$\nu_0=0$. To apply Proposition~\ref{prop-ord} iii) we need to compute
the derivative
$\partial_\nu \W\bigl( \Mu_\n(\xi,\nu),\Om_\n(\xi,0) \bigr)(t)\bigr|_{\nu=0}$.

In this situation we have
$\frac{1+\nu}2 - \k = m_0 + \frac{\nu + \e j_\xi}2$ with
$\e=\sign(\ell)$, $-\e j_\xi\in 2\ZZ_{\geq 1}$, and
$0\leq m_0 < -\frac \e 2 j_\xi$. See \cite[Table 23]{BMSU21}. This
implies that $\nu \mapsto \frac{1+\nu}2-\k $ has values in
$\ZZ_{\leq -1}$ as $\nu=\nu_0=0$. The zero of the Wronskian in (C.11)
at $\nu=0$ implies that $\mu_\n(\xi,0)$ and $\Om_\n(\xi,0)$ are
proportional. Hence, the holomorphic function
$\nu \mapsto \Gf\bigl(\frac{1+\nu}2-\k)\, \W\bigl( \Mu_\n (\xi,\nu),\Om_\n(\xi,0) \bigr)(t)$
has a zero at $\nu=0$, and we apply Proposition~\ref{prop-ord} iii). We
use the following proportionality
\be \partial \W\bigl( \Mu_\n (\xi,\nu),\Om_\n(\xi,0)
\bigr)|_{\nu=0} \simeq \lim_{\nu\rightarrow 0}\Gf\bigl(\tfrac{1+\nu}2
-\k\bigr) \, \Wr\bigl(\Mu_\n (\xi,\nu),\Om_\n(\xi,0)\bigr)\,.\ee
For $\nu $ near $0$, $\nu \neq 0$, and the abbreviation
$2\pi|\ell| = \al$, we have
\begin{align*}
\Gf\bigl(\tfrac{1+\nu}2&-\k) \, \W\bigl( \Mu_\n (\xi,\nu),\Om_\n(\xi,0)
\bigr)(t)
\= \Gf\bigl(\tfrac{1+\nu}2 -\k) \, \Wr\bigl( t\, M_{\k,\nu/2}(\al t^2),
t\, W_{\k,0}(\al t^2) \Bigr)_t
\displaybreak[0]\\
&\= \Gf\bigl(\tfrac{1+\nu}2 -\k) \, 2 \al t^3 \, \Bigl( M_{\k,\nu/2}(\al
t^2) \, W_{\k,0}'(\al t^2) - 
 M_{\k,\nu/2}'
(\al t^2)\,W_{\k,0}(\al t^2)
\displaybreak[0]\\
& \= \Gf\bigl(\tfrac{1+\nu}2 -\k) \, 2\al t^3 \Bigl( - \frac1{\al t^2}
M_{\k,\nu/2}(\al t^2)\, W_{\k+1,0}(\al t^2) \\
&\qquad\hbox{} - \frac{ 1+2\k+\nu}{2\al t^2} M_{\k+1,\nu/2}(\al t^2)
W_{\k,0}(\al t^2)
\Bigr)\,,
\end{align*}
where we have used the contiguous relations in \cite[(A.18)]{BMSU21}.

Next we determine the asymptotic behavior as $t\rightarrow \infty$,
using the formulas in~\eqref{MWest}. The main term in the expansion of
the first summand is
\[ -2 t \Gf(1+\nu) \, (\al t^2)^{-\k+\k+1} \= -4\pi|\ell| \Gf(1+\nu) \,
t^3\,. \]
The second  term is of size $\oh(t^{-1})$. 
Hence the
value of $a$ in Proposition \ref{prop-ord} iii) equals $a=-4\pi|\ell|$,
thus $\wo_\n(0)=1$.

\subsubsection{Higher-dimensional minimal K-types, non-abelian
cases}\label{hdna-app}
We still have to consider the large discrete series type and the thin
representations $T^\pm_k$ with $k\in \ZZ_{\geq 0}$ in the non-abelian
cases. The minimal $K$-type $\tau^{h_0}_{p_0}$ has dimension $p_0+1>1$,
for which we have the explicit descriptions \eqref{hdWna} and
\eqref{hdMna} of the basis functions $\Om_\n$ and $\Mu_\n$. These
expressions depend on $\k$ and $s$ in~\eqref{parms-nab}. We collect in
Table~\ref{tab-i} information from Tables 15, 23, and equation (16.6)
in~\cite{BMSU21}, and from Table~\ref{tab-Vlist} above.
\begin{table}[ht]
\[
\begin{array}{|c|c|cc|}\hline
&\text{large discr. s.t.}&T^+_k & T^-_k \\\hline
\text{type} & II_+(j_\xi,\nu_0) & \IF_+(j_\xi,-1) & \FI_+(j_\xi,-1) \\
\nu_0 &\nu_0\geq 1,\; \nu_0\geq |j_\xi|
& 1 & 1
\\
j_\xi & j_\xi \equiv \nu_0 \bmod 2 & 2k+3
& -(2k+3)
\\
h_0 & -j_\xi & k+3 & -(k+3)
\\
p_0 & \nu_0 & k+1 & k+1 \\
\e=\sign(\ell) & \e\in \{1,-1\} & -1 & 1
\\
m_0 & \hbox{} \geq \frac{\nu_0 - \e j_\xi} 2 & k+1 & k+1
\\
m(h_0,r) & m_0 + \frac\e 2(r+j_\xi) & \frac{k+1-r}2 & \frac{k+1+r}2
\\
\k(r) &- m_0 - \frac \e 4(r+j_\xi) - \frac12&
- \frac{k+1-r} 4 & - \frac{k+1+r}4
\\
s(r)= s(h_0,r) &-\frac14(j_\xi+r) &\frac{k+3-r} 4&
- \frac{k+3+r}4
\\ \hline
\end{array}
\]
\caption{Parameters for the cases of large discrete series type and thin
representations.}\label{tab-i}
\end{table}
In \eqref{hdWna} and \eqref{hdMna} the summation variable runs over
$r\equiv p_0\bmod 2$, $|r|\leq p_0$, and $m(h_0,r) \geq 0$. For the
large discrete series type this gives no restriction, since
$m(h_0,r) \geq \frac{\nu_0+ \e r} 2 = \frac{p_0+\e r}2\geq 0$. For the
thin representations the same holds:
$m(h,r) = \frac{k+1\mp r}2 = \frac{p_0\mp r}2\geq 0$.

First we consider the Wronskian function at $\nu = \nu_0$.
\begin{align*}
\W\bigl(\Mu_\n(\xi,&\nu_0), \Om_\n(\xi,\nu_0) \bigr)
\= \sum_r \binom{p_0}{\frac{p_0+r}2}\, t^{2p_0} \,(-e^{\pi
i(m(h_0,r)-\k(r)}) \, \frac{\Gf\bigl( \frac12+|s(r)| - \k(r) \bigr)}
{\sqrt{m(h_0,r)!}\,(2|s(r)|)!} \\
&\qquad\hbox{} \cdot
(-i)^{m(h_0,r)} \sqrt{m(h_0,r)}\, \Wr\bigl(t\,
M_{\k(r),|s(r)|}(2\pi|\ell|t^2) ,\, t\, W_{\k(r),|s(r)}(2\pi|\ell|t^2)
\bigr)\,.
\end{align*}
For all terms in the sum we have
$\frac12+|s(r)|-\k(r) \geq 1+m_0\geq 1$.

The factor $-e^{\pi i(m(h_0,r)-\k(r))}$ is invariant under
$r\mapsto r+2$, and can be taken outside the sum as a non-zero
constant. The factor $(-1)^{m(h_0,r)}$ can be replaced by
$(-1)^{(p_0+r)/2}$ up to a non-zero constant. With \eqref{Wr0} we
arrive at
\begin{align*}
\W\bigl(\Mu_\n(\xi,&\nu_0), \Om_\n(\xi,\nu_0) \bigr)
\ddis t^{2p_0}\, \sum_r \binom{p_0}{\frac{p_0+r}2}\, (-1)^{(p+r_0)/2}\\
&\qquad\hbox{} \cdot \frac{\Gf\bigl( \frac12+|s(r)| - \k(r) \bigr)}
{(2|s(r)|)!} \frac{\Gf\bigl( 2|s(r)|+1)\,t^3}{\Gf\bigl(
\frac12+|s(r)|-\k(r)\bigr)}
\displaybreak[0]\\
&\ddis t^{2p_0} \sum_{n=0}^{p_0} \binom {p_0}n \, (-1)^n \, t^3 \= 0\,.
\end{align*}
This brings us to part ii) of Proposition~\ref{prop-ord}.

With similar simplifications we obtain
\bad \bigl\{ \bigl(\Mu_\n(\xi,&\nu_0), \Om_\n(\xi,\nu_0) \bigr\}(t)
\ddis \sum_r \binom{p_0}{\frac{p_0+r}2}\, t^{2p_0+2}
\,(-1)^{(p_0+r)/2}\\
&\qquad\hbox{} \cdot
\frac{\Gf\bigl( \frac12+|s(r)| - \k(r) \bigr)} {(2|s(r)|)!}\,
M_{\k(r),|s(r)|}(2\pi|\ell|t^2) \, W_{\k(r),|s(r)|}(2\pi |\ell|t^2)\,.
\ead
For the behavior as $t\downarrow 0$ of the separate terms we use
\eqref{MWest} and \eqref{West0}. If $s(r)\neq 0$ we find
\bad&\hbox{} \sim \; t^{2p_0+2}\, \binom{p_0}{\frac{p_0+r}2}
\,(-1)^{(p_0+r)/2}\, \frac{\Gf\bigl( \frac12+|s(r)| - \k(r)
\bigr)}{(2|s(r)|)!}\\
&\qquad\hbox{} \cdot
\, \frac{\Gf\bigl(2|s(r)|\bigr)}{\Gf\bigl( \frac12+|s(r)|-\k(r)\bigr)}
2\pi|\ell|t^2\\
&\= t^{2p_0+2}\,
\binom{p_0}{\frac{p_0+r}2}\frac{(-1)^{(p_0+r)/2}}{2|s(r)|}\,
2\pi|\ell|t^2\,.
\ead
For $s(r)=0$ we have
\be t^{2p_0+2} \binom{p_0}{\frac{p_0+r}2} \, \,(-1)^{(p_0+r)/2}\,
\frac{-1}{\Gf\bigl( \frac12-\k(r)\bigr)} 2\pi|\ell|t^2 \log2\pi|\ell|
t^2\,.\ee
If $s(r)=0$ occurs in the sum, then the corresponding term has the
largest size near $t=0$, and by ii) we thus get $\wo_\n(\nu_0)=0$. We
next verify that this is the case.

We have $-p_0\leq r \leq p_0$. For the large discrete series type
\[ s(-p_0) \= \frac14(- j_\xi+ p_0 ) \= \frac14(-j_\xi+\nu_0)\geq 0,
\qquad s(p_0) \= \frac14(-j_\xi-\nu_0)\leq 0\,.\]
Since $h_0\equiv p_0 \equiv r\bmod 2$, the values of $2s(r)$ are
integral. Thus the value $s(r)=0$ occurs, and we get $\wo_\n(\nu_0)=0$
in this case.

In the case of $T^\pm_k$ we have
$$ \pm s(r) \= \frac{k+3\mp r}4 \geq \frac{k+3-p_0}4 \= \frac12\,.$$
So $s(r)$ does not  have the value $0$ in the sum over~$r$, and all 
terms in the sum  have an asymptotic expansion with a main term 
that is a non-zero constant times $t^{2p_0+4}$.
We have to 
determine whether the following quantity is zero or non-zero.
\[ 2\pi|\ell| t^{2p_0+4 } \sum_r \binom{p_0}{\frac{p_0+r}2}\;
(-1)^{(p_0+r)/2}\,\frac2{k+3\mp r}\,.\]
With the substitution
$r=\mp 2n\pm p_0$ we get, up to a non-zero factor,
\begin{align*} \sum_{n=0}^{p_0} &\binom{p_0} n \;(-1) ^n\, \frac1{n+1} 
\= \sum_{n=0}^{p_0}  
\frac{ p_0! \; (-1)^n}{n!\; (p_0-n)! \;(n+1)}
\\
&
\= \sum_{n=0}^{p_0+1} \binom{p_0+1}{n+1}\, \frac{(-1)^n}{p_0+1} - 1\,
\frac{(-1)^{p_0+1}}{p_0+1} 
\= \frac{(1-1)^{p_0+1}}{p_0+1} +
 \frac{(-1)^{p_0}}{p_0+1}  \neq0\,.
\end{align*}
 So the value of
$\bigl\{ \bigl(\Mu_\n(\xi,\nu_0), \Om_\n(\xi,\nu_0) \bigr\}(t)$ is
non-zero for $t$ sufficiently close to~$0$. Therefore, this implies
that $\wo(0)=0$ for the thin representations $T^\pm_k$, with $k\geq 0$,
as well.

In this way we have taken care of the last case in the list of
isomorphism classes, thus the proof is now complete.

\clearpage 

\appendix
\begin{center}\large\bf Appendix\end{center}
In this appendix we give further explanations and computations, mostly
based on~\cite{BMSU21}. In this way we avoid in the main text frequent
references to that paper.

\section{ Lie algebra and subgroups of
\texorpdfstring{$\SU(2,1)$}{SU(2,1)}}\label{app-group}

\subsection {Subalgebras and Casimir element.}\label{app-Lie}
We let $\glie$ be the real Lie algebra of $G=\SU(2,1)$, and let
$\glie_c$ be the complex Lie algebra $\CC\otimes_\RR \glie$. For Lie
algebras of subgroups we use a similar notation.

We fix a basis of $\glie$ by indicating bases for the summands in
$\glie=\nlie\oplus\alie\oplus \klie$:
\begin{align}\label{nlie}\nlie&\=\RR\,
\XX_0\oplus\RR\,\XX_1\oplus\RR\,\XX_2\,,\\
\nonumber
e^{t\XX_0}&\= \nm(0,t/2)\,,\quad e^{t\XX_1} \= \nm(t,0)\,,\quad
e^{t\XX_2}\= \nm(it,0)\,; \displaybreak[0]\\
\label{alie}\alie&\= \RR\, \HH_r\,,\quad e^{t\HH_r}\= \am(e^t)\,;
\displaybreak[0]\\
\label{klie}\klie&\= \RR\,\HH_i\oplus\RR\,\WW_0\oplus\RR\,\WW_1
\oplus\RR\,\WW_2\,,\\
\nonumber
e^{t\HH_i}&\= \mm(e^{it})
\,, \quad e^{t\WW_0} \= \mathrm{diag}\bigl\{ e^{it}, e^{-it},1\bigr\}
\,,
\\
\nonumber
e^{t\WW_1}&\= \begin{pmatrix}\cos t&\sin t& 0\\
- \sin t& \cos t & 0\\
0&0&1\end{pmatrix}
\,,\quad e^{t\WW_2}\= \begin{pmatrix}\cos t&i\sin t& 0\\
i\sin t& \cos t & 0\\
0&0&1\end{pmatrix}
\,. \end{align}

The Lie algebra of $M$ is $\mlie=\RR\, \HH_i$. The Lie algebra $\glie$
is a real form of $\mathfrak{sl}_3$, of type~$\mathrm{A}_2$. A Cartan
algebra contained in $\klie_c$ is $\CC\, \CK_i \oplus \CC\,\WW_0$,
where $\CK_i = 3\WW_0- 2\HH_i$. It has root spaces spanned by
\badl{Zij}
&\text{in $\klie_c$:}& \quad \Z_{12}&\= \WW_1-i \WW_2\,,\qquad\quad
\Z_{21}\=\WW_1+i\WW_2\,;\\
&\text{not in $\klie_c$:}& \Z_{13}&\= \frac12 \HH_r + i \,\Bigl(
\XX_0-\frac14\WW_0-\frac14 \CK_i\Bigr)\,,\\
& &\Z_{31}&\= \frac12 \HH_r -i \,\Bigl( \XX_0-\frac14\WW_0-\frac14
\CK_i\Bigr)\,,
\\
&&\Z_{23}&\= \frac12\Bigl( \XX_1-\WW_1+i \XX_2-i \WW_2\Bigr)\,,\\
&& \Z_{32}&\= \frac12\Bigl( \XX_1-\WW_1-i \XX_2+i \WW_2\Bigr)\\
\eadl
with corresponding roots
\bad \al_{12}(\CK_i) &\= 0&\quad\al_{13}(\CK_i)&\= 3i&
\quad \al_{23}(\CK_i)&\=3i
\\
\al_{12}(\WW_0) &\=2i&\al_{13}(\WW_0)&\=i& \al_{23}(\WW_0) &\= -i\\
\al_{ji}&\=-\al_{ij} \ead

The center of the enveloping algebra can be generated by two independent
elements. One of them is the
Casimir operator given by $\sum_j X_j X_j^\ast$, where $\{X_j\}$ is a
basis of $\glie_c$ and $\{X_j^\ast\}$ the dual basis 
 with
respect to a suitable multiple of the Killing form.

For the two bases discussed above this leads to 
\begin{align}\label{Cas}
C &\= \HH_r^2-4\HH_4-\frac13\HH_i^2+4\XX_0\HH_i-8\XX_0\WW_0+4\XX_0^2
-2\XX_1\WW_1\\
\nonumber
&\qquad\hbox{}
+\XX_1^2-2\XX_2\WW_2+\XX_2^2
\displaybreak[0]\\
\label{CasZ}
&\= \frac13\CK_i^2 +2i\CK_i-\WW_0^2+2i \WW_0 -\Z_{12}\Z_{21}+4
\Z_{13}\Z_{31}+4\Z_{23}\Z_{32}\,.
\end{align}

We take as second generator the element $\Dt_3$ given in
\cite[Proposition 3.1]{GPT02}.

\subsection{Realization of irreducible representations
of~\texorpdfstring{$K$}{K}} \label{app-K}
We define polynomial functions $\Kph hprq$ on $K$ by the identity
\bad \sum_{r\equiv p(2), |r|\leq p} \Kph hprq(k) \, x^{(p-r)/2}
&\=\dt^{(h+p)/2}\, ( ax+c)^{(p-q)/2}\, (bx+d)^{(p+q)/2}\,,\\
\text{ with\, } k &\= \begin{pmatrix} a&b&0\\ c&d&0\\ 0&
0&\dt\end{pmatrix}\,.
\ead

These functions form an orthogonal system in $L^2(K)$. It is not
orthonormal.

Translation by the central elements $\exp(i t \CK_i)$ multiplies
$\Kph hprq$ by $e^{-i h t}$. Right (resp. left) translation by
$e^{t\WW_0}$ multiplies $\Kph h p r q$ by $e^{-iq t}$ (resp. by
$e^{- i r t}$). We call $q$ the weight of $\Kph h p r q$. Letting $r$
run over $-p, -p+2,\cdots, p-2,p$ we get orthogonal realizations of the
irreducible representations $\tau^h_p$ in $L^2(K)$ under the action of
right translation, on the spaces
\be \bigoplus _{q\equiv p \bmod 2,\; |q|\leq p} \CC\,\Kph h p r q \,.\ee
The norms of $\Kph h p r q$ satisfy 
\be\label{Kph-norm}
\|\Kph hp r q \|^2_K = \frac{p! } {\bigl( \frac{p+r}2\bigr)!\; \bigl(
\frac{p-r}2\bigr)!} \|\Kph h{p}{p}{q}\|^2_K\,. \ee

The Lie algebra $\klie$ acts by right differentiation:
\badl{Rk} \CK_i\Kph h{p}{r}{q}&\=- ih \Kph h{p}{r}{q}\,,\\
\WW_0 \Kph h{p}{r}{q}&\= -iq \Kph h{p}{r}{q}\,,\\
(\WW_1\pm i \WW_2)\Kph h{p}{r}{q} &\= (q\mp p)\Kph h{p}{r}{q\pm2} \,.
\eadl
We refer to \cite[\S3]{BMSU21} for more details.

\subsection{Theta functions}\label{app-Theta}
We indicate how theta functions in~\eqref{Thla} arise from the
Schr\"odinger representation.

Under the bilinear form
$\;\;[\ph_1,\ph_2]_\RR \= \int_\RR \ph_1(\xi)\, \ph_2(\xi)\, d\xi, $
the Schr\"odinger representations $\pi_{2\pi\ell}$ and
$\pi_{-2\pi\ell}$ in \eqref{pild} are dual to each other. One can
extend this relation to a relation between Schwartz functions and
tempered distributions. Let the delta-distribution $\dt_c$ be defined
by $\delta_c(\ph)=\ph(c)$ for $\ph \in \Schw(\RR)$. Let
$\s \in\ZZ_{\ge 1}$. For the standard generators of $\Ld_\s$ we have:
\bad \pi_{-2\pi\ell}\bigl( \nm(0,2/\s) \bigr)\, \dt_c \, =
&\,\,e^{-4\pi i\ell/\s}\, \dt_c\,,\\
\pi_{-2\pi\ell}\bigl( \nm(1,0)
\bigr) \, \dt_c =\, e^{4\pi i \ell c}\, \dt_c,\,\,&\,\, \pi_{-2\pi\ell }
\bigl( \nm(i,0)\bigr) \, \dt_c \= \dt_{c-1}\,. \ead
This implies that if we take $\ell \in \frac \s2\ZZ$,
$c\in \ZZ\bmod 2|\ell|$, we have invariant distributions
\be\label{mula} \mu_{\ell,c} \= \sum_{k\in \ZZ} \dt_{k+c/2\ell}\,.\ee

Each such $\pi_{-2\pi\ell}(\Ld_\s)$-invariant tempered distribution
$\mu = \mu_{\ell,c}$ on $\RR$ gives rise to a theta-function by the
formula 
\be\label{Thmu} \Th_\mu(\ph)(n)\= [\pi_{2\pi\ell}(n)\ph ,\mu]_\RR\qquad
(\ph \in \Schw(\RR))\,, \ee
yielding left-$\Ld_\s$-invariant functions on $N$. In this way we have
the operator $\ph \mapsto \Th_{c,\ell}(\ph)$ given by
\be \Th_{\ell,c}(\ph)\bigl( \nm(x+iy,r)\bigr) \= \sum_{k \in \ZZ} \bigl[
\ph, \pi_{-2\pi\ell}(\nm(-x-iy,-r))\, \dt_{c/2\ell+k}\bigr]_\RR\,,\ee
which expands to~\eqref{Thla}. It is an intertwining operator:
\bad \Th_{\ell,c}(\ph)(n n_1) &\= \Th_{\ell,c}\bigl( \pi_{2\pi
\ell}(n_1) \ph\bigr)(n)&&(n.n_1\in N)\,,\\
X\, \Th_{\ell,c}(\ph) &\=
\Th_{\ell,c}\bigl(d\pi_{2\pi\ell}(X)\ph\bigr)&\qquad&
(X\in \nlie)\,.
\ead

Furthermore, for $\ell,\ell'\in \frac \s2\ZZ_{\neq 0}$, $c,c'\in \ZZ$,
$\ph,\ph'\in \Schw(\RR)$, one has the orthogonality relations

\be\label{Th-un}\bigl(
\Th_{\ell,c}(\ph),\Th_{\ell',c'}(\ps)\bigr)_{\Ld_\s\backslash N} \=
\begin{cases}
\frac 2\s ( \ph,\ps)_\RR & \text{ if }\ell'=\ell,\; c'\equiv c\bmod
2|\ell|\,,\\
0&\text{ otherwise}\,.
\end{cases}
\ee

So for each couple $(\ell,c)$ of integers with
$\ell \in \frac \s2\ZZ_{\neq 0}$, $0\leq c \leq 2|\ell|-1$, the linear
map $\ph \mapsto \sqrt {\s/2} \,\Th_{\ell,c}(\ph)$ from $\Schw(\RR)$ to
$C^\infty_\ell(\Ld_\s\backslash N)$, extends to a unitary map
$L^2(\RR) \rightarrow L^2(\Ld_\s\backslash N)$. The images for
different choices of $(\ell,c)$ are orthogonal to each other.

\rmrk{Further transformation properties} The distribution $\mu_{\ell,c}$
in~\eqref{mula} satisfies
\badl{dtth} \pi_{-2\pi\ell}\bigl( \nm(1/2\ell,0)\bigr)\, \mu_{\ell,c}&\=
e^{\pi i c/\ell}\,\mu_{\ell,c},\\
\pi_{-2\pi\ell}\bigl( \nm(i/2\ell,0) \bigr)\, \mu_{\ell,c} &\=
\mu_{\ell,c-1}\,. \eadl
This implies
\bad \Th_{\ell,c}( \ph) (\nm(1/2\ell,0)n)
&\= e^{\pi i c /\ell}\, \Th_{\ell,c}(\ph)(n)\,,\\
\Th_{\ell,c}( \ph)(\nm(i/2\ell,0)n)& \= \Th_{\ell,c+1}(\ph)(n)\,,
\ead
which go beyond the left invariance under $\Ld_\s$.
\medskip

The group $\Gm$ that we have fixed in~\S\ref{discretesubgroups} contains
the element $\mm(i)$, which normalizes the unipotent group $N$, and
also the lattice $ \Gm\cap N$. Indeed, we have that
$\mm(-i) \nm(b,r) \mm(i) = \nm(ib ,r)$ and furthermore (see
\cite[Proposition 4.2]{BMSU21}):
\begin{prop}\label{prop-actmi} Let $m\in \ZZ_{\geq 0}$,
$\s\in \ZZ_{\geq 1}$, and $\ell\in \frac \s2\ZZ_{\neq 0}$. The
automorphism $\nm(b,r) \mapsto \nm(ib,r)$ of $N$ induces 
a linear bijection  
in the $2|\ell|$-dimensional space of theta
functions with basis
$\bigl\{ \Th_{\ell,c}(h_{\ell,m})\;:\; 0\leq c < 2|\ell|\bigr\}$, where
$\Th_{\ell,c}(h_{\ell,m})$ is the linear transformation determined by
\be \Th_{\ell,c}(h_{\ell,m}) \bigl( \nm(ib,r) \bigr) \=
\frac{\bigl(-i\sign(\ell)\bigr)^m}{\sqrt{2|\ell|}}\sum_{c'=0}^{2|\ell|-1}
e^{\pi i c c'/\ell} \Th_{\ell,c'} (h_{\ell,m}\bigl( \nm(b,r)
\bigr)\,.\ee
\end{prop}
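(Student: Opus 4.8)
First I would record that $\alpha\colon\nm(b,r)\mapsto\nm(ib,r)$ is the inner automorphism of $N$ given by conjugation by $\mm(i)\in M\subset K$, i.e. $\alpha(n)=\mm(-i)\,n\,\mm(i)$ (see the remark preceding the statement and \eqref{normN}). It is trivial on the centre $Z(N)$ and carries the generating set $\{\nm(1,0),\nm(i,0),\nm(0,2/\s)\}$ of $\Ld_\s$ to $\{\nm(i,0),\nm(1,0)^{-1},\nm(0,2/\s)\}$, so $\alpha(\Ld_\s)=\Ld_\s$. Hence $f\mapsto f\circ\alpha$ is a unitary operator on $L^2(\Ld_\s\backslash N)$ fixing each central-character subspace $L^2(\Ld_\s\backslash N)_\ell$ and satisfying the twisted intertwining rule $(f\circ\alpha)(n\,n_1)=\bigl(R_{\alpha(n_1)}f\bigr)(\alpha(n))$. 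The plan is to transport this operator to the Schr\"odinger model of $\pi_{2\pi\ell}$, where I expect it to be a rescaled Fourier transform; the statement — including the assertion that the slice spanned by the $\Th_{\ell,c}(h_{\ell,m})$ with $m$ fixed is preserved — will then follow from the classical fact that Hermite functions diagonalise the Fourier transform.

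Concretely I would work with the distributional description \eqref{Thmu}, $\Th_{\ell,c}(\ph)(n)=[\pi_{2\pi\ell}(n)\ph,\mu_{\ell,c}]_\RR$, $\mu_{\ell,c}$ as in \eqref{mula}. Since $\alpha$ fixes the centre, $n\mapsto\pi_{2\pi\ell}(\alpha(n))$ is again a Schr\"odinger representation with central character $e^{2\pi i\ell r}$, so by Stone--von Neumann there is a unitary $U=U_\ell$ on $L^2(\RR)$, unique up to a unimodular scalar, with $\pi_{2\pi\ell}(\alpha(n))=U\,\pi_{2\pi\ell}(n)\,U^{-1}$. Passing the pairing through $U$ gives
\[
\Th_{\ell,c}(\ph)\bigl(\alpha(n)\bigr)\=\bigl[\pi_{2\pi\ell}(n)\,U^{-1}\ph,\ {}^{t}U\,\mu_{\ell,c}\bigr]_\RR\=\Th_{{}^{t}U\mu_{\ell,c}}\bigl(U^{-1}\ph\bigr)(n)\,,
\]
with ${}^{t}U$ the transpose for $[\cdot,\cdot]_\RR$. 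So the statement is reduced to two computations, that of $U^{-1}h_{\ell,m}$ and that of ${}^{t}U\mu_{\ell,c}$.

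For the first I would identify $U$. By \eqref{pild} the subgroups $\{\nm(x,0)\}$ and $\{\nm(iy,0)\}$ act on $L^2(\RR)$ by the modulations $\ph(\xi)\mapsto e^{-4\pi i\ell\xi x}\ph(\xi)$ and the translations $\ph(\xi)\mapsto\ph(\xi+y)$, and $\alpha$ interchanges these two subgroups up to a sign; an operator that conjugates modulations into translations and translations into modulations is, up to a unitary normalisation, the Fourier transform $\ph\mapsto\int_\RR\ph(\eta)\,e^{-4\pi i\ell\xi\eta}\,d\eta$. The normalised Hermite functions \eqref{hlk} are exactly the classical Hermite functions rescaled to this kernel, hence eigenfunctions of $U$, so $U^{-1}h_{\ell,m}$ is a unimodular multiple of $h_{\ell,m}$ depending only on $m\bmod 4$, and tracking the scalars yields the factor $(-i\,\sign(\ell))^{m}$. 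For the second, ${}^{t}U\mu_{\ell,c}$ is the tempered-distribution Fourier transform of the Dirac comb $\mu_{\ell,c}=\sum_{k\in\ZZ}\dt_{k+c/2\ell}$; Poisson summation turns it into another comb, and more quickly: since $\alpha$ sends $\nm(1/2\ell,0)$ to $\nm(i/2\ell,0)$ and $\nm(i/2\ell,0)$ to $\nm(-1/2\ell,0)$, the transformation rules \eqref{dtth} force ${}^{t}U\mu_{\ell,c}$ to be, up to a constant, $\frac{1}{\sqrt{2|\ell|}}\sum_{c'=0}^{2|\ell|-1}e^{\pi i cc'/\ell}\mu_{\ell,c'}$. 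Substituting the two evaluations into the displayed identity produces the formula in the statement; it is a bijection because $\bigl(e^{\pi i cc'/\ell}\bigr)_{0\le c,c'<2|\ell|}$ is, up to a scalar, a discrete Fourier transform matrix.

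The main obstacle is entirely one of normalisations and phases: reconciling the dilation in \eqref{hlk}, the phase conventions in \eqref{Thla}, and the lattice spacing $1/2\ell$ in \eqref{mula} precisely enough that the eigenvalue of $U$ on $h_{\ell,m}$ is exactly $(-i\,\sign(\ell))^{m}$ and the comb transforms with exactly the prefactor $1/\sqrt{2|\ell|}$, together with the correct $\sign(\ell)$-dependence throughout. The cleanest way to keep this under control, which I would follow, is to verify the identity first for $m=0$ — where it reduces to a Gaussian integral, equivalently to the classical transformation formula for Jacobi theta functions — and then to induct on $m$ using that a suitable raising element $X\in\nlie_c$ acts in the Schr\"odinger model as a creation operator sending $h_{\ell,m}$ to a multiple of $h_{\ell,m+1}$ and is conjugated by $U$ into a multiple of itself (with factor $-i\,\sign(\ell)$, since $\alpha$ acts $\RR$-linearly on $\nlie$); this propagates the base case to all $m$ with no further computation.
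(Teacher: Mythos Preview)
The paper does not prove this proposition itself; it simply cites \cite[Proposition 4.2]{BMSU21}. Your outline is correct and is the standard metaplectic argument: transport the automorphism to the Schr\"odinger model via Stone--von Neumann, identify the intertwiner $U$ as a rescaled Fourier transform, and then use that Hermite functions diagonalise it while Poisson summation (equivalently, the transformation rules \eqref{dtth}) handles the Dirac comb $\mu_{\ell,c}$. Your own caveat about phase and scaling bookkeeping---especially the $\sign(\ell)$ dependence---is the only genuinely delicate point, and the $m=0$ base case plus induction via a creation operator in $\nlie_c$ that you propose is the clean way to pin down the exact scalars without a long direct computation.
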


\section{Fourier term modules}

\subsection{Fourier term operators}\label{app-Fto}
The fact that for given pairs $(\ell,c)$ the operator
$\ph \mapsto \Th_{\ell,c}(\ph)$ is an intertwining operator makes it
fairly natural to consider the Fourier terms
\be \Four_{\ell,c} f (n a k) \= \frac \s 2 \sum_{m\geq 0} \Th_{\ell,c}(
h_{\ell,m})(n)\, \int_{\Ld_\s\backslash N} \overline
{\Th_{\ell,c}(h_{\ell,m})(n_1)} \, f(n_1 a k) \, dn_1\,,\ee
with $n\in N$, $a\in A$, $k\in K$. Using the orthogonal system
$\bigl\{\Kph hprq\bigr\}$ in $L^2(K)$ we get for $\Four_{\ell,c} f$ an
expansion as in \eqref{Flcd}, without the restriction~\eqref{mtpleq}.
One can work out the action of $U(\glie)$ on the individual terms
explicitly; see \cite[\S7]{BMSU21}. Then it turns out that the action
leaves the quantity in the left hand side of \eqref{mtpleq} constant,
with an odd value. This implies that the operators $\Four_{\ell,c,d}$
are indeed intertwining operators for the action of~$\glie$.

The splitting $\Four_{\ell,c} \= \sum_{d\equiv 1(2)} \Four_{\ell,c,d}$
can be understood from the action of the double cover of~$M$ on the
elements $\Four_{\ell,c}$ in $C^\infty(\Ld_\s\backslash G)_K$. See the
discussion of the metaplectic action in \cite[\S8.2]{BMSU21}.

The convergence of the total Fourier expansion
\[ f \= \sum_\bt \Four_\bt f + \sum_\n \Four_\n f\]
is discussed in \cite[Proposition 5.2]{BMSU21}.

\subsection{\texorpdfstring{$N$}{N}-trivial Fourier term
modules}\label{app-NtFt}
The principal series module $H^{\xi,\nu}_K$ contains the $K$-types
$\tau^h_p$ satisfying $ |h- 2j|\leq p$, with multiplicity one. The
subspace $H^{\xi,\nu}_{K;h,p,q}$ of $K$-type $\tau^h_p$ and weight $q$
is spanned by the function
\be \kph h p r q(\nu) : n\am(t)k \mapsto t^{2+\nu} \,\Kph h p r q(k)\,,
\ee
with $r= \frac{h-2j}3$. Here, by the Iwasawa decomposition, if $g\in G$,
we write $g=n\am(t)k$ with $n\in N$ $\am(t)\in A$ and $k\in K$. See
\cite[\S10.1]{BMSU21}.

If $\nu \not\equiv j_\xi\bmod 2$, then the principal series module
$H^{\xi,\nu}_K$ is an irreducible $(\glie,K)$-mod\-ule. All isomorphism
classes of irreducible $(\glie,K)$-modules are represented by a
submodule in some principal series representation
(see \cite{CM82}).
\medskip

If the Weyl orbit contains elements of the form $(j,0)$, then there are
logarithmic submodules of $\Ffu_0^{\ps[j,\nu]}$, as discussed in
\cite[Propositions 10.4 and 12.3]{BMSU21}.

\subsection{Families of Fourier terms }\label{app-famFt}
The families $\nu \mapsto \om^{0,0}_\Nfu(j_\xi,\nu)$ in \eqref{MWxinubt}
(for $\Nfu=\Nfu_\bt$, $\bt\neq 0$) and \eqref{om-mu-nab} (for
$\Nfu=\Nfu_\n$)
are holomorphic and even in $\CC$. Furthermore, the function
$(\nu,g)\mapsto \om^{0,0}(j,\nu;g)$ is in $C^\infty (\CC\times G)$. The
families $\nu \mapsto \mu^{0,0}_\Nfu(j_\xi,\nu)$ have similar
properties, except that in the non-abelian case $\Nfu=\Nfu_\n$ there
may be first order singularities at points of $\ZZ_{\leq -1}$, as one
can see in the expansion in \cite[(A.9)]{BMSU21}.

The values of these families at points $\nu$ in the closed right
half-plane $\re\nu\geq 0$ are elements of $\Wfu^{\xi,\nu}_\Nfu$ or
$\Mfu^{\xi,\nu}_\Nfu$, respectively. For the values and residues in the
left half-plane we are content to note that they are in
$\Ffu^{\ps[\xi,\nu]}_\Nfu$, where $\ps[\xi,\nu]$ is the character of
$ZU(\glie) $ parametrized by $(j_\xi,\nu)$.

If we multiply these families by a holomorphic function of $\nu$ we get
families with the same properties. The special choice of
$\om^{0,0}_\Nfu$ and $\mu^{0,0}_\Nfu$ is not intrinsic, but governed by
the traditional choice of basis elements of differential equations for
the modified Bessel functions and the Whittaker functions.

Differentiation by an element of $\glie$ preserves these properties. So
if $u\in U(\glie)$ then $u\, \om^{0,0}_\Nfu(j,\nu)$ and
$u\, \mu^{0,0}_\Nfu(j,\nu)$ are again holomorphic (or meromorphic).
There are choices of $u\in U(\glie)$ that send $\Ffu^\ps_{\Nfu;0,0,0}$
to $\Ffu^\ps_{\Nfu;h,p,q}$ for any $K$-type $\tau^h_p$ and weight $q$
in that $K$-type.

In the case $q=p$ this is done in \cite{BMSU21} by repeated application
of the shift operators; see Proposition 6.1, and the application to
Fourier term modules in Tables 9 and~11. Other weights in a given
$K$-type are reached by powers of $\Z_{1,2} \in \klie$. This leads to
the families $\om^{a,b}_\Nfu$ and $\mu^{a,b}_\Nfu$ in
\cite[(10.17)]{BMSU21} with $a,b\in \ZZ_{\geq 0}$. Under generic
parametrization this is used to define $\Wfu^{\xi,\nu}_\Nfu$ and
$\Mfu^{\xi,\nu}_\Nfu$. In the abelian case, this also works under
integral parametrization. See \cite[(10.18), (13.8)]{BMSU21}. In the
non-abelian case the families $\om^{a,b}_\Nfu(\xi,\nu)$ and
$\mu^{a,b}_\Nfu(\xi,\nu)$ may have zeros in the closed right
half-plane, which have to be divided out. See Proposition 14.2 and
Definition 14.8 in~\cite{BMSU21}.

In this way we arrive at the following result:
\begin{prop}\label{prop-MuOm}Let $\xi$ be a character of $M$ and let
$\Nfu\neq \Nfu_0$ a Fourier term order. For each $K$-type $\tau^h_p$
satisfying $|h-2j|\leq 3p$, and each weight $q\equiv p \bmod 2$,
$|q|\leq p$ there are an even holomorphic family
$\Om_{\Nfu;h,p,q}(\xi,\nu)$ in $\CC$ and a meromorphic family
$\Mu_{\Nfu;h,p,q}(\xi,\nu)$ in $\CC$, with at most first order
singularities in points of $\ZZ_{\leq -1}$, with the following
properties:
\begin{enumerate}
\item[a)] The values and residues of these families at $\nu$ are in
$\Ffu^{\ps[j_\xi,\nu]}_\Nfu$.
\item[b)] For $\re\nu \geq 0$ the value $\Om_{\Nfu;h,p,q}(\xi,\nu_0)$
spans the one-dimensional space $\Wfu^{\xi,\nu_0}_{\Nfu;h,p,q}$.
\item[c)] For $\re\nu \geq 0$ the value $\Mu_{\Nfu;h,p,q}(\xi,\nu_0)$
spans the one-dimensional space $\Mfu^{\xi,\nu_0}_{\Nfu;h,p,q}$.
\end{enumerate}
\end{prop}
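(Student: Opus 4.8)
The plan is to build the families $\Om_{\Nfu;h,p,q}$ and $\Mu_{\Nfu;h,p,q}$ out of the explicit one-parameter families already recorded for the minimal $K$-type $\tau^{2j_\xi}_0$, to transport them to a general $K$-type by the action of $U(\glie)$, and to check at each step that the three listed properties survive. Property~a) will be the easy one, since right differentiation by elements of $U(\glie)$ preserves the $(\glie,K)$-modules $\Ffu^\ps_\Nfu$ and commutes with passing to values and residues; the real work is the one-dimensionality statements b) and~c).

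First I would take as starting point the families $\nu\mapsto\om^{0,0}_\Nfu(j_\xi,\nu)$ and $\nu\mapsto\mu^{0,0}_\Nfu(j_\xi,\nu)$ given explicitly in~\eqref{MWxinubt} for $\Nfu=\Nfu_\bt$ and in~\eqref{om-mu-nab} for $\Nfu=\Nfu_\n$, built from the modified Bessel functions $K_\nu$, $I_\nu$ and the Whittaker functions $W_{\k,\nu/2}$, $M_{\k,\nu/2}$. From the classical theory of these special functions one reads off directly that $\om^{0,0}_\Nfu$ is entire and even in $\nu$ and jointly smooth in $(\nu,g)$, while $\mu^{0,0}_\Nfu$ is entire in the abelian case and meromorphic with at most first order poles, located in $\ZZ_{\leq-1}$, in the non-abelian case (visible in the expansion \cite[(A.9)]{BMSU21}). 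That their values and residues lie in $\Ffu^{\ps[j_\xi,\nu]}_\Nfu$, and that for $\re\nu\geq 0$ they span $\Wfu^{\xi,\nu}_{\Nfu;2j_\xi,0,0}$, resp.\ $\Mfu^{\xi,\nu}_{\Nfu;2j_\xi,0,0}$, is exactly the content of the structure results recalled in~\S\ref{sect-Ftm0}.

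Next I would propagate these families to an arbitrary $K$-type $\tau^h_p$ with $|h-2j_\xi|\leq 3p$ and an arbitrary weight $q$ in that $K$-type. Applying the shift operators of \cite[Proposition 6.1]{BMSU21} carries $\om^{0,0}_\Nfu$ and $\mu^{0,0}_\Nfu$ into $\Ffu^\ps_{\Nfu;h,p,p}$, and then applying suitable powers of $\Z_{1,2}\in\klie$, using the action~\eqref{Rk} on the polynomial functions $\Kph hprq$, moves from weight $p$ to any weight $q$, producing the families $\om^{a,b}_\Nfu(\xi,\nu)$ and $\mu^{a,b}_\Nfu(\xi,\nu)$ of \cite[(10.17)]{BMSU21}. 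Since each operator used lies in $U(\glie)$, and right differentiation preserves holomorphy, meromorphy and the location of poles as well as commuting with taking values and residues, the transported families still satisfy property~a), and $\Om$ stays entire and even.

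The last and hardest step is to secure properties~b) and~c), i.e.\ that for $\re\nu_0\geq 0$ the values span the \emph{one-dimensional} subspaces $\Wfu^{\xi,\nu_0}_{\Nfu;h,p,q}$ and $\Mfu^{\xi,\nu_0}_{\Nfu;h,p,q}$. Under generic parametrization these subspaces are by definition spanned by the transported families, so nothing is needed; and in the abelian case the same holds under integral parametrization by \cite[(10.18),(13.8)]{BMSU21}. The obstacle is the non-abelian case under integral parametrization: there the shifted families $\om^{a,b}_\Nfu(\xi,\nu)$ and $\mu^{a,b}_\Nfu(\xi,\nu)$ may vanish at certain points of the closed right half-plane, so they cease to span the one-dimensional target subspace at those points. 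Following \cite[Proposition 14.2, Definition 14.8]{BMSU21} I would determine these zeros exactly, divide them out by the appropriate holomorphic factors, and then verify that the renormalized families $\Om_{\Nfu;h,p,q}(\xi,\nu)$ stay entire and even while $\Mu_{\Nfu;h,p,q}(\xi,\nu)$ retains only first order poles in $\ZZ_{\leq-1}$, and that at every $\nu_0$ with $\re\nu_0\geq 0$ their values now span the required one-dimensional spaces. Taking $\Om_{\Nfu;h,p,q}$ and $\Mu_{\Nfu;h,p,q}$ to be these normalized families then yields all three assertions.
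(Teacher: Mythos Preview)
Your proposal is correct and follows essentially the same route as the paper: the discussion preceding the proposition in \S\ref{app-famFt} constructs the families exactly as you describe, starting from the explicit $\om^{0,0}_\Nfu$, $\mu^{0,0}_\Nfu$ for the one-dimensional $K$-type, transporting them via shift operators and powers of $\Z_{1,2}$ to general $(h,p,q)$, and then, in the non-abelian integral case, dividing out the zeros identified in \cite[Proposition~14.2, Definition~14.8]{BMSU21}. You have cited the same auxiliary results from \cite{BMSU21} at each stage.
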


Any (finite) non-zero linear combination
$\sum_j c_j(\nu) \,\Mu_{\Nfu;h_j,p_j,q_j}(\xi,\nu)$ with holomorphic
functions $c_j$ gives a function as required in
Proposition~\ref{prop-Mudef}.

In all cases, we have a decomposition in Iwasawa coordinates of the form
\badl{OmMuh} \Om_{\Nfu;h,p,q}&(\xi,\nu)\bigl( n \am(t) k \bigr)
\= \sum_r w_r(n)\, h^\om_r(\nu,t)\, \Kph h p r q(k)\,,\\
\Mu_{\Nfu;h,p,q}&(\xi,\nu)\bigl( n \am(t) k \bigr)
\= \sum_r w_r(n)\, h^\mu_r(\nu,t)\, \Kph h p r q(k)\,.
\eadl
The sum is over $r\equiv p \bmod 2$, $|r|\leq p$. In the non-abelian
case there is an additional restriction on $r$; here we just take the
components $h^\om_r(\nu,t)$ and $h^\mu_r(\nu,t)$ to be zero if the
additional condition is not satisfied. By $w_r$ we mean $\ch_\bt$
in~\eqref{chbt} if $\Nfu=\Nfu_\bt$, with $\bt\neq 0$. In the
non-abelian case $\Nfu=\Nfu_{\ell,c,d}$ and we have
$w_r = \Th_{\ell,c}( h_{\ell,m(h,r) })$ with $m(h,r)\in \ZZ_{\geq 0}$
given in~\eqref{parms-nab} below.

The component $h^\om_r(\nu,t)$ is a linear combination of positive
powers of $t$  times factors $K_{\nu+n}(2\pi|\bt|t)$
if $\Nfu=\Nfu_\bt$ or factors $W_{\k+n,\nu/2}(2\pi|\ell|t^2)$ if
$\Nfu=\Nfu_\n$. In both cases $n \in \ZZ_{\geq 0}$. The component
$h^\mu_r(\nu,t)$ has a similar structure, with Bessel functions
$I_{\nu+n}(2\pi|\bt|t)$ and Whittaker functions
$W_{\k+n,\nu/2}(2\pi|\ell|t^2)$. The description of these components is
far from unique. With the contiguous relations mentioned in
\S\ref{sect-rpfWr} we can rewrite the components in many ways. From
\eqref{IKest} and \eqref{MWest}, and from the properties of
$\Mfu^\ps_\Nfu$, we obtain estimates valid at least for
$\re\nu \geq 0$, of the form
\badl{OmMu-est} \Om_{\Nfu;h,p,q}(\xi,\nu) \bigl( n \am(t) k \bigr) &\=
\oh\bigl(e^{-\al t}) &&\text{as }t\uparrow\infty, \text{ for some
}\al>0\,,\\
&\= \oh\bigl( t^{2-\al}\bigr) &&\text{as }t\downarrow 0, \text{ for some
}\al>0\,,\\
\Mu_{\Nfu;h,p,q}(\xi,\nu) \bigl( n \am(t) k \bigr) &\= \oh \bigl(
t^{2+\re\nu}\bigr)&&\text{as }t\downarrow 0\,.
\eadl

\rmrk{Drawbacks} (1) The families $\om^{0,0}_\Nfu(j,\nu)$ and
$\mu^{0,0}_\Nfu(j,\nu)$ generating $\Wfu^{\xi,\nu}_\Nfu$ and
$\Mfu^{\xi,\nu}_\Nfu$ are nicely explicit. The action of the shift
operators can be carried out explicitly, at least with help of a
computer. In the $K$-type $\tau^h_p$ and weight~$q$ the elements have
the form
\begin{align*}
& \sum_{r\equiv p\bmod 2,\;|r|\leq p} \ch_\bt(n) \, F_r(t) \, \Kph h p r
q(k)\,,\\
&\sum_{r\equiv p\bmod 2,\;|r|\leq p} \Th_{\ell,c}(h_{\ell,m(r)})(n)
\, F_r(t) \, \Kph h p r q(k)\,,
\end{align*}
where the $F_r(t)$ are linear combinations of modified Bessel functions
or Whittaker functions multiplied by powers of~$t$. After a few
applications of shift operators these linear combinations become in
general very complicated.

(2) Parts b) and c) of the proposition seem to give a basis of
$\Ffu^{\xi,\nu}_{\Nfu;h,p,q}$. This fails in some cases in the
non-abelian case, for values of $\nu \in \ZZ_{\geq 0}$. (This is due to
the fact that the Whittaker functions $M_{\k,s}$ and $W_{\k,s}$ may be
proportional; see \cite[(A.13)]{BMSU21}.)

\subsubsection{An additional family} \label{sect-afU}
Proposition 14.21, ii) in \cite{BMSU21} tells us that the sole instances
in which $\Mfu_\Nfu^{\xi,\nu}$ and $\Wfu_\Nfu^{\xi,\nu}$ have a
non-zero intersection occur in those non-abelian cases in which
$\Wfu^{\xi,\nu}_\n$ contains a submodule in the holomorphic or in the
antiholomorphic discrete series. Then the intersection
$\Wfu_\n^{\xi,\nu}\cap \Mfu_\n^{\xi,\nu}$ is equal to that submodule.

Proposition 16.6 in \cite{BMSU21} gives a third family
$\nu \mapsto \Ups_{\n;h,p,q}(j,\nu)$ in the non-abelian case. It is
based on the function $V_{\k,s}$ in~\eqref{Vkpsdef}, and is holomorphic
and even in $\CC$ such that for $\re\nu_0\geq 0$
\be \Ffu^\ps_{\n;h,p,q} \= \bigoplus_{(j_\xi,\nu_0) \in \Wo(\ps)_\n^+}
\Bigl( \CC \, \Om_{\n;h,p,q}(j_\xi,\nu_0) \oplus \CC \,
\Ups_{\n;h,p,q}(j_\xi,\nu_0) \Bigr)\,. \ee

\subsubsection{Explicit cases}\label{sect-MuOmexpl}There are only a few
cases where we know explicit and rather simple expressions of the
functions in Proposition~\ref{prop-MuOm}. In the one-di\-men\-sional
$K$-type with $(h,p,q) = (2j,0,0)$ we have the explicit expressions in
\eqref{MWxinubt} and \eqref{om-mu-nab}.

There are some higher dimensional cases as well. We mention the cases
that we will need later on. They are based on Propositions 13.2, 14.4
and~14.15 in \cite{BMSU21}. (Ishikawa gives similar expressions in
\cite[Theorem 5.3.1]{Ish99}.) This concerns $K$-types $\tau^h_p$ that
have minimal dimension among the $K$-types occurring in a submodule of
$\Ffu^\ps_\Nfu$. Then $p\geq 1$, and $\ps=\ps[-h,p]$. We take
$(\xi,\nu)\in \Wo(\ps)^+$ such that $j_\xi = -h$ and $\nu=p$.

If, in the \emph{generic abelian cases}, $\tau^h_p$ is minimal in
$\Wfu_\bt^\ps$, then
\badl{hdWa} \Om_{\bt;h,p,q}&(-h,p)\bigl(n \am(t) k\bigr)\\
&\ddis \sum_{r\equiv p(2),\,|r|\leq p} \ch_\bt(n)\, \Bigl( \frac{i
\bt}{|\bt|}\Bigr)^{(r+p)/2} \, t^{2+p} \, K_{|h-r|/2}(2\pi|\bt|t) \,
\Kph h p r q(k)
\,.\eadl
By $\dis$ we denote equality up to a non-zero factor.

If $\tau^h_p$ is minimal in $\Mfu_\bt^\ps$, $\bt\neq 0$, then
\badl{hdMa} \Mu_{\bt;h,p,q}&(-h,p)\bigl(n \am(t) k\bigr)\\
&\ddis \sum_{r\equiv p(2),\,|r|\leq p} \ch_\bt(n)\, \Bigl( \frac{-i
\bt}{|\bt|}\Bigr)^{(r+p)/2} \, t^{2+p} \, I_{|h-r|/2}(2\pi|\bt|t) \,
\Kph h p r q(k)
\,.\eadl

In the \emph{non-abelian cases} $\Nfu=\Nfu_{\ell,c,d}$ we use the
quantities
\badl{parms-nab} m(h,r) &\=\frac12\sign(\ell)\,\bigl( r-r_0(h)\bigr)\,,&
\hbox{\ } r_0(h)&\= \frac13(h-d) + \sign(\ell)\,,\\
\k(r) &\= - m(h,r) -\sign(\ell)s(h,r)-\frac12& s(r)&\= s(h,r) =
\frac14(h-r)\,.
\eadl
The summation runs over $r\equiv p\bmod 2$ such that $|r|\leq p$ and
$m(h,r)\geq 0$.

If $\tau^h_p$ is minimal in $\Wfu_\n^\ps$, then
\badl{hdWna} \Om_{\n;h,p,q}&(-h,p)\bigl(n \am(t) k\bigr)
\ddis \sum_{r} \Th_{\ell,c}(h_{\ell,m(h,r)})(n) \\
&\hbox{} \cdot i^{m(h,r) } \,\sqrt{ m(h,r)!}\, t^{1+p} \,
W_{\k(r),s(r)}(2\pi|\ell|t^2)
  \, \Kph h p r q(k)
\,.\eadl

If $\tau^h_p$ is minimal in $\Mfu_\n^\ps$, then
\badl{hdMna} \Mu_{\n;h,p,q}&(-h,p)\bigl(n \am(t) k\bigr)\\
& \ddis \sum_{r} \Th_{\ell,c}(h_{\ell,m(h,r)})(n) \, c^M(r)\, t^{1+p} \,
M_{\k(r),|s(r)|}(2\pi|\ell|t^2)
  \, \Kph h p r q(k)
\,,\\
c^M(r) &\= - e^{\pi i(m(h,r)-\k(r))}\,\frac{\Gf \bigl(
\frac12+|s(r)|-\k(r)\bigr)}{\sqrt{m(h,r)!}\, \bigl( 2 |s(r)|
\bigr)!}\,.
\eadl

We stress that these formulas give an explicit expression only for the
value at $\nu=p$ of the holomorphic families
$\nu \mapsto \Om_{\Nfu;h,p,q}(-h,\nu)$ and
$\nu \mapsto \Mu_{\Nfu;h,p,q}(-h,\nu)$.

\section{Special functions and Wronskians}\label{sect-rpfWr}
The $N$-non-trivial Fourier term functions have expressions in
terms of modified Bessel functions and Whittaker functions. 
Appendix~A of~\cite{BMSU21} gives many results and references 
for these special functions. Here we mention facts that we
use here.

\subsection{Modified Bessel functions}The modified Bessel differential
equation
\be \label{Bde}
s^2\, j''(x) + x \, j'(x) - (x^2+\nu^2) j(x)\=0\ee
has the standard solutions $I_\nu$ and $K_\nu$; \cite[(A.2),
(A.4)]{BMSU21}. Their boundary behavior is
\badl{IKest} I_\nu(x) &\;\sim\;
(2\pi x)^{-1/2} \, e^x && \text{as }x\uparrow\infty\,,\\
&\;\sim\; \Gf(\nu+1)^{-1} (x/2)^\nu && \text{as } x\downarrow 0\,;
\\
K_\nu(x) & \;\sim\; (\pi/2x)^{1/2}\, e^{-x}&& \text{as
}x\uparrow\infty\,,\\
&\= \oh\bigl(x^{-|\re\nu|-\e}\bigr)&& \text{as } x\downarrow 0\,.
\eadl
For $n\in \ZZ$ we need the following more precise information, as
$x\downarrow 0$.
\badl{KI-expint} I_n(x) &\= I_{-n}(x) \= (x/2)^{|n|}/|n|!
+\oh(x^{|n|+4})\,,\\
K_n(x) &\= K_{-n}(x)\\
&\= \frac{(|n|-1)!}2 \, (x/2)^{-|n| }
+ \oh (x^{-|n|+2 }\,\log x) &&\text{if }n\neq 0\,,\\
&\= - \log(x/2)
+ \oh\bigl( x^2\log x\bigr)&&\text{if }n=0\,.
\eadl
The contiguous relations in \cite[(A.7)]{BMSU21} allow to express
derivatives of $I_\nu$ and $K_\nu$ as linear combinations of
$I_{\nu\pm 1}$ or of $K_{\nu\pm 1}$, respectively.

\subsection{Whittaker functions} The Whittaker differential equation
\be\label{Wde}
f''(x) \= \left( \frac14- \frac\k x +\frac{s^2-1/4}{x^2}\right)
f(x)\=0\ee
has standard solutions $M_{\k,s}$ and $W_{\k,s}$. Since these two
solutions are proportional for some combinations, we also use an ad hoc
solution $V_{\k,s}$, in \cite[(A.12)]{BMSU21}. It is the holomorphic
continuation in $s$ of
\be\label{Vkpsdef}
V_{\k,s}(x) \= \frac{\pi i}{\sin \pi s} \sum_\pm \frac{\pm e^{\pi \pi i
s}} { \Gf \bigl(\frac12\mp 2+\k\bigr)\, \Gf(1\pm 2s) }\, M_{\k,\pm
s}(x)\,.\ee
This implies the relation
\badl{MinWV} M_{\kappa,s}(\tau) &\= e^{\pi i \kappa}\,\Gamma(1+2s)\,
\biggl( \frac{-i\,e^{-\pi i s}}{\Gamma(1/2+s+\kappa)}
W_{\kappa,s}(\tau)
\\
&\quad\hbox{} - \frac 1{\Gamma(1/2+s-\kappa)} \, V_{\kappa,s}(\tau)
\biggr)\,.\eadl

We note the boundary behavior:
\badl{MWest} W_{\k,s}(x) &\;\sim\; x^\k \, e^{-x/2} &&\text{as }
x\uparrow\infty\,,\\
&\; \= \oh(x^{1/2-s})&&\text{as }x\downarrow 0\text{ and }s\not\in
\ZZ\,;\\
M_{\k,s}(x)&\= \frac{\Gf(1+2s)}{\Gf(\frac12-\k+s)} \, x^{-\kp} \,
e^{x/2} \,\Bigl(1+\oh(1/x)\Bigr) &&\text{as } x\uparrow\infty\text{,
for } \re s\geq 0\,,\\
&\;\sim\; x^{s+1/2}&&\text{ as } x\downarrow 0\text{ for }s\not\in
\ZZ_{\leq -1}\,;\\
V_{\k,s}(x) &\;\sim\; -e^{-\pi i \k}x^{-\k} e^{x/2}
&&\text{as }x\uparrow \infty\,.
\eadl
The functions $W_{\k,s}$ and $V_{\k,s}$ are even in the parameter~$s$.
If $\frac12+s-\k\not\in \ZZ_{\leq 0}$, we have as $x\downarrow 0$
\be \label{West0}
W_{\k,s}(x) \;\sim\; \begin{cases}
\frac{\Gf(2|s|)}{\Gf\bigl( \frac12+|s|-\k\bigr)} x^{1/2-|s|}&\text{ if }
s>0\,,\\
- \frac1{\Gf\bigl(\frac12-\k \bigr)} x^{1/2}\log x&\text{if }s=0\,.
\end{cases}
\ee

There are the specializations
\bad M_{\k,\k-1/2}(x) &\= W_{\k,\pm(\k-1/2)}(x) \= x^\k\, e^{-x/2}\,,\\
M_{\k,-\k-1/2}(x) &\= - e^{\pi i \k} V_{\k,\pm(\k-1/2)}(x) \= \tau^{-\k}
\, e^{\k/2}\,.
\ead

In \cite[(A.18)--(A.21)]{BMSU21} we quote many contiguous relations,
allowing us to express derivatives of Whittaker functions in terms of
Whittaker functions with neighboring values of the parameters.

\subsection{Wronskians}\label{sect-Wro} For two expressions $X$ and $Y$
we denote by $\Wr(X,Y)_t$ the quantity
$X \frac{dY}{dt} - \frac{dX}{dt} Y$. The skew-symmetry implies that
$\Wr(X,X)_t=0$.

For solutions $X$ and $Y$ of the differential equation \eqref{Bde} one
can easily check that $\Wr(X,Y)_x$ is a multiple of $\frac1x$, and for
solutions of \eqref{Wde} the Wronskian is a constant. The asymptotic
behavior near zero gives
\be \Wr\bigl(I_\nu(x), I_{-\nu}(x) \bigr)_x\= - \frac{2\sin \pi \nu}{\pi
x}\,,\quad \Wr\bigl( M_{\k,s}(x), M_{\k,-s}(x) \bigr)_x \= -2 s\,. \ee
Using the expression \cite[(A.4)]{BMSU21} for $K_\nu$ in terms of
$I_\nu$ and $I_{-\nu}$ we get \[ \Wr\bigl( I_\nu(x),K_\nu(x)\bigr)_x \=
-x^{-1}\,,\]
and similarly with \cite[(A.11)]{BMSU21}
\[ \Wr\bigl( M_{\k,s}(x),W_{\k,s}(x)\bigr)_x\=
\frac{\Gf(1+2s)}{\Gf\bigl( \frac12-\k+s\bigr)}\,.\]

This leads to the relations that we use in the proof of
Theorem~\ref{thm:orders}:
\badl{Wr0} \Wr \bigl( t^2 \,I_\nu(2\pi |\bt|t),\, t^2
\,K_\nu(2\pi|\bt|t) \bigr)_t &\= -t^3\,,\\
\Wr\bigl( t\, M_{\k,\nu/2}(2\pi|\ell|t^2),\, t\,
W_{\k,\nu/2}(2\pi|\ell|t^2\bigr)_t
&\= \frac{-4\pi |\ell|\, \Gf(\nu+1)}{\Gf\bigl(
\frac{\nu+1}2-\kp\bigr)}\, t^3\,.
\eadl

\section{Poincar\'e series for
\texorpdfstring{$\SL_2(\RR)$}{SL(2,R)}}\label{PSSl2R}

Here we give some more information related to the discussion in
Subsection~\ref{comparison}. We work with functions on the group
$\SL_2(\RR)$, like we do for $\SU(2,1)$. We use Iwasawa coordinates
$(x,y,\th) \leftrightarrow \nm(x)\am(y)\km(\th)$, with
$\nm(x) = \begin{pmatrix} 1&x\\0&1\end{pmatrix}$, $x\in \RR$;
$\am(y)=\begin{pmatrix}y^{1/2}&0\\0&y^{-1/2}\end{pmatrix}$, $y>0$; and
$\km(\th)= \begin{pmatrix}\cos\th&\sin \th\\-\sin\th&\cos\th\end{pmatrix}$,
$\th\in \RR\bmod 2\pi\ZZ$. A function $g$ on $\SL_2(\RR)$ has weight
$k\in \ZZ$ if $f\bigl(g\km(\th)\bigr)=f(g) e^{ik\th}$, so the weight
coincides with the $K$-type. The Casimir operator $C$ is given by
\be C \= - y^2\partial_x^2 - y^2\partial_y^2+ y \partial_x\partial_
\th\,.\ee

For automorphic forms of odd weight we need a character on the discrete
subgroup of the same parity as the weight.

Automorphic forms on $\Gm\backslash \SL_2(\RR)$ for the character $\ch$
are defined as in Definition~\ref{def-aut}, except that only in a) we
require $f(\gm g) = \ch(\gm) \, f(g)$ for all $\gm\in \Gm$. The ring
$ZU(\mathfrak{sl}_2)$ is the polynomial ring in the Casimir operator.
We write the eigenvalue of the Casimir operator as
$\frac12-\nu^2$.

For simplicity we choose the discrete subgroup $\Gm = \SL_2(\ZZ)$ which
has 
twelve characters: $\ch _n$, $n\in \frac1{12}\ZZ\bmod \ZZ$, determined
by
\be \ch_n\bigl(\nm(1)\bigr) \= e^{2\pi i n }\,,\qquad\ch_n\bigl(
\km(-\pi/2)\bigr)\= e^{-6\pi i n }\,.\ee
The character $\ch_n$ is suitable for weight $k$ if
$12n \equiv k \bmod 2$.

\emph{Basis functions} for the spaces of \emph{Fourier terms} of order
$n\neq 0$ are
\bad \Om_k\bigl(n,\nu;\nm(x)\am(y)\km(\th)\bigr) &\= e^{2\pi i n x} \
W_{\e k/2,\nu}(4\pi|n| y) \, e^{ik\th}\,,\\
\Mu_k\bigl(n,\nu;\nm(x)\am(y)\km(\th)\bigr) &\= e^{2\pi i n x} \, M_{\e
k/2,\nu}(4\pi|n|y) \, e^{ik\th}\,,
\ead
with $\e=\sign(n)$. These functions have weight $k$ and eigenvalue
$\frac14-\nu^2$ under the Casimir operator.

The \emph{Poincar\'e series}
\be P_k(n,\nu; g) \= \sum_{\gm\in \Gm_\infty \backslash \Gm}
\ch(\gm)^{-1} \Mu_k(n,\nu;\gm g)\ee
converges absolutely for $\re \nu>\frac12$ and gives a non-trivial
family $\nu\mapsto P_k(n,\nu)$ of real-analytic automorphic forms on
$\Gm\backslash \SL_2(\RR)$ for the character $\ch$, of weight $k$, and
with moderate exponential growth. (Only the Fourier term of order $n$
has exponential growth for general values of $\nu$.)

The family has a meromorphic extension to $\nu\in \CC$ and all its main
parts are automorphic forms with moderate exponential growth. This can
be shown by spectral theory, as in \cite{MW89}.

Several results in Section~\ref{sect-rPs} are similar to results for
$\Gm\backslash \SL_2(\RR)$. We quote some statements from
\cite[Proposition 11.3.9]{B94}.

For even weights $k$, the Poincar\'e series $P_k(n,\nu)$ has a first
order singularity at
$\nu=\nu_0 \in i \RR\setminus \{0\} \cup\bigl(0,\tfrac12\bigr)$ if and
only if there is a square integrable automorphic form $\ph$ of weight
$k$ with character $\ch_n$ and eigenvalue $\frac14-\nu_0^2$ that has a
non-zero Fourier term of order $n$. Such an automorphic form $\ph$
generates a submodule of $L^{2,\discr}_k(\Gm\backslash G, \ch)_K$ in
the unitary principal series or in the complementary series. For
$\nu_0=0$ the same equivalence holds, but now with a second order
singularity.

For odd weights, a similar statement holds, with
$\nu=\nu_0 \in i \RR\setminus \{0\} $ (no complementary series). These
results are similar to those in Proposition~\ref{prop-upcstr} above.
For $\SL_2(\RR)$ there are no irreducible modules analogous to the thin
representations.

The discrete series type representations of $\SL_2(\RR)$ are analogous
to the holomorphic and antiholomorphic discrete series type modules for
$\SU(2,1)$. Part~i) of Proposition~\ref{prop-hads} is analogous to the
fact that spaces of holomorphic cusp forms of weights at least three on
the upper half plane are spanned by values of Poincar\'e series. For
weight $k=2$ one has to use the analytic continuation. The holomorphic
cusp forms of weight $k$ correspond to the lowest weight vectors in
discrete series modules for $\SL_2(\RR)$ with Casimir eigenvalue
$\frac k2(1-\frac k2)$.

The case of weight $1$ corresponds to $\nu_0=0$. For
$\SL_2(\ZZ)\backslash \SL_2(\RR)$ it is known that the space of
holomorphic square integrable automorphic forms has dimension $1$ for
the character $\ch_{1/12}$, and is zero for other characters. For
character $\ch_{1/12}$ the space is spanned by the square of the
Dedekind eta function. The corresponding function on $\SL_2(\RR)$ is
\be \ph_+\bigl( \nm(x)\am(y)\km(\th)\bigr)\= y^{1/2}\, \eta(x+iy)^2 \,
e^{i\th}\,.\ee
We have also
\be \ph_-\bigl( \nm(x)\am(y)\km(\th)\bigr)\= y^{1/2}\,
\overline{\eta(x+iy)^2} \, e^{-i\th}\ee
with character $\ch_{-1/12}$ and weight $-1$. Here, the element $\ph_+$
is a lowest weight vector in a submodule of a principal series
representation, and $\ph_-$ is a highest weight vector in the
complementary module.

From an initial part of the Fourier expansion
\[ \eta(z)^2 \= q^{1/12}\,\Bigl( 1-2q -q^2+2q^3+q^4+2q^5- q^6-2q^8
-2q^9 + \cdots
\Bigr)\,, \quad q=e^{2\pi i z}\,,\]
we can conclude that $P_{\pm 1}(n,\nu)$ has a first order singularity at
$\nu=0$ for $n= \pm\bigl( \frac1{12} +m\bigr)$ with $0\leq m \leq 6$
and $m=8,9$, and that it is holomorphic at $\nu=0$ for $m=7$.

For the other five characters suitable for odd weights, the Poincar\'e
families are holomorphic at $\nu=0$.

\raggedright


\begin{thebibliography}{A}











\bibitem{B94}R.W.\,Bruggeman: {\it Families of automorphic forms;}
Birkh\"auser, 1994 \,
	\bibitem{BMSU21}{R.W.\,Bruggeman, R.J.\,Miatello}, {\it Representations
	of the unitary group $\SU(2,1)$ in Fourier term modules}
		{arXiv:1912.01334 [math.RT]}.

	\bibitem{CM82}{W.\,Casselman, D.\,Mili\v ci\'c}, {\it Asymptotic
	behavior of matrix coefficients of admissible representations}, {Duke
		Math.\ J.~{\bf 49:4} (1982) 869--930}.


	\bibitem{FL5}{G.\,Francsics, P.\,Lax}, {\it An explicit fundamental
	domain for the Picard modular group in two complex dimensions},
	{arXiv:math/0509708 [math.CV], September 2005}.

	\bibitem{Gan}{Gan W.\,T.}, {\it Automorphic forms and automorphic
		representations}{
		\texttt{http://www.cms.zju.edu.cn/UploadFiles/AttachFiles/2004728135246450.pdf}.}

	\bibitem{GPS84}{S.\,Gelbart, I.\,Piatetski-Shapiro}, {\it Automorphic
	forms and $L$-functions for the unitary group}, {in {\it Lie group
			representations II}, ed. R.\,Herb, S.\,Kudla, R.\,Lipsman,
		J.\,Rosenberg; Lecture notes in Math.~{\bf 1041} (1984) 141--184}.

	\bibitem{GPT02} {F.A.\,Gr\"unbaum, I.\,Pacharoni, J.\,Tirao}, {\it
	Matrix valued spherical functions associated to the complex projective
		plane}, {J.\ Funct.\ Analysis 188 (2002) 359--441}.


	\bibitem{HCh68}{Harish-Chandra}, {\it Automorphic forms on semisimple
	Lie groups}, {notes by J.G.M.\,Mars; Lect.\ Notes in Math.~{\bf 62},
		Springer-Verlag, 1968}.

	\bibitem{Ish99}{Y.\,Ishikawa}, {\it The generalized Whittaker functions
	for $\mathrm{SU}(2,1)$ and the Fourier expansion of automorphic forms},
	{J.\ Math.\ Sci.\ Univ.\ Tokyo {\bf 6} (1999) 477--526}.

	\bibitem{Ish00}{Y.\,Ishikawa}, {\it Generalized Whittaker functions for
		cohomological representations of $\mathrm{SU}(2,1)$ and
		$\mathrm{SU}(3,1)$}, {Kurenai repository, 2000, 129--137}.


	\bibitem{KO95} {H.\,Koseki, T.\,Oda}, {\it Whittaker functions for the
		large discrete series representation of $\mathrm{SU}(2,1)$ and related
		zeta integral}, {Publ.\ RIMS Kyoyo Univ.\ {\bf 31} (1995)
		959--999}.





	\bibitem{La74}{S.\,Lang}, {$\mathrm{SL}_2(\RR)$}, {Addison-Wesley,
		1975}.

	\bibitem{Lal76}{R.P.\,Langlands}, {\it On the functional equations
		satisfied by Eisenstein series}, {Lect.\ Notes in Math.~{\bf 544},
		Springer-Verlag, 1976}.



	\bibitem{MW89}{ R.J.\,Miatello, N.R.\,Wallach}, {\it Automorphic forms
		constructed from Whittaker vectors}, {Journal of Functional Analysis
		{\bf 86}
		(1989)
		441--487}.

	\bibitem{Neu73}H.\, Neunh\"offer, {\it \"Uber die analytische
		Fortsetzung von Poincar\'ereihen}, Sitzungsb.\ der Heidelberger Ak.\
	der Wiss., Math.-naturw.\ Klasse, 2.\ Abh, 1973, 33--90

	\bibitem{Nie73}D.\,Niebur, {\it A class of nonanalytic automorphic
		functions}, Nagoya Math.\ j.~{\bf 52} (1973) 133--145.

	\bibitem{Pe32} H.\,Petersson, {\it \"Uber die Entwicklungscoeffizientem
	der automorfen Formen}, Acta Math. {\bf 58} (1932), 169-215.


	\bibitem{Po1882}H.\,Poincar\'e, {\it M\'emoire sur les fonctions
		fuchsiennes}, Acta Math.~{\bf 1} (1882) 193--294.

	\bibitem{Ra77} R.\,Rankin, {\it On modular forms and functions},
	Cambridge University Press, London/New York, 1977.







	\bibitem{VZ84}{D.A.\,Vogan, G.J.\,Zuckerman}. {\it Unitary
	representations with non-zero cohomology}, { Compositio Mathematica~
		{\bf 53.1} (1984)
		51--90}.


	\bibitem{Wal76}{ N.R.\,Wallach}, {\it On the Selberg trace formula in
	the case of a compact quotient}, {Bull.\ AMS~{\bf 82:2} (1976)
		171--194}.





	\bigskip


\end{thebibliography}
\end{document}